\documentclass[12pt]{amsart}
\usepackage[all]{xy}
\usepackage{amssymb,amsmath,amsfonts,verbatim, float, color}
\usepackage{enumerate, multicol}
\usepackage{graphicx, color}
\usepackage[linktocpage=true,colorlinks=true, linkcolor=blue, citecolor=red, urlcolor=green]{hyperref}
\newtheorem{theorem}{Theorem}
\newtheorem{lemma}[theorem]{Lemma}
\newtheorem{proposition}[theorem]{Proposition}
\newtheorem{corollary}[theorem]{Corollary}

\theoremstyle{definition}
\newtheorem{definition}[theorem]{Definition}
\newtheorem{example}[theorem]{Example}

\theoremstyle{remark}
\newtheorem{remark}[theorem]{Remark}

\numberwithin{equation}{section}
\numberwithin{theorem}{section}

\newcommand\thref{Theorem \ref}
\newcommand\leref{Lemma \ref}
\newcommand\prref{Proposition \ref}
\newcommand\coref{Corollary \ref}

\newcommand\reref{Remark \ref}
\newcommand\seref{Section \ref}
\def\CC{\mathbb{C}}
\def\QQ{\mathbb{Q}}
\def\RR{\mathbb{R}}
\def\ZZ{\mathbb{Z}}
\def\h{{\mathfrak{h}}}  

\DeclareMathOperator\End{End}

\DeclareMathOperator\tr{tr}
\DeclareMathOperator\rank{rank}
\DeclareMathOperator\asdim{asdim}
\DeclareMathOperator\qdim{qdim}
\DeclareMathOperator\Ker{Ker}

\DeclareMathOperator\Ind{Ind}
\DeclareMathOperator\Span{span}

\def\vac{{\boldsymbol{1}}}  
\def\tw{\mathrm{tw}}
\def\ii{\mathrm{i}} 
\def\al{\alpha}                         
\def\be{\beta}
\def\ga{\gamma}
\def\Ga{\Gamma}
\def\de{\delta}

\def\ep{\varepsilon}

\def\la{\lambda}
\def\La{\Lambda}

\def\om{\omega}
\def\Om{\Omega}
\def\ph{\varphi}
\def\si{\sigma}

\def\th{\theta}

\def\ze{\zeta}
\def\SL{\mathrm{SL}}
\def\d{\partial}
\def\<{\left\langle}
\def\>{\right\rangle}

\def\lieh{{\mathfrak{h}}}

\def\F{\mathcal{F}}
\def\B{\mathcal{B}}
\def\C{\mathcal{C}}
\def\L{\mathcal{L}}
\def\M{\mathcal{M}}
\def\O{\mathcal{O}}
\def\W{\mathcal{W}}
\def\Z{\mathcal{Z}}

\begin{document}
\title[Orbifolds of Lattice Vertex Algebras]{Orbifolds of Lattice Vertex Algebras}
\author[B.~Bakalov]{Bojko Bakalov}
\address{Department of Mathematics\\
North Carolina State University\\
Raleigh, NC 27695, USA}
\email{bojko\_bakalov@ncsu.edu}

\author[J.~Elsinger]{Jason Elsinger}
\address{Department of Mathematics\\
Florida Southern College\\
Lakeland, FL 33801, USA}
\email{jelsinger@flsouthern.edu}

\author[V.~G.~Kac]{Victor G.~Kac}

\address{Department of Mathematics, MIT, Cambridge, MA 02139, USA}
\email{kac@math.mit.edu}

\author[I.~Todorov]{Ivan Todorov}

\address{INRNE,
Bulgarian Academy of Sciences,
Tsarigradsko Chaussee 72, BG-1784 Sofia, Bulgaria}
\email{ivbortodorov@gmail.com}


\date{October 21, 2022}

\subjclass[2010]{Primary 17B69; Secondary 81R10}

\keywords{Regular vertex algebra; lattice vertex algebra; theta function; modular transformation; orbifold algebra; twisted module}

\begin{abstract}
To a positive-definite even lattice $Q$, one can associate the lattice vertex algebra $V_Q$, and any automorphism $\sigma$ of $Q$ lifts to an automorphism of $V_Q$.
In this paper, we investigate the orbifold vertex algebra $V_Q^\sigma$, which consists of the elements of $V_Q$ fixed under $\sigma$, in the case when $\sigma$ has prime order.
We describe explicitly the irreducible $V_Q^\sigma$-modules, compute their characters, and determine the modular transformations of characters.
As an application, we find the asymptotic and quantum dimensions of all irreducible $V_Q^\sigma$-modules.
We consider in detail the cases when the order of $\sigma$ is $2$ or $3$, as well as the case of permutation orbifolds.
\end{abstract}

\maketitle

\tableofcontents

\section{Introduction}\label{sintro}

In the past $25$ years there has been significant progress in the theory of regular vertex algebras \cite{DLM2, MC, DRX}. Recall that a vertex algebra $V$ with a conformal vector $L$ \cite{K2} is called \emph{regular} \cite{DLM2} if all $V$-modules are completely reducible, and all eigenvalues of $L_0$ are non-negative integers with finite multiplicities. It is often required, in addition, that the $0$-eigenvalue of $L_0$ has multiplicity $1$, that $V$ is simple, and that the $V$-module $V$ is self-dual, which we shall do in the present paper.

Examples of regular vertex algebras include the simple affine vertex algebras at positive integer levels, Virasoro minimal series, and lattice vertex algebras $V_Q$ associated to a positive-definite even lattice $Q$ (see e.g.\ \cite{K2} for their construction). 

Regular vertex algebras were introduced in \cite{DLM2}, where it was shown that they are in particular rational. Hence for them the Zhu Theorem \cite{Z} holds, i.e., a regular vertex algebra has finitely many irreducible modules and the span of their characters is $\mathrm{SL}_2(\ZZ)$-invariant.

A major advance of the theory has been made by Carnahan and Miyamoto \cite{MC}, who proved that for any finite-order automorphism $\si$ of a regular vertex algebra $V$, its fixed-point subalgebra $V^\si$ is regular as well. Another important result, by Dong, Ren and Xu \cite{DRX}, implies that for a regular vertex algebra $V$ all irreducible modules over $V^\si$ occur as submodules of irreducible $\si^k$-twisted $V$-modules for some $k$.

In the present paper, we use these results to describe explicitly the irreducible $V_Q^\si$-modules for a prime order $p$ automorphism $\si$ of the positive-definite even lattice $Q$ (Theorem \ref{class}), to compute their characters (Section \ref{charVQ}), and to obtain explicit modular transformation formulas for these characters (Theorems \ref{chiorbtran}, \ref{chiorbtran2}, \ref{chiorbtran3}). For this we use the explicit description of twisted $V_Q$-modules obtained in \cite{BK}.

As an application, we find the asymptotic dimensions \cite{KP2} of all irreducible $V_Q^\si$-modules, where $\si$ is a prime order $p$ automorphism of $Q$ (Corollary \ref{corqdim}).
In Sections \ref{ex2} and \ref{ex3}, we consider the cases $p=2$ and $p=3$, 
and in Section \ref{permorb} the case of a permutation orbifold with $Q=Q_0^{\oplus p}$ for any positive-definite even lattice $Q_0$ and the cyclic permutation $\si$ of the summands 
(cf.\ \cite{DXY1,DXY2,DXY3,DXY4}).

\section{Vertex Algebras and Their Twisted Modules}\label{sva}

In this section, we briefly recall the notions of a vertex algebra and of a twisted module over a vertex algebra.
Then we review several important theorems about regular vertex algebras.
Good general references on vertex algebras are \cite{FLM, FHL, K2, FB, LL, KRR}. 

\subsection{Conformal vertex algebras}\label{vert}

Recall that a \emph{vertex algebra} 
is a vector space of states $V$ with a distinguished vector $\vac\in V$ 
(vacuum vector), together with a linear map 
(state-field correspondence)
\begin{equation}\label{vert2}
Y(\cdot,z)\cdot \colon V \otimes V \to V(\!(z)\!) = V[[z]][z^{-1}] \,,
\end{equation}
satisfying axioms \eqref{vert6} and \eqref{vert5} below.
Thus, for every $a\in V$, we have the \emph{field}
$Y(a,z) \colon V \to V(\!(z)\!)$. This field can be viewed as
a formal power series from $(\End V)[[z,z^{-1}]]$, which 
involves only finitely many negative powers of $z$ when
applied to any vector.

The coefficients in front of powers of $z$ in this expansion are known as the
\emph{modes} of $a$:
\begin{equation}\label{vert4}
Y(a,z) = \sum_{n\in\ZZ} a_{(n)} \, z^{-n-1} \,, \qquad
a_{(n)} \in \End V \,.
\end{equation}
The vacuum vector $\vac$ plays the role of an identity in the sense that
\begin{equation}\label{vert6}
a_{(-1)}\vac = \vac_{(-1)} a = a \,, \quad 
a_{(n)}\vac = \vac_{(m)} a = 0 \,, \quad n\geq 0 \,, \; m\ne-1 \,.
\end{equation}
This means that $Y(\vac,z)$ is the identity operator, $Y(a,z)\vac \in V[[z]]$ is regular at $z=0$, and
$Y(a,z)\vac|_{z=0} = a$.

The main axiom for a vertex algebra is the \emph{Borcherds identity}
(also called Jacobi identity \cite{FLM})
satisfied by the modes:
\begin{equation}\label{vert5}
\begin{split}
\sum_{j=0}^\infty & \binom{m}{j} (a_{(n+j)}b)_{(k+m-j)}c
= \sum_{j=0}^\infty \binom{n}{j} (-1)^j 
a_{(m+n-j)}(b_{(k+j)}c)
\\
&- \sum_{j=0}^\infty \binom{n}{j} (-1)^{j+n} \, b_{(k+n-j)}(a_{(m+j)}c) \,,
\end{split}
\end{equation}
where $a,b,c \in V$ and $m,n,k\in\ZZ$. 
Note that the above sums are finite, because
$a_{(n)}b = 0$ for sufficiently large~$n$.

We say that a vertex algebra $V$ is (strongly) \emph{generated} by a subset $S\subset V$ if $V$
is linearly spanned by the vacuum $\vac$ and all elements of the form
${a_1}_{(n_1)} \cdots {a_k}_{(n_k)} \vac$, where $k\geq1$, $a_i \in S$, $n_i<0$.
An \emph{ideal} of a vertex algebra $V$ is a subspace $W$ such that $a_{(n)}w \in W$ for all $a\in V$, $w\in W$, $n\in\ZZ$. The vertex algebra $V$ is \emph{simple} if it contains no nonzero proper ideals. 

Essential to our setting is the notion of a conformal vertex algebra and a vertex operator algebra, which we define below.
\begin{definition}\label{CVA}
A vertex algebra $V$ is called \emph{conformal} of central charge $c\in\CC$ if
there exists a Virasoro vector $L\in V$ such that the corresponding field $Y(L,z)=\sum_{n\in\ZZ}L_nz^{-n-2}$ satisfies:
\begin{enumerate}[$(i)$]
\item $[L_{-1},Y(a,z)]=\partial_zY(a,z), \quad a\in V$;
\item $[L_m,L_n]=(m-n)L_{m+n}+\delta_{m,-n}\frac{m^3-m}{12}c$;\label{CVA2}
\item $L_0$ is diagonalizable.
\end{enumerate}
\end{definition}
The eigenvalues of $L_0$ are called \emph{conformal weights} or \emph{conformal dimensions}. The \emph{Virasoro algebra} is the Lie algebra with basis $\{c,L_n\,|\,n\in\ZZ\}$ 
equipped with the bracket relations in Definition \ref{CVA}(\textit{\ref{CVA2}}) together with the condition that $c$ is central.
A \emph{vertex operator algebra} is a conformal vertex algebra, in which all eigenvalues of $L_0$ are integers and all eigenspaces of $L_0$ are finite dimensional. 

A \emph{representation} of a vertex algebra $V$, or a $V$-\emph{module}, is a vector space $M$ endowed with a
linear map $Y(\cdot,z)\cdot \colon V \otimes M \to M(\!(z)\!)$
(cf.\ \eqref{vert2}, \eqref{vert4}) such that the Borcherds identity
\eqref{vert5} holds for $a,b\in V$, $c\in M$ (see \cite{FB, LL, KRR}).
For a vertex operator algebra $V=\bigoplus_{n\in\ZZ}V_n$, where $V_n$ is the $L_0$-eigenspace with eigenvalue $n$,
its \emph{dual} or \emph{contragradient} $V'$ is defined as
\begin{align}\label{Vdual}
V'=\bigoplus_{n\in\ZZ}V_n^* \,.
\end{align}
It is shown in \cite{FHL} that this duality can be defined more generally for $V$-modules $M$ with analogous grading, and in this case the dual $M'$ is also a $V$-module.
The vertex operator algebra $V$ is called \emph{self-dual} if $V\cong V'$ as $V$-modules. 

\subsection{Twisted representations of vertex algebras}\label{twrep}

Let $\si$ be an automorphism of a vertex algebra $V$ of a finite order $N$. Then $\si$ is diagonalizable.
In the definition of a \emph{$\si$-twisted representation} $M$ of $V$ \cite{FFR, D2}, 
the image of the linear map $Y$ is allowed to have nonintegral rational powers of $z$, so that
\begin{equation}\label{twlat1}
Y(a,z) = \sum_{n\in m+\ZZ} a_{(n)} \, z^{-n-1} \,, \qquad
\text{if} \quad \si a = e^{-2\pi\ii m} a \,, \;\; m\in\frac1N\ZZ \,,
\end{equation}
where $a_{(n)} \in \End M$.
The Borcherds identity \eqref{vert5} satisfied by the modes remains the
same in the twisted case, provided that $a$ is an eigenvector of $\si$, where $a,b\in V$, $c\in M$, $n\in\ZZ$, and $k,m\in\frac1N\ZZ$.

An important consequence of the Borcherds identity is the
\emph{locality} property \cite{DL1,Li,K2}:
\begin{equation}\label{locpr}
(z-w)^{N} [Y(a,z), Y(b,w)] = 0 
\end{equation}
for sufficiently large $N$ depending on $a,b$ (one can take $N$ to be such that
$a_{(n)}b=0$ for $n\ge N$).

\begin{proposition}[\cite{BM}]\label{pnprod}
Let\/ $V$ be a vertex algebra, $\si$ an automorphism of\/ $V$, and\/ $M$ a $\si$-twisted representation of\/ $V$.
Then
\begin{equation}\label{locpr3}
\frac1{k!} \d_{z}^k \Bigl( (z-w)^{N} \, Y(a,z) Y(b,w) v \Bigr)\Big|_{z=w}
= Y(a_{(N-1-k)} b, w) v
\end{equation}
for all\/ $a,b\in V$, $v\in M$, $k\geq0$, and sufficiently large\/ $N$.
Conversely, \eqref{locpr} and \eqref{locpr3} imply the Borcherds identity \eqref{vert5}.
\end{proposition}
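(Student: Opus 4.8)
The plan is to prove the two directions separately. For the forward direction, the idea is to start from the locality property \eqref{locpr} and observe that for sufficiently large $N$ the expression $(z-w)^N Y(a,z)Y(b,w)v$ is a \emph{regular} power series in $z$ (no negative powers of $(z-w)$ survive), so it makes sense to expand it in a Taylor series around $z=w$ and extract coefficients. Concretely, I would write $Y(a,z) = \sum_n a_{(n)} z^{-n-1}$ and $Y(b,w)v = \sum_m b_{(m)}v\, w^{-m-1}$, multiply by $(z-w)^N = \sum_i \binom{N}{i}(-w)^i z^{N-i}$, and then apply $\frac{1}{k!}\d_z^k(\cdot)|_{z=w}$. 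The left-hand side is thus a concrete (finite, once $N$ is large) combination of the products $a_{(n)} b_{(m)} v$. To identify this with $Y(a_{(N-1-k)}b,w)v$ I would compute the modes of $a_{(N-1-k)}b$ using the definition of the state--field correspondence on the twisted module and match coefficients of powers of $w$.

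The cleanest route, however, is to deduce \eqref{locpr3} directly from the Borcherds identity \eqref{vert5}, since the converse half of the proposition already asks me to reverse this. So I would organize the forward direction around a suitable specialization of \eqref{vert5}. The key observation is that the associativity-type combination $a_{(N-1-k)}b$ appearing on the right is exactly the kind of composite mode that the first sum on the left of \eqref{vert5} produces: taking the mode index $n+j = N-1-k$ and applying the identity with the appropriate choice of $m,n,k$ isolates $(a_{(N-1-k)}b)_{(\ell)}c$ on one side and a difference of $a_{(\cdots)}b_{(\cdots)}c$ terms on the other. Matching this difference against the Taylor coefficient extracted from $(z-w)^N Y(a,z)Y(b,w)v$ is the heart of the argument; the binomial coefficients $\binom{N}{i}$ from expanding $(z-w)^N$ should line up with the binomials $\binom{m}{j}$ and $\binom{n}{j}$ in \eqref{vert5} after the substitution $z=w$ sets $m=n$ in the appropriate normalization.

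For the converse, I would argue that locality \eqref{locpr} together with the family of relations \eqref{locpr3} (as $k$ ranges over all nonnegative integers and $N$ over all sufficiently large values) recovers enough information to reconstruct the full operator product, and hence the Borcherds identity. The strategy is to note that \eqref{locpr3} determines all the Taylor coefficients of $(z-w)^N Y(a,z)Y(b,w)v$ at $z=w$, and since this is a regular series in $(z-w)$ by locality, knowing all its coefficients determines the product $Y(a,z)Y(b,w)v$ itself (in the region where this expansion is valid). Dividing by $(z-w)^N$ and using the standard expansion of the delta-function identity then yields \eqref{vert5}; this is essentially the reversal of the forward computation, reading the matched binomial identities in the opposite direction.

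The main obstacle I expect is bookkeeping the fractional powers of $z$ and $w$ introduced by the twisting \eqref{twlat1}: since $a_{(n)}$ has $n \in m+\ZZ$ for an eigenvector $a$ with $\si a = e^{-2\pi\ii m}a$, the exponents in $Y(a,z)$ are shifted, and I must be careful that the factor $(z-w)^N$ with integer $N$ correctly cancels the nonintegral part so that the $z=w$ specialization and the Taylor expansion in $(z-w)$ are well-defined. The crucial point is that \eqref{locpr} holds with integer $N$ precisely because the commutator of two fields has integral monodromy even in the twisted case, so $(z-w)^N Y(a,z)Y(b,w)v$ genuinely is a Laurent series in integral powers of $(z-w)$ after accounting for the common fractional prefactor. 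Verifying that this regularity makes the operation $\frac{1}{k!}\d_z^k(\cdot)|_{z=w}$ legitimate, and that no fractional-power ambiguity creeps into the matched binomial identities, is the technical core; once that is secured, the combinatorial matching is routine.
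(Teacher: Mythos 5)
The paper contains no proof of this proposition: it is quoted from \cite{BM}, so there is no in-paper argument to compare yours against. Judged on its own terms, your plan has the right architecture --- it is the standard formal-calculus proof that the Borcherds identity is equivalent to locality plus the $n$-th product (associativity) formula. Your converse is correct in outline: locality forces $(z-w)^{N}Y(a,z)Y(b,w)v$ to equal a fractional prefactor $z^{-m_a}w^{-m_b}$ times an element of $M[[z,w]][z^{-1},w^{-1}]$, such an element is determined by its Taylor coefficients at $z=w$, and the difference of the two expansions $\iota_{z,w}-\iota_{w,z}$ of $(z-w)^{-s}$ produces the delta-function derivatives that reassemble into \eqref{vert5}. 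Your closing paragraph also correctly isolates the genuine technical point of the twisted setting: the singularity at $z=w$ is a pole rather than a branch point, so after multiplying by $(z-w)^N$ the re-expansion in $(z-w)$ and the substitution $z=w$ are legitimate even though fractional powers of $z$ and $w$ individually remain.

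The one step that would fail as written is your ``cleanest route'' for the forward direction. In the twisted Borcherds identity the index $m$ runs over $m_a+\ZZ$ where $\si a=e^{-2\pi\ii m_a}a$, so unless $a$ is $\si$-invariant you cannot choose $m=0$, and for non-integral $m$ the generalized binomial coefficients $\binom{m}{j}$ do not vanish for $j\ge 1$; hence no single ``appropriate choice of $m,n,k$'' collapses the sum $\sum_j\binom{m}{j}(a_{(n+j)}b)_{(k+m-j)}c$ to the lone term $(a_{(N-1-k)}b)_{(\ell)}c$. To derive \eqref{locpr3} from \eqref{vert5} you must instead sum the identity against $z^{-m-1}w^{-k-1}$ over the full cosets and manipulate the resulting formal-distribution identity --- which is exactly your first, ``direct'' route of expanding $(z-w)^N$, applying $\frac{1}{k!}\d_z^k(\cdot)|_{z=w}$, and matching modes. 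Commit to that computation: the Vandermonde-type binomial matching and the use of locality to reorder the $b_{(\cdot)}a_{(\cdot)}$ terms is where the real work lies, and calling it ``routine'' undersells the part of the proof that actually needs to be written down.
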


\subsection{Regular vertex algebras}

In this subsection, we give a definition of regular vertex algebras and state a series of remarkable theorems about them, which were proved by several authors. 

\begin{definition}\label{reg}
Let $V$ be a conformal vertex algebra with conformal vector $L\in V$. Then $V$ is called \emph{regular} if the following additional conditions hold:
\begin{enumerate}[$(i)$]
\item $L_0$ has non-negative integral eigenvalues;
\item the eigenspaces of $L_0$ are finite dimensional; 
\item all $V$-modules are completely reducible; \label{reg3}
\item $V$ is \emph{simple}, i.e.,  $V$ contains no nontrivial ideals; \label{reg4}
\item $V$ is \emph{self-dual}, i.e.,  the contragredient module $V'$ (cf.\ \eqref{Vdual}) is isomorphic to $V$;  \label{reg5}
\item 
the $0$-eigenspace of $L_0$ is $\CC\vac{}$. \label{reg6}
\end{enumerate}
\end{definition}


\begin{remark}
The $V$-modules that we consider in Definition \ref{reg}(\textit{\ref{reg3}}) are the same as the weak $V$-modules as in \cite{MC} and \cite{ABD}. In general, there are three types of $V$-modules, 
labeled as weak, admissible, and ordinary. The \emph{weak} modules are not necessarily graded. 
The \emph{admissible} modules have a $\ZZ_+$-grading, 
which is compatible with the action of $V$. 
The strongest notion of module is the \emph{ordinary} $V$-module, which is graded by the eigenvalues of $L_0$, all eigenvalues are in $\ZZ_+$, and all eigenspaces are finite dimensional.
In the literature, rationality refers to complete reducibility of admissible modules, while regularity refers to complete reducibility of weak modules, which {\it a priori} is more general than rationality. For regular vertex algebras, all these notions of $V$-module coincide. 
\end{remark}

The original definition of regularity from \cite{DLM2} does not include assumptions (\textit{\ref{reg4}}), (\textit{\ref{reg5}}), (\textit{\ref{reg6}}). 
While in general one may consider dropping several of these assumptions, we include them here in order to state the following theorems in a more concise form.


\begin{theorem}[\cite{MC}]\label{MC}
Let\/ $V$ be a regular vertex algebra, and\/ $\Gamma$ be a cyclic group of automorphisms of\/ $V$. Then the fixed point subalgebra $V^\Gamma$ (called the orbifold) is regular as well.
\end{theorem}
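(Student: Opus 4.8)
The plan is to verify directly the six defining conditions of \deref{reg} for $V^\Ga$, reducing the two substantive ones—complete reducibility of modules and simplicity—to the corresponding properties of $V$ together with the theory of twisted modules. Write $\Ga=\langle\si\rangle$ with $\si$ of order $N$. The grading conditions (i), (ii), (vi) are essentially automatic: since an automorphism of a conformal vertex algebra preserves the conformal vector $L$, we have $L\in V^\Ga$, so $V^\Ga$ is a conformal vertex algebra with the $L_0$-grading inherited from $V$; hence its $L_0$-eigenvalues remain non-negative integers with finite multiplicities, and $(V^\Ga)_0 = V^\Ga\cap\CC\vac = \CC\vac$ yields (vi).

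The heart of the argument is condition (iii), complete reducibility of all (weak) modules. The cleanest route is through the equivalence, valid for vertex operator algebras of CFT type, between regularity and the conjunction of \emph{rationality} and \emph{$C_2$-cofiniteness}, as established in \cite{ABD} together with \cite{DLM2}. Thus I would first record that $V$, being regular, is both rational and $C_2$-cofinite, and then prove these two properties separately for $V^\Ga$.

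For $C_2$-cofiniteness of $V^\Ga$, the key step is to show that $V$ is finitely generated as a $V^\Ga$-module; this rests on a Hilbert-type finiteness argument for the action of the finite group $\Ga$ on Zhu's commutative Poisson algebra $R_V=V/C_2(V)$, whose finite-dimensionality together with classical invariant theory forces $R_{V^\Ga}$ to be finite-dimensional as well. For rationality of $V^\Ga$, I would invoke the theory of $\si^k$-twisted $V$-modules: since $V$ is regular, for each $k$ there are only finitely many irreducible $\si^k$-twisted modules, each completely reducible, and every irreducible $V^\Ga$-module arises inside such a twisted module (cf.\ \cite{DRX}). Decomposing each twisted module under the commuting actions of $V^\Ga$ and the order-$N$ symmetry then transports complete reducibility from the twisted modules down to $V^\Ga$.

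The remaining conditions (iv) and (v) are bookkeeping. Simplicity of $V^\Ga$ follows from simplicity of $V$ via the standard argument that the fixed-point subalgebra of a simple vertex algebra under a finite cyclic group of automorphisms is again simple, and self-duality of $V^\Ga$ follows from self-duality of $V$ using compatibility of the contragredient functor with restriction to $V^\Ga$. The main obstacle is unquestionably the preservation of $C_2$-cofiniteness under the orbifold construction: the finite generation of $V$ over $V^\Ga$ and the descent of $C_2$-cofiniteness require genuinely new commutative-algebra input on the Zhu Poisson algebra, and it is precisely this step—rather than the more formal rationality and bookkeeping arguments—that constitutes the deep content of the Carnahan–Miyamoto theorem.
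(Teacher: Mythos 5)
The paper offers no proof of this statement: Theorem \ref{MC} is imported verbatim from Carnahan--Miyamoto \cite{MC} and used as a black box, so there is nothing internal to compare your argument against. Judged on its own terms, your outline correctly disposes of conditions (i), (ii), (iv), (v), (vi) and correctly isolates $C_2$-cofiniteness as the crux, but the two substantive steps, as you describe them, do not work. For $C_2$-cofiniteness, the invariant-theory argument fails at the decisive point. Writing $R_U=U/C_2(U)$, the inclusion $V^\Ga\hookrightarrow V$ induces a map $R_{V^\Ga}\to R_V$ landing in the $\Ga$-invariants, and finite-dimensionality of $(R_V)^\Ga$ is indeed immediate from that of $R_V$; but this map is not injective in general, because $C_2(V)\cap V^\Ga$ can be strictly larger than $C_2(V^\Ga)$. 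Hence finite-dimensionality of $R_V$ gives no control on $R_{V^\Ga}$ --- this is precisely why $C_2$-cofiniteness of orbifolds remained open for so long. Miyamoto's actual proof (for prime order, extended to cyclic groups in \cite{MC}) is a delicate contradiction argument involving twisted modules and intertwining operators, not a descent on the Zhu Poisson algebra.

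For rationality, your appeal to \cite{DRX} is circular: the Dong--Ren--Xu theorem that every irreducible $V^\Ga$-module embeds in some $\si^k$-twisted $V$-module is proved under the hypothesis that $V^\Ga$ is already rational and $C_2$-cofinite, and indeed the present paper applies \cite{DRX} only \emph{after} invoking Theorem \ref{MC} to secure regularity of $V_Q^\si$. Without that input, nothing a priori rules out weak $V^\Ga$-modules unrelated to twisted $V$-modules, and excluding them is part of what \cite{MC} establishes. So your sketch is a reasonable map of which conditions are easy and which are hard, but the hard ones are not proved by the routes you propose.
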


\begin{theorem}[\cite{Z}, \cite{ABD}]\label{ZABD}
Let\/ $V$ be a regular vertex algebra of central charge $c$. Then
\begin{enumerate}[$(i)$]
\item $V$ has, up to isomorphism, a finite number of irreducible modules, $M_0=V$, $M_1,\ldots,M_m$.
\item The characters
\begin{align*}
\chi_j(\tau):=\tr_{M_j}q^{L_0-c/24},
\end{align*}
where $q=e^{2\pi\ii\tau}$, are convergent series to holomorphic functions for\/ $\mathrm{Im}\,\tau>0$.
\item The\/ $\CC$-span of the functions\/ $\chi_0,\ldots,\chi_m$ is\/ $\mathrm{SL}_2(\ZZ)$-invariant under the modular transformation \label{ZABD3}
\begin{align*}
f(\tau)\mapsto f\left(\frac{a\tau+b}{c\tau+d}\right),
\end{align*}
where $\begin{pmatrix}
a&b\\c&d
\end{pmatrix}\in \mathrm{SL}_2(\ZZ)$.
Equivalently, we have
\begin{align*}
\chi_j(\tau+1)&=e^{2\pi\ii(\Delta_j-c/24)}\chi_j(\tau),\quad
\chi_j\left(-\frac{1}{\tau}\right)=\sum_{k=0}^mS_{j,k}\chi_k(\tau),
\end{align*}
where $S_{j,k}\in\CC$, and\/ $\Delta_j$ is the conformal weight of\/ $M_j$, i.e., the minimal eigenvalue of\/ $L_0$ in $M_j$.
\end{enumerate}
\end{theorem}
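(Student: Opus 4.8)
The plan is to assemble the standard three-tiered argument of Zhu's theory, using throughout the $C_2$-cofiniteness that regularity supplies. For part (i), I would pass to Zhu's associative algebra $A(V)$, which under these hypotheses is finite-dimensional and, by rationality, semisimple. Since Zhu established a bijection between isomorphism classes of irreducible admissible $V$-modules and irreducible $A(V)$-modules, the Artin--Wedderburn decomposition of $A(V)$ yields only finitely many such irreducibles $M_0=V, M_1,\dots,M_m$, and complete reducibility from Definition \ref{reg}(\textit{\ref{reg3}}) guarantees that they exhaust the module category.

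For part (ii), the essential input is again $C_2$-cofiniteness, which \cite{ABD} deduce from regularity. It forces the graded dimensions $\dim(M_j)_{\Delta_j+n}$ to grow at most polynomially in $n$, so that the series $\sum_n \dim(M_j)_{\Delta_j+n}\, q^{\Delta_j - c/24 + n}$ converges absolutely for $|q|<1$, i.e.\ for $\mathrm{Im}\,\tau>0$, and defines a holomorphic function there. An equivalent route, also powered by $C_2$-cofiniteness, shows that each character satisfies a modular-invariant linear differential equation in $\tau$ with holomorphic coefficients, whose solutions are automatically holomorphic on the upper half-plane.

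Part (iii) is the substantive content, namely Zhu's modular invariance. The $T$-transformation is immediate: since all $L_0$-eigenvalues on $M_j$ lie in $\Delta_j + \ZZ_{\geq 0}$, replacing $\tau$ by $\tau+1$ sends $q=e^{2\pi\ii\tau}$ to $e^{2\pi\ii}q$ and multiplies each term of the character by $e^{2\pi\ii(\Delta_j - c/24)}$, giving the stated formula. For the $S$-transformation I would introduce, for each module $M$ and each $a\in V$, the one-point trace function $S_M(a,\tau) = \tr_M o(a)\, q^{L_0 - c/24}$, where $o(a)$ denotes the zero mode of $a$ acting on $M$. Zhu's recursion relations express these trace functions through Eisenstein series and associated elliptic functions whose behavior under $\tau\mapsto -1/\tau$ is classical, and the key structural output is that the finite-dimensional space spanned by all such trace functions, as $M$ ranges over $M_0,\dots,M_m$, is invariant under the full $\SL_2(\ZZ)$-action inherited from the modular transformation of those series. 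Specializing to $a=\vac$ then recovers the characters $\chi_0,\dots,\chi_m$ together with the transition matrix $(S_{j,k})$.

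The main obstacle is establishing the $\SL_2(\ZZ)$-invariance of the space of trace functions in part (iii). This is precisely where Zhu's machinery carries the weight: one must derive the recursion relations, verify that the resulting functions are genuinely modular of the correct weight, and control all convergence uniformly in $\tau$, invoking $C_2$-cofiniteness at each stage. Here the finiteness from part (i) is what makes the phrase \emph{finite-dimensional span} meaningful, while the convergence from part (ii) is what allows these objects to be treated as honest holomorphic functions on the upper half-plane. Since the theorem is classical, in the body of the paper I would simply cite \cite{Z} and \cite{ABD}; the sketch above records how their arguments fit together.
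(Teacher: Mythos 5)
The paper gives no proof of this theorem — it is quoted directly from \cite{Z} and \cite{ABD} — and your sketch is an accurate summary of exactly the argument those references use: Zhu's algebra $A(V)$ plus rationality for (i), $C_2$-cofiniteness for the convergence in (ii), and Zhu's one-point trace functions with the recursion relations for the $\mathrm{SL}_2(\ZZ)$-invariance in (iii). Your stated intention to simply cite \cite{Z} and \cite{ABD} in the body is precisely what the paper does, so the proposal is correct and takes the same approach.
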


\begin{theorem}[\cite{H}]\label{H}
Let\/ $V$ be a regular vertex algebra. Then, in the notation of Theorem \ref{ZABD}, we have:
\begin{enumerate}[$(i)$]
\item $S_{j,k}=S_{k,j}$ \, for all \, $j, k=0,\ldots,m$,
\item $S_{j,0}\neq0$ \, for all \, $j=0,\ldots,m$.\label{H2}
\item Let 
\begin{align*}
M_i\boxtimes M_j=\bigoplus_{k=0}^mN_{i,j}^{k}M_k
\end{align*}
be the fusion product. Then Verlinde's formula holds:
\begin{equation}\label{Verlinde}
N_{i,j}^{k}=\sum_{l=0}^m\frac{S_{i,l}S_{j,l}\bar{S}_{k,l}}{S_{l,0}} \,.
\end{equation}
\end{enumerate}
\end{theorem}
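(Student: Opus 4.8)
The plan is to deduce \thref{H} from the modular tensor category structure carried by the module category $\C = \mathrm{Rep}(V)$ of the regular vertex algebra $V$, following the strategy of \cite{H}. The argument splits naturally into an analytic-geometric part, which identifies the $S$-matrix of \thref{ZABD}(\textit{\ref{ZABD3}}) with an intrinsic categorical invariant, and a formal part, in which the three assertions are read off from the axioms of a modular tensor category.

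First I would invoke the fusion product and braiding on $\C$ constructed from intertwining operators together with their associativity and commutativity (the vertex tensor category theory of Huang and Lepowsky). For regular $V$ this makes $\C$ a braided tensor category, and the central structural input is rigidity: every irreducible $M_j$ admits a dual $M_{j'}\cong M_j'$ with evaluation and coevaluation morphisms satisfying the duality axioms. Together with the ribbon twist $\th_j = e^{2\pi\ii\De_j}$ induced by $L_0$—whose trace already appears in the relation $\chi_j(\tau+1)=e^{2\pi\ii(\De_j-c/24)}\chi_j(\tau)$—this upgrades $\C$ to a ribbon category. One then defines the unnormalized categorical matrix $\tilde s_{j,k}$ as the trace of the double braiding $c_{M_k,M_j}\circ c_{M_j,M_k}$ (the Hopf-link invariant) and sets $\hat S_{j,k}=\tilde s_{j,k}/D$, where $D$ is the global dimension.

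The crucial and hardest step is to prove that $\hat S$ coincides with the analytic matrix $S$ governing $\chi_j(-1/\tau)=\sum_k S_{j,k}\chi_k(\tau)$. This is the heart of \cite{H}: one studies the space of genus-one conformal blocks (torus one-point functions), establishes via Zhu's modular invariance \cite{Z} and its refinement to one-point functions that this space carries an action of $\SL_2(\ZZ)$, and then computes the effect of $\tau\mapsto -1/\tau$ by a geometric deformation of the torus, re-expanding a one-point block through the associativity and commutativity of intertwining operators so that the Hopf-link pairing appears. Controlling the convergence of the relevant $q$-series and the genus-one gluing rigorously is the main obstacle, and it is precisely this identification that the citation \cite{H} supplies; granting it, $S=\hat S$ is in particular unitary, symmetric, and invertible.

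The three claims then follow formally. Part $(i)$, $S_{j,k}=S_{k,j}$, holds because the Hopf-link invariant $\tilde s_{j,k}$ is invariant under interchanging its two components in a ribbon category. Part $(ii)$, $S_{j,0}\neq 0$, holds because $S_{j,0}$ is proportional to the quantum dimension of $M_j$, which is nonzero in a modular tensor category; in the present setting one may alternatively see $S_{j,0}>0$ from the positivity of the characters and their $\tau\to 0$ asymptotics, in the spirit of the later application to asymptotic dimensions. For part $(iii)$, a graphical computation with the Hopf link in the ribbon category yields the identity $\sum_{k}N_{i,j}^{k}\,S_{k,l}=S_{i,l}S_{j,l}/S_{0,l}$, expressing that the $l$-th column of $S$ is a common eigenvector of the fusion matrices with eigenvalue $S_{i,l}/S_{0,l}$; inverting $S$ via the unitarity relation $\sum_l S_{k,l}\overline{S_{k',l}}=\delta_{k,k'}$ then produces Verlinde's formula \eqref{Verlinde}.
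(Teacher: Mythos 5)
You should first note that the paper does not prove \thref{H} at all: it is quoted verbatim from Huang's work \cite{H} (with the modularity of the module category, used elsewhere, quoted from \cite{H2}). So the relevant comparison is with Huang's argument rather than with anything in this text.

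Your sketch is a faithful account of the standard modular-tensor-category formalism, and the formal deductions of $(i)$--$(iii)$ from the axioms of a modular tensor category, together with the identification of the analytic $S$-matrix with the categorical Hopf-link matrix, are correct as far as they go. The genuine gap is one of logical order: you take rigidity (and hence the ribbon and modular structure) of $\mathrm{Rep}(V)$ as an input supplied by the Huang--Lepowsky vertex tensor category theory, but for a regular vertex algebra rigidity is \emph{not} part of that theory. It is proved in \cite{H2}, and the proof there uses precisely the Verlinde formula and the nonvanishing $S_{j,0}\neq0$ established in \cite{H}. Huang's actual proof of \thref{H} therefore cannot and does not presuppose a modular tensor category: it works directly with the modular invariance of genus-one one-point functions of intertwining operators (a nontrivial extension of Zhu's theorem \cite{Z} beyond ordinary modules), the associativity and commutativity of intertwining operators, and the resulting Moore--Seiberg relations, from which the symmetry of $S$, the nonvanishing of $S_{j,0}$, and the eigenvector identity $\sum_k N_{i,j}^k S_{k,l}=S_{i,l}S_{j,l}/S_{0,l}$ are extracted; only afterwards is the category shown to be rigid and modular. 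Your proposed alternative for $(ii)$ via the $\tau\to0$ asymptotics has the same defect: the paper's Corollary \ref{KacCor} deduces $S_{j,0}>0$ \emph{from} $S_{j,0}\neq0$, since if $S_{j,0}$ vanished the stated leading asymptotics would be vacuous. As written, your route is circular unless the appeal to rigidity is replaced by an independent argument.
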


Theorems \ref{ZABD} and \ref{H} have the following simple corollary (cf.\ \cite{KP2}).

\begin{corollary}\label{KacCor}
Assume that\/ $V$ is a regular vertex algebra of central charge\/ $c$, such that\/ $\Delta_j>0$ for all\/ $j>0$. Then 
\begin{enumerate}[$(i)$]
\item As\/ $\tau\to0^+$ we have
\begin{align*}
\chi_j(\tau) \sim S_{j,0}e^{\pi\ii c/(12\tau)},\quad j=0,\ldots,m,
\end{align*}
\item $S_{j,0}>0 \quad\text{for all}\quad j=0,\ldots,m$.
\end{enumerate}
\end{corollary}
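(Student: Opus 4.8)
The plan is to extract the $\tau\to 0^+$ asymptotics of each $\chi_j$ from the (elementary) behaviour of the characters near $\tau\to\ii\infty$, transported across the cusp by the modular $S$-transformation of \thref{ZABD}. First I would record two facts about the vacuum module $M_0=V$: by regularity (Definition \ref{reg}), the minimal $L_0$-eigenvalue on $V$ is $\Delta_0=0$, with one-dimensional eigenspace $\CC\vac$, so writing $d_k=\dim (M_k)_{\Delta_k}$ for the dimension of the lowest conformal-weight space one has $d_0=1$. More generally, reading off the leading term of the convergent $q$-expansion (\thref{ZABD}(ii)) gives
\begin{equation*}
\chi_k(\sigma)\sim d_k\, e^{2\pi\ii\sigma(\Delta_k-c/24)}\,,\qquad \sigma\to\ii\infty\,.
\end{equation*}

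Next I would apply \thref{ZABD}(iii). Substituting $\sigma=-1/\tau$ into $\chi_j(-1/\sigma)=\sum_k S_{j,k}\chi_k(\sigma)$ and using $-1/(-1/\tau)=\tau$ yields the identity
\begin{equation*}
\chi_j(\tau)=\sum_{k=0}^m S_{j,k}\,\chi_k(-1/\tau)\,,
\end{equation*}
valid for $\tau$ in the upper half-plane. Letting $\tau\to0^+$ along the imaginary axis, $\tau=\ii\epsilon$ with $\epsilon\to0^+$, sends $-1/\tau\to\ii\infty$, so I may insert the asymptotics above. At $\tau=\ii\epsilon$ each term becomes $d_k\,e^{\pi\ii c/(12\tau)}e^{-2\pi\Delta_k/\epsilon}$; the common factor $e^{\pi\ii c/(12\tau)}=e^{\pi c/(12\epsilon)}$ pulls out, and since $\Delta_0=0<\Delta_k$ for every $k>0$, all terms with $k>0$ are exponentially suppressed relative to the $k=0$ term. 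With $d_0=1$ this gives $\chi_j(\tau)\sim S_{j,0}\,e^{\pi\ii c/(12\tau)}$ as $\tau\to0^+$, which is part $(i)$. The one step requiring care—and the main obstacle—is the interchange of the term-by-term asymptotic expansion with the finite sum over $k$: I would note that each $\chi_k$ is a convergent series whose subleading terms carry strictly larger powers of $q$, so that after factoring out $e^{\pi c/(12\epsilon)}$ every contribution other than the vacuum leading term tends to $0$, and since the index set is finite this justifies the stated leading asymptotics.

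For part $(ii)$ I would evaluate the asymptotic relation along the same ray $\tau=\ii\epsilon$. There $\chi_j(\ii\epsilon)=\sum_n \dim(M_j)_n\, e^{-2\pi\epsilon(n-c/24)}$ is a sum of strictly positive real terms, hence positive, while $e^{\pi\ii c/(12\tau)}=e^{\pi c/(12\epsilon)}>0$; therefore
\begin{equation*}
S_{j,0}=\lim_{\epsilon\to0^+}\frac{\chi_j(\ii\epsilon)}{e^{\pi c/(12\epsilon)}}\ge 0\,.
\end{equation*}
Combining this with $S_{j,0}\ne 0$ from \thref{H}(\textit{\ref{H2}}) forces $S_{j,0}>0$, completing the proof. (Note that the argument is insensitive to the sign of $c$, since the $-c/24$ shift is common to all $\chi_k$ and only the differences $\Delta_k-\Delta_0=\Delta_k$ govern which term dominates.)
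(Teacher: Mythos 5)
Your argument is essentially the paper's own proof: both parts proceed by inverting the $S$-transformation, inserting the leading $q$-expansion $\chi_k(-1/\tau)=e^{\pi\ii c/(12\tau)-2\pi\ii\Delta_k/\tau}(a_k+O(\tau))$ with $a_0=1$, and using $\Delta_k>0$ for $k\ge1$ to isolate the vacuum term; positivity then follows from $S_{j,0}\ne 0$ together with positivity of the ratio along $\tau=\ii\beta$. The one point you pass over silently is that your positivity step needs $c$ to be real (so that $e^{\pi c/(12\beta)}>0$ and $\chi_j(\ii\beta)$ is a sum of positive reals); the paper closes this by invoking that the module category is modular and hence $c\in\QQ$, and you should add that justification.
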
 

\begin{proof}
Replacing $\tau$ by $-\frac{1}{\tau}$, we obtain from Theorem \ref{ZABD}(\textit{\ref{ZABD3}}):
$$\chi_j(\tau)=\sum_{k=0}^mS_{j,k} \,\chi_k\Bigl(-\frac{1}{\tau}\Bigr).$$
But, by the definition of $\chi_k$, we have:
\begin{align*}
\chi_k\Bigl(-\frac{1}{\tau}\Bigr)=\exp\Bigl(\frac{\pi\ii c}{12\tau}-\frac{2\pi\ii\Delta_j}{\tau}\Bigr)(a_k+O(\tau)),
\end{align*}
where $a_k$ is the multiplicity of the $L_0$-eigenvalue $\Delta_k$ in $M_k$, so that $a_0=1$. Since, by the assumption, $\Delta_k>0$ for $k\geq1$, we obtain the first part.

Letting $\tau=\ii\beta$, where $\beta$ is a positive real number, we obtain from the first part:
\begin{align*}
\chi_j(\ii\beta)\sim S_{j,0}e^{\pi c/(12\beta)}>0
\end{align*}
since $S_{j,0}\neq0$. But the category of $V$-modules is a modular tensor category \cite{H2}, and
$c\in\QQ$ in any modular tensor category (see e.g.\ \cite{BKr}); hence $S_{j,0}>0$.
\end{proof}

\begin{definition}
Provided that the condition of Corollary \ref{KacCor} holds, the positive real number $S_{j,0}$ is called the \emph{asymptotic dimension} of the $V$-module $M_j$ and denoted by $\asdim M_j$.  The number $S_{j,0}/S_{0,0}$ is called the {\it quantum dimension} of $M_j$ and is denoted $\qdim M_j$.
\end{definition}

\begin{remark}
Obviously, the asymptotic dimension is additive. Also, the quantum dimension is a ring homomorphism from the fusion ring of modules to $\CC$ (see \cite{H, V}).
\end{remark}

\begin{remark}\label{Sprops}
Two other well-known properties of the matrix $S=(S_{j,k})_{j,k=0}^m$ for arbitrary regular vertex algebras are (cf.\ \cite{KP2} for affine $V$): $S^2=C$, where $C$ is a permutation matrix of square $1$ (cf.\ \cite{H}),  the matrix $S$ is unitary,  and in particular, 
\begin{align*}
\sum_{j=0}^m|S_{0,j}|^2=1.
\end{align*}
As pointed out to us by Y.-Z. Huang, the unitarity of $S$ follows from his theorem that the category of modules over a regular vertex algebra is modular \cite{H2}, and a theorem of Etingof--Nikshich--Ostrik \cite{ENO}, that $S$ is unitary for any modular tensor category.
\end{remark}

\subsection{Theta functions and transformation laws}\label{theta}

In this subsection, we review the definition and transformation laws of the classical theta functions (see e.g.\ \cite{KP2, K1}). 

Let $\mathcal{L}$ be a positive-definite even integral \emph{lattice},  i.e., a free abelian group of finite rank equipped with a symmetric
bilinear form $(\cdot|\cdot) \colon Q\times Q\to\ZZ$ such that $|\al|^2=(\al|\al)$ is a positive even integer for all nonzero $\al\in Q$.
Set $r=\text{rank} \,\mathcal{L}$ and $\mathfrak{h}_\mathcal{L}=\CC\otimes_{\ZZ}\mathcal{L}$. 
For $\la\in \mathcal{L}^*, u\in\CC$, and $z\in \mathfrak{h}_\mathcal{L}$, the {\it classical theta function} (of degree 1) is defined as

\begin{equation}\label{th}
\theta_{\la+\mathcal{L}}(\tau,z,u)=e^{2\pi\ii u}\sum_{\ga\in {\la}+\mathcal{L}}e^{2\pi\ii(\ga|z)}q^{|\ga|^2/2} \,,
\end{equation} 
where $q=e^{2\pi\ii\tau}$.
The parameter $u$ is introduced in order to simplify the transformation formulas in \thref{thtlaws} below.
\begin{remark}
For any scalar $c$, we have
\begin{align}\label{thc}
\begin{split}
\theta_{c\la+c\mathcal{L}}(\tau,z,u)&=e^{2\pi\ii u}\sum_{\ga\in{\la}+\mathcal{L}}e^{2\pi\ii(c\ga|z)}e^{\pi\ii\tau c^2|\ga|^2}\\
&=\theta_{\la+\mathcal{L}}(c^2\tau, cz, u).
\end{split}
\end{align} 
This formula is particularly useful when the lattice $\mathcal{L}$ is not integral. If we can choose a suitable value of $c$ which makes $c\mathcal{L}$ an integral lattice, then the known transformation laws of theta functions can be applied using the lattice $c\mathcal{L}$ and \eqref{thc}.
\end{remark}

The transformation laws of theta functions go back to Jacobi and are given in \cite[Theorem 13.5]{K1} among many other works.

\begin{theorem}\label{thtlaws}
Let $\mathcal{L}$ be a positive-definite even integral lattice of rank $r$, and let $\la\in \mathcal{L}^*$. Then the transformation laws of \eqref{th} are
\begin{align}
\begin{split}
\theta_{\la+\mathcal{L}}\left(-\frac{1}{\tau},\frac{z}{\tau},u-\frac{|z|^2}{2\tau}\right)&=(-\ii\tau)^{r/2}|\mathcal{L}^*/\mathcal{L}|^{-1/2}\\
&\times\sum_{\mu+\mathcal{L}\in \mathcal{L}^*/\mathcal{L}}e^{-2\pi\ii(\la|\mu)}\theta_{\mu+\mathcal{L}}(\tau,z,u),\label{thtlaws1}
\end{split}
\end{align}
\begin{equation}
\theta_{\la+\mathcal{L}}(\tau+1,z,u)=e^{\pi\ii|\la|^2}\theta_{\la+\mathcal{L}}(\tau,z,u).\label{thtlaws2}
\end{equation}
\end{theorem}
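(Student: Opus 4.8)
The plan is to prove the two laws separately: the $T$-transformation \eqref{thtlaws2} by direct inspection, and the $S$-transformation \eqref{thtlaws1} by the Poisson summation formula applied to a Gaussian. For \eqref{thtlaws2} I would substitute $\tau+1$ into \eqref{th}: the factor $q^{|\ga|^2/2}$ becomes $e^{\pi\ii|\ga|^2}q^{|\ga|^2/2}$, so it suffices to show that $e^{\pi\ii|\ga|^2}$ is constant on the coset $\la+\mathcal{L}$ with value $e^{\pi\ii|\la|^2}$. Writing $\ga=\la+\beta$ with $\beta\in\mathcal{L}$ gives $|\ga|^2=|\la|^2+2(\la|\beta)+|\beta|^2$, where $|\beta|^2\in2\ZZ$ by evenness of $\mathcal{L}$ and $(\la|\beta)\in\ZZ$ since $\la\in\mathcal{L}^*$; hence $e^{\pi\ii(2(\la|\beta)+|\beta|^2)}=1$, and pulling $e^{\pi\ii|\la|^2}$ out of the sum yields \eqref{thtlaws2}.

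For \eqref{thtlaws1} I would first rewrite the left-hand side so that Poisson summation applies. Expanding the definition and completing the square in the exponent, $-\pi\ii(|\ga|^2-2(\ga|z))/\tau=-\pi\ii(|\ga-z|^2-|z|^2)/\tau$, one sees that the shift $u\mapsto u-|z|^2/(2\tau)$ contributes exactly the factor $e^{-\pi\ii|z|^2/\tau}$ needed to cancel the $e^{\pi\ii|z|^2/\tau}$ produced by completing the square, leaving
\begin{equation*}
\theta_{\la+\mathcal{L}}\Bigl(-\tfrac1\tau,\tfrac z\tau,u-\tfrac{|z|^2}{2\tau}\Bigr)=e^{2\pi\ii u}\sum_{\beta\in\mathcal{L}}h(\la+\beta),\qquad h(x)=e^{-\pi\ii|x-z|^2/\tau}.
\end{equation*}
I would then apply the shifted Poisson summation formula on the Euclidean space $\mathfrak{h}_{\mathcal{L},\RR}=\RR\otimes_\ZZ\mathcal{L}$,
\begin{equation*}
\sum_{\beta\in\mathcal{L}}h(\la+\beta)=\frac1{\mathrm{covol}(\mathcal{L})}\sum_{\mu\in\mathcal{L}^*}\widehat h(\mu)\,e^{2\pi\ii(\la|\mu)},
\end{equation*}
the essential input being the Gaussian Fourier transform $\widehat h(\mu)=(-\ii\tau)^{r/2}e^{\pi\ii\tau|\mu|^2-2\pi\ii(z|\mu)}$, obtained by completing the square in $\int e^{-\pi\ii|y|^2/\tau-2\pi\ii(y|\mu)}\,dy$.

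To finish, I would reorganize the sum over $\mu\in\mathcal{L}^*$ as a sum over cosets $\mu+\mathcal{L}\in\mathcal{L}^*/\mathcal{L}$ together with a sum over each coset, recognizing the inner sum as $\theta_{\mu+\mathcal{L}}(\tau,z,u)$; here one uses that both the factor $e^{\pm2\pi\ii(\la|\mu)}$ and the coset of $\mu$ depend only on $\mu+\mathcal{L}$, since $(\la|\beta)\in\ZZ$ for $\beta\in\mathcal{L}$. Relabeling $\mu\mapsto-\mu$ (a bijection of $\mathcal{L}^*$ preserving $|\mu|^2$) matches the signs in the exponents with those on the right-hand side of \eqref{thtlaws1}, and the normalizing constant comes out correctly because for an integral lattice $\mathrm{covol}(\mathcal{L})=|\mathcal{L}^*/\mathcal{L}|^{1/2}$.

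The main obstacle is analytic rather than algebraic: Poisson summation and the Gaussian integral are immediately valid only for rapidly decreasing functions, which requires $\mathrm{Im}(-1/\tau)>0$, i.e.\ that $h$ genuinely decay. I would therefore first establish the identity on the imaginary axis $\tau=\ii t$, $t>0$ (with $z$ real if needed), where both sides converge absolutely and the branch $(-\ii\tau)^{r/2}=t^{r/2}>0$ is unambiguous, and then extend to all $\tau$ in the upper half-plane and all $z\in\mathfrak{h}_\mathcal{L}$ by analyticity of both sides. Pinning down the principal branch of $(-\ii\tau)^{r/2}$ on the imaginary axis is the only genuinely delicate piece of bookkeeping.
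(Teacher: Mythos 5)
Your argument is correct. The paper itself does not prove this theorem; it simply cites \cite[Theorem 13.5]{K1} (the result going back to Jacobi), and your Poisson-summation proof is essentially the standard argument found in that reference: the $T$-law by the evenness/integrality computation on a coset, and the $S$-law by completing the square, taking the Gaussian Fourier transform, applying shifted Poisson summation over $\mathcal{L}$ with dual sum over $\mathcal{L}^*$, regrouping into cosets, and using $\mathrm{covol}(\mathcal{L})=|\mathcal{L}^*/\mathcal{L}|^{1/2}$. Your handling of the branch of $(-\ii\tau)^{r/2}$ and of the validity of Poisson summation --- first on $\tau=\ii t$ and then by analytic continuation in $\tau$ and $z$ --- is the right way to close the only delicate point, so there is no gap.
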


For our purposes, it will often be convenient to set $u=0$. In this case, we set $\theta_{\la+\mathcal{L}}(\tau,z)=\theta_{\la+\mathcal{L}}(\tau,z,0)$, and Theorem \ref{thtlaws} becomes:
\begin{align}
\begin{split}
\theta_{\la+\mathcal{L}}\left(-\frac{1}{\tau},\frac{z}{\tau}\right)&=(-\ii\tau)^{r/2}|\mathcal{L}^*/\mathcal{L}|^{-1/2}\\
&\times e^{\pi\ii|z|^2/\tau}\sum_{\mu+\mathcal{L}\in \mathcal{L}^*/\mathcal{L}}e^{-2\pi\ii(\la|\mu)}\theta_{\mu+\mathcal{L}}(\tau,z),\label{thtlaws3}
\end{split}
\end{align}
\begin{equation}
\theta_{\la+\mathcal{L}}(\tau+1,z)=e^{\pi\ii|\la|^2}\theta_{\la+\mathcal{L}}(\tau,z).\label{thtlaws4}
\end{equation}
In many of the examples, we also set $z=0$. In this case we set $\theta_{\la+\mathcal{L}}(\tau)=\theta_{\la+\mathcal{L}}(\tau,0,0)$.

The transformation corresponding to a general element of $\mathrm{SL}(2,\RR)$ is given in \cite[Corollary 3.9]{KP2}. The following is a special case of their general result, which we will need later.

\begin{theorem}[\cite{KP2}, Corollary 3.9]\label{KP}
Let\/ $\mathcal{L}$ be a positive-definite even lattice with rank $r$. 
Consider the action of\/ $A=\begin{pmatrix}
a&b\\c&d
\end{pmatrix}\in \mathrm{SL}(2,\RR)$ on triples $(\tau,z,u)$ given by
\begin{align*}
A\cdot(\tau,z,u)=\left(\frac{a\tau+b}{c\tau+d} \,, \frac{z}{c\tau+d} \,,u-\frac{c}{2}\frac{|z|^2}{c\tau+d}\right),
\end{align*}
where $\tau, u\in\CC$, {\upshape{Im}}$\,\tau>0$, and $z\in \mathcal{L}$. There exists\/ $\be_0\in\CC\otimes_\ZZ\mathcal{L}$ such that
\[
ac|\nu|^2\equiv 2(\nu|\be_0)\mod2\ZZ\quad\text{for all}\quad\nu\in\mathcal{L}^*\;\;\text{with}\;\; c\nu\in\mathcal{L}.
\]
Then, for any $\la\in\mathcal{L}^*$, we have
\begin{align*}
\begin{split}
&\th_{\la+\mathcal{L}}\left(A\cdot(\tau,z,u)\right)
=(c\tau+d)^{r/2}v(A)\\
&\quad\times\sum_{\substack{\mu: \;c\mu+\mathcal{L}\in \mathcal{L}^*/\mathcal{L}\\c\mu\in c\mathcal{L}^*}}e^{\pi\ii(cd|\mu|^2+2bc(\mu|\la)+ab|\la|^2+2b(\la|\be_0)+2d(\mu|\be_0))}\th_{a\la+c\mu+\mathcal{L}}(\tau, z, u),
\end{split}
\end{align*}
where $v(A)\in\CC$ depends only on $A$.

In the case when $c\mathcal{L}^*\subset\mathcal{L}$, which includes the case when $c=0$,  the summation collapses to one term, corresponding to $\mu=0$\emph{:}
\begin{align*}
\begin{split}
\th_{\la+\mathcal{L}}&\left(A\cdot(\tau,z,u)\right)
=(c\tau+d)^{r/2}v(A)e^{\pi\ii b (a|\la|^2+2(\la|\be_0))}\th_{a\la+\mathcal{L}}(\tau, z, u).
\end{split}
\end{align*}
\end{theorem}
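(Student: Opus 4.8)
The plan is to reduce the transformation under a general $A\in\SL(2,\RR)$ to the three elementary operations whose effect on $\theta_{\la+\mathcal{L}}$ is already in hand: the inversion $\tau\mapsto-1/\tau$ (equation \eqref{thtlaws1}), the integer translation $\tau\mapsto\tau+1$ (equation \eqref{thtlaws2}), and the rescaling of the lattice (equation \eqref{thc}). For $c\neq0$ I would use the Bruhat-type factorization
\[
A=\begin{pmatrix}1&a/c\\0&1\end{pmatrix}\begin{pmatrix}1/c&0\\0&c\end{pmatrix}\begin{pmatrix}0&-1\\1&0\end{pmatrix}\begin{pmatrix}1&d/c\\0&1\end{pmatrix},
\]
which is valid precisely because $ad-bc=1$ (so $(ad-1)/c=b$). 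Thus $A$ is, reading right to left, a translation by $d/c$, the inversion, a diagonal scaling $\diag(1/c,c)$, and a translation by $a/c$. The case $c=0$ is easier: there $A$ is upper-triangular with $\tau\mapsto a^2\tau+ab$, $z\mapsto az$, handled directly by \eqref{thc} followed by a single translation.

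Then I would apply the corresponding laws in sequence to $\theta_{\la+\mathcal{L}}(A\cdot(\tau,z,u))$. The scaling contributes through \eqref{thc} and the inversion through \eqref{thtlaws1}; together these generate the automorphy factor $(c\tau+d)^{r/2}$, the normalization $|\mathcal{L}^*/\mathcal{L}|^{-1/2}$, and the metaplectic root of unity, all of which depend on $A$ alone and are collected into $v(A)$. The inversion also replaces the single coset $\la+\mathcal{L}$ by a sum over $\mathcal{L}^*/\mathcal{L}$, while the scaling cuts this down to the cosets with $c\mu\in\mathcal{L}^*$ and $c\mu\in c\mathcal{L}^*$, producing the shifted index $a\la+c\mu+\mathcal{L}$ of the output theta functions. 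The restriction $z\in\mathcal{L}$ is what guarantees that the transformation closes on theta functions for the \emph{same} lattice $\mathcal{L}$.

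The genuine difficulty lies in the two fractional translations by $d/c$ and $a/c$. For noninteger $x$ one has, directly from \eqref{th},
\[
\theta_{\la+\mathcal{L}}(\tau+x,z,u)=e^{2\pi\ii u}\sum_{\ga\in\la+\mathcal{L}}e^{\pi\ii x|\ga|^2}\,e^{2\pi\ii(\ga|z)}q^{|\ga|^2/2},
\]
whose phase $e^{\pi\ii x|\ga|^2}$ is \emph{quadratic} in $\ga$ and therefore, unlike in \eqref{thtlaws2}, does not reduce to an overall constant. These Gauss-type phases must be linearized and, after summation over cosets, reorganized into the single exponential in the statement. This is exactly the purpose of $\be_0$: the congruence $ac|\nu|^2\equiv2(\nu|\be_0)\pmod{2\ZZ}$ on the sublattice $\{\nu\in\mathcal{L}^*:c\nu\in\mathcal{L}\}$ converts the residual quadratic character into a linear one, and it is what renders the final phase well defined as a function of the coset $c\mu+\mathcal{L}$ rather than of the representative $\mu$, thereby reconciling the nonintegrality of $a,b,c,d$ with the integrality of the lattice pairing. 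I expect the construction of $\be_0$ — showing that in the present setting $\nu\mapsto ac|\nu|^2$ is additive modulo $2\ZZ$ and hence representable through the nondegenerate form on $\mathfrak{h}_\mathcal{L}$ — to be the main obstacle; the remaining phase bookkeeping across the three-fold composition is then routine, if lengthy.

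Finally, once $\be_0$ is in hand I would read off the collapsed case: if $c\mathcal{L}^*\subseteq\mathcal{L}$, then for every $\mu\in\mathcal{L}^*$ we have $c\mu\in\mathcal{L}$, so $c\mu+\mathcal{L}=\mathcal{L}$ and only $\mu=0$ survives. The sum then reduces to $\theta_{a\la+\mathcal{L}}(\tau,z,u)$ with phase $e^{\pi\ii(ab|\la|^2+2b(\la|\be_0))}=e^{\pi\ii b(a|\la|^2+2(\la|\be_0))}$, exactly as claimed, and this in particular covers $c=0$.
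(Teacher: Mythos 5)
First, a point of comparison: the paper does not prove this statement at all. Theorem \ref{KP} is imported verbatim as Corollary 3.9 of \cite{KP2}, and the only proof-adjacent material in the paper is the practical remark that for a concrete $A$ one can factor it into the generators $S$ and $T$ and iterate Theorem \ref{thtlaws} --- which is exactly what Proposition \ref{vTSprop} and the computations of $v(A)$ in Sections \ref{ex2} and \ref{ex3} carry out for specific matrices. So your outline can only be measured against the cited source. Within that outline, the Bruhat factorization of $A$ is correct, the identification of the quadratic Gauss phases produced by the fractional translations as the crux is correct, your account of $\be_0$ (linearizing $\nu\mapsto ac|\nu|^2$ modulo $2\ZZ$ on the sublattice where $c\nu\in\mathcal{L}$, which is additive there because $c(\nu|\nu')\in\ZZ$) is right, and your reading of the collapsed case $c\mathcal{L}^*\subset\mathcal{L}$ is right.

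The step that fails as written is the application of \eqref{thtlaws1} immediately after the translation by $d/c$. At that point the series is $e^{2\pi\ii u}\sum_{\ga\in\la+\mathcal{L}}e^{\pi\ii (d/c)|\ga|^2}e^{2\pi\ii(\ga|z)}q^{|\ga|^2/2}$, and the weight $e^{\pi\ii (d/c)|\ga|^2}$ is not constant on $\la+\mathcal{L}$; this object is therefore not a theta function of the even integral lattice $\mathcal{L}$, and \eqref{thtlaws1} simply does not apply to it. The correct order is to decompose $\la+\mathcal{L}$ into cosets modulo a sublattice on which the phase is constant mod $2\ZZ$ (essentially $c\mathcal{L}$), rescale those sublattices via \eqref{thc} to make them integral, invert each piece with \eqref{thtlaws1}, and only then recombine. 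It is this preliminary coset decomposition --- not the diagonal scaling, as your narrative suggests --- that produces the index set $\{\mu: c\mu\in c\mathcal{L}^*\}$ modulo $\mathcal{L}$ and the normalization $|v(A)|=|(\mathcal{L}+c\mathcal{L}^*)/\mathcal{L}|^{-1/2}$ recorded in \eqref{vprops}. Your phrase ``after summation over cosets, reorganized'' gestures at the right mechanism but places it after the inversion, where it is too late to legitimize that step. With the reordering the outline becomes the standard proof; as it stands, the central analytic step is unjustified and the remaining ``routine bookkeeping'' is where all of the content of the phase $e^{\pi\ii(cd|\mu|^2+2bc(\mu|\la)+ab|\la|^2+\cdots)}$ actually lives.
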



\begin{remark}\label{beta0}
In general, Corollary 3.9 in \cite{KP2} requires two vectors $\al_0, \be_0\in \mathcal{L}$ and a chosen scalar $t_0\in \RR$. However, in the case when the lattice is even and $c$ is an odd integer, each of the parameters $\al_0, \be_0, t_0$ can be set to zero and their result simplifies to the result shown in Theorem \ref{KP} (using that $c|\mu|^2\equiv c^2|\mu|^2\equiv|c\mu|^2\equiv0\mod2$ when $c$ is odd). In the case when $c$ is even, the parameter $\beta_0$ may be nonzero. 
For example, if $a=d=1, b=0$, and $c=2$, then $\beta_0=\frac{\al}{2}$ is nonzero for the $A_1$ root lattice generated by the vector $\al$ with $|\al|^2=2$.
\end{remark}

\begin{remark}
In Corollary 3.9 in \cite{KP2}, the summation is indexed by the set of $\mu\in\mathcal{L}^*, c\mu\hspace*{-0.05in}\mod\mathcal{L}$, and this set coincides with the index set of cosets used throughout this paper using the cosets $\mu'+\mathcal{L}\in \mathcal{L}^*/\mathcal{L}$ with $\mu'=c\mu\in c\mathcal{L}^*$. In particular, for $c=0$ both sums reduce to one term corresponding to $\mu=\mu'=0$.
\end{remark}

\begin{remark}
It is important to point out that in Theorem \ref{KP}, a choice of square root of $c\tau+d$ is made.  This means that Theorem \ref{KP} describes the action of the metaplectic group, which contains the modular group as a subgroup, due to the choice of square root. In this paper, we will not concern ourselves too much with this choice because the characters and trace functions later in Section \ref{charVQ} will be the ratio of a theta function divided by another function (see Theorem \ref{chiF}) whose transformation involves the same choice of square root (see Corollary \ref{chiFtran2} in the case $A=S$).
\end{remark}

In general, $v(A)$ is a complex valued function such that 
\begin{equation}\label{vprops}
|v(A)|=|(\mathcal{L}+c\mathcal{L}^*)/\mathcal{L}|^{-\frac12},\qquad v(A^{-1})=\overline{v(A)}
\end{equation}
(cf.\ \cite[Proposition 3.8]{KP2}). We also calculate in general that 
\begin{equation}\label{vprops2}
v(-A)=\ii^rv(A).
\end{equation}
While the theorem proves the existence of the complex number $v(A)$, it does not provide what they are explicitly. In practice, it may be more convenient to write the matrix $A$ first in terms of the generators $S$ and $T$, then repeatedly use Theorem \ref{thtlaws}.

By comparing the coefficients in Theorem \ref{thtlaws} with Theorem \ref{KP} when $A=S$ and when $A=T$, we can obtain the constants $v(S)$ and $v(T)$:
\begin{equation}\label{vSTremark}
v(S)=\displaystyle\frac{(-\ii)^{r/2}}{|\mathcal{L}^*/\mathcal{L}|^{1/2}},\qquad v(T)=1.
\end{equation}
It follows that $v(T^k)=1$ for all integers $k$ and $v(-S)=\displaystyle\frac{\ii^{r/2}}{|\mathcal{L}^*/\mathcal{L}|^{1/2}}$, using  \eqref{vprops2}.
In addition to these facts, the following proposition will be useful later.

\begin{proposition}\label{vTSprop}
Let\/ $\mathcal{L}$, $A=\begin{pmatrix}
a&b\\c&d
\end{pmatrix}\in \mathrm{SL}(2,\ZZ)$, and $v(A)$ be as in Theorem \ref{KP}.
Then the following hold for the generators $S$ and $T:$
\begin{align}
v(AT^k)&=v(T^kA)=v(A)\qquad\text{for all}\quad k\in\ZZ,\label{vTA}\\
v(AS)&=v(A)v(S)\sum_{\mu+\mathcal{L}\in\mathcal{L}^*/\mathcal{L}}e^{\pi\ii (cd|\mu|^2+2d(\mu|\be_0))}.\label{vSA}
\end{align}
\end{proposition}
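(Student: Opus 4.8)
The plan is to establish the two identities \eqref{vTA} and \eqref{vSA} by comparing the general transformation formula in Theorem \ref{KP} applied to the composite matrices $AT^k$, $T^kA$, $AS$ against what one gets by applying Theorem \ref{KP} to $A$ first and then to the generators $T$, $S$. Since $v(A)$ is characterized (up to the explicit constants \eqref{vSTremark}) by being the overall scalar prefactor in the theta transformation law, and since the theta functions $\theta_{\la+\mathcal{L}}$ are linearly independent as functions of $(\tau,z,u)$ for distinct cosets $\la+\mathcal{L}\in\mathcal{L}^*/\mathcal{L}$, I can read off $v$ of a product by composing the two transformation laws and matching the scalar coefficient of a single well-chosen theta function on each side.

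First I would treat \eqref{vTA}. For $v(T^kA)$, I apply Theorem \ref{KP} to the matrix $A$ to transform $\theta_{\la+\mathcal{L}}(A\cdot(\tau,z,u))$, and then apply the simple formula \eqref{thtlaws2} $k$ times (equivalently, Theorem \ref{KP} with $A=T^k$, for which $c=0$, $v(T^k)=1$) to the resulting theta functions evaluated at $(\tau+k,z,u)$. Because $T^k$ acts on the upper-half-plane coordinate by $\tau\mapsto\tau+k$ and fixes $z,u$, and because its entries give $c=0$, composing leaves the leading factor $(c\tau+d)^{r/2}$ of $A$ unchanged and multiplies only by phase factors $e^{\pi\ii k|\mu|^2}$ that get absorbed into the existing coset sum; hence the overall prefactor is exactly $v(A)$, giving $v(T^kA)=v(A)$. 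For $v(AT^k)$ I instead precompose: I first apply $T^k$ (a pure $\tau\mapsto\tau+k$ shift with trivial scalar since $v(T)=1$) and then $A$. Here the key routine check is that $AT^k$ still has the same bottom row $(c,d)$ as $A$ — indeed $AT^k=\begin{pmatrix} a & ak+b\\ c & ck+d\end{pmatrix}$ has the same $c$ but $d$ replaced by $ck+d$ — so I must verify that the factor $(c\tau+d)^{r/2}$ and the $\beta_0$-dependent data are consistent; writing $A(T^k\cdot\tau)=\frac{a(\tau+k)+b}{c(\tau+k)+d}$ shows the cocycle of automorphy factors multiply correctly and the scalar is again $v(A)$.

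Next I would handle the more substantive identity \eqref{vSA}. I compute $\theta_{\la+\mathcal{L}}((AS)\cdot(\tau,z,u))$ in two ways. On one hand, Theorem \ref{KP} applied directly to $AS$ produces the prefactor $v(AS)$ (times the appropriate automorphy factor and coset sum). On the other hand, I apply the $S$-transformation first — using the $S=0$-term version, i.e.\ formula \eqref{thtlaws1}, which expands $\theta_{\la+\mathcal{L}}(S\cdot(\tau,z,u))$ as $v(S)$ times a sum over $\mu+\mathcal{L}\in\mathcal{L}^*/\mathcal{L}$ of phases $e^{-2\pi\ii(\la|\mu)}\theta_{\mu+\mathcal{L}}$ — and then apply Theorem \ref{KP} for the matrix $A$ to each $\theta_{\mu+\mathcal{L}}(A\cdot(\tau,z,u))$. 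Collecting the scalar factors, one obtains $v(A)v(S)$ times a sum of phases; the phase attached to each surviving $\theta_{a\mu+\cdots}$ is the product of the $S$-phase $e^{-2\pi\ii(\la|\mu)}$ and the $A$-phase $e^{\pi\ii(cd|\mu|^2+\cdots+2d(\mu|\be_0))}$ from Theorem \ref{KP}. Matching the coefficient of one theta function (most cleanly the one indexed by the trivial outgoing coset, isolating the $\la$-independent scalar) against the single prefactor $v(AS)$ forces the stated formula $v(AS)=v(A)v(S)\sum_{\mu+\mathcal{L}\in\mathcal{L}^*/\mathcal{L}}e^{\pi\ii(cd|\mu|^2+2d(\mu|\be_0))}$.

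The main obstacle, and the step demanding the most care, is bookkeeping in the composition of two sums in \eqref{vSA}: the $S$-step produces a sum over $\mu\in\mathcal{L}^*/\mathcal{L}$, and the subsequent $A$-step in Theorem \ref{KP} produces a further sum and relabels indices via $\mu\mapsto a\mu+c\mu'$, so I must track how the linear-exponent phases and the $\be_0$-correction terms combine and confirm that, after re-indexing, the $\la$-dependent phases reorganize into the standard transformation pattern for $AS$ while the purely $\mu$-dependent phases assemble into the asserted factor. I must also be attentive to the choice of square root of the automorphy factor $c\tau+d$ (the metaplectic ambiguity flagged in the remarks) and ensure the $S$-transformation's branch $(-\ii\tau)^{r/2}$ composes with $(c\tau+d)^{r/2}$ to the branch implicit in $v(AS)$; since the identity is an equality of scalars $v(AS)$, $v(A)$, $v(S)$ all defined by the same Theorem \ref{KP} conventions, this consistency should hold, but verifying it is where a sign or branch error could creep in. Linear independence of the $\theta_{\mu+\mathcal{L}}$ is what legitimizes the coefficient comparison, and I would invoke it explicitly to justify equating scalars.
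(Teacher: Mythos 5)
Your overall strategy is exactly the paper's: compute the theta transformation for the product matrix once directly via Theorem \ref{KP} and once by composing two transformation laws, then equate the coefficient of a single theta function (the paper sets $\mu'=0$ in the direct expansion and $\al=0$ in the composed one). The $T$-identities and the general shape of the $S$-identity are handled the same way, and you correctly flag the two real care points (index bookkeeping in the double sum, and the square-root branch).

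There is, however, one concrete step that would fail as written. Throughout you peel off the transformations in the reverse order, which is harmless for \eqref{vTA} (both orders give $v(A)$) but not for \eqref{vSA}. Since $(AS)\cdot(\tau,z,u)=A\cdot(S\cdot(\tau,z,u))$, the inner point is $S\cdot(\tau,z,u)$ and the \emph{outer} matrix is $A$; so to expand $\th_{\la+\mathcal{L}}\bigl((AS)\cdot(\tau,z,u)\bigr)$ you must first invoke Theorem \ref{KP} for $A$, producing a sum of $\th_{a\la+c\nu+\mathcal{L}}$ evaluated at $S\cdot(\tau,z,u)$, and only then apply \eqref{thtlaws1} to each of those — this is the paper's \eqref{kp2}, to be compared against the direct expansion \eqref{kp1} of $\th_{\mathcal{L}}((AS)\cdot\tau)$ with $AS=\begin{pmatrix}b&-a\\d&-c\end{pmatrix}$. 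Your description instead expands $\th_{\la+\mathcal{L}}(S\cdot(\,\cdot\,))$ first and then feeds the result into the $A$-law, i.e.\ it computes $\th_{\la+\mathcal{L}}(S\cdot(A\cdot(\tau,z,u)))=\th_{\la+\mathcal{L}}((SA)\cdot(\tau,z,u))$; carried out literally this yields a formula for $v(SA)$ whose phase involves $ab|\mu|^2$ and $2b(\mu|\be_0)$ rather than the asserted $cd|\mu|^2+2d(\mu|\be_0)$, so the coefficient comparison will not reproduce \eqref{vSA}. Once the order is corrected the argument goes through; you should also note, as the paper does, the degenerate cases $c=0$ and $d=0$, where one of the two coset sums collapses to a single term and the comparison must be redone (for $c=0$ one in fact lands back in the case \eqref{vTA}).
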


\begin{proof}
To prove the first identity, we use Theorem \ref{KP} to calculate $\theta_\mathcal{L}((AT^k)\cdot\tau)$, where $k$ in an integer, in two different ways. Since 
\[
AT^k=\begin{pmatrix}
a&ak+b\\c&ck+d
\end{pmatrix},
\]
we have from setting $\la, z, u$ equal to zero in Theorem \ref{KP} that 
\begin{align*}
\begin{split}
\th_{\mathcal{L}}\left((AT^k)\cdot\tau\right)&=(c\tau+ck+d)^{r/2}v(AT^k)\sum_{\substack{\mu'+\mathcal{L}\in \mathcal{L}^*/\mathcal{L}\\\mu'\in c\mathcal{L}^*}}e^{\pi\ii c(ck+d)|\mu|^2}\th_{c\mu+\mathcal{L}}(\tau),
\end{split}
\end{align*}
where $\mu'=c\mu$. On the other hand, we compute this transformation using two steps and \eqref{thtlaws2}:
\begin{align*}
\begin{split}
\th_{\mathcal{L}}\left(A\cdot(T^k\cdot\tau)\right)&=(c(\tau+k)+d)^{r/2}v(A)\sum_{\substack{\mu'+\mathcal{L}\in \mathcal{L}^*/\mathcal{L}\\\mu'\in c\mathcal{L}^*}}e^{\pi\ii cd|\mu|^2}\th_{c\mu+\mathcal{L}}(T^k\cdot\tau)\\
&=(c\tau+ck+d)^{r/2}v(A)\sum_{\substack{\mu'+\mathcal{L}\in \mathcal{L}^*/\mathcal{L}\\\mu'\in c\mathcal{L}^*}}e^{\pi\ii c(d+ck)|\mu|^2}\th_{c\mu+\mathcal{L}}(\tau).
\end{split}
\end{align*}
By comparing both equations and noting that $(AT^k)\cdot\tau=A\cdot(T^k\cdot\tau)$, we arrive at the relation $v(AT^k)=v(A)$. In a similar way, we can show that $v(T^kA)=v(A)$.

We now compute in a similar way $\theta_\mathcal{L}((AS)\cdot\tau)$ in two different ways. First we assume that $c, d\neq0$.
Since 
\[
AS=\begin{pmatrix}
b&-a\\d&-c
\end{pmatrix},
\]
we have from setting $\la, z, u$ equal to zero in Theorem \ref{KP} that 
\begin{align}\label{kp1}
\begin{split}
\th_{\mathcal{L}}\left((AS)\cdot\tau\right)&=\left(d\tau-c\right)^{r/2}v(AS)\sum_{\substack{\mu'+\mathcal{L}\in \mathcal{L}^*/\mathcal{L}\\\mu'\in d\mathcal{L}^*}}e^{-\pi\ii (cd|\mu|^2+2c(\mu|\be_0))}\th_{d\mu+\mathcal{L}}(\tau),
\end{split}
\end{align}
where $\mu'=d\mu$. On the other hand, we compute this transformation using two steps and \eqref{thtlaws1}:
\begin{equation}\label{kp2}
\begin{split}
\th_{\mathcal{L}}&\left(A\cdot(S\cdot\tau)\right)\\
&=\left(-\frac{c}{\tau}+d\right)^{r/2}v(A)\sum_{\substack{\nu'+\mathcal{L}\in \mathcal{L}^*/\mathcal{L}\\\nu'\in c\mathcal{L}^*}}e^{\pi\ii (cd|\nu|^2+2d(\nu|\be_0))}\th_{c\nu+\mathcal{L}}\left(-\frac{1}{\tau}\right)\\
&=(d\tau-c)^{r/2}\tau^{r/2}v(A)v(S)\\
&\times\sum_{\substack{\nu'+\mathcal{L}\in \mathcal{L}^*/\mathcal{L}\\\nu'\in c\mathcal{L}^*}} \; \sum_{\al+\mathcal{L}\in\mathcal{L}^*/\mathcal{L}}e^{\pi\ii (cd|\nu|^2+2d(\nu|\be_0)-2c(\nu|\al))}\th_{\al+\mathcal{L}}(\tau)\,,
\end{split}
\end{equation}
where $\nu'=c\nu$.
Since $(AS)\cdot\tau=A\cdot(S\cdot\tau)$, it follows from Theorem \ref{KP} that the terms in the sum in \eqref{kp1} and \eqref{kp2} must have equal coefficients for each corresponding theta function. In particular, if we set $\mu'=0$ in \eqref{kp1} and $\al=0$ in \eqref{kp2},  
we obtain \eqref{vSA}.

Next we consider the cases $c=0$ or $d=0$. 
When $c=0$, the summation over $\nu'$ in \eqref{kp2} turns into one term corresponding to $\nu'=0$. Hence a comparison of \eqref{kp1} and \eqref{kp2} in this case yields that $v(AS)=v(A)v(S)$. In fact, when $c=0$ and $\det A=1$, the matrix is actually of the form $A=\pm T^k$, for some integer $k$, and this case reduces to \eqref{vTA}.
When $d=0$, the summation over $\mu'$ in \eqref{kp1} turns into one term corresponding to $\mu'=0$. A comparison of \eqref{kp1} and \eqref{kp2} then yields \eqref{vSA}.
\end{proof}

We also recall the Dedekind $\eta$-function
\begin{equation}\label{etadef}
\eta(\tau)=e^{{\pi\ii\tau}/{12}}\prod_{n=1}^\infty (1-q^n),\qquad q=e^{2\pi\ii\tau}, \quad\mathrm{Im}\,\tau>0.
\end{equation}
The transformation laws for $\eta(\tau)$ are well known:
\begin{equation}\label{eta}
\eta(\tau+1)=e^{\pi\ii/12}\eta(\tau),\qquad
\eta\Bigl(-\frac{1}{\tau}\Bigr)=(-\ii\tau)^{1/2}\eta(\tau).
\end{equation}
More generally, it is well known that for any $A=\begin{pmatrix}
a&b\\c&d
\end{pmatrix}\in \mathrm{SL}(2,\ZZ)$,
\begin{align}\label{etaA}
\eta\left(A\cdot\tau\right)=\epsilon(A)(c\tau+d)^{1/2}\eta(\tau),
\end{align}
where $\epsilon(A)$ is a 24-th root of unity that depends on $A$.

Consider the one-dimensional lattice $\ZZ\al$, with $|\al|^2=m$. Set $z=\frac{\zeta}{2}\al$ so that $(z|\al)=\frac{\zeta}{2}m$. Then the theta function \eqref{th} takes the form
\begin{equation}\label{1Dth}
\theta_{\frac lm\al+\ZZ\al}\Bigl(\tau,\frac{\zeta}{2}\al,u\Bigr)=e^{2\pi\ii u}\sum_{n\in\frac lm+\ZZ}e^{\pi\ii nm\zeta}q^{n^2m/2},
\end{equation}
where $l\in\ZZ/m\ZZ$. For later use we set
\begin{equation}\label{K}
K_l(\tau,\zeta;m)=\frac1{\eta(\tau)}\theta_{\frac lm\al+\ZZ\al} \Bigl(\tau,\frac{\zeta}{2}\al,0\Bigr).
\end{equation}
The following is then immediate using Theorem \ref{thtlaws} and \eqref{eta}:
\begin{proposition}\label{Ktlaws}
The transformation laws for $K_l(\tau,\zeta;m)$ are
\begin{align}
K_l\Bigl(-\frac{1}{\tau},\frac{\zeta}{\tau};m\Bigr)&=\frac{1}{\sqrt{m}}e^{\pi\ii m\zeta^2/4\tau}\sum_{l'\in\ZZ/m\ZZ}e^{-2\pi\ii ll'/m}K_{l'}(\tau,\zeta;m),\label{Ktlaws1}\\
K_l(\tau+1, \zeta;m)&=e^{\pi\ii\left(\frac{l^2}{m}-\frac{1}{12}\right)}K_l(\tau, \zeta;m).\label{Ktlaws2}
\end{align}
\end{proposition}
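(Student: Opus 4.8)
The plan is to specialize the general theta transformation laws of Theorem \ref{thtlaws} to the rank-one lattice $\mathcal{L}=\ZZ\al$ with $|\al|^2=m$, and then divide by the corresponding transformation of $\eta(\tau)$ given in \eqref{eta}. First I would record the data attached to this lattice: since $(\frac1m\al|\al)=1$, the dual lattice is $\mathcal{L}^*=\frac1m\ZZ\al$, so the quotient $\mathcal{L}^*/\mathcal{L}$ is cyclic of order $m$ with coset representatives $\frac{l}{m}\al$, $l\in\ZZ/m\ZZ$. Hence $r=\rank\mathcal{L}=1$ and $|\mathcal{L}^*/\mathcal{L}|=m$, so the prefactors in Theorem \ref{thtlaws} become $(-\ii\tau)^{1/2}$ and $m^{-1/2}$, matching the single power of $\eta$ and the $\frac{1}{\sqrt m}$ appearing in \eqref{Ktlaws1}.

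Next I would compute the three bilinear quantities that appear after substituting $\la=\frac lm\al$, $\mu=\frac{l'}m\al$, and $z=\frac\zeta2\al$. These are $|z|^2=\frac{\zeta^2}4|\al|^2=\frac{m\zeta^2}4$, which produces the exponential factor $e^{\pi\ii m\zeta^2/4\tau}$; the pairing $(\la|\mu)=\frac{ll'}{m^2}|\al|^2=\frac{ll'}m$, which produces $e^{-2\pi\ii ll'/m}$; and $|\la|^2=\frac{l^2}{m^2}|\al|^2=\frac{l^2}m$. I would also note that under the $S$-action the argument $z/\tau=\frac{\zeta/\tau}2\al$ corresponds precisely to replacing $\zeta$ by $\zeta/\tau$ in the notation \eqref{K}, so the left-hand sides line up as claimed.

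Finally I would assemble the two formulas. For \eqref{Ktlaws1}, applying \eqref{thtlaws3} to the theta numerator and $\eta(-1/\tau)=(-\ii\tau)^{1/2}\eta(\tau)$ to the denominator, the two factors of $(-\ii\tau)^{1/2}$ cancel, leaving exactly $m^{-1/2}e^{\pi\ii m\zeta^2/4\tau}\sum_{l'}e^{-2\pi\ii ll'/m}K_{l'}(\tau,\zeta;m)$. For \eqref{Ktlaws2}, applying \eqref{thtlaws4} and $\eta(\tau+1)=e^{\pi\ii/12}\eta(\tau)$ produces the phase $e^{\pi\ii l^2/m}/e^{\pi\ii/12}=e^{\pi\ii(l^2/m-1/12)}$. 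There is no substantive obstacle here, since the statement is a direct specialization; the only points requiring care are the bookkeeping of the $(-\ii\tau)^{1/2}$ cancellation (which hinges on $r=1$ matching the single factor of $\eta$) and keeping the coset index set $\mathcal{L}^*/\mathcal{L}$ consistently identified with $\ZZ/m\ZZ$.
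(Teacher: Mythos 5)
Your proposal is correct and is precisely the computation the paper has in mind: the paper offers no written proof beyond "immediate using Theorem \ref{thtlaws} and \eqref{eta}," and your specialization to $\mathcal{L}=\ZZ\al$, $|\al|^2=m$ (so $r=1$, $\mathcal{L}^*/\mathcal{L}\cong\ZZ/m\ZZ$) together with the cancellation of $(-\ii\tau)^{1/2}$ against $\eta(-1/\tau)$ fills in exactly the intended steps. All the bilinear-form evaluations ($|z|^2=m\zeta^2/4$, $(\la|\mu)=ll'/m$, $|\la|^2=l^2/m$) check out.
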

Note that since the function $K_l$ is a ratio of a theta function and the Dedekind $\eta$-function, the choice of square root $(-\ii\tau)^{1/2}$ cancels. 

\section{Twisted Representations of Lattice Vertex Algebras}\label{twlat}
In this section, we review the construction of irreducible $\si$-twisted modules over a lattice vertex algebra associated to a positive-definite even lattice (see \cite{KP,Le,D2,BK}). 
We do so in more details than available in the literature, and in particular, we calculate the action of the twisted energy operator $L_0^{\tw}$.

\subsection{Lattice vertex algebras}\label{lat}

Let $Q$ be a positive-definite even integral lattice with the bilinear form $(\cdot|\cdot)$.
We denote by   
$\lieh = \CC\otimes_\ZZ Q$ the corresponding complex 
vector space considered as an abelian Lie algebra, and extend the bilinear form $(\cdot|\cdot)$ to it by linearity.

The \emph{Heisenberg algebra}
$\hat\lieh = \lieh[t,t^{-1}] \oplus \CC K$
is the Lie algebra with brackets
\begin{equation}\label{heis1}
[a_m,b_n] = m \delta_{m,-n} (a|b) K \,, \qquad
a_m=at^m \,,
\end{equation}
where $K$ is central, i.e., $[a_m, K]=0$ for all $m\in\ZZ$.
Its irreducible highest-weight representation 
\begin{equation}\label{F}
\F = \Ind^{\hat\lieh}_{\lieh[t]\oplus\CC K} \CC \cong S(\lieh[t^{-1}]t^{-1}),
\end{equation}
where $\lieh[t]$ acts trivially on $\CC$ and $K=1$, is known as the (bosonic) \emph{Fock space}.

Following \cite{FK, B}, we consider a
$2$-cocycle $\ep\colon Q \times Q \to \{\pm1\}$ 
such that
\begin{equation}\label{lat2}
\ep(\al,\al) = (-1)^{|\al|^2/2}  \,,
\qquad \al\in Q \,,
\end{equation}
and the associative algebra $\CC_\ep[Q]$ with basis
$\{ e^\al \}_{\al\in Q}$ and multiplication
\begin{equation}\label{lat1}
e^\al e^\be = \ep(\al,\be) e^{\al+\be} \,.
\end{equation}
Such a $2$-cocycle $\ep$ is unique up to equivalence and can be chosen
to be bimultiplicative, which we shall assume. In this case, by \eqref{lat2}, we also have
\begin{equation}\label{lat22}
\ep(\al,\be) \ep(\be,\al) = (-1)^{(\al|\be)}  \,, 
\qquad \al,\be\in Q \,.
\end{equation}

The space of states of the \emph{lattice vertex algebra} 
associated to $Q$ is defined as $V_Q=\F\otimes\CC_\ep[Q]$,
where the vacuum vector is $\vac=1\otimes e^0$.
We extend the action of the Heisenberg algebra on $\F$ to $V_Q$ by
\begin{equation}\label{lat3}
a_n e^\be = \delta_{n,0} (a|\be) e^\be \,,\qquad
a\in\lieh, \; n\in\ZZ \,.
\end{equation}
%
The state-field correspondence on $V_Q$ is uniquely determined by the
generating fields (recall that $z^{\al_0} e^\be = z^{(\al|\be)} e^\be$):
\begin{align}\label{lat4}
Y(a_{(-1)}\vac,z) &= \sum_{n\in\ZZ} a_{n} \, z^{-n-1} \,, \qquad a\in\lieh \,,
\\ \label{lat5}
Y(e^\al,z) &= e^\al z^{\al_0} 
\exp\Bigl( \sum_{n<0} \al_{n} \frac{z^{-n}}{-n} \Bigr) 
\exp\Bigl( \sum_{n>0} \al_{n} \frac{z^{-n}}{-n} \Bigr) \,.
\end{align}

Notice that $\F\subset V_Q$ is a vertex subalgebra, which we call the \emph{Heisenberg vertex algebra}.
The map $\lieh\to\F$ given by $a\mapsto a_{-1}\vac$ is injective.
{}From now on, we will slightly abuse the notation and identify
$a\in\lieh$ with $a_{-1}\vac \in\F$; then $a_{(n)}=a_n$ for all $n\in\ZZ$. 
Let $\{a^i\}$ and $\{b^i\}$, $i=1,\ldots,r$, be dual bases of $\mathfrak{h}$ with respect to the bilinear form, so that
\begin{equation*}
(a^i|b^j) = \de_{i,j} \,.
\end{equation*}
Then lattice vertex algebras are conformal with central charge $r$ and Virasoro vector given by
\begin{equation}\label{om}
L=\frac12\sum_{i=1}^ra^i_{(-1)}b^i\in V_Q.
\end{equation}

We will denote by $Q^*$ the \emph{dual lattice} to $Q$, which is defined by
\begin{equation}\label{duallat}
Q^* = \{\la\in\lieh \,|\, (\la|\al) \in\ZZ \;\;\forall\, \al\in Q \} \,.
\end{equation}
This is a free abelian group of the same rank as $Q$; however, $Q^*$ is not integral in general. Notice that $Q\subset Q^*$ because $Q$ is integral.
It is well known (see Theorem 2.7 in \cite{D1}) that the irreducible $V_Q$-modules are classified by the finite abelian group $Q^*/Q$, and are given explicitly by
\begin{equation}\label{VFW}
V_{\la+Q}=\F\otimes  \CC_\ep[Q]e^\la \,, \qquad \la\in Q^* \,.
\end{equation}
The contragradient module of $V_{\la+Q}$ is 
$$V'_{\la+Q}=V_{-\la+Q} \,,$$ 
so that the duality corresponds to sending $\la$ to $-\la$. In particular, we see that $V_Q$ is self-dual.

\begin{theorem}[\cite{DLM2}]\label{DLM2}
For any positive-definite even lattice $Q$, the lattice vertex algebra $V_Q$ is regular.
\end{theorem}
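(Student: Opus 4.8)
The plan is to verify the six conditions of Definition \ref{reg} one at a time, reading off conditions $(i)$, $(ii)$, $(v)$, $(vi)$ directly from the explicit construction of $V_Q=\F\otimes\CC_\ep[Q]$ in Section \ref{lat}, disposing of simplicity $(iv)$ by a short argument, and concentrating the real work on the complete reducibility $(iii)$ of all weak modules.

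First I would examine the $L_0$-grading. On a monomial $a^{i_1}_{-n_1}\cdots a^{i_k}_{-n_k}\otimes e^\al$ with all $n_j>0$ and $\al\in Q$, the operator $L_0$ coming from \eqref{om} acts by the scalar $n_1+\cdots+n_k+\tfrac12|\al|^2$. Since $Q$ is even, $\tfrac12|\al|^2$ is a non-negative integer, so every eigenvalue lies in $\ZZ_{\ge0}$, giving $(i)$. The weight-$n$ subspace is spanned by such monomials with $n_1+\cdots+n_k+\tfrac12|\al|^2=n$; positive-definiteness leaves only finitely many $\al\in Q$ with $|\al|^2\le 2n$, and there are finitely many partitions of the remaining weight, so each graded piece is finite dimensional, giving $(ii)$. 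The unique monomial of weight $0$ is $\vac=1\otimes e^0$, so the $0$-eigenspace is $\CC\vac$, giving $(vi)$. Self-duality $(v)$ was already observed above: since $V'_{\la+Q}=V_{-\la+Q}$, taking $\la\in Q$ yields $V_Q'\cong V_Q$. For simplicity $(iv)$, I would note that an ideal of $V_Q$ is precisely a $V_Q$-submodule, and that $V_Q=V_{0+Q}$ occurs in the list \eqref{VFW} of irreducible modules; hence $V_Q$ is irreducible over itself and has no nonzero proper ideal. (Alternatively, a nonzero ideal contains a nonzero vector of minimal weight, and applying suitable modes $a_{(n)}$ and $(e^\al)_{(n)}$ recovers $\vac$, forcing the ideal to be all of $V_Q$.)

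The main obstacle is condition $(iii)$. Here I would take an arbitrary weak module $M$ and restrict it to the Heisenberg vertex subalgebra $\F\subset V_Q$, so that $\hat\lieh$ acts with $K=1$; note that the truncation built into the axiom $Y(a,z)v\in M(\!(z)\!)$ forces $a_nv=0$ for $n\gg0$, so $M$ is automatically a restricted Heisenberg module. The commuting zero modes $\{a_0:a\in\lieh\}$ then decompose $M$ into generalized eigenspaces, and the relation $[a_0,(e^\al)_{(n)}]=(a|\al)(e^\al)_{(n)}$ issuing from \eqref{lat3} and \eqref{lat5} shows that the fields $Y(e^\al,z)$ shift these eigenvalues by $(a|\al)$. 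If a vector $v$ has weight $\mu$ (meaning $a_0v=(a|\mu)v$), then $Y(e^\al,z)v$ involves the exponent $z^{(\al|\mu)}$; since this field must lie in $M(\!(z)\!)$ with integral powers of $z$, one gets $(\al|\mu)\in\ZZ$ for all $\al\in Q$, i.e.\ $\mu\in Q^*$. Thus the weights are confined to $Q^*$, and $M$ splits into pieces indexed by the cosets $\la+Q\in Q^*/Q$.

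The key lemma I would then prove is the structure theorem for restricted Heisenberg modules with $K=1$: each such module is a direct sum of Fock spaces $\F$. Concretely, let $\Omega\subset M$ be the subspace of vectors annihilated by all $a_n$ with $n>0$; a minimal-weight argument, using the truncation property, produces enough such lowest-weight vectors, and the freeness of the negative part $\lieh[t^{-1}]t^{-1}$ (a consequence of $K=1$) makes the natural map from $S(\lieh[t^{-1}]t^{-1})\otimes\Omega$ to $M$ an isomorphism. Combining this Heisenberg decomposition with the coset refinement above, and letting the operators $e^\al$ permute the cosets $\la+Q$, realizes $M$ as a direct sum of copies of the irreducibles $V_{\la+Q}$ of \eqref{VFW}, which proves complete reducibility. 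The delicate points throughout—and where positive-definiteness and evenness of $Q$ together with the truncation property of weak modules are genuinely used—are the confinement of weights to $Q^*$ and the production of a sufficient supply of lowest-weight vectors for the Heisenberg action.
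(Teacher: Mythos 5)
The paper offers no proof of this statement: it is quoted directly from the reference [DLM2], so the only comparison available is with the argument in that cited work, which your outline does follow in broad strokes (conditions $(i)$, $(ii)$, $(iv)$--$(vi)$ of Definition \ref{reg} read off from the explicit construction of $V_Q=\F\otimes\CC_\ep[Q]$, with the content concentrated in complete reducibility of weak modules via the Heisenberg decomposition and the confinement of $\lieh_0$-weights to $Q^*$).

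There is, however, a genuine gap at the crux of $(iii)$. Your structure theorem for restricted $\hat\lieh$-modules of level $1$ gives $M\cong S(\lieh[t^{-1}]t^{-1})\otimes\Omega$ with $\Omega$ the space of vectors killed by all $a_n$, $n>0$, but it says nothing about how the central zero modes $a_0$ act on $\Omega$: a priori they may act with nontrivial Jordan blocks, and indeed the Heisenberg vertex algebra $\F$ by itself is \emph{not} regular for exactly this reason (there exist indecomposable, non-irreducible weak $\F$-modules $\F\otimes\CC^2$ with $a_0$ acting nilpotently but nontrivially on $\CC^2$). So "generalized eigenspaces" cannot silently become "eigenspaces," and the closing sentence "realizes $M$ as a direct sum of copies of the irreducibles $V_{\la+Q}$" omits the step where the lattice operators actually do the work: one must use the modes $(e^{\pm\al})_{(n)}$ to force $a_0$ to act semisimply on $\Omega$ and to decompose $\Omega$ into irreducibles over the algebra they generate. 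Relatedly, the integrality $(\al|\mu)\in\ZZ$ does not follow merely from $Y(e^\al,z)v\in M(\!(z)\!)$; on an abstract weak module $Y(e^\al,z)$ is an integer-power series by definition, and the exponent $z^{(\al|\mu)}$ only appears after one establishes the factorization $Y(e^\al,z)=E^-(\al,z)E^+(\al,z)U_\al z^{\al_0}$ on $M$ from the commutation relations with the Heisenberg modes and the translation axiom. Both of these points are where positive-definiteness and the lattice structure genuinely enter, and both are supplied in [DLM2]; without them the sketch establishes only what already holds for $\F$, which is strictly weaker than regularity.
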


\subsection{Twisted Heisenberg algebra}\label{twheis}

Every automorphism $\si$ of $\lieh$ preserving the bilinear form induces automorphisms
of $\hat\lieh$ and $\F$, which will be denoted again as $\si$, by setting $\si(a t^m) = \si(a) t^m$,
$\si(K)=K$ and $\si(\vac)=\vac$.
As before, we assume that $\si$ has a finite order $N$.

The \emph{$\si$-twisted Heisenberg algebra} 
$\hat\lieh_\si$ is spanned over $\CC$ by $K$ and
the elements $a_m = at^m$, where $m\in\frac1N\ZZ$ is such that $\si a = e^{-2\pi\ii m} a$.
This is a Lie algebra with bracket  (cf.\ \eqref{heis1}):
\begin{equation*}
[a_m,b_n] = m \delta_{m,-n} (a|b) K \,, \qquad a,b\in\lieh \,, \;\; m,n\in \frac1N\ZZ \,.
\end{equation*}
Let $\hat\lieh_\si^\ge$ (respectively, $\hat\lieh_\si^<$) be the abelian subalgebra of 
$\hat\lieh_\si$ spanned by all elements $a_m$ with $m\geq0$ 
(respectively, $m<0$). 

The \emph{$\si$-twisted Fock space} is defined as
\begin{equation}\label{twheis2}
\F_\si = \Ind^{\hat\lieh_\si}_{\hat\lieh_\si^\ge \oplus\CC K} \CC \cong S(\hat\lieh_\si^<) \,,
\end{equation}
where $\hat\lieh_\si^\ge$ acts on $\CC$ trivially and $K$ acts as the identity operator.
Then $\F_\si$ is an irreducible highest-weight representation of $\hat\lieh_\si$, and
has the structure of a $\si$-twisted representation of the Heisenberg vertex algebra $\F$
(see e.g.\ \cite{FLM,KRR}). This structure can be described as follows. 
We let $Y(\vac,z)$ be the identity operator, let
\begin{equation}\label{twheis3}
Y(a,z) = \sum_{n\in m+\ZZ} a_{n} \, z^{-n-1} \,, 
\end{equation}
for $a\in\lieh$ and $m\in\frac1N\ZZ$ such that $\si a = e^{-2\pi\ii m} a$,
and we extend $Y$ to all $a\in\lieh$ by linearity.
The action of $Y$ on other elements of $\F$ is then determined by applying several times the product
formula \eqref{locpr3}.
More explicitly, $\F$ is spanned by elements of the form $a^1_{m_1}\cdots a^k_{m_k} \vac$
where $a^j\in\lieh$, $m_j\in\ZZ$, and we have:
\begin{align*}
Y& ( a^1_{m_1}\cdots a^k_{m_k} \vac, z ) v
\\
&= \prod_{j=1}^k \d_{z_j}^{(N-1-m_j)} 
\Bigl( \prod_{j=1}^k (z_{j}-z)^N \; Y(a^1,z_1) \cdots Y(a^k,z_k) v \Bigr)\Big|_{z_1=\cdots=z_k=z}
\end{align*}
for all $v\in\F_\si$ and sufficiently large $N$.
In the above formula, we use the divided-power notation $\d^{(n)} = \d^n / n!$.
\subsection{The groups $G_\si$ and $G_\si^\perp$}\label{gsigsiperp}
Let $\si$ be an isometry of the even lattice $Q$ of finite order $N$, so that
\begin{equation}\label{twlat1a}
(\si\al | \si\be) = (\al | \be) \,, \qquad \al,\be\in Q \,.
\end{equation}
The uniqueness of the cocycle $\ep$, \eqref{lat22} and \eqref{twlat1a} imply that
there exists a function $\eta\colon Q\to\{\pm1\}$ such that
\begin{equation}\label{twlat3}
\eta(\al+\be) \ep(\si\al,\si\be) = \eta(\al)\eta(\be) \ep(\al,\be)
\end{equation}
for all $\al,\be\in Q$
(the distinction from the Dedekind $\eta$-function should be clear from the context).
If $L$ is a sublattice of $Q$ with the property $\ep(\si\al,\si\be) = \ep(\al,\be)$
for $\al,\be\in L$, then $\eta$ can be chosen to satisfy $\eta(\al)=1$ for all\/ $\al\in L$
\cite[Lemma 2.3]{BE}.
In particular, we can choose $\eta$ so that
\begin{equation}\label{twlat2}
\eta(\al)=1 \,,\qquad \al\in Q\cap\lieh_0 \,,
\end{equation}
where 
\begin{equation}\label{twlat-h0}
\lieh_0 = \{ h\in\lieh \,|\, \si h=h\}
\end{equation}
is the subspace of $\lieh$ consisting of vectors fixed under $\si$.

There is a natural lifting of $\si$ to an automorphism of 
the lattice vertex algebra $V_Q$ by setting
\begin{equation}\label{twlat4}
\si(a_n)=\si(a)_n \,, \quad \si(e^\al)=\eta(\al) e^{\si\al} \,,
\qquad a\in\lieh \,, \; \al\in Q \,.
\end{equation}
Note that the order of $\si$ is either $N$ or $2N$ when acting on $V_Q$.

We recall the following useful fact concerning dual spaces taken in a subspace of $\mathfrak{h}$.
\begin{lemma}[\cite{BK}, Lemma 4.6]\label{tdual}
Let\/ $\mathfrak{t}$ be a subspace of\/ $\mathfrak{h}$ on which the bilinear form $(\cdot|\cdot)$
is nondegenerate. Denote by $\pi_\mathfrak{t}\colon \mathfrak{h}\to\mathfrak{t}$ the orthogonal projection of\/ $\mathfrak{h}$ onto $\mathfrak{t}$. Then for any lattice $L\subset\mathfrak{h}$, we have 
\begin{equation}
\pi_\mathfrak{t}(L^*)=(L\cap\mathfrak{t})^{*_\mathfrak{t}},
\end{equation}
where $*_\mathfrak{t}$ denotes taking dual in $\mathfrak{t}$.  Equivalently
\begin{equation}
(\pi_\mathfrak{t}(L))^{*_\mathfrak{t}}=L^*\cap\mathfrak{t}.
\end{equation}
\end{lemma}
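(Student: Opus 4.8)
The plan is to reduce the whole statement to a single elementary property of the orthogonal projection onto a nondegenerate subspace, prove the second (equivalent) identity directly from it, and then deduce the first identity by duality.

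First I would record the key identity. Since the form is nondegenerate on $\mathfrak{t}$, we have $\mathfrak{t}\cap\mathfrak{t}^\perp=0$, hence an orthogonal decomposition $\mathfrak{h}=\mathfrak{t}\oplus\mathfrak{t}^\perp$, and $\pi_{\mathfrak{t}}$ is the projection along $\mathfrak{t}^\perp$. For every $v\in\mathfrak{h}$ the difference $v-\pi_{\mathfrak{t}}(v)$ lies in $\mathfrak{t}^\perp$, so for all $t\in\mathfrak{t}$,
\[
(\pi_{\mathfrak{t}}(v)\,|\,t)=(v\,|\,t),
\]
and by symmetry of the form $(\mu\,|\,\al)=(\mu\,|\,\pi_{\mathfrak{t}}(\al))$ for all $\mu\in\mathfrak{t}$ and $\al\in\mathfrak{h}$. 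This adjointness of $\pi_{\mathfrak{t}}$ is the only tool the argument requires.

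Next I would prove the second identity $(\pi_{\mathfrak{t}}(L))^{*_{\mathfrak{t}}}=L^*\cap\mathfrak{t}$, for which both inclusions are immediate. If $\la\in L^*\cap\mathfrak{t}$, then for every $\al\in L$ one has $(\la\,|\,\pi_{\mathfrak{t}}(\al))=(\la\,|\,\al)\in\ZZ$, so $\la\in(\pi_{\mathfrak{t}}(L))^{*_{\mathfrak{t}}}$. Conversely, if $\mu\in(\pi_{\mathfrak{t}}(L))^{*_{\mathfrak{t}}}\subseteq\mathfrak{t}$, then $(\mu\,|\,\al)=(\mu\,|\,\pi_{\mathfrak{t}}(\al))\in\ZZ$ for all $\al\in L$, whence $\mu\in L^*\cap\mathfrak{t}$. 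Note that this identity holds with no further hypotheses.

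Finally I would deduce the first identity from the second. Applying the identity just proved with $L$ replaced by $L^*$, and using biduality $(L^*)^*=L$, gives $(\pi_{\mathfrak{t}}(L^*))^{*_{\mathfrak{t}}}=L\cap\mathfrak{t}$; taking duals in $\mathfrak{t}$ once more yields $\pi_{\mathfrak{t}}(L^*)=(L\cap\mathfrak{t})^{*_{\mathfrak{t}}}$, which is the first identity. The only real obstacle here is that double-dualization in $\mathfrak{t}$ must return the original group, which requires $\pi_{\mathfrak{t}}(L^*)$ (equivalently $L\cap\mathfrak{t}$) to be a genuine full-rank lattice in $\mathfrak{t}$. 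This holds exactly when $\mathfrak{t}$ is spanned by lattice vectors, i.e.\ when $\mathfrak{t}$ is a rational subspace — automatic in all our applications, where $\mathfrak{t}$ is an eigenspace-type subspace of the rational isometry $\si$ and hence defined over $\QQ$. Without this rationality, $\pi_{\mathfrak{t}}(L^*)$ need not be discrete and the first identity can genuinely fail, whereas the second identity remains valid in complete generality.
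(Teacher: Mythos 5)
Your argument is correct, but note that the paper itself gives no proof of this lemma --- it is quoted verbatim from \cite{BK} (Lemma 4.6) --- so there is no in-paper proof to compare against. On its own merits, your proposal is sound and well organized: the adjointness identity $(\pi_{\mathfrak{t}}(v)\,|\,t)=(v\,|\,t)$ is exactly the right tool, the two inclusions for $(\pi_{\mathfrak{t}}(L))^{*_{\mathfrak{t}}}=L^*\cap\mathfrak{t}$ are complete, and deducing the companion identity by substituting $L^*$ for $L$ and double-dualizing is a clean way to obtain the hard inclusion $(L\cap\mathfrak{t})^{*_{\mathfrak{t}}}\subseteq\pi_{\mathfrak{t}}(L^*)$ (the surjectivity of $\pi_{\mathfrak{t}}\colon L^*\to(L\cap\mathfrak{t})^{*_{\mathfrak{t}}}$), which a direct approach would have to establish by hand. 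You are also right to flag the two places where biduality is invoked: $(L^*)^*=L$ needs $L$ to be of full rank in $\lieh$, and $((\pi_{\mathfrak{t}}(L^*))^{*_{\mathfrak{t}}})^{*_{\mathfrak{t}}}=\pi_{\mathfrak{t}}(L^*)$ needs $L\cap\mathfrak{t}$ to be of full rank in $\mathfrak{t}$, i.e.\ $\mathfrak{t}$ rational with respect to $L$. Both hypotheses hold in every use of the lemma in this paper ($L=Q$ or $Q^*$ and $\mathfrak{t}=\lieh_0$ or an eigenspace-sum of the finite-order isometry $\si$), and your observation that the second identity survives without any rationality assumption while the first can fail is a genuine and worthwhile refinement of the statement as quoted.
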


Next, we introduce the group $G = \CC^\times \times \exp\lieh_0 \times Q$ consisting of elements $c \, e^h U_\al$ 
($c\in\CC^\times$, $h\in\lieh_0$, $\al\in Q$) with multiplication
\begin{align}
\label{tig1}
e^h e^{h'} &= e^{h+h'} \,,
\\
\label{tig2}
e^h U_\al e^{-h} &= e^{(h|\al)} U_\al \,,
\\
\label{tig3}
U_\al U_\be &= \ep(\al,\be) B_{\al,\be}^{-1} \, U_{\al+\be} \,,
\end{align}
where
\begin{equation}\label{Balbe}
B_{\al,\be} = 
N^{ -(\al|\be) } \prod_{k=1}^{N-1} \bigl(1 - e^{2\pi\ii k/N} \bigr)^{ (\si^k\al|\be) } \,.
\end{equation}
We set the following notation for the eigenspaces of $\si$:
\begin{align}
\mathfrak{h}_{j/N}&=\bigl\{h\in\mathfrak{h}\,\big|\,\si h=e^{-2\pi\ii j/N}h\bigr\} \,, \qquad 0\leq j<N\,,\label{j/N}\\
\mathfrak{h}_\perp&=(\mathfrak{h}_0)^\perp=\bigoplus_{j=1}^{N-1}\mathfrak{h}_{j/N}\label{hperp} \,.
\end{align}

From \eqref{tig3}, we get the commutator
\begin{align}\label{Calbe}
C_{\al,\be}&=U_\al U_\be U_\al^{-1}U_\be^{-1}=e^{\pi\ii(\pi_0\al|\be)}e^{2\pi\ii(\al_*|\be)},
\end{align}
for $\al=\pi_0\al+(1-\si)\al_*,$ where $\al_*\in\mathfrak{h}_\perp$ and 
$\pi_0$ is the orthogonal projection of $\lieh$ onto $\lieh_0$ (see \cite[(4.44)]{BK}).
We will also use the notation $\pi_\perp=1-\pi_0$ for the orthogonal projection of $\lieh$ onto $\lieh_\perp$.

By \cite[Lemma 4.4]{BK}, the center $Z(G)$ of $G$ consists of all elements of the form 
\begin{equation}\label{ZG}
c \, e^{2\pi\ii \,\pi_0(\la)}U_{(1-\si)\la} \,,
\end{equation} 
where $c\in\CC^\times$ and $\la\in Q^*$ is such that $(1-\si)\la\in Q$. 
For $\al\in Q$, we set (cf.\ \cite[(4.46)]{BK}):
\begin{equation}\label{Cal}
C_\al = \eta(\al) U_{\si\al}^{-1} U_\al e^{ 2\pi\ii(b_\al+\pi_0\al) } \,, 
\end{equation}
where 
\begin{equation}\label{bal}
b_\al=\frac12 \bigl( |\pi_0\al|^2-|\al|^2 \bigr).
\end{equation}
Applying \eqref{tig3}, we see that $C_\al$ has the form \eqref{ZG} for $\la=\al$ and a suitable scalar $c$; hence
$C_\al\in Z(G)$ for all $\al\in Q$.

The next lemma is contained in the proof of \cite[Proposition 5.5]{BS}, but is provided here for completeness.

\begin{lemma}\label{CalCbe}
We have\/
$C_\al C_\be = C_{\al+\be}$ 
for all\/ $\al,\be\in Q$.
\end{lemma}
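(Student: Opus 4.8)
The plan is to reduce the group identity to a single scalar identity by bringing $C_\al C_\be$ into standard position and then showing the resulting phase is trivial.

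First I would pull the scalar factors $\eta(\al),\eta(\be),e^{2\pi\ii b_\al},e^{2\pi\ii b_\be}$ out of the product and move the group-like factor $e^{2\pi\ii\pi_0\al}$ to the far right. Since $U_{\si\be}^{-1}U_\be$ is, by \eqref{tig3}, a scalar multiple of $U_{(1-\si)\be}$ and $(1-\si)\be\in\lieh_\perp$ is orthogonal to $\pi_0\al\in\lieh_0$, relation \eqref{tig2} shows $e^{2\pi\ii\pi_0\al}$ commutes with $U_{\si\be}^{-1}U_\be$. This rewrites $C_\al C_\be$ as $\eta(\al)\eta(\be)e^{2\pi\ii(b_\al+b_\be)}\,U_{\si\al}^{-1}U_\al U_{\si\be}^{-1}U_\be\,e^{2\pi\ii\pi_0(\al+\be)}$. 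Comparing with $C_{\al+\be}$ and cancelling the common $e^{2\pi\ii\pi_0(\al+\be)}$, the claim reduces to
\begin{equation*}
\eta(\al)\eta(\be)e^{2\pi\ii(b_\al+b_\be)}\,U_{\si\al}^{-1}U_\al U_{\si\be}^{-1}U_\be=\eta(\al+\be)e^{2\pi\ii b_{\al+\be}}\,U_{\si(\al+\be)}^{-1}U_{\al+\be}.
\end{equation*}

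Next I would collapse the middle product of $U$'s. Moving $U_\al$ past $U_{\si\be}^{-1}$ produces the central commutator $C_{\al,\si\be}^{-1}$ from \eqref{Calbe}, and then the group law \eqref{tig3} merges $U_{\si\al}^{-1}U_{\si\be}^{-1}=\ep(\si\be,\si\al)B_{\si\be,\si\al}U_{\si(\al+\be)}^{-1}$ and $U_\al U_\be=\ep(\al,\be)B_{\al,\be}^{-1}U_{\al+\be}$. After cancelling the surviving group element $U_{\si(\al+\be)}^{-1}U_{\al+\be}$, everything becomes scalar. I would then invoke the cocycle relation \eqref{twlat3} in the form $\eta(\al)\eta(\be)\ep(\al,\be)/\eta(\al+\be)=\ep(\si\al,\si\be)$, the bimultiplicativity \eqref{lat22} giving $\ep(\si\al,\si\be)\ep(\si\be,\si\al)=(-1)^{(\al|\be)}$, and the elementary computation $b_\al+b_\be-b_{\al+\be}=(\pi_\perp\al|\be)$ coming from \eqref{bal}. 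At this point the remaining identity to verify is
\begin{equation*}
(-1)^{(\al|\be)}e^{2\pi\ii(\pi_\perp\al|\be)}\,C_{\al,\si\be}^{-1}\,B_{\si\be,\si\al}B_{\al,\be}^{-1}=1.
\end{equation*}

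The heart of the argument is evaluating the $B$-ratio. Using \eqref{Balbe}, $\si$-invariance of the form, and the reindexing $k\mapsto N-k$, the ratio telescopes to $B_{\si\be,\si\al}B_{\al,\be}^{-1}=\prod_{k=1}^{N-1}\bigl(\tfrac{1-\om^{-k}}{1-\om^k}\bigr)^{(\si^k\al|\be)}$ with $\om=e^{2\pi\ii/N}$, and the identity $\tfrac{1-\om^{-k}}{1-\om^k}=-\om^{-k}$ turns this into a pure phase. Combining it with $C_{\al,\si\be}=e^{\pi\ii(\pi_0\al|\be)}e^{2\pi\ii(\al_*|\si\be)}$, and evaluating the resulting sums via $\sum_{k=0}^{N-1}\si^k=N\pi_0$ and $\sum_{k=1}^{N-1}k\si^k=-N(1-\si)^{-1}$ on $\lieh_\perp$ (where $(1-\si)^{-1}\pi_\perp\al=\al_*$), the total exponent collapses to $2\pi\ii\bigl[(\pi_\perp\al|\be)+(\al_*|(1-\si)\be)\bigr]=2\pi\ii\,(\al_*|(2-\si-\si^{-1})\be)$. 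I expect this last step to be the main obstacle: one must show this exponent lies in $2\pi\ii\,\ZZ$. The decisive point is the lattice identity $(2-\si-\si^{-1})\al_*=(1-\si^{-1})\al$, which follows from $(1-\si)\al_*=\pi_\perp\al$ together with $\si\pi_0=\pi_0$; it rewrites the pairing as $(\al|\be)-(\al|\si\be)\in\ZZ$, since $\al,\be,\si\be\in Q$ and the form is integral on $Q$. Hence the leftover phase is trivial and $C_\al C_\be=C_{\al+\be}$.
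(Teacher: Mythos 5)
Your proof is correct, but it takes a genuinely more computational route than the paper's. The paper first establishes that $C_\be\in Z(G)$ and then commutes $C_\be$ leftward past $U_\al e^{2\pi\ii(b_\al+\pi_0\al)}$; this arranges the four $U$-factors as $U_{\si\al}^{-1}U_{\si\be}^{-1}U_\be U_\al$, whose collapse via \eqref{tig3} produces the ratio $B_{\si\be,\si\al}/B_{\be,\al}$, which equals $1$ outright by the $\si$-invariance of the form in \eqref{Balbe}, together with $\ep(\be,\al)/\ep(\si\be,\si\al)=\eta(\al+\be)/\bigl(\eta(\al)\eta(\be)\bigr)$ from \eqref{twlat3}; the only residual phase is $e^{2\pi\ii(\pi_0\be|\al)}$, which cancels against $b_\al+b_\be-b_{\al+\be}$ modulo the integer $(\al|\be)$. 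Your ordering $U_{\si\al}^{-1}U_{\si\be}^{-1}U_\al U_\be$ instead forces an explicit evaluation of $B_{\be,\al}/B_{\al,\be}$ as a root of unity and carries the extra commutator $C_{\al,\si\be}$, with the final integrality resting on the identity $(2-\si-\si^{-1})\al_*=(1-\si^{-1})\al$. This is heavier, but sound: I checked that your leftover exponent does reduce to $2\pi\ii\bigl((\al|\be)-(\al|\si\be)\bigr)\in2\pi\ii\,\ZZ$, and your route has the minor virtue of not invoking the centrality of $C_\be$. One gloss worth making explicit: $\sum_{k=1}^{N-1}k\si^k=-N(1-\si)^{-1}$ only on $\lieh_\perp$, while on $\lieh_0$ the sum is $\tfrac{N(N-1)}{2}$; that $\lieh_0$-contribution, combined with the sign $(-1)^{\sum_k(\si^k\al|\be)}$, is exactly what cancels the $e^{-\pi\ii(\pi_0\al|\be)}$ coming from $C_{\al,\si\be}^{-1}$, and your stated final exponent is correct only after this cancellation is carried out.
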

\begin{proof}
Using that $C_\be\in Z(G)$, we find:
\begin{align*}
C_\al C_\be &=\eta(\al) U_{\si\al}^{-1} U_\al e^{ 2\pi\ii(b_\al+\pi_0\al) } C_\be \\
&=\eta(\al) U_{\si\al}^{-1} C_\be U_\al e^{ 2\pi\ii(b_\al+\pi_0\al) } \\
&=\eta(\al) \eta(\be) e^{ 2\pi\ii(b_\al+b_\be)} U_{\si\al}^{-1} U_{\si\be}^{-1}  U_\be e^{ 2\pi\ii \, \pi_0(\be) } U_\al e^{ 2\pi\ii \, \pi_0(\al) } \\
&=\eta(\al) \eta(\be) e^{ 2\pi\ii(b_\al+b_\be)} U_{\si\al}^{-1} U_{\si\be}^{-1}  U_\be U_\al e^{ 2\pi\ii (\pi_0\be|\al) } e^{ 2\pi\ii \, \pi_0(\al+\be) } \,,
\end{align*}
where in the last line we used \eqref{tig2}.
Then applying \eqref{tig3} and \eqref{twlat3}, we get:
\begin{align*}
U_{\si\al}^{-1} U_{\si\be}^{-1}  U_\be U_\al & = \frac{B_{\si\be,\si\al}}{\ep(\si\be,\si\al)} \frac{\ep(\be,\al)}{B_{\be,\al}} U_{\si(\al+\be)}^{-1} U_{\al+\be} \\
&= \frac{\eta(\al+\be)}{\eta(\al) \eta(\be)} U_{\si(\al+\be)}^{-1} U_{\al+\be} \,,
\end{align*}
because $B_{\si\be,\si\al} = B_{\be,\al}$. The rest of the proof follows from \eqref{bal} and the fact that $(\pi_0\be|\al) = (\pi_0\be|\pi_0\al)$.
\end{proof}

Now we define 
\begin{equation}\label{Nsi}
N_\si=\{C_\al\,|\,\al\in Q\}.
\end{equation}
It follows from \leref{CalCbe} that $N_\si$ is a subgroup of $G$. Moreover, $N_\si \subset Z(G)$.
Let $G_\si$ be the quotient group $G/N_\si$.
Note that $\exp {2\pi\ii(Q\cap\mathfrak{h}_0)}$ is a subgroup of $N_\si$ so that these elements become trivial in $G_\si$.
Also consider the subgroup 
\begin{equation}
T_\si=\exp \lieh_0 \big/ \exp 2\pi\ii(Q\cap\mathfrak{h}_0) \subset G_\si .
\end{equation}
Then $\CC^\times\times T_\si$ is the connected component of the identity in $G_\si$.

Let $G^\perp\subset G$ be the subgroup 
\begin{equation}
G^\perp=\{c\,U_\al\,|\, c\in\CC^\times, \; \al\in Q\cap\mathfrak{h}_\perp\},
\end{equation}
and denote by $G_\si^\perp$ the image of $G^\perp$ in $G_\si$. This image is described as the quotient group $G^\perp/N_\si^\perp$, where 
\begin{equation}\label{Nsiperp}
N_\si^\perp=N_\si\cap G^\perp=\{C_\al\,|\,\al\in Q\cap\mathfrak{h}_\perp\}.
\end{equation}
Then it is easy to see that $N_\si^\perp$ is a subgroup of $Z(G^\perp)$ and the centralizer of $\hat{\mathfrak{h}}_\si$ in $G_\si$ is equal to $T_\si\times G_\si^\perp$. The group $G_\si^\perp$ can also be viewed as a central extension by $\CC^\times$ of the finite abelian group 
\begin{align}\label{Gsiperp}
\frac{Q\cap\mathfrak{h}_\perp} {(1-\si)(Q\cap\mathfrak{h}_\perp)} \,.
\end{align}

We will need the following description of the centers.

\begin{lemma}[\cite{BK}, Lemma 4.5]
The centers of the groups $G_\si$, $G^\perp$, and $G_\si^\perp$ are given by$:$
\begin{align}
Z(G_\si)&\cong Z(G)/N_\si \cong \CC^\times \times (Q^*/Q)^\si, \label{ZGsi}\\
Z(G^\perp)&=\{c \, U_\al\,|\,c\in\CC^\times, \; \al\in Q \cap (1-\si)Q^*\},\label{ZGperp}\\
\label{ZGsiperp}
Z(G_\si^\perp)&\cong Z(G^\perp)/N_\si^\perp \cong \CC^\times \times \frac{Q \cap (1-\si)Q^*} {{(1-\si)(Q\cap\mathfrak{h}_\perp)}} \,.
\end{align}
\end{lemma}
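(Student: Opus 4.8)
The plan is to obtain all three displayed isomorphisms by the same two-step pattern: first identify the center of each quotient group with the quotient of the center, and then compute that quotient explicitly. The first step rests on a single observation. From \eqref{tig1}--\eqref{tig3} and the commutator \eqref{Calbe} one checks that every commutator of generators of $G$ lies in $\CC^\times$: indeed $[e^h,e^{h'}]=1$, $[e^h,U_\al]=e^{(h|\al)}$ by \eqref{tig2}, and $[U_\al,U_\be]=C_{\al,\be}\in\CC^\times$ by \eqref{Calbe}. Since $\CC^\times$ is central this gives $[G,G]\subseteq\CC^\times$, and the same argument gives $[G^\perp,G^\perp]\subseteq\CC^\times$. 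I would then verify $N_\si\cap\CC^\times=\{1\}$ and $N_\si^\perp\cap\CC^\times=\{1\}$: using \eqref{Cal} and \eqref{bal}, the image of $C_\al$ in $\exp\lieh_0\times Q$ is $(e^{2\pi\ii\pi_0\al},(1-\si)\al)$, which is trivial only when $\pi_0\al=0$ and $(1-\si)\al=0$, forcing $\al\in\lieh_0\cap\lieh_\perp=0$. Now if $gN_\si\in Z(G_\si)$, then $[g,h]\in N_\si$ for all $h\in G$; but $[g,h]\in[G,G]\subseteq\CC^\times$, so $[g,h]\in N_\si\cap\CC^\times=\{1\}$ and hence $g\in Z(G)$. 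This yields $Z(G_\si)=Z(G)/N_\si$, and the identical reasoning gives $Z(G_\si^\perp)=Z(G^\perp)/N_\si^\perp$.

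For the second isomorphism in the first line I would use the explicit form \eqref{ZG} of $Z(G)$. Modulo the central $\CC^\times$, an element $c\,e^{2\pi\ii\pi_0(\la)}U_{(1-\si)\la}$ determines $(1-\si)\la$ from its $Q$-component and $\pi_0\la$ from its $\exp\lieh_0$-component; since $1-\si$ is invertible on $\lieh_\perp$, one recovers $\pi_\perp\la=(1-\si)^{-1}(1-\si)\la$ and hence $\la$ itself. Thus $Z(G)/\CC^\times\cong\{\la\in Q^*:(1-\si)\la\in Q\}$ as groups, the group law being addition of the parameters $\la$ (the scalars produced by \eqref{tig3} are absorbed into $\CC^\times$). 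By \leref{CalCbe}, $C_\al$ has parameter $\la=\al$, so $N_\si$ maps onto the subgroup $Q$, and the quotient is precisely $(Q^*/Q)^\si=\{\la+Q:(1-\si)\la\in Q\}$. Since $\CC^\times$ is divisible, the resulting central extension of the finite group $(Q^*/Q)^\si$ by $\CC^\times$ splits, giving $Z(G)/N_\si\cong\CC^\times\times(Q^*/Q)^\si$.

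The computation of $Z(G^\perp)$ in the second line is the main obstacle and the only genuinely new calculation. For $\al\in Q\cap\lieh_\perp$ we have $\pi_0\al=0$, so \eqref{Calbe} simplifies to $C_{\al,\be}=e^{2\pi\ii((1-\si)^{-1}\al|\be)}$. Hence $c\,U_\al$ is central in $G^\perp$ if and only if $((1-\si)^{-1}\al|\be)\in\ZZ$ for all $\be\in Q\cap\lieh_\perp$, that is, $(1-\si)^{-1}\al\in(Q\cap\lieh_\perp)^{*_{\lieh_\perp}}$. Applying \leref{tdual} with $\mathfrak{t}=\lieh_\perp$ rewrites this condition as $(1-\si)^{-1}\al\in\pi_\perp(Q^*)$, which, after applying $1-\si$, is equivalent to $\al\in(1-\si)Q^*$. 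Combined with $\al\in Q$, this yields exactly $Z(G^\perp)=\{c\,U_\al:\al\in Q\cap(1-\si)Q^*\}$.

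Finally, the third isomorphism follows as in the first line: $Z(G^\perp)$ is parametrized modulo $\CC^\times$ by $Q\cap(1-\si)Q^*$, and by \eqref{Cal} (with $\pi_0\al=0$) the element $C_\al$ has parameter $(1-\si)\al$, so $N_\si^\perp$ maps onto $(1-\si)(Q\cap\lieh_\perp)$. Taking the quotient and splitting off the divisible factor $\CC^\times$ gives $Z(G_\si^\perp)\cong\CC^\times\times\bigl(Q\cap(1-\si)Q^*\bigr)\big/\bigl((1-\si)(Q\cap\lieh_\perp)\bigr)$, as claimed. Throughout, the substantive input is \leref{tdual}, which converts the integrality condition defining centrality in $G^\perp$ into the lattice-theoretic statement $\al\in(1-\si)Q^*$; the remaining arguments are structural facts about central extensions with scalar commutators.
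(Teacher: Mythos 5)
Your proof is correct. Note that the paper offers no argument of its own for this lemma --- it is imported verbatim from \cite{BK}, Lemma 4.5 --- so there is nothing in the text to compare against; what you have written is a complete, self-contained derivation from the ingredients the paper does supply, namely the multiplication rules \eqref{tig1}--\eqref{tig3}, the commutator formula \eqref{Calbe}, the description \eqref{ZG} of $Z(G)$, \leref{CalCbe}, and \leref{tdual}. The two structural points that drive the reductions $Z(G_\si)\cong Z(G)/N_\si$ and $Z(G_\si^\perp)\cong Z(G^\perp)/N_\si^\perp$ both check out: $[G,G]\subseteq\CC^\times$ extends from generators to arbitrary elements because central commutators make the commutator map bimultiplicative, and $N_\si\cap\CC^\times=\{1\}$ follows as you say since the $\exp\lieh_0\times Q$-component of $C_\al$ is $(e^{2\pi\ii\,\pi_0\al},(1-\si)\al)$, which vanishes only for $\al\in\lieh_0\cap\lieh_\perp=0$. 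The splitting of the resulting extensions via divisibility of $\CC^\times$ is legitimate because every group in sight is abelian, so $\mathrm{Ext}^1(-,\CC^\times)=0$. The one genuinely computational step --- identifying $Z(G^\perp)$ --- is also right: for $\al\in Q\cap\lieh_\perp$ the centrality condition is $((1-\si)^{-1}\al\,|\,\be)\in\ZZ$ for all $\be\in Q\cap\lieh_\perp$, and \leref{tdual} with $\mathfrak{t}=\lieh_\perp$ (where the form is nondegenerate) turns this into $(1-\si)^{-1}\al\in\pi_\perp(Q^*)$, hence $\al\in(1-\si)\pi_\perp(Q^*)=(1-\si)Q^*$, giving \eqref{ZGperp}; the quotient by the image $(1-\si)(Q\cap\lieh_\perp)$ of $N_\si^\perp$ then yields \eqref{ZGsiperp}.
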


Note that in \eqref{ZGsi}, $(Q^*/Q)^\si$ is the subgroup of $Q^*/Q$ consisting of cosets $\la+Q$, where $\la\in Q^*$, such that $\si(\la+Q)=\la+Q$, i.e., $(1-\si)\la\in Q$.

\subsection{The $G_\si$-modules $W(\mu,\ze)$} 
We continue to use the notation of the previous subsection.
Let $\zeta\colon Z(G^\perp_\si)\rightarrow\CC^\times$ be a central character of $G^\perp_\si$ such that $\zeta(c)=c$  for every $c\in\CC^\times$. 
Then $\zeta$ can also be viewed as a central character of $G^\perp$ such that $\zeta(c)=c$ and $\zeta(C_\al)=1$ for every $c\in\CC^\times$ and $\al\in Q\cap\mathfrak{h}_\perp$.
Let $\Om(\ze)$ be the unique (up to isomorphism) finite-dimensional irreducible $G_\si^\perp$-module corresponding to the central character $\zeta$.
Recall that $\Om(\ze)$ can be constructed as follows (see e.g.\ \cite{FLM}). Pick a maximal abelain subgroup $A_\si^\perp$ of $G_\si^\perp$, and extend $\zeta$ to $A_\si^\perp$; then $\Om(\ze)$ is the induced module
\begin{equation}\label{A1}
\Om(\ze)=\Ind_{A_\si^\perp}^{G_\si^\perp}\CC_\zeta,
\end{equation}
where $\CC_\zeta$ is the $1$-dimensional $A_\si^\perp$-module $\CC$ with character $\zeta$.

By Lemma \ref{tdual}, we have $\pi_0(Q^*)=(Q\cap\mathfrak{h}_0)^{*_0}$, where $*_0$ represents taking dual in $\mathfrak{h}_0$.
For $\mu\in\pi_0(Q^*)$, consider the $1$-dimensional $T_\si$-module $\CC_\mu$, with the action of $T_\si$ given by 
\begin{equation}
e^{2\pi\ii h}\mapsto e^{2\pi\ii (h|\mu)} \,, \qquad h\in\lieh \mod Q\cap\mathfrak{h}_0.
\end{equation}
Notice that this is independent of the representative of $h$ modulo $Q\cap\mathfrak{h}_0$ because $(\al|\mu)\in\ZZ$ for $\al\in Q\cap\mathfrak{h}_0$.
We also have an action of $\mathfrak{h}_0$ on $\CC_\mu$ given by $h\mapsto(h|\mu)$ for $h\in\mathfrak{h}_0$. 

Now consider the $T_\si\times G_\si^\perp$-module $\Om(\mu,\zeta) = \CC_\mu \otimes \Om(\ze)$, and induce it to a module for $G_\si$:
\begin{equation}\label{W}
W(\mu,\zeta)=\Ind_{T_\si\times G_\si^\perp}^{G_\si}\Om(\mu,\zeta).
\end{equation}
Alternatively, by \eqref{A1}, we can write
\begin{equation}\label{A}
W(\mu,\zeta)=\Ind_{T_\si\times A_\si^\perp}^{G_\si}\CC_{\mu,\zeta} \,,
\end{equation}
where $\CC_{\mu,\zeta}$ is the $1$-dimensional $T_\si\times A_\si^\perp$-module $\CC$ with action 
\begin{equation}\label{Aaction}
(e^{2\pi\ii h},c\,U_\al) 1_{\mu,\ze} = c\,e^{2\pi\ii(h|\mu)}\,\zeta(U_\al) 1_{\mu,\ze}
\end{equation}
for $h\in\mathfrak{h}_0$, $c\in\CC^\times$, and $\al\in Q\cap\mathfrak{h}_\perp$ such that $U_\al\in A_\si^\perp$.

By \cite[Proposition 4.4]{BK} and its proof, $W(\mu,\zeta)$ is an irreducible $G_\si$-module such that the action of $T_\si$ on it is semisimple and every $c\in\CC^\times$ acts as the scalar $c$.
Moreover, any irreducible $G_\si$-module with these properties is isomorphic to $W(\mu,\zeta)$ for some $\mu,\ze$.

\begin{lemma}[\cite{BK}]\label{lmuze}
Two pairs\/ $(\mu,\zeta)$ and\/ $(\mu',\zeta')$ correspond to isomorphic irreducible\/ $G_\si$-modules if and only if they are related by
\begin{equation}\label{equivrel}
\mu'=\mu+\pi_0\al, \qquad \zeta'(U_\be)=C_{\al,\be}^{-1} \, \zeta(U_\be),
\end{equation}
for some fixed\/ $\al\in Q$ and all\/ $\be\in Q \cap (1-\si)Q^*$.
In this case, the isomorphism\/ $W(\mu',\zeta') \to W(\mu,\ze)$ sends\/ $1_{\mu',\ze'}$ to\/ $U_\al 1_{\mu,\ze}$.
\end{lemma}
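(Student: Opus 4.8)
The plan is to compute $\Hom_{G_\si}\bigl(W(\mu',\ze'),W(\mu,\ze)\bigr)$ by Frobenius reciprocity and then invoke irreducibility: since both modules are irreducible by \cite{BK}, any nonzero homomorphism between them is automatically injective with nonzero image, hence an isomorphism. Writing $W(\mu',\ze')=\Ind_{T_\si\times G_\si^\perp}^{G_\si}\bigl(\CC_{\mu'}\otimes\Om(\ze')\bigr)$, the standard adjunction gives
\begin{equation*}
\Hom_{G_\si}\bigl(W(\mu',\ze'),W(\mu,\ze)\bigr)\cong\Hom_{T_\si\times G_\si^\perp}\bigl(\CC_{\mu'}\otimes\Om(\ze'),\,\Res W(\mu,\ze)\bigr),
\end{equation*}
so the whole problem reduces to decomposing the restriction of $W(\mu,\ze)$ to the centralizer $T_\si\times G_\si^\perp$ and comparing irreducible constituents.

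First I would decompose $\Res W(\mu,\ze)$ into the ``layers'' $U_\al\cdot\bigl(\CC_\mu\otimes\Om(\ze)\bigr)$, where $\al$ runs over a set of coset representatives of $T_\si\times G_\si^\perp$ in $G_\si$; by the definition of the induced module this is a direct sum. The key step is to identify each layer as a $T_\si\times G_\si^\perp$-module. Using \eqref{tig2} one checks that $e^{2\pi\ii h}$ (for $h\in\mathfrak{h}_0$) acts on $U_\al v$ by the scalar $e^{2\pi\ii(h|\mu+\pi_0\al)}$, so the map $v\mapsto U_\al v$ shifts the $T_\si$-weight from $\mu$ to $\mu+\pi_0\al$, carrying $\CC_\mu$ to $\CC_{\mu+\pi_0\al}$. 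For the $G_\si^\perp$-action I would invoke the commutator \eqref{Calbe}: since $U_\be U_\al=C_{\al,\be}^{-1}U_\al U_\be$ for $\be\in Q\cap\mathfrak{h}_\perp$, the element $U_\be$ acts on $U_\al v$ by $C_{\al,\be}^{-1}$ times its action on $v$. Because $\pi_0\al\perp\be$, formula \eqref{Calbe} collapses to $C_{\al,\be}=e^{2\pi\ii(\al_*|\be)}$, so $\be\mapsto C_{\al,\be}^{-1}$ is an honest one-dimensional character of the finite abelian quotient \eqref{Gsiperp}. Twisting the irreducible module $\Om(\ze)$ by this character yields an irreducible $G_\si^\perp$-module whose central character is $\ze$ multiplied by $C_{\al,\cdot}^{-1}$ on $Z(G_\si^\perp)$; by the uniqueness of $\Om(\ze)$ for a prescribed central character, this twist is exactly $\Om(\ze^{(\al)})$ with $\ze^{(\al)}(U_\be)=C_{\al,\be}^{-1}\ze(U_\be)$ for $\be\in Q\cap(1-\si)Q^*$. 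Thus each layer satisfies
\begin{equation*}
U_\al\cdot\bigl(\CC_\mu\otimes\Om(\ze)\bigr)\cong\CC_{\mu+\pi_0\al}\otimes\Om(\ze^{(\al)})
\end{equation*}
as $T_\si\times G_\si^\perp$-modules.

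With this decomposition the conclusion is immediate. Each layer and the module $\CC_{\mu'}\otimes\Om(\ze')$ are irreducible over $T_\si\times G_\si^\perp$, so the $\Hom$ above is nonzero precisely when $\CC_{\mu'}\otimes\Om(\ze')\cong\CC_{\mu+\pi_0\al}\otimes\Om(\ze^{(\al)})$ for some representative $\al$. Comparing $T_\si$-weights forces $\mu'=\mu+\pi_0\al$, and comparing central characters on $Z(G_\si^\perp)$ forces $\ze'(U_\be)=C_{\al,\be}^{-1}\ze(U_\be)$ for all $\be\in Q\cap(1-\si)Q^*$; these are exactly the relations \eqref{equivrel}. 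Conversely, if \eqref{equivrel} holds for some $\al\in Q$, the corresponding layer is isomorphic to $\CC_{\mu'}\otimes\Om(\ze')$, the $\Hom$ is nonzero, and the resulting map is an isomorphism sending the cyclic generator $1_{\mu',\ze'}$ into this layer, i.e.\ to $U_\al 1_{\mu,\ze}$, as asserted.

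I expect the main obstacle to be the precise identification of the layers $U_\al\cdot(\CC_\mu\otimes\Om(\ze))$, in particular verifying that conjugation by $U_\al$ twists the $G_\si^\perp$-action by a genuine character of \eqref{Gsiperp} rather than by a projective factor. This is exactly where the simplification $C_{\al,\be}=e^{2\pi\ii(\al_*|\be)}$ for $\be\in\mathfrak{h}_\perp$ (coming from $\pi_0\al\perp\mathfrak{h}_\perp$) is essential, together with the uniqueness of $\Om(\ze)$ to pass from the twisted module back to the standard form $\Om(\ze^{(\al)})$. A secondary point requiring care is that $Z(G_\si^\perp)$ is central in $G_\si^\perp$ but not in $G_\si$; this noncentrality is precisely the mechanism by which conjugation by $U_\al$ alters the central character and produces the equivalence \eqref{equivrel}.
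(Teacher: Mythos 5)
Your argument is essentially correct, but it is worth noting that the paper itself does not prove this lemma: its ``proof'' is a pointer to the discussion above Eq.\ (4.57) in \cite{BK}, of which \eqref{equivrel} is a restatement. What you have written is a self-contained Mackey/Clifford-theoretic derivation of that fact, and it goes through. Since $T_\si\times G_\si^\perp$ is normal in $G_\si$ (conjugation by $U_\al$ sends $e^h$ to $e^{-(h|\al)}e^h$ and $U_\be$ to $C_{\al,\be}U_\be$ by \eqref{tig2}--\eqref{tig3}), the layers $U_\al\cdot(\CC_\mu\otimes\Om(\ze))$ with $\al\in\C_Q$ really are $T_\si\times G_\si^\perp$-submodules, your weight computation identifies the $T_\si$-character of the layer as $\mu+\pi_0\al$, and distinct representatives $\al$ give distinct weights, so the restriction is multiplicity-free; this is what pins the image of $1_{\mu',\ze'}$ down to a multiple of $U_\al 1_{\mu,\ze}$ in the end. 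Three small points deserve to be made explicit rather than implicit. First, the twisting character $\be\mapsto C_{\al,\be}^{-1}=e^{-2\pi\ii(\al_*|\be)}$ must be checked to be trivial on $N_\si^\perp$, i.e.\ $C_{\al,(1-\si)\ga}=1$ for $\ga\in Q\cap\lieh_\perp$; this follows from $(\al_*|(1-\si)\ga)=-(\al|\si\ga)\in\ZZ$, and only then does the twist descend to a genuine character of $G_\si^\perp$ as you assert. Second, in the converse direction \eqref{equivrel} is stated for an arbitrary $\al\in Q$, not a coset representative; one should observe that replacing $\al$ by $\al+\de$ with $\de\in Q\cap\lieh_\perp$ changes neither $\pi_0\al$ nor $C_{\al,\be}$ for $\be\in Q\cap(1-\si)Q^*$ (again because $(\de_*|(1-\si)\nu)\in\ZZ$), so the relation depends only on the coset of $\al$ and matches exactly one layer. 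Third, the final claim about $1_{\mu',\ze'}\mapsto U_\al 1_{\mu,\ze}$ implicitly uses that the extension of $\ze'$ from $Z(G_\si^\perp)$ to the maximal abelian subgroup $A_\si^\perp$ in \eqref{A1} is chosen compatibly with $C_{\al,\cdot}^{-1}\ze$; with that convention the $\ze'$-eigenline of $A_\si^\perp$ inside the layer is exactly $\CC\, U_\al 1_{\mu,\ze}$. With these verifications added, your proof is complete and, unlike the paper, does not outsource the statement to \cite{BK}.
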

\begin{proof}
This follows from the discussion in \cite{BK} above Eq.\ (4.57). Notice that \eqref{equivrel} coincides with \cite[(4.57)]{BK}.
\end{proof}

It is easy to check that \eqref{equivrel} defines an equivalence relation on pairs $(\mu,\zeta)$; henceforth, two such related pairs will be called \emph{equivalent}. We emphasize that while $\mu$ can be taken as a representative of the coset $\mu+\pi_0(Q)$, 
a change in the representative $\mu$ also changes the central character $\zeta$.
We summarize the above discussion as follows.

\begin{theorem}[cf.\ \cite{BK}, Proposition 4.4]\label{bij}
There is a bijective correspondence between the following sets of objects$:$
\begin{enumerate}
\item Irreducible representations of\/ $G_\si$, up to isomorphism, such that the action of\/ $T_\si$ is semisimple and every $c\in\CC^\times$ acts as the scalar $c;$ \label{1}
\item Characters $\chi\colon Z(G)\to\CC^\times$ such that\/ $\chi|_{N_\si}=1$ and\/ $\chi(c)=c$ for all\/ $c\in\CC^\times$ {\upshape{(}}cf.\ \eqref{Cal}, \eqref{Nsi}{\upshape{);}} \label{2}
\item Pairs $(\mu,\zeta)$, up to equivalence \eqref{equivrel}, where $\mu\in\pi_0(Q^*)$ and $\zeta$ is a central character of\/ $G_\si^\perp$ such that 
\begin{equation}\label{con}
e^{2\pi\ii(\ga|\mu)} \zeta\bigl(U^{-1}_{\si\ga}U_{\ga}\bigr)=\eta(\ga)e^{-2\pi\ii b_\ga},
\end{equation}
for all\/ $\ga\in Q$ {\upshape{(}}cf.\ \eqref{bal}{\upshape{)}}.
\label{3}
\end{enumerate}
\end{theorem}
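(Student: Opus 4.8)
The plan is to establish a chain of bijections among the three sets, passing through the intermediate set of central characters (set (\ref{2})). The key observation is that irreducible $G_\si$-modules of the specified type are controlled entirely by how the center $Z(G)$ acts, so the correspondence between (\ref{1}) and (\ref{2}) should come directly from Schur's lemma together with the structural results already in hand. Specifically, for an irreducible module $M$ as in (\ref{1}), the center $Z(G)$ acts by a character $\chi$ (by Schur's lemma, since each $z\in Z(G)$ commutes with the $G_\si$-action and $M$ is irreducible); the hypotheses that $T_\si$ acts semisimply and $c\in\CC^\times$ acts as $c$ force $\chi(c)=c$, while the requirement that $M$ descends to a genuine $G_\si=G/N_\si$-module forces $\chi|_{N_\si}=1$. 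Conversely, given such a $\chi$, I would invoke the classification statement recalled just before Lemma \ref{lmuze} — namely that every irreducible $G_\si$-module of this type is isomorphic to some $W(\mu,\zeta)$ — to produce a module realizing $\chi$, with uniqueness following from the fact that a central character determines the induced module $W(\mu,\zeta)$ up to isomorphism.

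Next I would set up the bijection between (\ref{2}) and (\ref{3}). The description \eqref{ZG} of $Z(G)$ shows that an element of the center has the form $c\,e^{2\pi\ii\pi_0(\la)}U_{(1-\si)\la}$ for $\la\in Q^*$ with $(1-\si)\la\in Q$, and by \eqref{ZGsi} we have $Z(G)/N_\si\cong\CC^\times\times(Q^*/Q)^\si$. A character $\chi$ on $Z(G)$ with $\chi(c)=c$ and $\chi|_{N_\si}=1$ is thus determined by its values on representatives indexed by $\la\in Q^*$ modulo the identifications coming from $Q$ and from $N_\si$. I would decompose these data into a $T_\si$-part, recording the action of $e^{2\pi\ii h}$ and yielding the weight $\mu=\pi_0(\la)\in\pi_0(Q^*)$, and a $G_\si^\perp$-part, recording the central character $\zeta$ on $Z(G_\si^\perp)$ via \eqref{ZGsiperp}. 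The compatibility condition \eqref{con} is precisely the statement that $\chi(C_\ga)=1$ for all $\ga\in Q$: unwinding the definition \eqref{Cal} of $C_\ga=\eta(\ga)U_{\si\ga}^{-1}U_\ga e^{2\pi\ii(b_\ga+\pi_0\ga)}$ and applying $\chi$ (using $\chi(e^{2\pi\ii\pi_0\ga})=e^{2\pi\ii(\ga|\mu)}$ and $\chi(U_{\si\ga}^{-1}U_\ga)=\zeta(U_{\si\ga}^{-1}U_\ga)$) gives exactly $e^{2\pi\ii(\ga|\mu)}\zeta(U_{\si\ga}^{-1}U_\ga)=\eta(\ga)e^{-2\pi\ii b_\ga}$, so the condition $\chi|_{N_\si}=1$ translates verbatim into \eqref{con}.

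The main obstacle, and the step requiring the most care, is verifying that the passage from $\chi$ to the pair $(\mu,\zeta)$ is well defined exactly up to the equivalence relation \eqref{equivrel}, and no finer. The subtlety is that the choice of coset representative $\la$ for an element of $(Q^*/Q)^\si$ is ambiguous up to $Q$, and shifting $\la$ by $\al\in Q$ simultaneously shifts $\mu=\pi_0(\la)$ by $\pi_0\al$ and twists $\zeta$ — this is the source of the coupled transformation $\mu'=\mu+\pi_0\al$, $\zeta'(U_\be)=C_{\al,\be}^{-1}\zeta(U_\be)$ in \eqref{equivrel}. Here I would appeal directly to Lemma \ref{lmuze}, which already records that these are precisely the pairs yielding isomorphic $G_\si$-modules, so that the equivalence classes of pairs are in bijection with isomorphism classes of modules, hence with characters $\chi$. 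Assembling the two bijections (\ref{1})$\leftrightarrow$(\ref{2}) and (\ref{2})$\leftrightarrow$(\ref{3}) then completes the proof; the bulk of the genuinely new verification is the commutator bookkeeping in \eqref{Calbe} and \eqref{Cal} needed to confirm that the representative-dependence matches \eqref{equivrel} on the nose.
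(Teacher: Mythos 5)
Your proposal is correct and follows essentially the same route as the paper: it constructs $\chi$ from $(\mu,\zeta)$ via the decomposition of $Z(G)$ in \eqref{ZG}, translates $\chi|_{N_\si}=1$ into \eqref{con} by the same unwinding of \eqref{Cal}, and defers the module-theoretic side (1)$\leftrightarrow$(2) to Schur's lemma together with the classification from \cite{BK} and Lemma \ref{lmuze}. The only cosmetic difference is that the paper verifies directly, via the commutator formula \eqref{Calbe1}, that equivalent pairs yield the same character, whereas you route that check through Lemma \ref{lmuze}; both are valid.
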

\begin{proof}
In addition to the above discussion, let us fill the gaps from \cite{BK} that were presented without proof there.
Recall that the center $Z(G)$ consists of elements \eqref{ZG}. Given a pair $(\mu,\zeta)$, we define $\chi\colon Z(G)\to\CC^\times$ by setting
\begin{equation}\label{chizeta}
\chi(c \, e^{2\pi\ii\,\pi_0(\la)}U_{(1-\si)\la})=c \, e^{2\pi\ii(\la|\mu)} \, \zeta(U_{(1-\si)\la}),
\end{equation}
where $c\in\CC^\times$  and $\la\in Q^*$ with $(1-\si)\la\in Q$. 

We check that equivalent pairs correspond to the same character $\chi$. 
Suppose $(\mu,\zeta)$ and $(\mu',\zeta')$ are equivalent, so they satisfy \eqref{equivrel} for some $\al\in Q$.
Let $\be=(1-\si)\la\in Q\cap(1-\si)Q^*$. Then \eqref{Calbe} with $\al,\be$ switched and $\be_*=\pi_\perp(\la)$ implies
\begin{equation}\label{Calbe1}
C_{\al,\be}^{-1} = C_{\be,\al} = e^{2\pi\ii(\pi_\perp\la|\al)} = e^{-2\pi\ii(\pi_0\la|\al)}\,.
\end{equation}
Hence
\begin{align*}
e^{2\pi\ii(\la|\mu')}\zeta'(U_{(1-\si)\la})&=e^{2\pi\ii(\la|\mu+\pi_0\al)} \, C_{\al,\be}^{-1} \, \zeta(U_{(1-\si)\la}) \\
&=e^{2\pi\ii(\la|\mu)}e^{2\pi\ii(\la|\pi_0\al)}e^{2\pi\ii(\pi_\perp\la|\al)} \zeta(U_{(1-\si)\la}) \\
&=e^{2\pi\ii(\la|\mu)}\zeta(U_{(1-\si)\la}),
\end{align*}
using that 
\begin{equation*}
(\la|\pi_0\al) + (\pi_\perp\la|\al) = (\pi_0\la|\al) + (\pi_\perp\la|\al) = (\la|\al)\in\ZZ, 
\end{equation*}
as $\la\in Q^*$ and $\al\in Q$.

Putting $\la=0$ in \eqref{chizeta}, we obviously have $\chi(c)=c$ for $c\in\CC^\times$.
Next, we show that condition \eqref{con} is equivalent to $\chi|_{N_\si}=1$, i.e., to 
$\chi(C_\ga) = 1$ for all $\ga\in Q$.
By \eqref{tig3}, we can write
\begin{equation*}
U^{-1}_{\si\ga}U_{\ga} = x_\ga U_{(1-\si)\ga}
\end{equation*}
for some $x_\ga\in\CC$.
Then from \eqref{Cal} and \eqref{tig2}, we have
\begin{align*}
C_\ga &= x_\ga \eta(\ga) e^{ 2\pi\ii b_\ga} U_{(1-\si)\ga} e^{ 2\pi\ii \, \pi_0(\ga) } \\
&= x_\ga \eta(\ga) e^{ 2\pi\ii b_\ga} e^{ 2\pi\ii \, \pi_0(\ga) } U_{(1-\si)\ga} \,,
\end{align*}
because $(\pi_0\ga|(1-\si)\ga)=0$. Then $\chi(C_\ga) = 1$ is equivalent to
\begin{align*}
1 &= x_\ga \eta(\ga) e^{ 2\pi\ii b_\ga} \chi\bigl( e^{ 2\pi\ii \, \pi_0(\ga) } U_{(1-\si)\ga} \bigr) \\
&= x_\ga \eta(\ga) e^{ 2\pi\ii b_\ga} e^{ 2\pi\ii (\ga|\mu) } \ze(U_{(1-\si)\ga}) \\
&= \eta(\ga) e^{ 2\pi\ii b_\ga} e^{ 2\pi\ii (\ga|\mu) } \ze\bigl( U^{-1}_{\si\ga}U_{\ga} \bigr),
\end{align*}
which is exactly \eqref{con}.
The rest of the proof is in \cite[Proposition 4.4]{BK}.
\end{proof}

\begin{remark}\label{remark1}
Suppose that $(1-\sigma)Q=Q\cap\mathfrak{h}_\perp$. Then $Q \cap (1-\si)Q^* = (1-\sigma)Q$.
Hence, for a given $\mu\in\pi_0(Q^*)$, relation \eqref{con} completely determines the character $\zeta$ of $Z(G_\si^\perp)$.
Some examples that satisfy the condition $(1-\sigma)Q=Q\cap\mathfrak{h}_\perp$ include the root lattice $A_{2n+1}$ $(n\geq1)$ with $\si$ a Dynkin diagram automorphism 
and the class of permutation orbifolds (see Section \ref{permorb} below). 
\end{remark}

\begin{example}\label{remark2}
Another important special case is when $\mathfrak{h}_0=0$. Then $\pi_0=0$ and $\mu=0$.
Examples that satisfy this condition include $\si=-1$ for an arbitrary lattice $Q$ (cf.\ \cite{D2} and Section \ref{ex2} below)
and the ADE root lattices with $\si$ a Coxeter element from the Weyl group (cf.\ \cite{KP,KT}).
\end{example}

\subsection{The $\si$-twisted $V_Q$-modules $M(\mu,\zeta)$}
Now we review the construction of irreducible $\si$-twisted $V_Q$-modules from \cite{BK},
where as before $\si$ is an isometry of $Q$ of order $N$.

Starting from an irreducible $G_\si$-module $W(\mu,\zeta)$ corresponding to a pair $(\mu,\ze)$ as in \thref{bij},
we make it an $\mathfrak{h}_0\oplus\hat{\mathfrak{h}}_\si^>$-module by letting $\hat{\mathfrak{h}}_\si^>$ act trivially. 
Note that $\lieh_0$ acts on $W(\mu,\zeta)$ according to
\begin{equation}\label{tig4}
h (U_\al v) = (h|\al+\mu) U_\al v \,,
\end{equation}
for $h\in\lieh_0$, $\al\in Q$ and $v\in\Om(\mu,\zeta)$ (cf.\ \eqref{tig2}, \eqref{W}).
Then by inducing this action to $\hat{\mathfrak{h}}_\si$, we obtain an irreducible $\si$-twisted $V_Q$-module
\begin{align}\label{M}
M(\mu,\zeta)&=\Ind_{\mathfrak{h}_0\oplus\hat{\mathfrak{h}}_\si^>}^{\hat{\mathfrak{h}}_\si}W(\mu,\zeta)\cong \F_\si\otimes W(\mu,\zeta)
\end{align}
with an action defined as follows.

We define $Y(a,z)$ for $a\in\lieh$ as before (see \eqref{twheis3}), 
and for $\al\in Q$ we let
\begin{equation}\label{twlat12}
Y(e^\al,z) = 
E_\al(z)
\otimes U_\al z^{b_\al+\pi_0\al} \,,
\end{equation}
where 
\begin{equation}\label{E}
\begin{aligned}
E_\al(z)&=\exp\Biggl( \sum_{  n\in\frac1N\ZZ_{<0} } \al_n \frac{z^{-n}}{-n} \Biggr)
\exp\Biggl( \sum_{  n\in\frac1N\ZZ_{>0} } \al_n \frac{z^{-n}}{-n} \Biggr).
\end{aligned}
\end{equation}
Here the action of $z^{\pi_0\al}$ is given by $z^{\pi_0\al} (U_\be v) = z^{(\pi_0\al|\be+\mu)} U_\be v$ for $\be\in Q$ and $v\in\Om(\mu,\zeta)$.
Notice that $(\pi_0\al|\mu) \in \frac1N\ZZ$.
The action of $Y$ on all of $V_Q$ can then be obtained by applying the product
formula \eqref{locpr3}.

\begin{theorem}[\cite{BK}, Theorem 4.2]\label{thm:BK}
Every irreducible\/ $\si$-twisted\/ $V_Q$-module is isomorphic to one of the modules\/ $M(\mu,\zeta)$,
and two such modules are isomorphic if and only if the corresponding pairs\/ $(\mu,\ze)$ are equivalent according to \eqref{equivrel}.
Moreover, every\/ $\si$-twisted\/ $V_Q$-module is a direct sum of irreducible ones.
\end{theorem}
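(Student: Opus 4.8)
The plan is to reduce the classification of irreducible $\si$-twisted $V_Q$-modules to the already-established classification of irreducible $G_\si$-modules from \thref{bij}. The key observation is that the construction \eqref{M} produces, from each admissible pair $(\mu,\ze)$, an irreducible $\si$-twisted $V_Q$-module $M(\mu,\ze)$, and the map $(\mu,\ze)\mapsto M(\mu,\ze)$ descends to the equivalence classes under \eqref{equivrel}. Conversely, any irreducible $\si$-twisted $V_Q$-module must arise this way. The semidirect structure $M(\mu,\ze)\cong\F_\si\otimes W(\mu,\ze)$ is what bridges the two classifications: the Heisenberg part $\F_\si$ is fixed, and the ``new'' data lives entirely in the $G_\si$-module $W(\mu,\ze)$.

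First I would verify that $M(\mu,\ze)$ is indeed a $\si$-twisted $V_Q$-module by checking that the operators defined in \eqref{twheis3}, \eqref{twlat12}, \eqref{E} satisfy the twisted Borcherds identity; by \prref{pnprod} this reduces to verifying locality \eqref{locpr} and the product formula \eqref{locpr3} for the generating fields $Y(a,z)$ with $a\in\lieh$ and $Y(e^\al,z)$ with $\al\in Q$. The group relations \eqref{tig1}--\eqref{tig3} and the cocycle condition \eqref{twlat3} are precisely engineered to make these identities hold, with the factor $B_{\al,\be}$ in \eqref{Balbe} accounting for the operator-product contractions of the exponential factors $E_\al(z)$. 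Second, irreducibility of $M(\mu,\ze)$ follows from irreducibility of $W(\mu,\ze)$ as a $G_\si$-module together with irreducibility of $\F_\si$ as an $\hat\lieh_\si$-module: any nonzero submodule, being stable under $\hat\lieh_\si$ and the zero-mode operators $U_\al z^{b_\al+\pi_0\al}$, must contain a full copy of $W(\mu,\ze)$ and hence everything.

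Next I would establish the converse: given an arbitrary irreducible $\si$-twisted $V_Q$-module $M$, I would extract the lowest-weight space with respect to $L_0^{\tw}$ (equivalently, the space annihilated by all positive modes $a_n$, $n>0$), show it is stable under the zero-mode action generating a $G_\si$-action, and argue it is an irreducible $G_\si$-module of the type classified in \thref{bij}(\ref{1}). This identifies it with some $W(\mu,\ze)$, and then the reconstruction theorem for twisted modules (inducing up via $\hat\lieh_\si$) forces $M\cong M(\mu,\ze)$. The isomorphism criterion --- that $M(\mu,\ze)\cong M(\mu',\ze')$ iff the pairs are equivalent under \eqref{equivrel} --- then transfers directly from \leref{lmuze}, since the Heisenberg factor contributes no new isomorphisms and the intertwiner sending $1_{\mu',\ze'}\mapsto U_\al 1_{\mu,\ze}$ lifts to a $V_Q$-module isomorphism. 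Finally, complete reducibility of arbitrary $\si$-twisted $V_Q$-modules follows from the regularity of $V_Q$ (\thref{DLM2}) together with \thref{MC}, which guarantees complete reducibility in the orbifold setting.

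The main obstacle I anticipate is the converse direction --- showing that every irreducible $\si$-twisted module has the tensor-product form \eqref{M} --- since this requires proving that the lowest-weight space is finite-dimensional and carries exactly a $G_\si$-module structure of the classified type, with the scalar $\CC^\times$ acting as claimed and $T_\si$ acting semisimply. The finite-dimensionality and the semisimplicity of the $T_\si$-action are where the analytic input (eigenvalue structure of $L_0^{\tw}$, which the paper computes explicitly) becomes essential, and reconciling the intrinsic grading on $M$ with the induced grading on $\F_\si\otimes W(\mu,\ze)$ is the delicate point. That said, since \thref{thm:BK} is attributed to \cite{BK}, I expect the bulk of this argument to be imported from there, with the present excerpt's contribution being the cleaner packaging via \thref{bij} and \leref{lmuze}.
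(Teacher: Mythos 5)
The first thing to note is that the paper contains no proof of this statement: Theorem~\ref{thm:BK} is quoted verbatim from \cite{BK}, Theorem~4.2, so there is no in-text argument to compare yours against. That said, your outline is broadly faithful to how the cited result is actually established: one checks via \prref{pnprod} that \eqref{twheis3}, \eqref{twlat12} define a twisted module structure on \eqref{M}; one shows conversely that the vacuum space $\Omega_M=\{v\in M \mid a_nv=0,\ n>0\}$ of an arbitrary irreducible twisted module is an irreducible $G_\si$-module of the type classified in \thref{bij}, so that $M\cong\F_\si\otimes\Omega_M$ by the standard structure theory of restricted modules over the twisted Heisenberg algebra with $K=1$; and the isomorphism criterion is exactly \leref{lmuze} lifted through the Heisenberg factor. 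Your identification of the delicate points (finite-dimensionality of the vacuum space, semisimplicity of the $T_\si$-action, matching the gradings) is also accurate.

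The one step that is genuinely wrong is your source for complete reducibility. \thref{MC} asserts regularity of the fixed-point subalgebra $V^\Gamma$; it says nothing directly about $\si$-twisted $V_Q$-modules, and it cannot be what \cite{BK} relies on, since that paper predates \cite{MC} by over a decade. Even granting \thref{MC}, a decomposition of a twisted module into irreducible $V_Q^\si$-submodules does not by itself reassemble into a decomposition into irreducible \emph{twisted $V_Q$-modules}; that gap would need to be closed separately. The argument in \cite{BK} is instead direct and elementary: any restricted $\hat\lieh_\si$-module with $K=1$ splits as $\F_\si\otimes\Omega$ with $\Omega$ its vacuum space, and $\Omega$ decomposes into irreducible $G_\si$-modules of the classified type because the relevant category of $G_\si$-modules (with $\CC^\times$ acting by scalars and $T_\si$ semisimply) is semisimple, $G_\si^\perp$ being a central extension of the finite abelian group \eqref{Gsiperp}. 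Complete reducibility of twisted modules is thus an input available before any orbifold machinery, not a consequence of it.
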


In the special case when $\si=1$, we obtain Dong's Theorem that the irreducible $V_Q$-modules
are classified by $Q^*/Q$ (see \eqref{VFW} and \cite{D1}).

Now we describe a basis for $M(\mu,\zeta)$. We start by choosing a basis $\B_\Om$ for the finite-dimensional $G_\si^\perp$-module $\Om(\mu,\zeta)$.

\begin{remark}\label{defect}
All irreducible $G_\si^\perp$-modules $\Om(\mu,\zeta)$ have the same dimension:
\[
\dim\Om(\mu,\zeta)=d(\si),
\]
where $d(\si)$ is called the \emph{defect} of $\si$ (see \cite{KP,BK}).
It is known that 
\begin{align}\label{defectdef}
d(\si)^2={|G_\si^\perp:Z(G_\si^\perp)|}=|(Q\cap\lieh_\perp)/Q_\si|, 
\end{align}
where $Q_\si=Q\cap(1-\si)Q^*\subset Q\cap\lieh_\perp$ (see \cite[(4.53)]{BK}).
In the setting of \cite{KP}, $Q$ is a root lattice and $\si$ is an element of its Weyl group. In this case, it was shown in \cite{KP} that $(1-\si)Q^*\subset Q$ for such $\si$ and that $d(\si)^2$ can be described as the order of the torsion subgroup of the abelian group $Q/(1-\si)Q^*$.
\end{remark}

Due to \eqref{W}, the set $\{gv \,|\, g\in\C_G, \, v\in\B_\Om\}$ is a basis for the $G_\si$-module $W(\mu, \zeta)$, where $\C_G \subset G_\si$ is a set of representatives of the cosets of $T_\si\times G_\si^\perp$ in $G_\si$.
The next lemma provides a way for constructing $\C_G$.

\begin{lemma}\label{basis}
If\/ $\C_Q \subset Q$ is a set of representatives of the cosets of\/ $Q\cap\mathfrak{h}_\perp$ in\/ $Q$, then\/ $\C_G=\{U_{\ga}N_\si\in G_\si \,|\, \ga\in\C_Q\}$ is a set of rep\-resen\-tatives of the cosets of\/ $T_\si\times G_\si^\perp$ in\/ $G_\si$. 
\end{lemma}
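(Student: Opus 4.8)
The plan is to establish the coset correspondence by analyzing the structure of $G_\si$ via its normal subgroups. First I would recall the relevant subgroup structure: we have $G = \CC^\times \times \exp\lieh_0 \times Q$ with multiplication given by \eqref{tig1}--\eqref{tig3}, and $G_\si = G/N_\si$. The centralizer of $\hat\lieh_\si$ in $G_\si$ is $T_\si \times G_\si^\perp$, where $T_\si = \exp\lieh_0 / \exp 2\pi\ii(Q\cap\lieh_0)$ and $G_\si^\perp$ is the image of $G^\perp = \{c\,U_\al \,|\, c\in\CC^\times,\ \al\in Q\cap\lieh_\perp\}$. The core idea is that the cosets of $T_\si \times G_\si^\perp$ in $G_\si$ should be indexed precisely by how the ``$Q$-part'' of an element projects onto $\lieh_\perp$, modulo the lattice $Q\cap\lieh_\perp$ that is already absorbed into $G_\si^\perp$.

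Next I would verify that the proposed representatives $\{U_\ga N_\si \,|\, \ga\in\C_Q\}$ are well-defined and exhaust all cosets. An arbitrary element of $G_\si$ is represented (modulo $N_\si$) by some $c\,e^h U_\al$ with $c\in\CC^\times$, $h\in\lieh_0$, $\al\in Q$. Since $c \in \CC^\times$ and $e^h \in \exp\lieh_0$ already lie in $T_\si \times G_\si^\perp$ (the factor $c$ is in the $\CC^\times$ common to both, and $e^h$ lands in $T_\si$), the coset of $c\,e^h U_\al$ modulo $T_\si\times G_\si^\perp$ depends only on $U_\al$ modulo $G_\si^\perp$. Writing $\al = \pi_0\al + \pi_\perp\al$ and using \eqref{tig2}, \eqref{tig3} to factor $U_\al$ as (an element of the $\exp\lieh_0$/$\CC^\times$ part) times $U_{\pi_\perp\al}$ up to scalars, I would reduce to understanding $U_{\al}$ modulo the subgroup generated by $\{U_\be \,|\, \be\in Q\cap\lieh_\perp\}$ together with the central identifications from $N_\si$. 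Since $\C_Q$ is a transversal for $Q\cap\lieh_\perp$ in $Q$, for each $\al\in Q$ there is a unique $\ga\in\C_Q$ with $\al - \ga \in Q\cap\lieh_\perp$, and then $U_\al$ and $U_\ga$ differ by $U_{\al-\ga}$ (up to a cocycle scalar in $\CC^\times$), which lies in $G_\si^\perp$. This shows every coset contains some $U_\ga N_\si$ with $\ga\in\C_Q$, giving surjectivity of the indexing.

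For injectivity (distinctness of the representatives) I would show that if $U_{\ga}$ and $U_{\ga'}$ lie in the same coset of $T_\si\times G_\si^\perp$ for $\ga,\ga'\in\C_Q$, then $\ga - \ga' \in Q\cap\lieh_\perp$, forcing $\ga=\ga'$ since $\C_Q$ is a transversal. Concretely, $U_\ga (U_{\ga'})^{-1} \in T_\si\times G_\si^\perp$ means that $U_{\ga-\ga'}$, up to a scalar and the $N_\si$-identifications, lies in $T_\si\times G_\si^\perp$; projecting to the quotient $Q/(Q\cap\lieh_\perp)$ and using that $T_\si$ contributes only to the $\lieh_0$-direction while $G_\si^\perp$ contributes only elements whose lattice part is in $Q\cap\lieh_\perp$, I conclude $\ga-\ga'\in Q\cap\lieh_\perp$.

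The main obstacle, and the step requiring the most care, is the bookkeeping of the central subgroup $N_\si$ and the cocycle scalars $\ep(\al,\be)$, $B_{\al,\be}$ appearing in \eqref{tig3}: one must check that passing from $U_\al U_\be$ to $U_{\al+\be}$ only introduces factors in $\CC^\times$ (harmlessly absorbed, since $\CC^\times$ is common to both $T_\si$ and $G_\si^\perp$) and that the $C_\al$-relations defining $N_\si$ do not secretly identify $U_\ga N_\si$ with $U_{\ga'}N_\si$ for distinct $\ga,\ga'\in\C_Q$. Here I would invoke the explicit form \eqref{Cal} of $C_\al$, noting that $C_\al$ lies in $Z(G)$ and has lattice part $(1-\si)\al \in Q\cap\lieh_\perp$ (together with an $\exp\lieh_0$-factor), so modding out by $N_\si$ only adds relations internal to $T_\si\times G_\si^\perp$ and does not collapse distinct $\lieh_\perp$-projection classes in $Q/(Q\cap\lieh_\perp)$. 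Once this is confirmed, the bijection between $\C_Q$ and the cosets is immediate and the lemma follows.
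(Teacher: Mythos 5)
Your proof is correct and follows essentially the same route as the paper: decompose each $\al\in Q$ as $\ga+\be$ with $\ga\in\C_Q$ and $\be\in Q\cap\lieh_\perp$, then use \eqref{tig2}, \eqref{tig3} to move $c\,e^h U_\al$ into $U_\ga\,(T_\si\times G_\si^\perp)$ up to scalars; your extra check that the $C_\al$-relations generating $N_\si$ have lattice part $(1-\si)\al\in Q\cap\lieh_\perp$, so that distinct $\ga\in\C_Q$ give distinct cosets, is a worthwhile point the paper leaves implicit. One minor caution: the intermediate remark about factoring $U_\al$ through $U_{\pi_\perp\al}$ is not literally meaningful since $\pi_\perp\al$ need not lie in $Q$, but your argument does not actually use it, relying instead on the correct decomposition $\al=\ga+(\al-\ga)$.
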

\begin{proof}
By assumption, every $\al\in Q$ can be written as $\al=\ga+\be$ for some $\ga\in\C_Q$ and $\be\in Q\cap\h_\perp$. 
Then for any $h\in\h_0$ and $c\in\CC^\times$, using \eqref{tig2}, \eqref{tig3}, we have
\[
c\,e^h\,U_\al=U_{\ga}\, e^h \, c'\,U_{\be} 
\]
for some $c'\in\CC^\times$.
The claim of the lemma follows, since $e^h \in T_\si$ and $c'\,U_{\be}\in G_\si^\perp$.
\end{proof}

As a consequence of Lemma \ref{basis}, we have:

\begin{corollary}\label{cbasisW}
With the above notation, the set
\[
\bigl\{U_{\ga}v \,\big|\, \ga\in\C_Q, \, v\in\B_\Om\bigr\}
\]
is a basis for\/ $W(\mu,\zeta)$. 
\end{corollary}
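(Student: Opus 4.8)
The plan is to combine the two structural results established immediately above: \leref{basis}, which produces a set $\C_G=\{U_\ga N_\si \mid \ga\in\C_Q\}$ of coset representatives for $T_\si\times G_\si^\perp$ inside $G_\si$, and the statement (quoted from \cite{BK} above) that $\{gv \mid g\in\C_G,\, v\in\B_\Om\}$ is a basis for the $G_\si$-module $W(\mu,\zeta)$, where $\B_\Om$ is a chosen basis of $\Om(\mu,\zeta)$. Since $W(\mu,\zeta)=\Ind_{T_\si\times G_\si^\perp}^{G_\si}\Om(\mu,\zeta)$ by \eqref{W}, the induced module decomposes as a direct sum $\bigoplus_{g\in\C_G} g\otimes\Om(\mu,\zeta)$ over a transversal, so a basis is obtained by letting $g$ run over $\C_G$ and $v$ run over $\B_\Om$. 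This is the general basis of the induced module specialized to the transversal of \leref{basis}.

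First I would recall that the image of $U_\ga N_\si$ acting on the cyclic generator reduces, on the module $W(\mu,\zeta)$, to the action of the operator $U_\ga$ itself, so that the abstract basis element $(U_\ga N_\si)\, v$ may be written simply as $U_\ga v$ for $\ga\in\C_Q$ and $v\in\B_\Om$. Substituting the explicit transversal $\C_G$ from \leref{basis} into the induced-module basis then yields exactly the set $\{U_\ga v \mid \ga\in\C_Q,\, v\in\B_\Om\}$. Since \leref{basis} guarantees that the $U_\ga N_\si$ for $\ga\in\C_Q$ form a complete and irredundant set of coset representatives, and $\B_\Om$ is a basis of $\Om(\mu,\zeta)$, the resulting family is a basis of $W(\mu,\zeta)$.

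The argument is essentially a bookkeeping specialization of the standard basis of an induced representation, and the only genuine content has already been isolated in \leref{basis} (that the $U_\ga$ give coset representatives) and in the cited \cite{BK} fact (that $W(\mu,\zeta)$ is the corresponding induced module with the stated basis). The main point to check carefully, and the only place where a slip could occur, is the compatibility of notation: that the action of the abstract coset $U_\ga N_\si\in G_\si$ on $\Om(\mu,\zeta)$ agrees with applying $U_\ga$, and that no overcounting arises because distinct $\ga\in\C_Q$ give distinct cosets (which is precisely the content of \leref{basis}). Given these, the corollary follows in one line, so I expect no real obstacle; the statement is a direct consequence to be recorded for later use in writing down characters in Section \ref{charVQ}.
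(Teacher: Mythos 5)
Your proposal is correct and matches the paper's argument exactly: the corollary is recorded as an immediate consequence of Lemma \ref{basis} together with the standard fact, stated just before it, that $\{gv \mid g\in\C_G,\, v\in\B_\Om\}$ is a basis of the induced module $W(\mu,\zeta)=\Ind_{T_\si\times G_\si^\perp}^{G_\si}\Om(\mu,\zeta)$. Your additional care about identifying the action of the coset $U_\ga N_\si$ with that of $U_\ga$ is the right (and only) bookkeeping point, and the paper treats it the same way.
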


In order to write a basis for the $\si$-twisted Heisenberg algrebra $\hat{\mathfrak{h}}_\si$, we pick a basis $\{a^i\}_{i=1}^r$ for $\mathfrak{h}$ such that 
$a^i\in\mathfrak{h}_{j_i/N}$,
where $0\leq j_i<N$ (cf.\ \eqref{j/N}). A basis for $\hat{\mathfrak{h}}_\si$ is then given by
\[
\Bigl\{a^it^{n}\,\Big|\,1\leq i\leq r, \; n\in \frac{j_i}{N}+\ZZ\Bigr\}.
\]
The following is now immediate.

\begin{lemma}\label{basisM}
Let\/ $\C_Q\subset Q$ be representatives of the cosets\/ $Q/(Q\cap\mathfrak{h}_\perp)$, let\/ $\B_\Om$ be a basis for the irreducible\/ $G_\si^\perp$-module\/ $\Om(\mu,\zeta)$,
and\/ $\{a^i\}_{i=1}^r$ be a basis for\/ $\mathfrak{h}$ as above.
Then a basis for the\/ $V_Q$-module\/ $M(\mu,\zeta)$ consists of elements
\begin{equation}
(a^{i_1}t^{-n_1})\cdots(a^{i_k}t^{-n_k})U_{\ga}v \qquad (\ga\in \C_Q, \; v\in\B_\Om),
\end{equation}
where $k\ge0$ $($the case $k=0$ corresponding to $U_{\ga}v)$,
$1\le i_l \le r$, $n_l\in-(j_l/N)
+\ZZ$, $n_l>0$ for $1\leq l\leq k$,
and the pairs\/ $(i_l,n_l)$ are ordered lexicographically.
\end{lemma}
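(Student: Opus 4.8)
The plan is to read off the basis directly from the vector-space factorization \eqref{M} together with Poincar\'e--Birkhoff--Witt bases for each of the two tensor factors. First I would recall from \eqref{M} that $M(\mu,\ze)=\Ind_{\lieh_0\oplus\hat\lieh_\si^>}^{\hat\lieh_\si} W(\mu,\ze)\cong\F_\si\otimes W(\mu,\ze)$ as vector spaces, where $\F_\si$ is generated from $W(\mu,\ze)$ by the creation operators $a_m=a t^m$ with $m<0$. Since $\hat\lieh_\si^<$ is abelian — the bracket $[a_m,b_n]=m\delta_{m,-n}(a|b)K$ vanishes whenever $m,n<0$ — and $\hat\lieh_\si=\hat\lieh_\si^<\oplus(\lieh_0\oplus\hat\lieh_\si^>)\oplus\CC K$, the Poincar\'e--Birkhoff--Witt theorem for the induced module shows that $M(\mu,\ze)$ is free as a module over $U(\hat\lieh_\si^<)=S(\hat\lieh_\si^<)$ with the subspace $W(\mu,\ze)$ as a set of free generators.

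I would then fix compatible bases of the two factors. For $\F_\si\cong S(\hat\lieh_\si^<)$, the homogeneous elements $a^i t^{-n}$ with $n>0$ and $-n\in\frac{j_i}{N}+\ZZ$ form a basis of $\hat\lieh_\si^<$ (these are the negative-mode members of the basis of $\hat\lieh_\si$ displayed just before the lemma), so the ordered monomials in these generators give a basis of $S(\hat\lieh_\si^<)$; because this is the symmetric (not tensor) algebra, imposing the lexicographic order on the pairs $(i_l,n_l)$ selects exactly one representative of each commutative monomial. For $W(\mu,\ze)$, Corollary \ref{cbasisW} supplies the basis $\{U_\ga v\mid\ga\in\C_Q,\ v\in\B_\Om\}$.

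Finally, a basis of a tensor product of vector spaces is obtained by tensoring bases of the factors, so the products of the two families above form a basis of $M(\mu,\ze)$; under the identification \eqref{M} the element $(a^{i_1}t^{-n_1}\cdots a^{i_k}t^{-n_k})\otimes U_\ga v$ is precisely the asserted vector $(a^{i_1}t^{-n_1})\cdots(a^{i_k}t^{-n_k})U_\ga v$, with the stated constraints on $k$, $i_l$, and $n_l$. The only substantive point — routine rather than a genuine obstacle — is the freeness invoked in the first step: one must confirm that the creation operators act freely on the top space $W(\mu,\ze)$, so that distinct ordered monomials remain linearly independent after tensoring. This is exactly the Poincar\'e--Birkhoff--Witt statement for the induced $\hat\lieh_\si$-module, and it relies on the direct-sum decomposition $\hat\lieh_\si=\hat\lieh_\si^<\oplus\hat\lieh_\si^\ge$ with $\hat\lieh_\si^<\cap(\lieh_0\oplus\hat\lieh_\si^>)=0$.
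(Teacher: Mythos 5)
Your proof is correct and follows the same route the paper takes: the paper presents this lemma as an immediate consequence of the factorization $M(\mu,\zeta)\cong\F_\si\otimes W(\mu,\zeta)$ from \eqref{M}, the basis of $W(\mu,\zeta)$ in Corollary \ref{cbasisW}, and the basis of $\hat\lieh_\si$ displayed just before the lemma. You have merely made explicit the PBW/freeness argument that the paper leaves implicit, which is fine.
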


\subsection{$L_0^{\tw}$-action on the $V_Q$-module $M(\mu,\zeta)$}
Recall that the lattice vertex algebra $V_Q$ is conformal with central charge $r=\rank Q$ and a Virasoro vector given by \eqref{om}.
Then for any $\si$-twisted $V_Q$-module $M$, from the action $Y$ of $V_Q$ on $M$,
we obtain a representation on $M$ of the Virasoro Lie algebra by
\[
Y(L,z)=\sum_{n\in\ZZ} L_n^M z^{-n-2}, \qquad L_n^M \in\End M.
\]
To emphasize that it is a twisted module, we will also denote the modes $L_n^M$ as $L_n^{\tw}$.
An explicit formula for $L_n^{\tw}$ in the more general case of twisted logarithmic modules was given in \cite{Ba} for an arbitrary (not necessarily semisimple) automorphism $\si$. 
Here we determine the action of the twisted energy operator 
$L_0^{\tw}$ on a basis of the irreducible $\si$-twisted $V_Q$-module $M(\mu,\zeta)$. 

To give this action explicitly, we define a linear operator $s$ on each eigenspace $\mathfrak{h}_{j/N}$ by (cf.\ \eqref{j/N}):
\begin{equation}\label{s}
sa=-\frac{j}{N}a \quad\text{for}\quad a\in\mathfrak{h}_{j/N},\;\; j=0,\ldots,N-1,
\end{equation}
so that
$\si=e^{2\pi\ii s}$.
Let $\{a^i\}_{i=1}^r$ and $\{b^i\}_{i=1}^r$ be dual bases of $\mathfrak{h}$ consisting of eigenvectors of $\si$.
Suppose that $a^i\in\mathfrak{h}_{j_i/N}$.
Notice that this implies 
\[
sa^i=-\frac{j_i}{N}a^i, \qquad sb^i=\Bigl(\frac{j_i}{N}-1\Bigr)b^i
\] 
if $j_i\neq0$, while $sa^i=sb^i=0$ if $j_i=0$.
Then, by \cite[(6.8)]{Ba}, the action of $L_0^{\tw}$ is given by 
\begin{equation}\label{L0}
L_0^{\tw}=\frac12\sum_{i=1}^r\sum_{n\in \frac{j_i}{N}+\ZZ}{:}(a^it^n)(b^it^{-n}){:}-\frac12\tr_\lieh \binom{s+1}{2}I,
\end{equation}
where the normal ordering $:\,:$ is defined by
\begin{align*}
{:}(a^it^n)(b^it^{-n}){:}=\begin{cases}
(a^it^n)(b^it^{-n}),&\; n<0,\\
(b^it^{-n})(a^it^n),&\; n\geq0.
\end{cases}
\end{align*}
For convenience, we set
\begin{equation}\label{Delta}
\Delta_\si=-\frac12\tr_\lieh \binom{s+1}{2}
=\displaystyle\frac14\sum_{j=1}^{N-1}\frac{j}{N}\left(1-\frac{j}{N}\right)\dim\lieh_{j/N}.
\end{equation}
Note that $\Delta_1=0$ for the identity automorphism $\si=1$, which corresponds to the case of untwisted modules.

\begin{remark}\label{trace}
Using that $\dim\mathfrak{h}_{j/N}=\dim\mathfrak{h}_{1-(j/N)}$, we have
\[
\tr_\lieh s=\frac12\sum_{j=1}^{N-1}\left(\left(\frac{j}{N}-1\right)+\left(-\frac{j}{N}\right)\right)\dim\lieh_{j/N}=-\frac12\dim\mathfrak{h}_\perp.
\]
\end{remark}

\begin{remark}
Formula (6.8) in \cite{Ba} uses a linear operator $\mathcal{S}$ on $\lieh$ such that $\si=e^{-2\pi\ii \mathcal{S}}$ and the eigenvalues of $\mathcal{S}$ are in the interval $(-1,0]$.
The form \eqref{L0} for $L_0^{\tw}$ is obtained by setting $\mathcal{S}=-s$ and using that $\binom{-s}{2}=\binom{s+1}{2}$. 
Note that the term $-\bar{\omega}t^n$ in \cite[(6.8)]{Ba} vanishes in our case since the Lie algebra $\mathfrak{h}$ is abelian and $\bar{\omega}$ is given in terms of Lie brackets (see \cite[Lemma 6.4]{Ba}).
\end{remark}

\begin{lemma}\label{Deltalemma}
Let\/ $Q$ be an even lattice and\/ $\si$ be an isometry of\/ $Q$ of prime order\/ $p$. Then
\begin{equation}\label{Deltap}
\Delta_\si=\frac{p+1}{24p}\dim\lieh_\perp,
\end{equation}
where\/ $\lieh_\perp$ is given in \eqref{hperp}. In particular, $\Delta_{\si^l}=\Delta_\si$ for all\/ $l=1,\ldots, p-1$.
\end{lemma}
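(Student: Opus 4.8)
The plan is to start from the explicit expression \eqref{Delta}, which in the prime-order case $N=p$ reads
\[
\Delta_\si=\frac14\sum_{j=1}^{p-1}\frac{j}{p}\Bigl(1-\frac{j}{p}\Bigr)\dim\lieh_{j/p}.
\]
The one substantive point is to show that the eigenspace dimensions $\dim\lieh_{j/p}$ are the same for all $j=1,\dots,p-1$. Since $\si$ is an isometry of the integral lattice $Q$, it acts as a $\ZZ$-integral (in particular $\QQ$-rational) operator on $\QQ\otimes_\ZZ Q$, so its characteristic polynomial has rational coefficients. On $\lieh_\perp$ every eigenvalue is a primitive $p$-th root of unity, since $\si$ has prime order $p$ and its fixed vectors are precisely $\lieh_0$; hence the minimal polynomial of $\si|_{\lieh_\perp}$ divides the cyclotomic polynomial $\Phi_p(x)=1+x+\cdots+x^{p-1}$. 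Because $\Phi_p$ is irreducible over $\QQ$, the primitive $p$-th roots of unity $e^{-2\pi\ii j/p}$ ($j=1,\dots,p-1$) form a single Galois orbit, and a rational (and, being of finite order, diagonalizable) operator assigns Galois-conjugate eigenvalues equal multiplicities. Thus $d:=\dim\lieh_{j/p}$ is independent of $j$, and $\dim\lieh_\perp=(p-1)d$.

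Granting this, the remainder is a routine power-sum computation. Factoring out the constant $d$ gives
\[
\Delta_\si=\frac{d}{4p^2}\sum_{j=1}^{p-1} j(p-j),
\]
and using $\sum_{j=1}^{p-1} j=\frac{(p-1)p}{2}$ together with $\sum_{j=1}^{p-1} j^2=\frac{(p-1)p(2p-1)}{6}$ one finds $\sum_{j=1}^{p-1} j(p-j)=\frac{(p-1)p(p+1)}{6}$. Substituting this, and then $d(p-1)=\dim\lieh_\perp$, yields
\[
\Delta_\si=\frac{d(p-1)(p+1)}{24p}=\frac{p+1}{24p}\dim\lieh_\perp,
\]
which is exactly \eqref{Deltap}.

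Finally, for the assertion $\Delta_{\si^l}=\Delta_\si$ with $l=1,\dots,p-1$: since $p$ is prime, each such $\si^l$ again has order $p$ and generates the same cyclic group as $\si$, so it shares the fixed subspace $\lieh_0$ and hence the orthogonal complement $\lieh_\perp$. Applying the already-established formula \eqref{Deltap} to $\si^l$ in place of $\si$ therefore returns the same value. I expect the main obstacle to be the eigenspace-dimension equality established in the first paragraph; once that is in hand, everything that follows is elementary arithmetic.
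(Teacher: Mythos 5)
Your proof is correct, and it reaches the one nontrivial input --- that $\dim\lieh_{j/p}$ is independent of $j$ for $j=1,\dots,p-1$ --- by a different route than the paper. The paper invokes the Cyclic Decomposition Theorem to produce a $\si$-invariant basis of $\lieh$ on which $\si$ acts as a permutation with $e$ fixed points and $d$ orbits of length $p$, and reads off $\dim\lieh_{j/p}=d$ from the eigenvalues of a $p$-cycle; you instead observe that $\si$ is a rational matrix, so the primitive $p$-th roots of unity, being a single Galois orbit (irreducibility of $\Phi_p$ over $\QQ$), occur with equal multiplicity in its characteristic polynomial. The two arguments buy essentially the same thing, but yours is arguably on firmer ground: a permutation basis as described forces $\dim\lieh_0\ge d$, which can fail (e.g.\ a Coxeter element of order $3$ on the $A_2$ root lattice has $\lieh_0=0$ yet $d=1$), whereas the Galois/rationality argument needs only that $\si$ preserves the lattice. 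The remaining power-sum computation and the observation that $\lieh_\perp$ is shared by $\si$ and $\si^l$ coincide with the paper's.
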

\begin{proof}
By the Cyclic Decomposition Theorem, $\lieh$ has a $\si$-invariant basis on which $\si$ acts as a permutation. The order of each orbit of this action divides $p$.
Suppose that there are $e$ singleton orbits and $d$ orbits of order $p$. Then we have for $1\leq j\leq p-1$:
\begin{equation}\label{dimh}
\dim\lieh=e+dp, \quad \dim\lieh_0=e+d, \quad \dim\lieh_{j/p}=d.
\end{equation}
Hence, \eqref{Delta} becomes
\begin{align*}
\Delta_\si
&=\displaystyle\frac{d}{4}\sum_{\substack{j=1}}^{p-1}\frac{j}{p}\left(1-\frac{j}{p}\right)
=\frac{d(p-1)(p+1)}{24p}\,,
\end{align*}
thus proving \eqref{Deltap} since $\dim\lieh_\perp=d(p-1)$.
The last claim of the lemma follows from the fact that $\lieh_\perp$ remains the same for $\si$ and $\si^l$. 
\end{proof}

\begin{remark}\label{rem-Delta}
In general, 
$\Delta_\si=\Delta_{\si^{l}}$
if $l$ is coprime to the order of $\si$, but \eqref{Deltap} may no longer hold. In particular, we have $\Delta_\si=\Delta_{\si^{-1}}$. 
\end{remark}


Our goal is to determine the action of $L_0^{\tw}$ on the basis of the $\si$-twisted $V_Q$-module $M(\mu,\zeta)$. For $a\in V_Q$, we have
\begin{equation}
[L_0^{\tw},Y(a,z)]=z\partial_zY(a,z)+Y(L_0a,z)
\end{equation}
(see e.g.\ \cite[Section 5.4]{Ba}). Recall that in $V_Q$, elements of $\mathfrak{h}$ have \emph{conformal weight} (eigenvalue of $L_0$) $1$, and elements $e^\al$ have conformal weight $|\al|^2/2$
(see e.g.\ \cite{K2}).
Hence, we have:
\begin{align}
[L_0^{\tw},Y(h,z)]&=\left(1+z\partial_z\right)Y(h,z), & h&\in\lieh,
\label{L0al}\\
[L_0^{\tw},Y(e^\al,z)]&=\left(\frac{|\al|^2}{2}+z\partial_z\right)Y(e^\al,z), & \al&\in Q.
\label{L0e^al}
\end{align}
\begin{lemma}
For $\al\in Q$, we have 
\begin{equation}
[L_0^{\tw},U_\al]=U_\al\left(\frac{|\pi_0\al|^2}{2}+\pi_0\al\right).
\end{equation}
\end{lemma}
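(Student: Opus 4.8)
The plan is to use the tensor product realization $M(\mu,\zeta)\cong\F_\si\otimes W(\mu,\zeta)$ of \eqref{M}, on which $U_\al$ acts only on the factor $W(\mu,\zeta)$, and to show that the only part of $L_0^{\tw}$ failing to commute with $U_\al$ is the zero-mode piece. In the double sum \eqref{L0}, a summand with index $n$ can have $n=0$ only when $j_i=0$, i.e.\ $a^i,b^i\in\lieh_0$. Separating these terms, I would write
\begin{equation*}
L_0^{\tw}=L_0^{\F}+L_0^{W}+\Delta_\si,\qquad L_0^{W}=\frac12\sum_{i:\,a^i\in\lieh_0}a^i_0b^i_0,
\end{equation*}
where $L_0^{\F}$ collects all summands with $n\ne0$. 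For $n\ne0$ the modes $a^it^n$, $b^it^{-n}$ act within the Fock factor $\F_\si$ and hence commute with $U_\al$; since $\Delta_\si$ is a scalar, this gives $[L_0^{\tw},U_\al]=[L_0^{W},U_\al]$ and reduces everything to the $\lieh_0$-action on $W(\mu,\zeta)$.

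Next I would record the elementary commutator of a single zero-mode with $U_\al$. Writing $h_0=ht^0$ for the operator by which $h\in\lieh_0$ acts (cf.\ \eqref{tig4}), relation \eqref{tig2} gives $e^hU_\al=e^{(h|\al)}U_\al e^h$, and hence
\begin{equation*}
[h_0,U_\al]=(h|\al)\,U_\al=(h|\pi_0\al)\,U_\al,\qquad h\in\lieh_0,
\end{equation*}
the two expressions coinciding because $h\perp\lieh_\perp$. Applying this to $a^i$ and $b^i$ and expanding by the Leibniz rule yields
\begin{equation*}
[L_0^{W},U_\al]=\frac12\sum_{i:\,a^i\in\lieh_0}\bigl((b^i|\al)\,a^i_0U_\al+(a^i|\al)\,U_\al b^i_0\bigr).
\end{equation*}
Since $\{a^i\}$ and $\{b^i\}$ restrict to dual bases of $\lieh_0$, the resolutions of the identity $\sum_{i:\,a^i\in\lieh_0}(b^i|\al)a^i=\sum_{i:\,a^i\in\lieh_0}(a^i|\al)b^i=\pi_0\al$ collapse both sums, giving $\tfrac12\bigl((\pi_0\al)_0U_\al+U_\al(\pi_0\al)_0\bigr)$, where $(\pi_0\al)_0$ is the zero-mode of $\pi_0\al\in\lieh_0$.

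Finally I would commute the remaining momentum operator past $U_\al$, applying the displayed commutator once more with $h=\pi_0\al$:
\begin{equation*}
(\pi_0\al)_0U_\al=U_\al(\pi_0\al)_0+(\pi_0\al|\al)\,U_\al=U_\al(\pi_0\al)_0+|\pi_0\al|^2\,U_\al,
\end{equation*}
which turns the symmetric expression into $U_\al(\pi_0\al)_0+\tfrac12|\pi_0\al|^2U_\al$. Identifying $(\pi_0\al)_0$ with the operator $\pi_0\al$ on $M(\mu,\zeta)$, exactly as in \eqref{twlat12}, produces the claimed $U_\al\bigl(\tfrac{|\pi_0\al|^2}{2}+\pi_0\al\bigr)$. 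The step requiring the most care is the first one, namely checking that $U_\al$ genuinely commutes with every $n\ne0$ mode in $L_0^{\F}$, so that the anomalous constant $\tfrac12|\pi_0\al|^2$ can only originate from the noncommutativity $[(\pi_0\al)_0,U_\al]\ne0$. As an independent check on this constant, one may substitute $Y(e^\al,z)=E_\al(z)\otimes U_\al z^{b_\al+\pi_0\al}$ into \eqref{L0e^al}: the action of $z\partial_z$ on $z^{b_\al+\pi_0\al}$ contributes $b_\al+\pi_0\al$, and $b_\al+\tfrac12|\al|^2=\tfrac12|\pi_0\al|^2$ by \eqref{bal}, reproducing the same shift.
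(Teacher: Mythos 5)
Your proof is correct, but it takes a genuinely different route from the paper's. The paper's proof is a two-line argument at the level of vertex operators: it uses the factorization $Y(e^\al,z)=E_\al(z)\otimes U_\al z^{b_\al+\pi_0\al}$ from \eqref{twlat12} together with the conformal-weight commutator \eqref{L0e^al}; since $[L_0^{\tw},E_\al(z)]=z\partial_z E_\al(z)$, all of the "anomaly" $\frac{|\al|^2}{2}+b_\al=\frac{|\pi_0\al|^2}{2}$ is forced onto the $U_\al z^{b_\al+\pi_0\al}$ factor, and the $z\partial_z$ acting on $z^{\pi_0\al}$ produces the $\pi_0\al$ term. You instead work directly from the explicit normal-ordered formula \eqref{L0}, isolating the zero-mode piece $\frac12\sum_{i:\,a^i\in\lieh_0}a^i_0b^i_0$ and computing its commutator with $U_\al$ from the group relation \eqref{tig2} and the resolution of $\pi_0$ in dual bases of $\lieh_0$. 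Both arguments are sound; the paper's buys brevity by reusing \eqref{L0e^al} (which encodes that $e^\al$ has conformal weight $|\al|^2/2$), while yours is more self-contained and makes transparent \emph{where} each of the two terms comes from: the linear term $\pi_0\al$ from symmetrizing the zero modes around $U_\al$, and the constant $\frac12|\pi_0\al|^2$ from the single reordering $[(\pi_0\al)_0,U_\al]=|\pi_0\al|^2 U_\al$. Your closing cross-check is, in fact, precisely the paper's proof.
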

\begin{proof}
Equations \eqref{L0al} and \eqref{twheis3} imply that $[L_0^{\tw},\al_n]=-n\al_n$, from which we get
$[L_0^{\tw}, E_\al(z)]=z\partial_zE_\al(z)$ (cf.\ \eqref{E}). The result now follows from \eqref{twlat12}, \eqref{L0e^al}, 
and the relation $\frac{|\al|^2}{2}+b_\al=\frac{|\pi_0\al|^2}{2}$.
\end{proof}

Now we determine the action of $L_0^{\tw}$ on $v\in\Om(\mu,\zeta)$:
\begin{align}\label{L0Om}
L_0^{\tw}v&=\frac12\sum_{i=1}^r(a^it^0)(b^it^0)v+\Delta_\si v=\left(\frac{|\mu|^2}{2}+\Delta_\si\right)v,
\end{align}
using that $(ht^0)v=(h|\mu)v$ and $(ht^n)v=0$ for $n>0$. Then for $\ga\in Q$ and $v\in\Om(\mu,\zeta)$, we find
\begin{equation}\label{L0prep}
\begin{aligned}
L_0^{\tw}U_\ga v&=U_\ga L_0^{\tw}v+[L_0^{\tw},U_\ga]v\\
&=\left(\frac{|\mu|^2}{2}+\Delta_\si\right)U_\ga v+U_\ga\left(\frac{|\pi_0\ga|^2}{2}+\pi_0\ga\right)v\\
&=\left(\frac{|\mu+\pi_0\ga|^2}{2}+\Delta_\si\right)U_\ga v,
\end{aligned}
\end{equation}
using that $(\pi_0\ga) v=(\pi_0\ga|\mu)v$. Using \eqref{L0prep}, the action of $L_0^{\tw}$ on $M(\mu,\zeta)$ can now be determined.

\begin{proposition}\label{pL0tw}
Let\/ $w=(a^{i_1}t^{-n_1})\cdots(a^{i_k}t^{-n_k})U_{\ga}v$ be a basis element for\/ $M(\mu,\zeta)$ as in Lemma \ref{basisM}. Then 
\begin{align}\label{h0M}
h_0w&=(h|\mu+\pi_0\ga)w,\qquad h\in\mathfrak{h}_0,\\
L_0^{\tw}w&=\Bigl(n_1+\cdots+n_k+\frac{|\mu+\pi_0\ga|^2}{2}+\Delta_\si\Bigr)w.\label{L0M}
\end{align}
In particular, $L_0^{\tw}$ is diagonalizable on\/ $M(\mu,\zeta)$ with positive eigenvalues,
with the only exception when\/ $M(\mu,\zeta) \cong V_Q$ and\/ $w=\vac$, which has eigenvalue $0$.
\end{proposition}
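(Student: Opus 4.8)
The plan is to reduce everything to the eigenvalue computation on $U_\ga v$ already recorded in \eqref{L0prep}, and then to propagate it through the creation operators $a^{i_l}t^{-n_l}$ by means of a single commutator identity. First I would extract the modewise relation $[L_0^{\tw},a_m]=-m\,a_m$ for $a\in\lieh$: expanding $Y(a,z)=\sum_m a_m z^{-m-1}$ and comparing coefficients of $z^{-m-1}$ in \eqref{L0al} gives $[L_0^{\tw},a_m]=(1+(-m-1))a_m=-m\,a_m$. In particular, writing $a^{i_l}t^{-n_l}=a^{i_l}_{-n_l}$, we obtain $[L_0^{\tw},a^{i_l}t^{-n_l}]=n_l\,(a^{i_l}t^{-n_l})$.

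For \eqref{h0M}, I would note that the twisted Heisenberg bracket $[a_m,b_n]=m\delta_{m,-n}(a|b)K$ makes $h_0=ht^0$ commute with every creation operator, because the prefactor $m$ vanishes at $m=0$. Hence $h_0 w=(a^{i_1}t^{-n_1})\cdots(a^{i_k}t^{-n_k})\,h_0(U_\ga v)$, and \eqref{tig4} gives $h_0(U_\ga v)=(h|\ga+\mu)U_\ga v$. Since $h\in\lieh_0$ is orthogonal to $\lieh_\perp$, we have $(h|\ga)=(h|\pi_0\ga)$, which yields $h_0 w=(h|\mu+\pi_0\ga)w$.

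For \eqref{L0M}, I would move $L_0^{\tw}$ to the right through the $k$ creation operators one at a time, each commutator contributing a summand $n_l$, and then apply \eqref{L0prep} to the remaining vector $U_\ga v$. This produces exactly the eigenvalue $n_1+\cdots+n_k+\tfrac12|\mu+\pi_0\ga|^2+\Delta_\si$.

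Since Lemma \ref{basisM} furnishes a basis of $M(\mu,\zeta)$ consisting of such $L_0^{\tw}$-eigenvectors, $L_0^{\tw}$ is automatically diagonalizable, and the only remaining point---which I expect to be the one requiring genuine care---is the sharp statement about positivity. The eigenvalue in \eqref{L0M} is a sum of three nonnegative quantities: the positive numbers $n_l$ (present only when $k\ge 1$), the term $\tfrac12|\mu+\pi_0\ga|^2\ge0$, and $\Delta_\si\ge0$. Thus it can vanish only if $k=0$, $\mu+\pi_0\ga=0$, and $\Delta_\si=0$. By \eqref{Delta} every summand $\tfrac{j}{N}(1-\tfrac{j}{N})$ with $1\le j\le N-1$ is strictly positive, so $\Delta_\si=0$ forces $\lieh_\perp=0$ (cf.\ \eqref{hperp}), i.e.\ $\si$ acts trivially and $M(\mu,\zeta)$ is untwisted with $\pi_0=\id$. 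Then $\mu+\ga=0$ with $\ga\in Q$ gives $\mu\in Q$, so $M(\mu,\zeta)=V_{\mu+Q}=V_Q$ and $w=U_\ga v=\vac$; conversely this single vector has eigenvalue $0$, while every other basis eigenvalue is strictly positive.
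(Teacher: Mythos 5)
Your proof is correct and follows essentially the same route as the paper: the commutator $[L_0^{\tw},a_m]=-m\,a_m$ together with \eqref{L0prep} gives the eigenvalue formulas, and positivity is settled by observing that the eigenvalue is a sum of nonnegative terms that all vanish only when $k=0$, $\Delta_\si=0$ (forcing $\si=1$), and $\mu+\pi_0\ga=0$. Your phrasing of the exceptional case as $\mu+\ga=0$ (rather than the paper's $\mu=\ga=0$) is in fact slightly more careful about the choice of coset representative, but identifies the same vector $\vac\in V_Q$.
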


\begin{proof}
The only thing left to prove is the claim that the eigenvalues of $L_0^{\tw}$ are positive.
From \eqref{Delta}, we see that $\Delta_\si\ge0$ and $\Delta_\si>0$ unless $\si=1$. 
Since $Q$ is positive definite, $|\mu+\pi_0\ga|^2 \ge0$ and $|\mu+\pi_0\ga|^2 >0$ unless $\mu+\pi_0\ga=0$. 
Hence, the eigenvalues of $L_0^{\tw}$ are non-negative, and the only way to obtain $0$ is if all $n_j=0$ and $\si=1$, $\mu=\ga=0$.
\end{proof}

\begin{remark}
We see from \eqref{L0M} that the \emph{conformal weight} (i.e., the minimal eigenvalue of $L_0^{\tw}$) of $M(\mu,\zeta)$ is 
\[
\Delta_\si + \frac12 \min_{\ga\in Q} |\mu+\pi_0\ga|^2.
\]
In particular, if $\mu=0$, then the conformal weight is $\Delta_\si$.
\end{remark}

\subsection{The sublattice $\bar{Q}\subset Q$
}\label{subsub}

In this subsection and the next one, we consider a more general situation when the isometry $\si$ of a positive-definite even lattice $Q$ is an element of a finite group $\Gamma$ of isometries.
In subsequent sections, we will return to the case of a single element $\si$, or equivalently, a cyclic group $\Ga=\langle\si\rangle$. 

As in \seref{lat}, we fix a bimultiplicative $2$-cocycle $\ep\colon Q \times Q \to \{\pm1\}$.
As in \seref{gsigsiperp}, for any isometry $\ph\in\Ga$, we pick a function $\eta_\ph\colon Q\to\{\pm1\}$ such that
\begin{equation}\label{etaph}
\eta_\varphi(\al+\be)\ep(\varphi\al,\varphi\be)
=\eta_\varphi(\al)\eta_\varphi(\be)\ep(\al,\be),
\qquad \al,\be\in Q.
\end{equation} 
We will assume that $\eta_1=1$ for the identity element $\ph=1$.

Then $\ph$ induces an automorphism of $\CC_\ep[Q]$ defined by
\begin{equation}\label{pheal}
\ph(e^\al)=\eta_\ph(\al) e^{\ph\al} \,,
\qquad \al\in Q
\end{equation}
(cf.\ \eqref{lat1}, \eqref{twlat4}). 
It also defines an automorphism of $V_Q$ similarly to \eqref{twlat4}, which by abuse of notation we will denote again as $\ph$.
As before, the order of $\ph$ as an automorphism of $\CC_\ep[Q]$ (or $V_Q$) may be double its order as an automorphism of $Q$.
Moreover, this does not define a representation of $\Ga$, but only a projective representation, or a representation of its central extension.
Following an idea of \cite{BE}, we will avoid these complications by restricting to a certain sublattice $\bar{Q}$ of $Q$, for which this is true and which produces the same 
orbifold subalgebra $(V_{\bar Q})^\Ga = (V_Q)^\Ga$ of fixed points under $\Ga$.

\begin{definition}\label{dQbar}
Let $\bar Q$ be the set of all $\al\in Q$ such that, for every $\ph_1,\ph_2\in\Ga$, we have
\begin{equation}\label{Qbar1}
(\ph_1\ph_2)(e^\al) = \ph_1(\ph_2(e^\al))
\end{equation}
in $\CC_\ep[Q]$ (and hence also in $V_Q$).
Equivalently, $\bar Q$ consists of all $\al\in Q$ such that
\begin{equation}\label{Qbar2}
\eta_{\ph_1\ph_2}(\al) = \eta_{\ph_1}(\ph_2\al) \eta_{\ph_2}(\al)
\end{equation}
for every $\ph_1,\ph_2\in\Ga$.
\end{definition}

\begin{lemma}\label{lQbar1}
The subset\/ $\bar{Q}\subset Q$ is a $\Ga$-invariant sublattice of\/ $Q$ containing\/ $2Q$. In particular, $\rank\bar Q=\rank Q$.
\end{lemma}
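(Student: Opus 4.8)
The plan is to verify the three asserted properties of $\bar{Q}$ directly from the defining condition \eqref{Qbar2}, namely that it is closed under addition, that it is $\Ga$-invariant, and that it contains $2Q$. For the closure under addition, I would take $\al,\al'\in\bar Q$ and check that $\al+\al'$ satisfies \eqref{Qbar2} for every $\ph_1,\ph_2\in\Ga$. The key tool is the cocycle relation \eqref{etaph}, which lets me expand $\eta_\ph(\al+\al')$ in terms of $\eta_\ph(\al)$, $\eta_\ph(\al')$, and the ratio $\ep(\ph\al,\ph\al')/\ep(\al,\al')$. Substituting these expansions into both sides of \eqref{Qbar2} for $\al+\al'$ and using that $\al,\al'$ individually satisfy \eqref{Qbar2}, the $\eta$-factors should match; the remaining discrepancy will be a product of $\ep$-terms, which I expect to cancel because the $2$-cocycle $\ep$ is associative (that is, the cocycle condition itself encodes exactly the needed compatibility). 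This is the step I anticipate to be the main obstacle, since it requires carefully bookkeeping how $\ep(\ph\al,\ph\al')$ transforms under the composition $\ph_1\ph_2$ versus the successive application of $\ph_1$ and $\ph_2$.

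For the $\Ga$-invariance, I would show that if $\al\in\bar Q$ and $\psi\in\Ga$, then $\psi\al\in\bar Q$. The cleanest route is to use the group-theoretic reformulation \eqref{Qbar1}: the condition $\al\in\bar Q$ says that the maps $\ph\mapsto\ph(e^\al)$ behave associatively, i.e., the projective cocycle obstruction vanishes on $e^\al$. Applying the automorphism $\psi$ (which acts on $V_Q$) to the identity $(\ph_1\ph_2)(e^\al)=\ph_1(\ph_2(e^\al))$ and conjugating the indices appropriately should transport associativity from $e^\al$ to $e^{\psi\al}$, using that $\psi(e^\al)$ is a scalar multiple of $e^{\psi\al}$ and that scalars do not affect \eqref{Qbar1}. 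Concretely, I would verify \eqref{Qbar2} for $\psi\al$ by reindexing the triple $(\ph_1,\ph_2)$ through $\psi$.

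For the inclusion $2Q\subseteq\bar Q$, I would show that every element of the form $2\al$ satisfies \eqref{Qbar2}. The point is that the sign ambiguities live in $\{\pm1\}$, so any quantity that is manifestly a square becomes trivial. Writing $\eta_\ph(2\al)$ via the cocycle relation \eqref{etaph} repeatedly (or noting $\eta_\ph$ is, up to an $\ep$-factor, essentially multiplicative) reduces the obstruction in \eqref{Qbar2} to products of signs that each appear an even number of times, hence equal $1$. This establishes $2Q\subseteq\bar Q$ and, since $2Q$ has full rank $r=\rank Q$ in $Q$ and $\bar Q$ is sandwiched $2Q\subseteq\bar Q\subseteq Q$, it follows immediately that $\rank\bar Q=\rank Q$, completing the proof.

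Throughout, the computations are elementary manipulations with the $\pm1$-valued functions $\eta_\ph$ and $\ep$, so I would not expect any conceptual difficulty beyond the careful cocycle bookkeeping in the closure step; the invariance and the $2Q$-containment should each follow in a few lines once the defining equation \eqref{Qbar2} is taken as the working characterization of $\bar Q$.
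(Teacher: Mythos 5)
Your proposal is correct and follows essentially the same route as the paper: the paper phrases the addition and $2Q$ steps in terms of the algebra elements $e^\al$ (using that each $\ph$ is an automorphism of $\CC_\ep[Q]$, so the $\ep(\al,\be)$ factors drop out automatically), while you carry out the equivalent $\pm1$-bookkeeping directly on \eqref{Qbar2} via \eqref{etaph} — where the cancellation comes simply from each $\ep$-factor appearing twice, not from the cocycle identity — and your $\Ga$-invariance argument via $\psi(e^\al)=\pm e^{\psi\al}$ is the paper's as well. The only point worth adding is closure under negation, needed for $\bar Q$ to be a subgroup; it follows at once from $-\al=\al+(-2\al)$ with $-2\al\in2Q\subset\bar Q$ (the paper instead takes $\be=-\al$ in the addition step).
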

\begin{proof}
Consider arbitrary $\al,\be\in Q$ and $\ph_1,\ph_2\in\Ga$. Using \eqref{lat1} and that each $\ph_i$ acts as an automorphism the associative algebra $\CC_\ep[Q]$,
we obtain:
\[
\ep(\al,\be) (\ph_1\ph_2)(e^{\al+\be})
= (\ph_1\ph_2)(e^\al e^\be) 
= (\ph_1\ph_2)(e^\al) (\ph_1\ph_2)(e^\be)
\]
and
\[
\ep(\al,\be) \ph_1(\ph_2(e^{\al+\be}))
= \ph_1(\ph_2(e^\al e^\be)) 
= \ph_1(\ph_2(e^\al)) \ph_1(\ph_2(e^\be)).
\]
If $\al,\be\in\bar Q$, then the right-hand sides of the above equations are equal; hence, the left sides are equal and $\al+\be\in\bar Q$.

Similarly, taking $\be=-\al$ and using that $\ph(e^0)=e^0$ for all $\ph\in\Ga$, we also see that $\al\in\bar Q$ implies $-\al\in\bar Q$.
Therefore, $\bar Q$ is a sublattice of $Q$.
Now let $\be=\al\in Q$ be arbitrary. Then, since $(\ph_1\ph_2)(e^\al) = \pm\ph_1(\ph_2(e^\al))$, when we square these we get that
the right-hand sides of the above equations are again equal. Hence, $2\al\in\bar Q$ for all $\al\in Q$.

Finally, to prove the $\Ga$-invariance of $\bar Q$, let $\al\in\bar Q$ and $\ph_1,\ph_2,\ph_3\in\Ga$. Then
\[
\eta_{\ph_3}(\al) (\ph_1\ph_2)(e^{\ph_3\al}) = (\ph_1\ph_2)(\ph_3(e^\al)) = (\ph_1\ph_2\ph_3)(e^\al)
\]
and
\[
\eta_{\ph_3}(\al) \ph_1(\ph_2(e^{\ph_3\al})) = \ph_1(\ph_2(\ph_3(e^\al))) = (\ph_1\ph_2\ph_3)(e^\al).
\]
This implies that $\ph_3\al\in\bar Q$.
\end{proof}

Due to \leref{lQbar1}, we can view $\Ga$ as a group of isometries of the lattice $\bar Q$. Hence, as above, every $\ph\in\Ga$ induces an
automorphism of the lattice vertex algebra $V_{\bar Q}$, which is a subalgebra of $V_Q$. By construction, this does give
a representation of $\Ga$ on $V_{\bar Q}$. In particular, the order of $\ph$ remains the same as an element of $\Ga$ and as an
automorphism of $V_{\bar Q}$. Now we prove that the subalgebra of $\Ga$-fixed points is the same in $V_{\bar Q}$ and in $V_Q$.

\begin{lemma}\label{lQbar2}
We have\/ $(V_Q)^\Gamma=(V_{\bar{Q}})^\Gamma$.
\end{lemma}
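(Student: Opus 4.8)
The plan is to prove the two inclusions $(V_{\bar Q})^\Gamma \subseteq (V_Q)^\Gamma$ and $(V_Q)^\Gamma \subseteq (V_{\bar Q})^\Gamma$ separately. The first inclusion is essentially immediate: since $\bar Q \subseteq Q$ by \leref{lQbar1}, we have $V_{\bar Q} \subseteq V_Q$ as vertex subalgebras, and the action of each $\ph\in\Gamma$ on $V_{\bar Q}$ is the restriction of its action on $V_Q$ (both are given by the same formulas \eqref{twlat4}, \eqref{pheal}). Hence any $\Gamma$-fixed vector of $V_{\bar Q}$ is automatically a $\Gamma$-fixed vector of $V_Q$, giving $(V_{\bar Q})^\Gamma \subseteq (V_Q)^\Gamma$.

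For the reverse inclusion, first I would record the decomposition of $V_Q$ as a module over the Heisenberg vertex algebra $\F$ together with the group algebra: namely $V_Q = \bigoplus_{\al\in Q} \F\otimes\CC e^\al$, and each summand $\F\otimes\CC e^\al$ is sent by $\ph\in\Gamma$ into $\F\otimes\CC e^{\ph\al}$ via \eqref{pheal}. Thus $\Gamma$ permutes the $Q/\bar Q$-graded pieces $V_Q^{[\al]} = \bigoplus_{\be\in\al+\bar Q}\F\otimes\CC e^\be$ (using that $\bar Q$ is $\Gamma$-invariant), and a $\Gamma$-fixed vector must itself be $\Gamma$-fixed inside each $\Gamma$-orbit of these graded components. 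The key point is that a fixed vector supported on a nontrivial orbit of cosets cannot arise, while on the sublattice $\bar Q$ the action is a genuine representation of $\Gamma$.

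The crucial step is to show that any $\Gamma$-invariant vector of $V_Q$ is in fact supported only on $\bar Q$, i.e.\ lies in $V_{\bar Q}$. To see this, suppose $v\in(V_Q)^\Gamma$ and write $v = \sum_\al v_\al$ with $v_\al \in \F\otimes\CC_\ep[Q]e^\al$ the homogeneous components with respect to the $Q$-grading. Fix $\al\in Q$ and consider the two expressions $(\ph_1\ph_2)(v)$ and $\ph_1(\ph_2(v))$. Since $v$ is $\Gamma$-invariant, both equal $v$, so they agree componentwise. Tracking the component landing in $\F\otimes\CC e^{\ph_1\ph_2\al}$, the failure of associativity is governed precisely by the scalar discrepancy between $\eta_{\ph_1\ph_2}(\al)$ and $\eta_{\ph_1}(\ph_2\al)\eta_{\ph_2}(\al)$ from \eqref{Qbar2}. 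The main obstacle is to make this bookkeeping rigorous: one must verify that for $\al\notin\bar Q$ the invariance equations force $v_\al=0$, whereas for $\al\in\bar Q$ they impose exactly the genuine $\Gamma$-invariance condition inside $V_{\bar Q}$. Once this is established, $v\in V_{\bar Q}$ and $v$ is $\Gamma$-fixed there, giving $v\in(V_{\bar Q})^\Gamma$ and hence $(V_Q)^\Gamma\subseteq(V_{\bar Q})^\Gamma$.

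I expect the delicate part to be handling the discrepancy between the $Q$-grading and the $\Gamma$-orbit structure cleanly, in particular ensuring that the projective-versus-genuine distinction from \deref{dQbar} translates correctly into the vanishing of unwanted components. Combining both inclusions then yields $(V_Q)^\Gamma = (V_{\bar Q})^\Gamma$, completing the proof.
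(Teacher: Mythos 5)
Your proposal is essentially the paper's proof: the paper packages the comparison of $(\ph_1\ph_2)(v)$ with $\ph_1(\ph_2(v))$ into the single automorphism $\psi=(\ph_1\ph_2)^{-1}\circ\ph_1\circ\ph_2$, which fixes $\F$ pointwise and multiplies each $e^\al$ by $\pm1$, so $\Gamma$-invariance of $v$ forces $\psi(e^\al)=+e^\al$ (i.e.\ $\al\in\bar Q$) on every nonzero component. The ``bookkeeping'' you worry about is in fact trivial, because $\ph_1\ph_2$ permutes the $Q$-graded components bijectively: each $v_\al$ is compared with itself up to the sign $\eta_{\ph_1\ph_2}(\al)\,\eta_{\ph_1}(\ph_2\al)^{-1}\eta_{\ph_2}(\al)^{-1}$ and must vanish whenever that sign is $-1$, which settles both cases at once (the digression about $Q/\bar Q$-cosets and their $\Gamma$-orbits in your second paragraph is not needed and is slightly misleading, since a component $v_\al$ with $\al\notin\bar Q$ dies even when its coset is $\Gamma$-fixed).
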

\begin{proof}
Since $\bar Q\subset Q$, we have the obvious inclusions $V_{\bar Q} \subset V_Q$ and $(V_{\bar{Q}})^\Ga \subset (V_Q)^\Ga$.
To prove the opposite inclusion, consider an arbitrary element
\[
v = \sum_{\al\in Q} f_\al\otimes e^\al \in V_Q \qquad (f_\al\in\F),
\]
where only finitely many $f_\al\ne0$. 

For $\ph_1,\ph_2\in\Ga$, the automorphism $\psi=(\ph_1\ph_2)^{-1} \circ \ph_1\circ\ph_2$ of $V_Q$
has the property that $\psi(f)=f$ and $\psi(e^\al)=\pm e^\al$ for all $f\in\F$, $\al\in Q$. Hence,
\[
\psi(v) = \sum_{\al\in Q} f_\al\otimes \psi(e^\al).
\]
Suppose that $v\in(V_Q)^\Ga$, i.e., $\ph(v)=v$ for all $\ph\in\Ga$. This implies that $\psi(v)=v$ and $\psi(e^\al)=e^\al$
for all $\al\in Q$ with $f_\al\ne0$. Therefore, $\al\in\bar Q$ and $v\in V_{\bar Q}$.
\end{proof}

If the group $\Ga$ is given in terms of generators and relations, then it is enough to specify $\eta_\ph$ for the generators $\ph$.
Due to \eqref{etaph}, we can let $\eta_{\ph^{-1}}=\eta_\ph$. Then we can use \eqref{Qbar2} to define $\eta_\ph$ in the case when
$\ph$ is a product of generators and their inverses. For this process to be well defined, it needs to respect the relations of $\Ga$; explicitly,
we must have
\begin{equation*}
\ph_1\circ\cdots\circ\ph_k(e^\al) = e^\al,
\end{equation*}
or equivalently 
\begin{equation*}
\eta_{\ph_1}(\ph_2\cdots\ph_k\al) \eta_{\ph_2}(\ph_3\cdots\ph_k\al) \cdots \eta_{\ph_{k-1}}(\ph_k\al) \eta_{\ph_k}(\al) = 1,
\end{equation*}
whenever $\ph_1\cdots\ph_k=1$ in $\Gamma$. The above identities do not hold in general for all $\al\in Q$, 
but by construction they hold for all $\al\in\bar Q$.

In this paper, our main focus is the case when $\Gamma=\langle\si\rangle$ is a cyclic group of order $N$. Then all relations in $\Ga$ are consequences of
the relation $\si^N=1$. We fix the function $\eta=\eta_\si\colon Q\to\{\pm1\}$ satisfying \eqref{twlat3} and \eqref{twlat2}.
Then from the above discussion we can define $\eta_\ph$ for all $\ph\in\Ga$.
Hence, the sublattice $\bar Q\subset Q$ is determined by the equation
\begin{equation}\label{Qbar3a}
\underbrace{\si\circ\cdots\circ\si}_N(e^\al) = e^\al,
\end{equation}
or equivalently 
\begin{equation}\label{Qbar3}
\eta(\si^{N-1}\al) \eta(\si^{N-2}\al) \cdots \eta(\si\al) \eta(\al) = 1.
\end{equation}
The final result of this subsection provides a useful characterization of $\bar Q$ in terms of the bilinear form $(\cdot|\cdot)$ of the lattice $Q$.

\begin{lemma}\label{Qbarlemma}
Suppose that\/ $\Gamma=\langle\si\rangle$ is a cyclic group of order\/ $N$ of isometries of\/ $Q$.
Then\/ $\bar Q$ consists of all\/ $\al\in Q$ such that
\begin{equation}\label{Qbar4}
\sum_{m=1}^{N-1}(\al|\si^m\al)\in2\ZZ,
\end{equation}
or equivalently,
\begin{equation}\label{Qbar41}
 |\pi_0\al|^2 = (\al|\pi_0\al) \in\frac2N\ZZ,
\end{equation}
where\/ $\pi_0$ is the projection onto the space of\/ $\si$-invariant vectors in\/ $\lieh$.
\end{lemma}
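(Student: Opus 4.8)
The plan is to reduce membership in $\bar Q$ to the vanishing of a single sign, and then evaluate that sign explicitly. Iterating the lift $\si(e^\al)=\eta(\al)e^{\si\al}$ from \eqref{twlat4} and using $\si^N\al=\al$ gives $\si^N(e^\al)=\bigl(\prod_{m=0}^{N-1}\eta(\si^m\al)\bigr)e^\al$, so the defining condition \eqref{Qbar3} for $\al\in\bar Q$ reads $P(\al):=\prod_{m=0}^{N-1}\eta(\si^m\al)=1$. Everything thus reduces to computing the sign $P(\al)$ in terms of the form $(\cdot|\cdot)$.

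To evaluate $P(\al)$, I would first rewrite the cocycle relation \eqref{twlat3} multiplicatively as $\eta(\al+\be)=\eta(\al)\eta(\be)F(\al,\be)$, where $F(\al,\be)=\ep(\al,\be)\ep(\si\al,\si\be)\in\{\pm1\}$; since $\ep$ is bimultiplicative and $\si$ is linear, $F$ is bimultiplicative in each slot. A straightforward induction on $n$ then yields
\[
\eta(\ga_1+\cdots+\ga_n)=\prod_{i=1}^n\eta(\ga_i)\prod_{1\le i<j\le n}F(\ga_i,\ga_j).
\]
Applying this with $\ga_m=\si^m\al$ for $m=0,\dots,N-1$, whose sum is $\sum_{m=0}^{N-1}\si^m\al=N\pi_0\al\in Q\cap\lieh_0$, the left-hand side equals $1$ by the normalization \eqref{twlat2}. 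This gives $P(\al)=\prod_{0\le i<j\le N-1}F(\si^i\al,\si^j\al)$, a product of $\ep$-values.

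The crux is to collapse this double product. Writing $a_{i,j}=\ep(\si^i\al,\si^j\al)$, which depends only on $i,j$ modulo $N$ since $\si^N=1$, one has $F(\si^i\al,\si^j\al)=a_{i,j}\,a_{i+1,j+1}$. I would split $P(\al)=A\cdot B$ with $A=\prod_{0\le i<j\le N-1}a_{i,j}$ and $B=\prod_{0\le i<j\le N-1}a_{i+1,j+1}$; reindexing $B$ by $(i+1,j+1)$ and reducing the index $N$ to $0$ shows that $A$ and $B$ share all ``interior'' factors $a_{i,j}$ with $1\le i<j\le N-1$, which therefore appear squared and cancel. Only the boundary terms survive, giving $P(\al)=\prod_{j=1}^{N-1}a_{0,j}\,a_{j,0}$, and \eqref{lat22} supplies $a_{0,j}a_{j,0}=\ep(\al,\si^j\al)\ep(\si^j\al,\al)=(-1)^{(\al|\si^j\al)}$. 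Hence $P(\al)=(-1)^{\sum_{m=1}^{N-1}(\al|\si^m\al)}$, so $P(\al)=1$ is exactly \eqref{Qbar4}. I expect the main obstacle to be the mod-$N$ index bookkeeping needed to confirm that precisely the diagonal-adjacent boundary terms remain.

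Finally, to match \eqref{Qbar4} with \eqref{Qbar41}, I would use that $\pi_0$ is a self-adjoint idempotent, so $(\al|\pi_0\al)=|\pi_0\al|^2$, and compute $\sum_{m=0}^{N-1}(\al|\si^m\al)=(\al|N\pi_0\al)=N|\pi_0\al|^2$. Since $Q$ is even, the $m=0$ term $|\al|^2$ is even, whence $\sum_{m=1}^{N-1}(\al|\si^m\al)\equiv N|\pi_0\al|^2\pmod 2$. As $N|\pi_0\al|^2\in\ZZ$, the condition $\sum_{m=1}^{N-1}(\al|\si^m\al)\in2\ZZ$ is equivalent to $N|\pi_0\al|^2\in2\ZZ$, i.e.\ to $|\pi_0\al|^2\in\frac2N\ZZ$, which completes the argument.
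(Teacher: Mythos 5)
Your proof is correct and follows essentially the same route as the paper's: both reduce membership in $\bar Q$ to the sign $\prod_{m=0}^{N-1}\eta(\si^m\al)=1$, exploit $\eta(N\pi_0\al)=1$ from \eqref{twlat2}, expand via the relation \eqref{twlat3} into a product of $\ep$-values, cancel the interior factors, and pair the surviving boundary factors with \eqref{lat22} to obtain $(-1)^{\sum_{m=1}^{N-1}(\al|\si^m\al)}$. Your packaging via the bimultiplicative correction term $F(\al,\be)=\ep(\al,\be)\ep(\si\al,\si\be)$ and the general expansion $\eta(\ga_1+\cdots+\ga_n)=\prod_i\eta(\ga_i)\prod_{i<j}F(\ga_i,\ga_j)$ is a slightly cleaner way to organize the same telescoping that the paper carries out with partial sums, and the mod-$N$ reindexing you flag as the main obstacle does work out exactly as you describe.
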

\begin{proof}
The equivalence of the two conditions is obvious from $(\al|\al)\in2\ZZ$ and the formula 
\begin{equation}\label{eqpi0}
\pi_0 = \frac1N \sum_{m=0}^{N-1} \si^m.
\end{equation}
Since the element $N\pi_0\al = \al+\si\al+\cdots+\si^{N-1}\al \in Q$ is $\si$-invariant, we have
\begin{equation}\label{etasiinv}
\eta(\al+\si\al+\cdots+\si^{N-1}\al) = 1, \qquad \al\in Q,
\end{equation}
by \eqref{twlat2}.

We want to calculate the left-hand side of \eqref{Qbar3}, which we denote by $L$.
Using \eqref{twlat3}, we have for $0\le j\le N-2$:
\begin{align*}
\eta&(\si^{j+1}\al) \eta(\al+\si\al+\cdots+\si^j\al) \eta(\al+\cdots+\si^{j+1}\al) \\
&= \ep(\al+\si\al+\cdots+\si^j\al,\si^{j+1}\al) \ep(\si\al+\cdots+\si^{j+1}\al,\si^{j+2}\al).
\end{align*}
Then, by \eqref{etasiinv}, $L$ equals the product of these expressions, which by the bimultiplicativity of $\ep$ gives
\begin{equation*}
L=\prod_{j=0}^{N-2}\prod_{i=0}^{j} \ep(\si^i\al,\si^{j+1}\al)\ep(\si^{i+1}\al,\si^{j+2}\al).
\end{equation*}

Consider first the factors of the form $\ep(\al,\si^m\al)$ or $\ep(\si^m\al,\al)$ in the above product, where $1\leq m \leq N-1$ and we
keep in mind that $\si^N\al=\al$. For each $m$, there are exactly two such factors, 
coming from $\ep(\si^i\al,\si^{j+1}\al)$ with $j=m-1$, $i=0$, or from $\ep(\si^{i+1}\al,\si^{j+2}\al)$ with $j=N-2$, $i=m-1$. 
By \eqref{lat22}, the product of these two factors is
\begin{equation*}
\ep(\al,\si^m\al)\ep(\si^m\al,\al)=(-1)^{(\al|\si^m\al)}.
\end{equation*}
Now consider the remaining factors of the form $\ep(\si^k\al,\si^m\al)$ for $1\leq k<m \leq N-1$. 
We claim that every such factor appears exactly twice in the above product, and hence they cancel each other.
Indeed, we can get $\ep(\si^k\al,\si^m\al)$ either from $\ep(\si^i\al,\si^{j+1}\al)$ with $j=m-1$, $i=k$,
or from $\ep(\si^{i+1}\al,\si^{j+2}\al)$ with $j=m-2$, $i=k-1$.
Therefore,
\begin{equation*}
L=\prod_{m=1}^{N-1}(-1)^{(\al|\si^m\al)},
\end{equation*}
which completes the proof of the lemma.
\end{proof}

\begin{corollary}\label{corQbar1}
If\/ $\Gamma=\langle\si\rangle$ is cyclic of odd order, then\/ $\bar Q=Q$.
\end{corollary}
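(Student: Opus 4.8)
The plan is to reduce everything to \leref{Qbarlemma}. That lemma cuts out $\bar Q$ inside $Q$ by the single congruence \eqref{Qbar4}, so to prove $\bar Q=Q$ it suffices to show that
\[
\sum_{m=1}^{N-1}(\al|\si^m\al)\in 2\ZZ
\]
holds automatically for every $\al\in Q$ once $N$ is odd. The mechanism I would exploit is a pairing $m\leftrightarrow N-m$ of the summands, which uses only that $\si$ is an isometry of finite order $N$.

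First I would record the symmetry of the summands. Since $\si$ preserves the bilinear form and $\si^N=1$, for each $1\le m\le N-1$ we have
\[
(\al|\si^m\al)=(\si^{-m}\al|\al)=(\al|\si^{N-m}\al),
\]
so the $(N-1)$ terms organize into matched pairs indexed by the two-element sets $\{m,N-m\}$. Next, because $N$ is odd, the self-paired index $m=N-m$ never arises (it would force $N=2m$), hence no term is left without a partner, and moreover $N-1$ is even. Consequently
\[
\sum_{m=1}^{N-1}(\al|\si^m\al)=2\sum_{m=1}^{(N-1)/2}(\al|\si^m\al),
\]
and each inner term is an integer since $Q$ is integral. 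Therefore the total sum lies in $2\ZZ$, condition \eqref{Qbar4} is satisfied for all $\al\in Q$, and $\bar Q=Q$.

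I do not expect any real obstacle here: the whole content is the observation that oddness of $N$ eliminates the would-be middle term $m=N/2$ from consideration, so the only care needed is the correct application of the isometry identity. It is worth noting that this parity is precisely what fails when $N$ is even, where the unpaired term $(\al|\si^{N/2}\al)$ survives and the congruence \eqref{Qbar4} becomes a genuine constraint, so that $\bar Q$ can be a proper sublattice of $Q$.
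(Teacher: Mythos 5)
Your proof is correct and is essentially identical to the paper's: both reduce to condition \eqref{Qbar4} of \leref{Qbarlemma} and pair the terms $(\al|\si^m\al)$ with $(\al|\si^{N-m}\al)$ via the isometry and symmetry of the form, using oddness of $N$ to ensure no unpaired middle term. Nothing further is needed.
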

\begin{proof}
When $\si$ has order $N=2n+1$, we have for every $\al\in Q$,
\begin{equation*}
\sum_{m=1}^{N-1}(\al|\si^m\al) = \sum_{m=1}^n (\al|\si^m\al+\si^{N-m}\al) = 2\sum_{m=1}^n (\al|\si^m\al) \in2\ZZ,
\end{equation*}
since
\begin{equation*}
(\al|\si^{N-m}\al) = (\al|\si^{-m}\al) = (\si^m\al|\al) = (\al|\si^m\al)
\end{equation*}
by the $\si$-invariance and symmetry of the bilinear form. Hence, \leref{Qbarlemma} gives $\bar Q=Q$.
\end{proof}

\begin{remark}\label{remQbar}
When $\Gamma=\langle\si\rangle$ is cyclic of even order $N=2n$, the same reasoning as in the proof of \coref{corQbar1} shows that \eqref{Qbar4}
is equivalent to the condition $(\al|\si^n\al)\in2\ZZ$.
\end{remark}

\begin{corollary}\label{corQbar}
In the case when\/ $\Gamma=\langle\si\rangle$ is cyclic, the index of\/ $\bar Q$ in\/ $Q$ is either\/ $1$ or\/ $2$.
\end{corollary}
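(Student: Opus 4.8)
The plan is to show that the quotient $Q/\bar Q$ is an elementary abelian $2$-group of rank at most $1$, which forces the index to be $1$ or $2$. By \leref{Qbarlemma}, membership in $\bar Q$ is governed by the value of the map
\begin{equation*}
\Phi\colon Q\to\ZZ/2\ZZ, \qquad \Phi(\al) = \sum_{m=1}^{N-1}(\al|\si^m\al) \bmod 2,
\end{equation*}
so that $\bar Q = \Ker\Phi$ in the set-theoretic sense. First I would observe that if $\Phi$ is identically zero then $\bar Q=Q$ and the index is $1$, so the remaining task is to show that when $\Phi$ is not identically zero, the index is exactly $2$. For this it suffices to prove that $\Phi$ is a group homomorphism from $Q$ to $\ZZ/2\ZZ$: its kernel $\bar Q$ is then a subgroup of index $|\,\mathrm{im}\,\Phi\,| \le 2$, and since $\bar Q$ is a genuine sublattice (which we already know from \leref{lQbar1}) and a nontrivial $\Phi$ is surjective onto $\ZZ/2\ZZ$, the index is precisely $2$.

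The key step is therefore the bilinearity modulo $2$ of $\Phi$. I would compute
\begin{equation*}
\Phi(\al+\be) - \Phi(\al) - \Phi(\be) = \sum_{m=1}^{N-1}\bigl((\al|\si^m\be) + (\be|\si^m\al)\bigr) \bmod 2,
\end{equation*}
expanding $(\al+\be|\si^m(\al+\be))$ and subtracting the two diagonal terms. Using the $\si$-invariance of the form, $(\be|\si^m\al) = (\si^m\be|\si^{2m}\al)=(\si^{-m}\be|\al)$, and reindexing the sum $m\mapsto N-m$, the two cross-terms pair up so that the mixed sum is $2\sum_{m=1}^{N-1}(\al|\si^m\be)$, hence even. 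This shows $\Phi(\al+\be)=\Phi(\al)+\Phi(\be)$ in $\ZZ/2\ZZ$, so $\Phi$ is additive and $\bar Q=\Ker\Phi$ is a subgroup with quotient embedding into $\ZZ/2\ZZ$.

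The main obstacle I anticipate is purely bookkeeping: ensuring the reindexing $m\mapsto N-m$ is handled correctly across the whole range $1\le m\le N-1$ (including, when $N$ is even, the self-paired middle term $m=N/2$, where $(\be|\si^{N/2}\al)=(\al|\si^{N/2}\be)$ by symmetry and $\si$-invariance, so it again contributes an even total). Once $\Phi$ is shown to be a homomorphism, the conclusion is immediate, and the dichotomy $\bar Q=Q$ (from \coref{corQbar1} when $N$ is odd) versus index $2$ fits cleanly with \reref{remQbar}. I would close by noting that the index is exactly $2$ precisely when some $\al\in Q$ satisfies $\sum_{m=1}^{N-1}(\al|\si^m\al)\in 2\ZZ+1$, equivalently (by \reref{remQbar} in the even-order case) when $(\al|\si^{N/2}\al)$ is odd for some $\al$.
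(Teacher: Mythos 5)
Your proof is correct and follows essentially the same route as the paper: both arguments show that the parity obstruction characterizing $\bar Q$ in \leref{Qbarlemma} is additive modulo $2$, so that $\bar Q$ is the kernel of a homomorphism $Q\to\ZZ/2\ZZ$ and hence has index $1$ or $2$. The only cosmetic difference is that you work with the criterion \eqref{Qbar4} and cancel the cross-terms $\sum_m\bigl((\al|\si^m\be)+(\be|\si^m\al)\bigr)$ directly, while the paper uses the equivalent form \eqref{Qbar41} and observes that $2(\pi_0\al|\pi_0\be)\in\frac2N\ZZ$; these are the same computation up to the identity \eqref{eqpi0}.
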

\begin{proof}
For any $\al,\be\in Q$, formula \eqref{eqpi0} implies
\[
(\pi_0\al|\pi_0\be) = (\al|\pi_0\be) \in\frac1N\ZZ.
\]
Thus,
\[
|\pi_0(\al+\be)|^2 -  |\pi_0\al|^2 -  |\pi_0\be|^2 \in\frac2N\ZZ.
\]
If $\al,\be\not\in\bar Q$, then $N|\pi_0\al|^2$ and $N|\pi_0\be|^2$ are odd integers; hence, their sum is even and $\al+\be\in\bar Q$.
\end{proof}

When $\Gamma=\langle\si\rangle$ is cyclic, the proof of the above corollary also provides an alternative proof of the fact that $\bar Q$ is a sublattice of $Q$,
while the $\si$-invariance of $\bar Q$ is an immediate consequence of \eqref{twlat1a} and \leref{Qbarlemma}.

\subsection{The subgroup $\Gamma_{\si,\mu,\zeta}$
}\label{subsubgroup}

As in the previous subsection, let $\Gamma$ be a finite group of isometries of a positive-definite even lattice $Q$.
Following \cite{DVVV}, for every pair of commuting elements $\si,\varphi\in\Gamma$, 
we want to define an action of $\ph$ on the set of irreducible $\si$-twisted $V_Q$-modules.

We will continue to use the notation from the previous subsections regarding $\si$.
If we extend $\varphi$ linearly to $\lieh$, then it preserves the $\si$-eigenspaces $\lieh_{j/N}$; in particular,
\begin{equation}\label{phih}
\varphi(\mathfrak{h}_0)\subset\mathfrak{h}_0,\qquad\varphi(\mathfrak{h}_\perp)\subset\mathfrak{h}_\perp.
\end{equation}
Hence, $\varphi$ induces an automorphism of the $\si$-twisted Heisenberg algebra $\hat{\mathfrak{h}}_\si$, given by 
\begin{equation}\label{phhtn}
\varphi(ht^n)=\varphi(h)t^n, \quad \ph(K)=K \qquad
\Bigl(h \in\lieh_{j/N}, \; n\in-\frac{j}{N}+\ZZ\Bigr).
\end{equation}
We also define an action of $\varphi$ on the group $G$ by 
\begin{equation}\label{phactG}
\varphi\bigl(c\,e^h\,U_\al\bigr)=c\,\eta_\varphi(\al)\,e^{\varphi(h)}\,U_{\varphi\al} \qquad
(c\in\CC^\times, \; h\in\lieh_0, \; \al\in Q),
\end{equation}
where $\eta_\varphi$ plays the same role for $\varphi$ as $\eta$ does for $\si$ (see \eqref{etaph}).
It is easy to show that $\varphi$ is an automorphism of $G$.
However, in general, it is not true that $\varphi$ preserves the subgroup $N_\si$ defined by \eqref{Nsi}.
It will be true if we assume that $\bar Q=Q$, which we will do from now on (cf.\ \leref{Qbarlemma} and  \coref{corQbar1}).

\begin{lemma}\label{lQbar3}
If\/ $\ph\si=\si\ph$ and\/ $\al\in\bar Q$, then\/ $\ph(C_\al)=C_{\varphi\al}$, where\/ $C_\al$ is defined by \eqref{Cal}.
\end{lemma}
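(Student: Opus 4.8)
The plan is to compute $\varphi(C_\al)$ directly from the definition \eqref{Cal} and verify that each factor transforms as it should, using crucially the hypothesis $\al\in\bar Q$ (equivalently $\bar Q=Q$). Recall
\[
C_\al = \eta(\al) \, U_{\si\al}^{-1} U_\al \, e^{2\pi\ii(b_\al+\pi_0\al)},
\]
so I would apply $\varphi$ termwise using the automorphism property of $\varphi$ on $G$ together with \eqref{phactG}. The key structural facts I intend to exploit are that $\varphi$ commutes with $\si$ (hence with $\pi_0$, by \eqref{eqpi0}, and it preserves $\lieh_0$ and $\lieh_\perp$ by \eqref{phih}), that $\varphi$ is an isometry (so $b_\al = b_{\varphi\al}$, since $b_\al$ depends only on $|\pi_0\al|^2$ and $|\al|^2$ by \eqref{bal}), and that $\varphi(\pi_0\al) = \pi_0(\varphi\al)$. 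These immediately handle the exponential factor: $\varphi\bigl(e^{2\pi\ii(b_\al+\pi_0\al)}\bigr) = e^{2\pi\ii(b_{\varphi\al}+\pi_0(\varphi\al))}$.

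First I would compute $\varphi(U_{\si\al}^{-1} U_\al)$. Since $\varphi$ is an automorphism of $G$ and $\varphi\si=\si\varphi$, formula \eqref{phactG} gives
\[
\varphi(U_\al) = \eta_\varphi(\al)\, U_{\varphi\al}, \qquad
\varphi(U_{\si\al}) = \eta_\varphi(\si\al)\, U_{\varphi\si\al} = \eta_\varphi(\si\al)\, U_{\si\varphi\al}.
\]
Combining these, $\varphi(U_{\si\al}^{-1} U_\al) = \eta_\varphi(\si\al)^{-1}\eta_\varphi(\al)\, U_{\si\varphi\al}^{-1} U_{\varphi\al}$. Therefore
\[
\varphi(C_\al) = \eta(\al)\,\frac{\eta_\varphi(\al)}{\eta_\varphi(\si\al)}\, U_{\si\varphi\al}^{-1} U_{\varphi\al}\, e^{2\pi\ii(b_{\varphi\al}+\pi_0(\varphi\al))},
\]
so that comparing with $C_{\varphi\al} = \eta(\varphi\al)\, U_{\si\varphi\al}^{-1} U_{\varphi\al}\, e^{2\pi\ii(b_{\varphi\al}+\pi_0(\varphi\al))}$, the claim $\varphi(C_\al)=C_{\varphi\al}$ reduces to the single scalar identity
\[
\eta(\al)\,\frac{\eta_\varphi(\al)}{\eta_\varphi(\si\al)} = \eta(\varphi\al).
\]

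The main obstacle is precisely this cocycle identity, and it is exactly here that $\al\in\bar Q$ must enter. I expect to derive it from the compatibility condition \eqref{Qbar2} (the defining property of $\bar Q$), applied to the commuting pair $\si,\varphi$. Writing \eqref{Qbar2} with $\ph_1=\varphi,\ph_2=\si$ gives $\eta_{\varphi\si}(\al)=\eta_\varphi(\si\al)\eta_\si(\al)=\eta_\varphi(\si\al)\eta(\al)$, while with $\ph_1=\si,\ph_2=\varphi$ it gives $\eta_{\si\varphi}(\al)=\eta_\si(\varphi\al)\eta_\varphi(\al)=\eta(\varphi\al)\eta_\varphi(\al)$. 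Since $\varphi\si=\si\varphi$ in $\Ga$, the left-hand sides agree (here the hypothesis $\bar Q=Q$ ensures $\eta_{\varphi\si}=\eta_{\si\varphi}$ is well defined and the product decomposition respects the group relation), so $\eta_\varphi(\si\al)\eta(\al)=\eta(\varphi\al)\eta_\varphi(\al)$, which upon rearranging (all values being $\pm1$) is exactly the desired identity $\eta(\al)\eta_\varphi(\al)/\eta_\varphi(\si\al)=\eta(\varphi\al)$. This completes the proof. The delicate point to state carefully is why \eqref{Qbar2} is available for these particular compositions, namely that it holds for \emph{all} $\ph_1,\ph_2\in\Ga$ on $\bar Q$ by \deref{dQbar}, combined with the standing assumption $\bar Q=Q$.
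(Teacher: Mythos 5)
Your proposal is correct and follows essentially the same route as the paper's proof: apply $\varphi$ factorwise to \eqref{Cal} using \eqref{phactG}, note that $\varphi$ commutes with $\pi_0$ and preserves $b_\al$, and reduce everything to the sign identity $\eta_\varphi(\si\al)\eta_\varphi(\al)=\eta(\varphi\al)\eta(\al)$, which both you and the paper obtain by evaluating $\eta_{\ph\si}(\al)=\eta_{\si\ph}(\al)$ via \eqref{Qbar2} in the two orderings. The only cosmetic difference is that the paper keeps all $\eta$-factors on one side (using that they are $\pm1$) rather than writing a quotient, but the argument is identical.
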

\begin{proof}
From \eqref{Qbar2}, we have
\begin{equation*}
\eta_{\ph\si}(\al) = \eta_{\ph}(\si\al) \eta_{\si}(\al) = \eta_{\si\ph}(\al) = \eta_{\si}(\ph\al) \eta_{\ph}(\al).
\end{equation*}
We rewrite this as
\begin{equation*}
\eta_\varphi(\si\al)\eta_\varphi(\al)=\eta(\varphi\al)\eta(\al),
\end{equation*}
where as before $\eta=\eta_\si$. 
Furthermore, observe that $\varphi$ commutes with $\pi_0$ and $b_{\varphi\al}=b_\al$ (cf.\ \eqref{eqpi0}, \eqref{bal}).
Using all that, \eqref{Cal}, and \eqref{phactG}, we find
\begin{align*}
\varphi(C_\al)&=\eta(\al)\eta_\varphi(\si\al)\eta_\varphi(\al)U_{\varphi\si\al}^{-1}U_{\varphi\al}e^{2\pi\ii(b_{\al}+\varphi\pi_0\al)}\\
&=\eta(\varphi\al)U_{\si\ph\al}^{-1}U_{\varphi\al}e^{2\pi\ii(b_{\varphi\al}+\pi_0\varphi\al)}\\
&=C_{\varphi\al} \,,
\end{align*}
as claimed.
\end{proof}

Due to the above lemma, we have
\[
\varphi (N_\si)\subset N_\si,\qquad\varphi(N_\si^\perp)\subset N_\si^\perp.
\]
Hence, we can view $\varphi$ as an automorphism of both $G_\si$ and $G_\si^\perp$.
This induces an action of $\ph$ among the irreducible $G_\si$-modules $W(\mu,\ze)$.
Since such modules are classified by pairs $(\mu,\ze)$,
up to equivalence \eqref{equivrel}, we obtain an action of $\ph$ on such pairs (see \thref{bij}).

\begin{lemma}\label{lphWmuze}
We have a linear map\/ $\ph\colon W(\mu,\ze) \to W(\ph(\mu),\ze\circ\ph^{-1})$ such that
\begin{equation}\label{phgw1}
\ph(1_{\mu,\ze}) = 1_{\ph(\mu),\ze\circ\ph^{-1}}
\end{equation}
and 
\begin{equation}\label{phgw}
\ph(gw) = \ph(g) \ph(w), \qquad g\in G_\si, \;\; w\in W(\mu,\ze).
\end{equation}
\end{lemma}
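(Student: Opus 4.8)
The plan is to obtain the map as the canonical isomorphism relating $W(\mu,\ze)$ to the $\ph$-twist of $W(\ph(\mu),\ze\circ\ph^{-1})$. By \leref{lQbar3} together with \eqref{phih} and \eqref{phactG}, the element $\ph$ is an automorphism of $G_\si$ that restricts to automorphisms of the subgroups $T_\si$ and $G_\si^\perp$; hence condition \eqref{phgw} says exactly that $\ph$ is an isomorphism of $G_\si$-modules from $W(\mu,\ze)$ onto $W(\ph(\mu),\ze\circ\ph^{-1})$ with its $G_\si$-action precomposed by $\ph$. Because $\ph$ preserves the subgroup $T_\si\times G_\si^\perp$, I would construct this isomorphism by inducing up a map defined on the small module $\Om(\mu,\ze)=\CC_\mu\otimes\Om(\ze)$ appearing in \eqref{W}.

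First I would produce a $\ph$-equivariant isomorphism $\psi\colon\Om(\mu,\ze)\to\Om(\ph(\mu),\ze\circ\ph^{-1})$, i.e.\ one with $\psi(gv)=\ph(g)\psi(v)$ for $g\in T_\si\times G_\si^\perp$. On the factor $\CC_\mu$ the scalars match automatically, since $\ph$ is an isometry commuting with $\pi_0$, so that $(\ph(h)|\ph(\mu))=(h|\mu)$ and $\ph(\mu)\in\pi_0(Q^*)$. On the factor $\Om(\ze)$ I would invoke Schur's lemma: pulling back the $G_\si^\perp$-action on $\Om(\ze\circ\ph^{-1})$ along $\ph$ yields an irreducible $G_\si^\perp$-module whose central character is $z\mapsto(\ze\circ\ph^{-1})(\ph(z))=\ze(z)$, so by uniqueness of the irreducible module with a prescribed central character (cf.\ \eqref{A1}) it is isomorphic to $\Om(\ze)$, and the intertwiner provides $\psi$, unique up to a scalar.

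Inducing, the formula $\ph(g\otimes v)=\ph(g)\otimes\psi(v)$ then defines a linear map $W(\mu,\ze)\to W(\ph(\mu),\ze\circ\ph^{-1})$; its well-definedness is precisely the $\ph$-equivariance of $\psi$ on $T_\si\times G_\si^\perp$, and \eqref{phgw} holds by construction. The remaining scalar in $\psi$ is fixed by imposing the normalization \eqref{phgw1}, which pins down $\ph$ uniquely on the cyclic generator and hence everywhere.

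The point requiring care is that the image class is \emph{exactly} the pair $(\ph(\mu),\ze\circ\ph^{-1})$ and not merely an equivalent one. I would confirm this at the level of central characters: the $\ph$-twist of $W(\mu,\ze)$ is again irreducible with $T_\si$ acting semisimply and $\CC^\times$ acting by scalars, so by \thref{bij} it equals some $W(\mu',\ze')$, and its central character is $\chi\circ\ph^{-1}$ in the notation of \eqref{chizeta}. Evaluating $\chi\circ\ph^{-1}$ using $\ph\pi_0=\pi_0\ph$, $\ph(1-\si)=(1-\si)\ph$, $\ph(Q^*)=Q^*$, and $\ph(C_\al)=C_{\ph\al}$ from \leref{lQbar3}, I expect to read off $\mu'=\ph(\mu)$ and $\ze'=\ze\circ\ph^{-1}$ directly from \eqref{chizeta}. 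The main obstacle is the careful bookkeeping of the cocycle signs $\eta_\ph$ entering \eqref{phactG}; it is precisely the standing hypothesis $\bar Q=Q$ that makes these consistent and allows $\ph$ to act as a genuine, rather than merely projective, automorphism of $G_\si$.
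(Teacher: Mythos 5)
Your proposal is correct, and its core mechanism --- $\ph$ is an automorphism of $G_\si$ preserving $T_\si\times G_\si^\perp$ (by \eqref{phih}, \eqref{phactG}, and \leref{lQbar3}), so one twists the inducing datum and induces up --- is the same one the paper relies on. The difference is which presentation of $W(\mu,\ze)$ you induce from. The paper's (one-line) proof works with \eqref{A}, i.e.\ induction from the \emph{one-dimensional} module $\CC_{\mu,\ze}$ over $T_\si\times A_\si^\perp$ with action \eqref{Aaction}; there, well-definedness of $g\cdot 1_{\mu,\ze}\mapsto\ph(g)\cdot 1_{\ph(\mu),\ze\circ\ph^{-1}}$ reduces to two scalar identities, $(\ph h|\ph\mu)=(h|\mu)$ and $(\ze\circ\ph^{-1})(\ph(U_\al))=\ze(U_\al)$ (the latter using $\eta_{\ph^{-1}}(\ph\al)\,\eta_\ph(\al)=\eta_1(\al)=1$ on $\bar Q=Q$), and \eqref{phgw1} is then automatic. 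You instead induce from $\Om(\mu,\ze)$ as in \eqref{W} and so need Schur's lemma to produce the intertwiner $\psi$ on the factor $\Om(\ze)$; this is valid, but it costs you an extra normalization step and leaves slightly implicit why $\psi(1_{\mu,\ze})$ is proportional to $1_{\ph(\mu),\ze\circ\ph^{-1}}$ rather than some other vector (one needs that it is an $A_\si^\perp$-eigenvector of weight $\ze\circ\ph^{-1}$ and that this eigenspace is one-dimensional, which tacitly uses that $\ph$ preserves $A_\si^\perp$ --- a point the route via \eqref{A} also relies on without comment). Your closing verification that the target pair is exactly $(\ph(\mu),\ze\circ\ph^{-1})$ via the central character \eqref{chizeta} is sound but becomes unnecessary once the map is built directly on \eqref{A}; the question of when this pair is merely \emph{equivalent} to $(\mu,\ze)$ is deferred by the paper to the definition of $\Gamma_{\si,\mu,\zeta}$ in \eqref{smz}.
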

\begin{proof}
This follows from comparing \eqref{A}, \eqref{Aaction}, \eqref{phactG}, and noting that $(\ph h|\ph\mu) = (h|\mu)$ for $h\in\lieh_0$.
\end{proof}

We will be interested in the case when the pairs $(\ph(\mu),\ze\circ\ph^{-1})$ and $(\mu,\ze)$ are equivalent. 
We introduce the following subgroup of $\Ga$:
\begin{equation}\label{smz}
\Gamma_{\si,\mu,\zeta}=\bigl\{\varphi\in\Gamma\,\big|\,\varphi\si=\si\varphi,\; (\ph(\mu),\ze\circ\ph^{-1})\sim(\mu,\zeta)\bigr\} \,,
\end{equation}
where $\sim$ denotes the equivalence \eqref{equivrel}. Then, by \leref{lmuze}, the $G_\si$-modules $W(\ph(\mu),\ze\circ\ph^{-1})$
and $W(\mu,\ze)$ are isomorphic. Composing this isomorphism with the map $\ph\colon W(\mu,\ze) \to W(\ph(\mu),\ze\circ\ph^{-1})$
from \leref{lphWmuze}, we obtain an action of $\ph$ on $W(\mu,\ze)$, which satisfies \eqref{phgw} and
\begin{equation}\label{phgw2}
\ph(1_{\mu,\ze}) = U_\al 1_{\mu,\ze} \,,
\end{equation}
where $\al\in Q$ is such that
\begin{equation}\label{equivrel2}
\ph(\mu)=\mu+\pi_0\al, \qquad \ze(\ph^{-1}(U_\be))=C_{\al,\be}^{-1} \, \zeta(U_\be),
\end{equation}
for all $\be\in Q \cap (1-\si)Q^*$.

Similarly, as we assumed that $\bar Q=Q$, the group $\Ga$ acts on $V_Q$ by automorphisms. Hence, $\ph$ acts on the irreducible $\si$-twisted modules
by composing the $V_Q$-action with $\ph$. More precisely:

\begin{lemma}\label{lphMmuze}
We have a linear map\/ $\ph\colon M(\mu,\ze) \to M(\ph(\mu),\ze\circ\ph^{-1})$ satisfying \eqref{phgw1} and
\begin{equation}\label{phav}
\ph\bigl(Y(a,z)v\bigr) = Y(\ph(a),z) \ph(v), \qquad a\in V_Q, \;\; v\in M(\mu,\ze).
\end{equation}
\end{lemma}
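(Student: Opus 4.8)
The plan is to assemble the map $\ph$ from its actions on the two tensor factors in the realization $M(\mu,\ze)\cong\F_\si\otimes W(\mu,\ze)$ of \eqref{M}, then to verify \eqref{phav} on the strong generators $\lieh$ and $\{e^\al\}_{\al\in Q}$ of $V_Q$, and finally to propagate the relation to all of $V_Q$ via \prref{pnprod}.

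First I would define $\ph$ on $M(\mu,\ze)$ as the tensor product $\ph_{\F_\si}\otimes\ph_W$, where $\ph_{\F_\si}$ is the automorphism of $\F_\si=S(\hat\lieh_\si^<)$ induced by the automorphism \eqref{phhtn} of $\hat\lieh_\si$ (which restricts to $\hat\lieh_\si^<$ by \eqref{phih} and fixes the Fock vacuum), and $\ph_W\colon W(\mu,\ze)\to W(\ph(\mu),\ze\circ\ph^{-1})$ is the map supplied by \leref{lphWmuze}. On the basis of \leref{basisM} this would read
\begin{equation*}
\ph\bigl((a^{i_1}t^{-n_1})\cdots(a^{i_k}t^{-n_k})U_\ga v\bigr)=(\ph(a^{i_1})t^{-n_1})\cdots(\ph(a^{i_k})t^{-n_k})\,\ph(U_\ga v),
\end{equation*}
so $\ph$ is visibly a well-defined linear map between the two induced modules. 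Because $\ph_{\F_\si}$ fixes the Fock vacuum and $\ph_W$ already satisfies \eqref{phgw1}, the highest-weight vector $1_{\mu,\ze}$ is sent to $1_{\ph(\mu),\ze\circ\ph^{-1}}$, establishing \eqref{phgw1} for $M(\mu,\ze)$.

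The key intermediate step I would isolate is that $\ph$ intertwines the twisted Heisenberg action, $\ph(xw)=\ph(x)\,\ph(w)$ for $x\in\hat\lieh_\si$ and $w\in M(\mu,\ze)$. For the creation and annihilation modes this is immediate from the construction of $\ph_{\F_\si}$ via \eqref{phhtn}; for a zero mode $h_0$ with $h\in\lieh_0$ I would use \eqref{tig4} together with the facts that $\ph$ is an isometry and commutes with $\pi_0$ (since $\ph\si=\si\ph$), which give $(\ph h\,|\,\ph(\mu)+\pi_0(\ph\ga))=(h\,|\,\mu+\pi_0\ga)$, so $h_0$ acts by the same scalar on either side. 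Granting this, \eqref{phav} for $a=h\in\lieh$ is immediate from \eqref{twheis3}. For $a=e^\al$ I would expand $Y(e^\al,z)$ by \eqref{twlat12}: the factor $E_\al(z)$ is built from the modes $\al_n$, so the Heisenberg intertwining gives $\ph(E_\al(z)w)=E_{\ph\al}(z)\,\ph(w)$; the operator $U_\al$ transforms by $\ph(U_\al v)=\eta_\ph(\al)\,U_{\ph\al}\,\ph(v)$ via \eqref{phgw} and \eqref{phactG}; and the factor $z^{b_\al+\pi_0\al}$ matches since $b_{\ph\al}=b_\al$ and $(\pi_0(\ph\al)\,|\,\ph\be+\ph(\mu))=(\pi_0\al\,|\,\be+\mu)$. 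Combining the three pieces would give
\begin{equation*}
\ph\bigl(Y(e^\al,z)v\bigr)=\eta_\ph(\al)\,Y(e^{\ph\al},z)\,\ph(v)=Y\bigl(\ph(e^\al),z\bigr)\,\ph(v),
\end{equation*}
as required.

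Finally, since $V_Q$ is strongly generated by $\lieh$ and the $e^\al$, and the $\si$-twisted module structure is reconstructed from the generating fields through locality \eqref{locpr} and the product formula \eqref{locpr3}, I would propagate \eqref{phav} from the generators to every $a\in V_Q$: applying $\ph$ to both sides of \eqref{locpr3} and using that $\ph$ already intertwines $Y(a,z)$ and $Y(b,w)$ shows that it intertwines $Y(a_{(N-1-k)}b,w)$ as well. I expect the main obstacle to be the $a=e^\al$ case, where the cocycle factor $\eta_\ph(\al)$ coming from \leref{lphWmuze} must be reconciled with the $z$-power $z^{b_\al+\pi_0\al}$ and the group relation \eqref{tig3}; this is handled because $\ph$ is a group automorphism of $G_\si$ (hence respects \eqref{tig3}) and an isometry, so the verification reduces to the bilinear-form identities noted above.
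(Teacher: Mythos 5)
Your proof is correct and takes essentially the same route as the paper, whose entire proof reads ``This follows immediately from \eqref{M}, \eqref{phhtn}, and \leref{lphWmuze}'' --- i.e., exactly the three ingredients you use (the tensor-product realization $\F_\si\otimes W(\mu,\ze)$, the induced action of $\ph$ on $\hat\lieh_\si$, and the map on $W(\mu,\ze)$). Your verification on the generating fields $Y(h,z)$ and $Y(e^\al,z)$ and the propagation via \eqref{locpr3} simply spells out the details the paper leaves implicit.
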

\begin{proof}
This follows immediately from \eqref{M}, \eqref{phhtn}, and \leref{lphWmuze}.
\end{proof}

As above, for $\ph\in\Gamma_{\si,\mu,\zeta}$, we obtain an action of $\ph$ on $M(\mu,\ze)$ satisfying \eqref{phgw2} and \eqref{phav}.
In particular, if we restrict to the orbifold subalgebra $V_Q^\Ga$, then we will have $\ph(a)=a$ in \eqref{phav}; 
hence the map $\ph\colon M(\mu,\ze) \to M(\mu,\ze)$ is an isomorphism of $V_Q^\Ga$-modules. 

\begin{remark}\label{rgasimuze}
Later we will restrict to the case when the group $\Ga=\langle\si\rangle$ is cyclic, so $\ph=\si^k$ for some $k$. Then we have $\ph(\mu)=\mu$
and $\ze\circ\ph^{-1}=\ze$, which imply, in particular, that $\Gamma_{\si,\mu,\zeta} = \Ga$.
Indeed, $\si(\mu)=\mu$ because $\mu\in\pi_0(Q^*)$, and $\ze\circ\si=\ze$ because $\ze(C_\al)=1$ for all $\al\in Q\cap\mathfrak{h}_\perp$
(see \eqref{Cal}, \eqref{Nsiperp}, \eqref{phactG}).
\end{remark}

\section{Modified Characters of Twisted $V_Q$-modules}\label{charVQ}
Throughout this section, as before, $Q$ will be a positive-definite even integral lattice, and $\ph,\si$ will be two commuting isometries of $Q$.
Our goal is to compute the modified characters of irreducible $\si$-twisted $V_Q$-modules $M(\mu,\zeta)$, which are defined by
\begin{equation}\label{chiM}
\chi_{M(\mu,\zeta)}^{\si,\ph}(\tau,h)=\tr_{M(\mu,\zeta)} \ph e^{2\pi\ii h}q^{L_0^{\tw}-\frac {r}{24}},
\end{equation}
where $r=\rank Q$, $h\in\mathfrak{h}_0$, $\ph\in\Gamma_{\si,\mu,\zeta}$ (cf.\ \eqref{smz}), and $q=e^{2\pi\ii\tau}$ with $\tau\in\CC$, $\text{Im}\,\tau>0$.
Since $M(\mu,\zeta)$ is a tensor product of the $\si$-twisted Fock space $\F_\si$ and the $G_\si$-module $W(\mu,\zeta)$, the trace is a product
of traces over them. First, we compute the trace over $\F_\si$ under only the assumption that $\si\ph=\ph\si$. Then we find the trace over $W(\mu,\zeta)$
under the additional assumption that $\ph$ is a power of $\si$, which is always the case when $\Ga=\langle\si\rangle$. 
 



\subsection{Calculating the trace over $\F_\si$}\label{strFsi}
 
 In this subsection, we only assume that $\lieh$ is a finite-dimensional complex vector space equipped with a nondegenerate symmetric bilinear form $(\cdot|\cdot)$,
 and $\si,\ph$ are two commuting automorphisms of $\lieh$ of finite order preserving $(\cdot|\cdot)$. 
 
 Any linear operator $\si$, such that $\si^N=1$ for some positive integer $N$, can be diagonalized with eigenvalues $N$-th roots of $1$ (cf.\ \eqref{j/N}).
Since $\si$ and $\ph$ commute, they can be diagonalized simultaneously. We denote by
\begin{equation}\label{hpq}
\mathfrak{h}_{x,y}=\bigl\{h\in\mathfrak{h}\,\big|\,\si h=e^{-2\pi\ii x}h, \;\ph h=e^{-2\pi\ii y}h\bigr\}
\end{equation}
the common eigenspaces for $\si$ and $\ph$.
Similarly to \eqref{s}, we define linear operators $s$ and $f$ on $\mathfrak{h}$ by
\begin{equation}\label{sf}
s(h)=-xh, \quad f(h)=-yh \qquad\text{for}\quad h\in\lieh_{x,y} \,,
\end{equation}
where the eigenvalues of $s$ and $f$ are chosen to be in the interval $(-1,0]$, i.e., $0\leq x,y<1$.
Hence, by construction,
\begin{equation}\label{siph}
\si =e^{2\pi\ii s},\qquad \ph=e^{2\pi\ii f},
\end{equation}
and $s$ coincides with the operator previously defined by \eqref{s}.

Consider the $\si$-twisted Heisenberg algebra $\hat\lieh_\si$ and its irreducible highest-weight representation, the $\si$-twisted Fock space $\F_\si$ (see \seref{twheis}).
Recall that $\varphi$ induces an automorphism of $\hat{\mathfrak{h}}_\si$ given by \eqref{phhtn}. Since $\ph$ preserves the subalgebra $\hat\lieh_\si^<$, it also induces
a linear operator $\ph$ on $\F_\si$ such that (cf.\ \eqref{twheis2}):
\begin{equation}\label{phFsi}
\ph(1)=1, \quad \ph(av) = \ph(a) \ph(v), \qquad a\in \hat\lieh_\si, \;\; v\in \F_\si \,.
\end{equation}

Now we find the trace over $\F_\si$ of $\ph e^{2\pi\ii h}q^{L_0^{\tw}-\frac {r}{24}}$ for $h\in\lieh_0$. As the action of $\mathfrak{h}_0$ on $\F_\si$ is trivial, this trace simplifies to
\begin{equation}\label{chiFdef}
\chi_{\F_\si}(\ph,\tau)=\tr_{\F_\si}\ph q^{L_0^{\tw}-\frac {r}{24}} \,.
\end{equation}

\begin{theorem}\label{chiF}
For every two commuting automorphisms\/ $\si,\ph$ of\/ $\lieh$ as above, we have 
\[
\chi_{\F_\si}(\ph,\tau)=\frac{1}{P_{\si,\ph}(\tau)} \,,
\]
where
\begin{equation}\label{P}
P_{\si,\ph}(\tau)=q^{-\Delta_\si+\frac {r}{24}}{\displaystyle\prod_{m=1}^\infty}
\det\nolimits_\mathfrak{h}(1-\ph q^{m+s})
\end{equation}
and\/ $\Delta_\si$ is defined by \eqref{Delta}.
\end{theorem}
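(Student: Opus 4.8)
The plan is to use the realization $\F_\si\cong S(\hat\lieh_\si^<)$ from \eqref{twheis2} and reduce the computation to the standard bosonic character formula. First I would fix a basis of $\lieh$ consisting of common eigenvectors for $\si$ and $\ph$, adapted to the decomposition $\lieh=\bigoplus_{x,y}\mathfrak{h}_{x,y}$ from \eqref{hpq}; on $\mathfrak{h}_{x,y}$ one has $q^{s}=q^{-x}$ and $\ph=e^{-2\pi\ii y}$ by \eqref{sf}, \eqref{siph}. The creation operators $h_{-n}$ spanning $\hat\lieh_\si^<$ then have, for $h\in\mathfrak{h}_{x,y}$, levels $n\in\{m-x:m\geq1\}$ (uniformly in the cases $x=0$ and $0<x<1$), and each such mode is an eigenvector for $\ph$ with eigenvalue $e^{-2\pi\ii y}$.

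Next I would record how $L_0^{\tw}$ and $\ph$ act on the monomial basis of $\F_\si$. From the explicit formula \eqref{L0} (equivalently, from the commutator \eqref{L0al}) together with the Heisenberg relations one gets $[L_0^{\tw},h_m]=-m\,h_m$, while $L_0^{\tw}$ fixes the vacuum with eigenvalue $\Delta_\si$ by \eqref{Delta}. Hence a monomial $h^1_{m_1}\cdots h^k_{m_k}\cdot 1$ is a simultaneous eigenvector: $L_0^{\tw}$ acts by $\Delta_\si+\sum_j(-m_j)$, and $\ph=e^{2\pi\ii f}$ acts by the product of the corresponding $\ph$-eigenvalues. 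Writing $L_0^{\tw}=\Delta_\si+D$, where $D$ is the level (degree) operator on $S(\hat\lieh_\si^<)$, I obtain $\ph\,q^{L_0^{\tw}-r/24}=q^{\Delta_\si-r/24}\,\ph\,q^{D}$.

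Then I would invoke the bosonic trace identity. Choosing a simultaneous eigenbasis $\{w_i\}$ of $\ph$ and $D$ on $W:=\hat\lieh_\si^<$, with eigenvalues $g_i$ and $d_i>0$, the symmetric algebra factors as $S(W)=\bigotimes_i S(\CC w_i)$, and summing the geometric series on each factor (which converges since the levels $d_i$ are strictly positive and $|q|<1$) gives
\[
\tr_{S(W)}\bigl(\ph\,q^{D}\bigr)=\prod_i\frac{1}{1-g_i q^{d_i}}=\frac{1}{\det_W(1-\ph\,q^{D})}.
\]
Only finitely many modes contribute to each power of $q$, so the product is a well-defined formal series and the manipulation is legitimate.

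Finally I would match the two products. Grouping the modes of $W$ by the eigenspace $\mathfrak{h}_{x,y}$ they come from and by their level $m-x$ (for $m\geq1$), the operator $\ph\,q^{D}$ acts on the slice of level $m-x$ inside $\mathfrak{h}_{x,y}$ as the scalar $e^{-2\pi\ii y}q^{m-x}$; this is exactly the eigenvalue of $\ph\,q^{m+s}$ on $\mathfrak{h}_{x,y}$. Hence
\[
\det\nolimits_W(1-\ph\,q^{D})=\prod_{m=1}^\infty\det\nolimits_\mathfrak{h}(1-\ph\,q^{m+s}),
\]
and combining with the prefactor $q^{\Delta_\si-r/24}$ yields $\chi_{\F_\si}(\ph,\tau)=1/P_{\si,\ph}(\tau)$ with $P_{\si,\ph}$ as in \eqref{P}. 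The only genuinely delicate point is this last bookkeeping step: one must check that the creation levels are exactly $\{m-x:m\geq1\}$ in both cases $x=0$ and $0<x<1$, so that the per-level determinants line up cleanly with the shift $q^{m+s}=q^{m}q^{-x}$; everything else is the routine bosonic computation.
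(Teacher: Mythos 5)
Your proposal is correct and follows essentially the same route as the paper's proof: choose a simultaneous eigenbasis of $\si$ and $\ph$, observe that $L_0^{\tw}$ acts on a monomial by $\Delta_\si$ plus the total level and $\ph$ by the product of mode eigenvalues, and then identify the resulting product of geometric series with $\prod_{m\ge1}\det_\lieh(1-\ph q^{m+s})^{-1}$ level by level. The "delicate bookkeeping" you flag at the end (levels $\{m-x:m\ge1\}$ matching the eigenvalues of $\ph q^{m+s}$ via $m_l=n_l+s_{i_l}$) is exactly the step the paper carries out, so nothing is missing.
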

\begin{proof}
Choose a basis $\{a^i\}_{i=1}^r$ for $\lieh$ consisting of common eigenvectors for $\si$ and $\ph$, so that $a^i\in\mathfrak{h}_{s_i,f_i}$.
Then $\F_\si$ has a basis of monomials of the form
\begin{equation*}
w=(a^{i_1}t^{-n_1})\cdots(a^{i_k}t^{-n_k})1,
\end{equation*}
where $k\ge 0$ (with $k=0$ corresponding to $w=1$), $1\leq i_l\leq r$, $n_l\in-s_{i_l}+\ZZ$, $n_l>0$ for $1\le l\le k$, and the pairs
$(i_l,n_l)$ are ordered lexicographically.

As a special case of \eqref{L0M}, we have that 
\begin{equation*}
L_0^{\tw}w=(n_1+\cdots+n_k+\Delta_\si)w,
\end{equation*} 
while \eqref{phhtn} and \eqref{hpq}--\eqref{phFsi} imply that
\begin{equation*}
\ph w=e^{-2\pi\ii(f_{i_1}+\cdots+f_{i_k})}w.
\end{equation*} 
Therefore,
\begin{align*}
\ph q^{L_0^{\tw}-\frac {r}{24}} w &= q^{\Delta_\si-\frac {r}{24}}e^{-2\pi\ii(f_{i_1}+\cdots+f_{i_k})}q^{n_{1}+\cdots+n_{k}}w \\
&=q^{\Delta_\si-\frac {r}{24}}(e^{-2\pi\ii f_{i_1}}q^{m_1-s_{i_1}})\cdots(e^{-2\pi\ii f_{i_k}}q^{m_k-s_{i_k}})w \,,
\end{align*} 
where $m_l=n_l+s_{i_l}\in\ZZ$. Notice that $n_l>0$ and $s_{i_l}\geq0$ imply that $m_l>0$ for all $1\le l\le k$.

For each positive integer $m$ that appears in the above product, consider the set of indices $l\in\{1,\dots,k\}$ such that $m_l=m$,
and let $u_{j,m}\ge0$ be the number of such $l$ for which $i_l=j$ ($1\le j\le r$). Then we can rewrite
\begin{equation*}
\ph q^{L_0^{\tw}-\frac {r}{24}} w
= q^{\Delta_\si-\frac {r}{24}} \Bigl( \prod_{m=1}^\infty \prod_{j=1}^r (e^{-2\pi\ii f_j} q^{m-s_j})^{u_{j,m}} \Bigr) w,
\end{equation*}
where only finitely many $u_{j,m}\ne0$.
Summing over the eigenvalues of all basis vectors $w\in\F_\si$, we obtain that the trace \eqref{chiFdef} is
\begin{equation*}
\chi_{\F_\si}(\ph,\tau)=q^{\Delta_\si-\frac {r}{24}} \prod_{m=1}^\infty \prod_{j=1}^r \sum_{u_{j,m} = 0}^\infty (e^{-2\pi\ii f_j} q^{m-s_j})^{u_{j,m}} \,.
\end{equation*}
On the other hand, for every fixed $m$, we have
\begin{equation*}
\det\nolimits_\mathfrak{h}(1-\ph q^{m+s}) = \prod_{j=1}^r (1-e^{-2\pi\ii f_j} q^{m-s_j}) \,,
\end{equation*}
as the determinant is the product of eigenvalues. Using a geometric series expansion, we find
\begin{equation*}
\frac{1}{\det\nolimits_\mathfrak{h}(1-\ph q^{m+s})} = \prod_{j=1}^r \sum_{u_{j,m} = 0}^\infty (e^{-2\pi\ii f_j} q^{m-s_j})^{u_{j,m}} \,,
\end{equation*}
and a comparison with the above expression for $\chi_{\F_\si}(\ph,\tau)$ completes the proof of the theorem.
\end{proof}

The products $P_{\si,\ph}(\tau)$ can be computed more explicitly in the case when $\ph=\si^k$ and the order of $\si$ is prime, 
by use of the following lemma.

\begin{lemma}\label{lPsisik}
Suppose that\/ $\si$ has a prime order\/ $p$. Then, for all\/ $k,m\in\ZZ$, we have
\[
\det\nolimits_\mathfrak{h} \bigl(1-\si^k q^{m+s}\bigr) 
= (1-q^m)^{r_0} \prod_{j=1}^{p-1} \Bigl(1 - \om^{-jk} q^{m-\frac{j}p} \Bigr)^d \,,
\]
where\/ $r_0=\dim\lieh_0$, $d=\dim\lieh_{1/p}$, and\/ $\om=e^{2\pi\ii /p}$. 
\end{lemma}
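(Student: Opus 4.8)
The plan is to exploit the simultaneous eigenspace decomposition of $\lieh$ under $\si$ and the operator $s$, together with the fact that all the nontrivial eigenspaces have the same dimension. First I would use that $\si$ has prime order $p$ to decompose $\lieh = \bigoplus_{j=0}^{p-1} \lieh_{j/p}$ (cf.\ \eqref{j/N}): on $\lieh_{j/p}$ the operator $\si$ acts as the scalar $e^{-2\pi\ii j/p} = \om^{-j}$, and by \eqref{s} the operator $s$ acts as the scalar $-j/p$. Since $\si^k$ and $q^{m+s}$ are both functions of $\si$ and $s$, the operator $\si^k q^{m+s}$ preserves each summand $\lieh_{j/p}$ and acts on it as the single scalar $\om^{-jk} q^{m-j/p}$. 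Consequently $1-\si^k q^{m+s}$ is block-diagonal with respect to this decomposition, acting as the scalar $1-\om^{-jk} q^{m-j/p}$ on $\lieh_{j/p}$.

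Because the determinant of a block-diagonal operator is the product of the determinants of its blocks, and the determinant of a scalar $\la$ on an $n$-dimensional space is $\la^n$, I would then write
\[
\det\nolimits_\lieh\bigl(1-\si^k q^{m+s}\bigr) = \prod_{j=0}^{p-1} \bigl(1-\om^{-jk} q^{m-j/p}\bigr)^{\dim\lieh_{j/p}} ,
\]
where the $j=0$ term is simply $(1-q^m)^{\dim\lieh_0}$, since $\om^0=1$ and $s$ vanishes on $\lieh_0$.

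It then remains only to insert the dimensions, and this is the sole substantive input. The key point is that $\dim\lieh_{j/p} = d$ is the \emph{same} for every $j=1,\dots,p-1$, while $\dim\lieh_0 = r_0$. This is exactly the dimension count \eqref{dimh} established in the proof of \leref{Deltalemma}: as $\si$ is an isometry of the lattice $Q$, it acts by an integer matrix on $\lieh=\CC\otimes_\ZZ Q$, so by the Cyclic Decomposition Theorem $\lieh$ splits into one-dimensional fixed lines together with $d$ copies of the regular representation of $\ZZ/p\ZZ$, and each such copy contributes exactly one dimension to every nontrivial eigenspace. Substituting $\dim\lieh_{j/p}=d$ for $1\le j\le p-1$ and pulling out the $j=0$ factor yields the claimed identity. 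There is no real obstacle beyond this bookkeeping; the one place where primality of $p$ is genuinely used is the equality of the nontrivial eigenspace dimensions (equivalently, the irreducibility over $\QQ$ of the cyclotomic polynomial $\Phi_p$), which has already been recorded for \leref{Deltalemma}.
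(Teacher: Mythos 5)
Your proof is correct and follows essentially the same route as the paper: the paper's one-line proof simply cites the definition of $s$ and the dimension count \eqref{dimh} from the proof of \leref{Deltalemma}, which is exactly the eigenspace decomposition and bookkeeping you carry out in detail.
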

\begin{proof}
This follows directly from the definition of $s$ (see \eqref{s}) and the proof of \leref{Deltalemma} (see \eqref{dimh}).
\end{proof}

As a consequence of \leref{lPsisik} and \reref{rem-Delta},
we see that $P_{\si^l,\si^{kl}}(\tau) = P_{\si,\si^k}(\tau)$ for all $1\le l\le p-1$,
because replacing $\si$ with $\si^l$ amounts to performing the permutation $j\mapsto lj$ mod $p$ on the index set $\{1,\dots,p-1\}$.
More generally, one can prove such an invariance without assuming that the order of $\si$ is prime:
\begin{equation}\label{Psilsikl}
P_{\si^l,\si^{kl}}(\tau) = P_{\si,\si^k}(\tau), \qquad \chi_{\F_{\si^l}}(\si^{kl},\tau) = \chi_{\F_\si}(\si^{k},\tau),
\end{equation}
for every $l$ that is coprime to the order $N$ of $\si$.

\subsection{The sublattice $R_\perp$ and a basis for $W(\mu,\zeta)$}\label{sRperp}
In this subsection and the next one, our goal is to compute the trace
\begin{equation}\label{chiWph}
\chi_{W(\mu, \zeta)}^{\si,\ph}(\tau, h)=\tr_{W(\mu, \zeta)}\ph e^{2\pi\ii h}q^{L_0^{\tw}-\Delta_\si},\qquad h\in\lieh_0,
\end{equation}
where $W(\mu,\zeta)$ is one of the irreducible $G_\si$-modules (see \thref{bij}).
The factor $q^{\Delta_\si}$ corresponding to the eigenvalue of $L_0^{\tw}$ in \eqref{L0M} is included in the trace over the Fock space \eqref{chiFdef}. We thereby subtract $\Delta_\si$ from the exponent of $q$ in \eqref{chiWph}, 
so that we have (cf.\ \eqref{chiM}, \eqref{chiFdef}):
\begin{equation}\label{chiMFW}
\chi_{M(\mu,\zeta)}^{\si,\ph}(\tau,h) = \chi_{\F_\si}(\ph,\tau) \chi_{W(\mu, \zeta)}^{\si,\ph}(\tau, h).
\end{equation}
From now on, we assume that $\bar Q=Q$ and $\ph\in\Gamma_{\si,\mu,\zeta}$ (cf.\ \leref{Qbarlemma},  \coref{corQbar1}, and \eqref{smz}).
The condition $\ph\in\Gamma_{\si,\mu,\zeta}$ ensures that we have an action of $\ph$ on $W(\mu,\zeta)$
satisfying \eqref{phgw} and \eqref{phgw2}.

Recall that a basis for $W(\mu,\zeta)$ is given in \coref{cbasisW}. If we take one of the basis vectors of the form
$U_\ga v$ ($\ga\in\C_Q$, $v\in\B_\Om$), then by \eqref{phactG}, \eqref{phgw}, we have 
\[
\ph(U_\ga v) = \ph(U_\ga) \ph(v) = \eta_\varphi(\ga)U_{\varphi\ga} \ph(v).
\]
Notice that, as $v\in\Om(\mu,\zeta)$, it has $\lieh_0$-weight $\mu$, i.e., $e^h v = e^{(h|\mu)} v$ for $h\in\lieh_0$.
Then the weight of $U_\ga v$ is $\pi_0\ga+\mu$. On the other hand, by \leref{lphWmuze}, the weight of $\ph(v)$ is $\ph(\mu)$; hence, the weight of
$\ph(U_\ga v)$ is $\pi_0\ph(\ga)+\ph(\mu)$. Thus, the only representatives $\ga\in\C_Q$ contributing to the trace \eqref{chiWph}
are those satisfying
\begin{equation}\label{condition1}
\pi_0\ga+\mu = \pi_0\ph(\ga)+\ph(\mu).
\end{equation}
Notice that this condition trivially holds in the case when $\ph=\si^k$ is a power of $\si$, because
$\si(\mu)=\mu$ and $\pi_0\circ\si=\pi_0$.

In order to provide a more explicit basis for $W(\mu,\zeta)$, we recall its alternate description given by \eqref{A}.
For that, we need to pick a maximal abelain subgroup $A_\si^\perp$ of $G_\si^\perp$, which we can do as follows.
Notice that when $\al,\be\in Q\cap\mathfrak{h}_\perp$, the commutator \eqref{Calbe} reduces to
\[
C_{\al,\be}=e^{2\pi\ii(\al_*|\be)},\qquad\al=(1-\si)\al_* \,.
\]
Hence, if we choose a maximal sublattice $R_\perp\subset Q\cap\mathfrak{h}_\perp$ with the pro\-perty that
\begin{equation}\label{Rperp}
(\al_*|\be)\in\ZZ\quad\text{for}\quad\al,\be\in R_\perp \,,
\end{equation}
then we can take
\begin{equation}\label{Aperp}
A_\si^\perp=\bigl\{c\,U_\al N_\si^\perp \,\big|\,c\in\CC^\times, \;\al\in R_\perp\bigr\} \,.
\end{equation}
Thus, a basis for $W(\mu,\zeta)$ consists of elements $U_\de 1_{\mu,\zeta}$, where $\de$ runs over a set of representatives of the cosets $Q/R_\perp$.

We can improve the above description even further by introducing the lattice (cf.\ \cite{BE}):
\begin{equation}\label{L}
L=(Q\cap\mathfrak{h}_\perp)+(Q\cap\mathfrak{h}_0) \,.
\end{equation}
Note that this sum is direct and $Q/L$ is a finite group. We have a chain of sublattices in $Q$:
\[
R_\perp\subset Q\cap\mathfrak{h}_\perp\subset L\subset Q,
\]
and $L/(Q\cap\mathfrak{h}_\perp) \cong Q\cap\mathfrak{h}_0$.
Let $\C_L \subset Q$ be a set of representatives of the cosets $Q/L$,
and $\C_R \subset Q\cap\lieh_\perp$ be a set of representatives of the cosets $(Q\cap\mathfrak{h}_\perp)/R_\perp$.
Then 
\[
\bigl\{ \al+\be+\ga \,\big|\, \al\in Q\cap\mathfrak{h}_0, \; \be\in\C_L, \; \ga\in\C_R \bigr\}
\]
is a set of representatives of the cosets $Q/R_\perp$.
We can summarize the above discussion as follows.

\begin{proposition}\label{Wbasis}
Let\/ $\C_L \subset Q$ be a set of representatives of the cosets\/ $Q/L$,
and\/ $\C_R \subset Q\cap\lieh_\perp$ be a set of representatives of the cosets\/ $(Q\cap\mathfrak{h}_\perp)/R_\perp$.
Then a basis for\/ $W(\mu,\zeta)$ consists of the elements
\[
U_\al U_{\be}U_{\ga}1_{\mu,\zeta} \qquad (\al\in Q\cap\mathfrak{h}_0, \; \be\in\C_L, \; \ga\in\C_R).
\]
\end{proposition}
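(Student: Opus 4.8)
The plan is to combine two ingredients: the basis $\{U_\de 1_{\mu,\zeta}\}$ of $W(\mu,\zeta)$ indexed by a set of representatives $\de$ of $Q/R_\perp$, which was established in the discussion immediately preceding the proposition (via the induced-module description \eqref{A} and the choice of maximal abelian subgroup \eqref{Aperp}), together with an explicit tower-of-cosets description of $Q/R_\perp$ coming from the chain $R_\perp\subset Q\cap\lieh_\perp\subset L\subset Q$. The proposition is then a bookkeeping consequence, so there is no genuine analytic or structural obstacle; the only points requiring care are the correct combination of transversals and the fact that the relevant group-multiplication scalars are nonzero.

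First I would recall the elementary transversal fact for abelian groups: if $A\subset B\subset C$, if $S$ is a set of representatives of $C/B$, and if $T$ is a set of representatives of $B/A$, then $\{s+t\mid s\in S,\,t\in T\}$ is a set of representatives of $C/A$ (existence of a representative and uniqueness modulo $A$ are both immediate). I would apply this twice. Applying it to $Q\cap\lieh_\perp\subset L\subset Q$, using that $\C_L$ represents $Q/L$ and that $Q\cap\lieh_0$ represents $L/(Q\cap\lieh_\perp)$ — the latter because the sum in \eqref{L} is direct, so $L=(Q\cap\lieh_\perp)\oplus(Q\cap\lieh_0)$ and projection gives $L/(Q\cap\lieh_\perp)\cong Q\cap\lieh_0$ — shows that $\{\be+\al\mid \be\in\C_L,\,\al\in Q\cap\lieh_0\}$ represents $Q/(Q\cap\lieh_\perp)$. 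Applying it again to $R_\perp\subset Q\cap\lieh_\perp\subset Q$, with $\C_R$ representing $(Q\cap\lieh_\perp)/R_\perp$, yields that $\{\al+\be+\ga\mid \al\in Q\cap\lieh_0,\,\be\in\C_L,\,\ga\in\C_R\}$ is a set of representatives of $Q/R_\perp$.

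Next I would pass from the vectors $U_{\al+\be+\ga}1_{\mu,\zeta}$ to the vectors $U_\al U_\be U_\ga 1_{\mu,\zeta}$. By the multiplication rule \eqref{tig3}, for any $\de_1,\de_2\in Q$ we have $U_{\de_1}U_{\de_2}=\ep(\de_1,\de_2)B_{\de_1,\de_2}^{-1}U_{\de_1+\de_2}$, and this scalar is nonzero: $\ep(\de_1,\de_2)=\pm1$, while in \eqref{Balbe} each factor $1-e^{2\pi\ii k/N}$ is nonzero for $1\le k\le N-1$ and is raised to the integer power $(\si^k\de_1|\de_2)\in\ZZ$ (integral since $\de_1,\de_2\in Q$), and $N^{-(\de_1|\de_2)}\ne0$. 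Iterating, $U_\al U_\be U_\ga$ equals $U_{\al+\be+\ga}$ times a nonzero scalar, so each $U_\al U_\be U_\ga 1_{\mu,\zeta}$ is a nonzero multiple of the corresponding $U_{\al+\be+\ga}1_{\mu,\zeta}$. Since rescaling the elements of a basis by nonzero scalars again produces a basis, the set displayed in the proposition is a basis of $W(\mu,\zeta)$. The main thing to check carefully is simply that the three chosen transversals assemble into a transversal of $Q/R_\perp$, which is exactly the purely group-theoretic tower argument above.
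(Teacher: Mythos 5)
Your proof is correct and follows essentially the same route as the paper: the paper also starts from the basis $\{U_\de 1_{\mu,\zeta}\}$ indexed by representatives of $Q/R_\perp$ coming from \eqref{A} and \eqref{Aperp}, and assembles a transversal of $Q/R_\perp$ from the chain $R_\perp\subset Q\cap\lieh_\perp\subset L\subset Q$ together with $L/(Q\cap\lieh_\perp)\cong Q\cap\lieh_0$. Your explicit remark that $U_\al U_\be U_\ga$ differs from $U_{\al+\be+\ga}$ by the nonzero scalar $\ep\cdot B^{-1}$ from \eqref{tig3} is left implicit in the paper but is exactly the right justification.
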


Similarly, notice that $\{U_\ga 1_{\mu,\zeta} \,|\, \ga\in\C_R\}$
is a basis of the irreducible $G_\si^\perp$-module $\Om(\mu, \zeta)$, by \eqref{A1}. 
As a consequence, we obtain another formula for the defect:
\begin{equation}\label{dsiCR}
d(\si)=|(Q\cap\mathfrak{h}_\perp)/R_\perp| = |\C_R|\,,
\end{equation}
since $d(\si)=\dim\Om(\mu,\zeta)$ by Remark \ref{defect}.
Finally, we point out that the cosets of $Q/L$ can be replaced with the cosets of $\pi_0(Q)/(Q\cap\lieh_0)$, as we now show.

\begin{lemma}\label{Q/L}
For an integral lattice\/ $Q$ with an isometry\/ $\si$, define\/ $L$ by \eqref{L} and let\/ $M=Q\cap\lieh_0$. Then the map\/
$\be+L\mapsto \pi_0\be+M$ is a group isomorphism $Q/L\cong\pi_0(Q)/M$.
\end{lemma}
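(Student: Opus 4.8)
The plan is to realize the stated bijection as the map induced by the orthogonal projection $\pi_0$ via the first isomorphism theorem. I would consider the group homomorphism $\Phi\colon Q\to\pi_0(Q)/M$ defined by $\Phi(\be)=\pi_0\be+M$. This is well-defined and additive because $\pi_0$ is linear, and it is surjective by the very definition of $\pi_0(Q)$ as the image of $Q$ under $\pi_0$: every coset in $\pi_0(Q)/M$ has a representative of the form $\pi_0\be$ for some $\be\in Q$. Once I identify $\ker\Phi$ with $L$, the first isomorphism theorem immediately yields the isomorphism $Q/L\cong\pi_0(Q)/M$ sending $\be+L$ to $\pi_0\be+M$, which is exactly the asserted map.

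The central step is therefore the computation $\ker\Phi=L$, which I would carry out by the two inclusions. For $L\subseteq\ker\Phi$: if $\be=\al+\ga$ with $\al\in Q\cap\lieh_\perp$ and $\ga\in Q\cap\lieh_0$, then $\pi_0\be=\ga\in M$, so $\Phi(\be)=0$. For the reverse inclusion $\ker\Phi\subseteq L$: suppose $\be\in Q$ satisfies $\pi_0\be\in M$, i.e.\ $\pi_0\be\in Q$ (note $\pi_0\be\in\lieh_0$ automatically). Writing $\pi_\perp=1-\pi_0$, the vector $\pi_\perp\be=\be-\pi_0\be$ then lies in $Q$, being a difference of two lattice elements, and it lies in $\lieh_\perp$, so $\pi_\perp\be\in Q\cap\lieh_\perp$. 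Since also $\pi_0\be\in Q\cap\lieh_0=M$, the orthogonal decomposition $\be=\pi_\perp\be+\pi_0\be$ exhibits $\be$ as an element of $(Q\cap\lieh_\perp)+(Q\cap\lieh_0)=L$.

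I do not anticipate a genuine obstacle: the whole argument rests on the orthogonal splitting $\be=\pi_0\be+\pi_\perp\be$ together with the observation that membership of either component in $Q$ is equivalent to the complementary component lying in $Q$. The only preliminary point worth recording is that the target quotient makes sense, i.e.\ $M\subseteq\pi_0(Q)$; this holds because $\pi_0$ fixes $\lieh_0$ pointwise, so $\pi_0\ga=\ga$ for every $\ga\in Q\cap\lieh_0$. One may also remark, using \eqref{eqpi0}, that $\pi_0(Q)\subseteq\frac1N(Q\cap\lieh_0)$, so that $\pi_0(Q)$ is a genuine lattice and $\pi_0(Q)/M$ is finite, although this discreteness is not logically needed for the purely group-theoretic isomorphism.
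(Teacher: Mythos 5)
Your proposal is correct and follows essentially the same route as the paper: both define the homomorphism $\be\mapsto\pi_0\be+M$, note surjectivity and $L\subseteq\Ker$, and identify the kernel with $L$ via the decomposition $\be=\pi_0\be+(\be-\pi_0\be)$ before invoking the First Isomorphism Theorem. Your additional remarks on why the quotient $\pi_0(Q)/M$ makes sense are harmless elaborations of the same argument.
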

\begin{proof}
Consider the map $f\colon Q\to \pi_0(Q)/M$ given by $f(\be)=\pi_0\be+M$. 
Then clearly $f$ is a surjective group homomorphism and $L\subset\Ker f$. 
In order to show that $L=\Ker f$, consider $\be\in\Ker f$ so that $\al:=\pi_0\be\in M$. 
Then $\pi_0(\al-\be)=0$, which implies $\al-\be\in Q\cap\lieh_\perp$. 
Hence $\be\in L$, and the result follows from the First Isomorphism Theorem.
\end{proof}

\begin{remark}
Sections \ref{subsub}, \ref{subsubgroup}, \ref{strFsi}, and \ref{sRperp} were developed in a more general setting than needed for the rest of the paper. We hope that they will be useful for future investigations.
\end{remark}

\subsection{Calculating the trace over the $G_\si$-module $W(\mu,\zeta)$}\label{Gsi}
From now on, we will assume that $\Gamma=\langle\si\rangle$ is a cyclic group of finite order $N$ and $\ph=\si^k$ is a power of $\si$. 
Then, by \leref{lphWmuze} and \reref{rgasimuze}, we have an action of $\ph$ on $W(\mu, \zeta)$ such that $\ph(1_{\mu,\ze}) = 1_{\mu,\ze}$
and \eqref{phgw} holds. Now \eqref{chiWph} reduces to
\begin{equation}\label{chiW}
\chi_{W(\mu, \zeta)}^{\si,\si^k}(\tau, h)=\tr_{W(\mu, \zeta)}\si^k e^{2\pi\ii h}q^{L_0^{\tw}-\Delta_\si},\qquad h\in\lieh_0.
\end{equation}
In this subsection, we will express \eqref{chiW} in terms of a theta function 
(recall Section \ref{theta}).

\begin{proposition}\label{chiW1}
Let\/ $Q$ be a positive-definite even integral lattice, and\/ $\si$ be an isometry of\/ $Q$ of finite order.
Then
\begin{equation}\label{chiW1a}
\chi_{W(\mu, \zeta)}^{\si,\si^k}(\tau, h)=d(\si)e^{-\pi\ii k|\mu|^2}\th_{\mu+\pi_0(Q)}(\tau+k,h, 0).
\end{equation}
As a consequence, we have
\begin{equation}\label{chiW1b}
\chi_{W(\mu, \zeta)}^{\si,\si^k}(\tau, h)=e^{-\pi\ii k|\mu|^2}\chi_{W(\mu, \zeta)}^{\si,1}(\tau+k, h).
\end{equation}
\end{proposition}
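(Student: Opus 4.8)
The plan is to show that $\si^k$ acts \emph{diagonally} in the weight basis of $W(\mu,\zeta)$ from Proposition \ref{Wbasis}, multiplying the $\lieh_0$-weight space of weight $\nu$ by the single scalar $e^{\pi\ii k(|\nu|^2-|\mu|^2)}$. Once this is established the trace \eqref{chiW} collapses into a theta series, and both \eqref{chiW1a} and \eqref{chiW1b} drop out.

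First I would record the weight data of the basis $\{U_\al U_{\be}U_{\ga}1_{\mu,\zeta}\}$ of Proposition \ref{Wbasis}. Such a vector is an $\lieh_0$-weight vector of weight $\mu+\pi_0(\al+\be+\ga)=\mu+\al+\pi_0\be$ by \eqref{tig4}, and by \eqref{L0prep} the operator $L_0^{\tw}-\Delta_\si$ acts on it by $\tfrac12|\mu+\al+\pi_0\be|^2$. By \leref{Q/L} the vectors $\mu+\al+\pi_0\be$, as $\al$ runs over $Q\cap\lieh_0$ and $\be$ over $\C_L$, run over $\mu+\pi_0(Q)$ exactly once each, while $\ga\in\C_R$ ranges over $d(\si)=|\C_R|$ values without changing the weight (cf.\ \eqref{dsiCR}). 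Hence each weight space $\mathcal{W}_\nu$ for $\nu\in\mu+\pi_0(Q)$ has dimension $d(\si)$, and $e^{2\pi\ii h}q^{L_0^{\tw}-\Delta_\si}$ acts on it by the scalar $e^{2\pi\ii(h|\nu)}q^{|\nu|^2/2}$.

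The heart of the argument is the eigenvalue computation for $\si$. Writing $\delta=\al+\be+\ga$ and $U_\al U_{\be}U_{\ga}=c\,U_\delta$ for a scalar $c$ by \eqref{tig3}, it suffices to evaluate $\si$ on $U_\delta 1_{\mu,\zeta}$. By \eqref{phgw2}, \eqref{phactG}, and \reref{rgasimuze} (which gives $\si(1_{\mu,\zeta})=1_{\mu,\zeta}$) we have $\si(U_\delta 1_{\mu,\zeta})=\eta(\delta)U_{\si\delta}1_{\mu,\zeta}$. The crucial point is that $C_\delta\in N_\si$ becomes the identity in $G_\si=G/N_\si$, so rearranging \eqref{Cal} yields, as operators on $W(\mu,\zeta)$,
\[
U_{\si\delta}=\eta(\delta)\,e^{2\pi\ii b_\delta}\,U_\delta\,e^{2\pi\ii\,\pi_0\delta},
\]
where $e^{2\pi\ii\,\pi_0\delta}\in\exp\lieh_0$ acts by its weight. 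Applying this to $1_{\mu,\zeta}$ (of weight $\mu$) and using $\eta(\delta)^2=1$, $|\delta|^2\in2\ZZ$ (whence $e^{2\pi\ii b_\delta}=e^{\pi\ii|\pi_0\delta|^2}$ from \eqref{bal}), I obtain
\[
\si(U_\delta 1_{\mu,\zeta})=e^{\pi\ii|\pi_0\delta|^2}e^{2\pi\ii(\pi_0\delta|\mu)}U_\delta 1_{\mu,\zeta}=e^{\pi\ii(|\nu|^2-|\mu|^2)}U_\delta 1_{\mu,\zeta},
\]
since $|\nu|^2-|\mu|^2=|\pi_0\delta|^2+2(\mu|\pi_0\delta)$ for $\nu=\mu+\pi_0\delta$. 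Thus $\si$, and hence $\si^k$, is the scalar $e^{\pi\ii k(|\nu|^2-|\mu|^2)}$ on all of $\mathcal{W}_\nu$.

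Finally I would assemble the trace. Summing over weight spaces the scalar by which $\si^k e^{2\pi\ii h}q^{L_0^{\tw}-\Delta_\si}$ acts, each with multiplicity $\dim\mathcal{W}_\nu=d(\si)$, gives
\[
\chi_{W(\mu,\zeta)}^{\si,\si^k}(\tau,h)=d(\si)\,e^{-\pi\ii k|\mu|^2}\sum_{\nu\in\mu+\pi_0(Q)}e^{2\pi\ii(h|\nu)}q^{|\nu|^2/2}e^{\pi\ii k|\nu|^2}.
\]
Using $q^{|\nu|^2/2}e^{\pi\ii k|\nu|^2}=e^{2\pi\ii(\tau+k)|\nu|^2/2}$ identifies the sum with $\th_{\mu+\pi_0(Q)}(\tau+k,h,0)$, which is \eqref{chiW1a}; leaving the sum as it stands and recognizing it as the same trace at $k=0$ with $\tau$ replaced by $\tau+k$ yields \eqref{chiW1b}. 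The main obstacle is precisely the eigenvalue step: one must notice that passing to $G_\si$ kills $C_\delta$, so the naive permutation $U_\delta\mapsto U_{\si\delta}$ is in fact a \emph{scalar} multiple of $U_\delta$; the rest is bookkeeping with the weight decomposition and the parity $|\delta|^2\in2\ZZ$.
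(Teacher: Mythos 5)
Your proof is correct and follows essentially the same route as the paper's: both rest on the basis of Proposition \ref{Wbasis}, the triviality of $C_\de$ in $G_\si$ via \eqref{Cal} to turn $U_{\si\de}$ into a scalar multiple of $U_\de$, the weight/$L_0^{\tw}$ data from \eqref{tig4} and \eqref{L0prep}, the multiplicity $d(\si)=|\C_R|$, and \leref{Q/L} to assemble the theta series. The only cosmetic difference is that you collapse $U_\al U_\be U_\ga$ into a single $U_\de$ before applying $\si$, whereas the paper evaluates $\si$ on the three factors separately; the resulting eigenvalue $e^{\pi\ii k(|\nu|^2-|\mu|^2)}$ is the same.
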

\begin{proof}
Let $w=U_\al U_{\be}U_{\ga}1_{\mu,\zeta}$ be a basis element of $W(\mu,\zeta)$ as in Proposition \ref{Wbasis}.
Using \eqref{phgw} and $\si(1_{\mu,\ze}) = 1_{\mu,\ze}$, we compute the action of $\si$ on $w$:
\begin{equation*}
\si w = \si(U_\al) \si(U_\be) \si(U_\ga) 1_{\mu,\zeta} \,.
\end{equation*}
First, by \eqref{twlat2} and \eqref{phactG} with $\eta=\eta_\si$, we have
\begin{equation*}
\si(U_\al)=\eta(\al) U_{\si\al} = U_\al \,, \qquad \al\in M:=Q\cap\lieh_0 \,.
\end{equation*}
Next, we find 
\begin{equation*}
\si(U_\be)=\eta(\be) U_{\si\be} = U_ \be e^{2\pi\ii (b_{\be}+\pi_0\be)}  \,, \qquad \be\in Q \,,
\end{equation*}
since $C_\be=1$ in $G_\si$; see \eqref{Cal}. Similarly,
\begin{equation*}
\si(U_\ga) = U_ \ga e^{2\pi\ii (b_{\ga}+\pi_0\ga)} = U_ \ga \,, \qquad \ga\in Q\cap\lieh_\perp \,,
\end{equation*}
because $\pi_0\ga=0$ and $b_\ga = -|\ga|^2/2 \in\ZZ$.
Finally, by \eqref{tig2} and \eqref{Aaction}, we have
\begin{equation*}
e^{2\pi\ii \,\pi_0\be}  U_\ga 1_{\mu,\zeta} = U_\ga e^{2\pi\ii \,\pi_0\be} 1_{\mu,\zeta} 
= U_\ga e^{2\pi\ii (\be|\mu)} 1_{\mu,\zeta} \,,
\end{equation*}
as $(\pi_0\be|\ga)=0$. Putting the above together, we get
\begin{equation*}
\si w = e^{2\pi\ii (b_{\be}+(\be|\mu))} w \,.
\end{equation*}
Then the $k$-th power gives
\begin{equation*}
\si^k w = e^{2\pi\ii k(b_{\be}+(\be|\mu))} w \,.
\end{equation*}

On the other hand, 
\begin{equation*}
e^{2\pi\ii h} w = e^{2\pi\ii (h|\al+\be+\mu)} w \,, \qquad h\in\lieh_0\,,
\end{equation*}
again by \eqref{Aaction}, and
\begin{equation*}
q^{L_0^{\tw}-\Delta_\si} w = q^{\frac12|\mu+\al+\pi_0\be|^2} w,
\end{equation*}
by \eqref{L0M}, where we used $\pi_0\al=\al$, $\pi_0\ga=0$.
Therefore,
\begin{equation*}
\si^k e^{2\pi\ii h} q^{L_0^{\tw}-\Delta_\si} w = e^{2\pi\ii ( kb_{\be} + k(\be|\mu)+(h|\al+\be+\mu) )} q^{\frac12|\mu+\al+\pi_0\be|^2} w \,.
\end{equation*}
Then the trace \eqref{chiW} is the sum of the eigenvalues:
\begin{align*}
\sum_{\al\in M} \sum_{\be\in\C_L} \sum_{\ga\in\C_R}
e^{2\pi\ii ( kb_{\be} + k(\be|\mu)+(h|\al+\be+\mu) )} q^{\frac12|\mu+\al+\pi_0\be|^2}
\end{align*}

The sum over $\ga$ gives $|\C_R|=d(\si)$; see \eqref{dsiCR}. To simplify the rest, we rewrite
\begin{align*}
2b_{\be}+2(\be|\mu)&=|\pi_0\be|^2-|\be|^2+2(\be|\mu)\\
&\equiv |\pi_0\be+\mu|^2-|\mu|^2\;\mod2\ZZ\\
&\equiv |\mu+\al+\pi_0\be|^2-|\mu|^2\;\mod2\ZZ \,,
\end{align*}
where we used that $|\al|^2\in2\ZZ$, $|\be|^2\in2\ZZ$ and $(\al|\pi_0\be+\mu) = (\al|\be)+(\al|\mu) \in\ZZ$.
For the last claim, we have $(\al|\mu)=(\al|\la)\in\ZZ$ if $\mu=\pi_0\la$ for some $\la\in Q^*$.

Hence, we can express the trace as a multiple of a theta function:
\begin{align*}
\chi_{W(\mu, \zeta)}^{\si,\si^k}(\tau, h)&=d(\si)e^{-\pi\ii k|\mu|^2} \sum_{\al\in M} \sum_{\be\in\C_L} 
e^{2\pi\ii(h|\mu+\al+\pi_0\be)} e^{\pi\ii(\tau+k)|\mu+\al+\pi_0\be|^2}\\
&=d(\si)e^{-\pi\ii k|\mu|^2} \sum_{\be\in\C_L} \theta_{\mu+\pi_0\be+M}(\tau+k, h, 0)\\
&=d(\si)e^{-\pi\ii k|\mu|^2}\th_{\mu+\pi_0(Q)}(\tau+k,h, 0),
\end{align*}
where in the last equality we used the  isomorphism $Q/L\cong\pi_0(Q)/M$ from \leref{Q/L}.
\end{proof}

Next, we derive some properties of the defect $d(\si)$ that are useful when describing the characters of $\si^l$-twisted modules (cf.\ Remark \ref{defect}).  

\begin{lemma}\label{defectlemma}
Let\/ $Q$ be a positive-definite even integral lattice, and\/ $\si$ be an isometry of\/ $Q$ of order $N$. 
\begin{enumerate}[$(i)$]
\item If\/ $l$ and\/ $N$ are coprime, then\/ $d(\si^l)=d(\si).$
\item If\/ $(1-\si)Q = Q\cap\lieh_\perp$, then\/ $d(\si)=1$.
\end{enumerate}
\end{lemma}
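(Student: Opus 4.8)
The plan is to reduce everything to the formula $d(\si)^2 = |(Q\cap\lieh_\perp)/Q_\si|$ with $Q_\si = Q\cap(1-\si)Q^*$ recorded in Remark \ref{defect}. For part $(i)$, since $\gcd(l,N)=1$ the isometries $\si$ and $\si^l$ generate the same cyclic subgroup $\langle\si\rangle$, so they have the same fixed space $\lieh_0$ and hence the same $\lieh_\perp$. Thus it suffices to show $Q_{\si^l}=Q_\si$, which I would deduce from the equality of image lattices $(1-\si^l)Q^* = (1-\si)Q^*$.

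To establish this lattice equality I would work in the commutative ring $\ZZ[\si]$ and use the factorization $1-\si^l = (1-\si)(1+\si+\cdots+\si^{l-1})$. Since $\si$ preserves the dual lattice $Q^*$, so does $1+\si+\cdots+\si^{l-1}$; applying $1-\si$ to $(1+\si+\cdots+\si^{l-1})Q^*\subseteq Q^*$ gives $(1-\si^l)Q^*\subseteq (1-\si)Q^*$. For the reverse inclusion I would pick $m$ with $lm\equiv 1\pmod N$, so that $\si=\si^{lm}$ as operators on $\lieh$ (using $\si^N=1$), and factor $1-\si = 1-\si^{lm} = (1-\si^l)(1+\si^l+\cdots+\si^{(m-1)l})$; the same reasoning yields $(1-\si)Q^*\subseteq (1-\si^l)Q^*$. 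Combining the two inclusions gives $(1-\si^l)Q^*=(1-\si)Q^*$, whence $Q_{\si^l}=Q_\si$ and $d(\si^l)=d(\si)$.

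Part $(ii)$ should be almost immediate from the same formula. Since $Q\subseteq Q^*$, applying $1-\si$ gives $(1-\si)Q\subseteq(1-\si)Q^*$; together with the hypothesis $(1-\si)Q = Q\cap\lieh_\perp$ this shows $Q\cap\lieh_\perp\subseteq(1-\si)Q^*$, and hence $Q\cap\lieh_\perp\subseteq Q\cap(1-\si)Q^* = Q_\si$. The reverse inclusion $Q_\si\subseteq Q\cap\lieh_\perp$ always holds because the image of $1-\si$ lies in $\lieh_\perp$. Therefore $Q_\si = Q\cap\lieh_\perp$, the quotient is trivial, and $d(\si)^2=1$. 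The one step demanding genuine care is the lattice equality in $(i)$: one must verify both containments at the level of lattices rather than vector spaces, and the clean device for this is the pair of complementary factorizations of $1-\si^l$ and of $1-\si=1-\si^{lm}$ in $\ZZ[\si]$, which is available precisely because coprimality of $l$ and $N$ lets me invert $l$ modulo $N$. Notably, no invertibility of $1-\si$ on $\lieh_\perp$ is needed.
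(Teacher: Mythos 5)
Your proposal is correct and follows essentially the same route as the paper: both reduce to the formula $d(\si)^2=|(Q\cap\lieh_\perp)/(Q\cap(1-\si)Q^*)|$ from Remark \ref{defect}, use the divisibility of $1-\si^l$ by $1-\si$ in $\ZZ[\si]$ together with coprimality of $l$ and $N$ to handle part $(i)$, and deduce $(ii)$ from the chain $(1-\si)Q\subset Q\cap(1-\si)Q^*\subset Q\cap\lieh_\perp$. The only (cosmetic) difference is that you establish the lattice equality $(1-\si^l)Q^*=(1-\si)Q^*$ outright, whereas the paper derives the two opposite inequalities $d(\si)\le d(\si^l)$ and $d(\si)\ge d(\si^l)$ from the corresponding pair of inclusions.
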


\begin{proof}
Recall that the defect $d(\si)$ is defined by \eqref{defectdef}:
\begin{equation*}
d(\si)^2=\big|(Q\cap\lieh_\perp)/(Q\cap(1-\si)Q^*) \big| \,.
\end{equation*}
As $l$ is coprime to $N$, the space $\lieh_\perp$ is the same for $\si$ and $\si^l$. Since $1-\si$ divides $1-\si^l$, we have 
\begin{align*}
Q\cap(1-\si^l)Q^*\subset Q\cap(1-\si)Q^*.
\end{align*}
It follows that $d(\si)\leq d(\si^l)$. But $\langle\si\rangle=\langle\si^l\rangle$ similarly implies that $d(\si)\geq d(\si^l)$. Hence $d(\si)=d(\si^l).$ Finally, $(ii)$ follows from 
\[
(1-\si)Q\subset Q\cap(1-\si)Q^* \subset Q\cap\lieh_\perp \,.
\]
This completes the proof.  
\end{proof}

It will be convenient to have an expression for $\chi_{W(\mu, \zeta)}^{\si,\si^k}(\tau, h)$ in terms of a theta function corresponding to an integral lattice. 
Notice that the lattice $\pi_0(Q)$ is not integral in general. However, $\sqrt{N}\pi_0(Q)$ is an even integral lattice by Lemma \ref{Qbarlemma},
if we assume $Q=\bar{Q}$.
Using the rescaling property \eqref{thc} of theta functions, we obtain the following.

\begin{proposition}\label{chiW2}
Let\/ $Q$ be a positive-definite even integral lattice, $\si$ be an isometry of\/ $Q$ of finite order\/ $N$, and assume that\/ $Q=\bar{Q}$.
Then\/ $\sqrt{N}\pi_0(Q)$ is an even integral lattice and
\begin{equation}\label{chiW3}
\chi_{W(\mu, \zeta)}^{\si,\si^k}(\tau, h)
=d(\si)e^{-\pi\ii k|\mu|^2}\th_{\sqrt{N}\mu+\sqrt{N}\pi_0(Q)}\Bigl(\frac{\tau+k}{N},\frac{h}{\sqrt{N}}, 0\Bigr).
\end{equation}
\end{proposition}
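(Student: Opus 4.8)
The plan is to prove the two assertions separately: first the purely lattice-theoretic claim that $\sqrt{N}\pi_0(Q)$ is an even integral lattice, and then the identity \eqref{chiW3}, which will follow from Proposition \ref{chiW1} by a direct rescaling of the theta function.

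For the first assertion, I would start from the averaging formula \eqref{eqpi0}, namely $\pi_0=\frac1N\sum_{m=0}^{N-1}\si^m$. For any $\al,\be\in Q$ this gives
\[
(\pi_0\al|\pi_0\be)=(\al|\pi_0\be)=\frac1N\sum_{m=0}^{N-1}(\al|\si^m\be)\in\frac1N\ZZ,
\]
since each $(\al|\si^m\be)$ is an integer by integrality of $Q$. Hence $(\sqrt{N}\pi_0\al|\sqrt{N}\pi_0\be)=N(\pi_0\al|\pi_0\be)\in\ZZ$, so $\sqrt{N}\pi_0(Q)$ is integral. For evenness I would invoke the standing hypothesis $Q=\bar Q$ together with Lemma \ref{Qbarlemma}, specifically \eqref{Qbar41}, which asserts $|\pi_0\al|^2\in\frac2N\ZZ$ for every $\al\in Q$; multiplying by $N$ yields $|\sqrt{N}\pi_0\al|^2=N|\pi_0\al|^2\in2\ZZ$, which is exactly evenness.

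For the identity \eqref{chiW3}, I would apply the rescaling law \eqref{thc} to the theta function already produced in \eqref{chiW1a}. Taking $\mathcal{L}=\pi_0(Q)$, $\la=\mu$, and $c=\sqrt{N}$ in \eqref{thc}, and matching arguments by $N\tau'=\tau+k$ (so $\tau'=(\tau+k)/N$) and $\sqrt{N}z'=h$ (so $z'=h/\sqrt{N}$), one obtains
\[
\th_{\mu+\pi_0(Q)}(\tau+k,h,0)=\th_{\sqrt{N}\mu+\sqrt{N}\pi_0(Q)}\Bigl(\frac{\tau+k}{N},\frac{h}{\sqrt{N}},0\Bigr).
\]
Substituting this into the formula of Proposition \ref{chiW1} yields \eqref{chiW3} immediately.

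This argument is essentially bookkeeping, so I do not anticipate a serious obstacle; the only points requiring care are (a) checking that $h\in\lieh_0$ is a legitimate argument of the theta function attached to $\pi_0(Q)$, which holds because $\CC\otimes_\ZZ\pi_0(Q)=\lieh_0$, and (b) recognizing that \emph{evenness} of $\sqrt{N}\pi_0(Q)$ genuinely uses the hypothesis $Q=\bar Q$, since without it the computation above delivers only integrality. The rescaling in the final step must also track the modular variable $\tau$ and the elliptic variable $h$ simultaneously, which is the point most prone to a normalization slip and thus the step I would verify most carefully.
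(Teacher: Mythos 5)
Your proof is correct and follows exactly the route the paper intends: the paper states this proposition as an immediate consequence of Lemma \ref{Qbarlemma} (for the evenness of $\sqrt{N}\pi_0(Q)$ under the hypothesis $Q=\bar Q$) together with the rescaling property \eqref{thc} applied to the formula of Proposition \ref{chiW1}. Your bookkeeping of the substitution $c=\sqrt{N}$, $\tau'=(\tau+k)/N$, $z'=h/\sqrt{N}$ is accurate, and your remark that evenness (as opposed to mere integrality) genuinely requires $Q=\bar Q$ via \eqref{Qbar41} is exactly the right point of care.
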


Notice that in the left-hand side of \eqref{chiW3}, $k$ can be taken mod $N$ because $\si^N=1$. 
Using the transformation \eqref{thtlaws2}, one can check that the same is true for the right-hand side of \eqref{chiW3}.
It is also not hard to see that
\begin{equation}\label{sqrtNmu}
\sqrt{N}\mu\in \bigl(\sqrt{N} \pi_0(Q) \bigr)^* \quad\text{for all}\quad \mu\in\pi_0(Q^*) \,,
\end{equation}
where the dual of $\sqrt{N} \pi_0(Q)$ is taken in $\lieh_0$.

Finally, we multiply the modified characters from Theorem \ref{chiF} and Proposition \ref{chiW2} as in \eqref{chiMFW}
to yield the modified characters of irreducible $\si$-twisted $V_Q$-modules $M(\mu,\zeta)$,
given by the following theorem.

\begin{theorem}\label{chiM2}
Let\/ $Q$ be a positive-definite even integral lattice, $\si$ be an isometry of\/ $Q$ of finite order\/ $N$, and assume that\/ $Q=\bar{Q}$.
Then the modified characters \eqref{chiM} for\/ $\ph=\si^k$ are given by
\begin{equation}\label{chiMk}
\chi_{M(\mu, \zeta)}^{\si,\si^k}(\tau, h)=d(\si)e^{-\pi\ii k|\mu|^2}\frac{\th_{\sqrt{N}\mu+\sqrt{N}\pi_0(Q)}\bigl(\frac{\tau+k}{N},\frac{h}{\sqrt{N}}, 0\bigr)}{P_{\si,\si^k}(\tau)} \,.
\end{equation}
\end{theorem}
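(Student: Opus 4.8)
The plan is to obtain \eqref{chiMk} by combining the two trace computations already carried out---Theorem \ref{chiF} for the $\si$-twisted Fock space and Proposition \ref{chiW2} for the $G_\si$-module $W(\mu,\zeta)$---via the multiplicative decomposition \eqref{chiMFW}. The key structural input is the tensor product realization $M(\mu,\zeta) \cong \F_\si \otimes W(\mu,\zeta)$ from \eqref{M}, which reduces the computation of a single trace to a product of two traces that have each been evaluated.

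First I would verify that the operator $\si^k e^{2\pi\ii h} q^{L_0^{\tw}-r/24}$ respects this tensor decomposition, so that its trace factors as a product. The automorphism $\si^k$ acts diagonally on the two factors by \eqref{phhtn} and \leref{lphWmuze}; the operator $e^{2\pi\ii h}$ with $h\in\lieh_0$ acts as the identity on $\F_\si$ (since $\lieh_0$ acts trivially there) and as the $W(\mu,\zeta)$-action on the second factor; and by \eqref{L0M} the eigenvalue of $L_0^{\tw}$ on a basis vector splits into a Fock-space part $n_1+\cdots+n_k+\Delta_\si$ and a lattice part $\frac12|\mu+\pi_0\ga|^2$. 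The only point requiring care is the distribution of the grading shifts: the full factor $q^{-r/24}$ together with the minimal contribution $q^{\Delta_\si}$ is absorbed into the Fock-space trace \eqref{chiFdef}, while the trace over $W(\mu,\zeta)$ is taken with the shifted exponent $q^{L_0^{\tw}-\Delta_\si}$ as in \eqref{chiWph}. This bookkeeping is precisely what produces the identity \eqref{chiMFW}.

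With \eqref{chiMFW} in hand, the argument is immediate: substituting $\chi_{\F_\si}(\si^k,\tau)=1/P_{\si,\si^k}(\tau)$ from Theorem \ref{chiF} and the expression \eqref{chiW3} for $\chi_{W(\mu,\zeta)}^{\si,\si^k}(\tau,h)$ from Proposition \ref{chiW2}, and multiplying, yields exactly \eqref{chiMk}. The prefactor $d(\si)e^{-\pi\ii k|\mu|^2}$ and the theta function come from the $W(\mu,\zeta)$ factor, and the denominator $P_{\si,\si^k}(\tau)$ from the Fock-space factor. Since the real content resides in Theorem \ref{chiF} and Proposition \ref{chiW2}, there is no serious obstacle here; the only thing to watch is that the $\Delta_\si$ appearing in the minimal $L_0^{\tw}$-eigenvalue is accounted for exactly once, entering through $P_{\si,\si^k}$ via \eqref{P}, and is not double-counted in the theta-function factor.
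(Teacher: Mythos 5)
Your proposal is correct and follows exactly the paper's own route: the character factors as a product of traces over $\F_\si$ and $W(\mu,\zeta)$ via \eqref{chiMFW}, and the result is then immediate from Theorem \ref{chiF} and Proposition \ref{chiW2}, with the $\Delta_\si$ shift allocated to the Fock-space factor precisely as you describe. No gaps.
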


\begin{remark}
When $\pi_0(Q)$ is an even integral lattice, we have the simpler formula
\begin{equation}\label{chiMk2}
\chi_{M(\mu, \zeta)}^{\si,\si^k}(\tau, h)=d(\si)\frac{\th_{\mu+\pi_0(Q)}\left({\tau}{},h, 0\right)}{P_{\si,\si^k}(\tau)} \,,
\end{equation}
which follows from \eqref{chiW1a} and \eqref{thtlaws2}.
\end{remark}

\begin{corollary}\label{sisame}
Suppose that\/ $l$ is coprime to the order\/ $N$ of\/ $\si$. Let\/ $\mu\in\pi_0(Q^*)$, let $\ze$ be a central character of\/ $G_\si^\perp$,
and\/ $\ze'$ be a central character of\/ $G_{\si^l}^\perp$. Denote the irreducible $\si^l$-twisted\/ $V_Q$-module corresponding to the pair\/
$(\mu,\ze')$ as $M(\mu, \zeta';\si^l)$.
Then
\begin{equation}\label{Msilsikl}
\chi_{M(\mu, \zeta';\si^l)}^{\si^l,\si^{kl}}(\tau, h)=\chi_{M(\mu, \zeta)}^{\si,\si^k}(\tau, h).
\end{equation}
\end{corollary}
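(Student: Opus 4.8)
The plan is to prove \eqref{Msilsikl} by applying the explicit character formula of \thref{chiM2} to each side and checking that the two resulting expressions coincide term by term. First I would record the structural facts that make the substitution $\si\mapsto\si^l$ harmless. Since $l$ is coprime to $N$, the automorphism $\si^l$ again has order $N$ and generates the same cyclic group $\langle\si^l\rangle=\langle\si\rangle$; consequently its fixed-point space $\lieh_0$, the orthogonal projection $\pi_0$, and hence the lattice $\pi_0(Q)$ and the set $\pi_0(Q^*)$ containing $\mu$ are all unchanged. By \leref{Qbarlemma}, the sublattice $\bar Q$ is characterized purely in terms of $\pi_0$ and $N$, so the standing hypothesis $Q=\bar Q$ for $\si$ transfers automatically to $\si^l$, and \thref{chiM2} applies equally to $\si^l$.

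Next I would write out the left-hand side. Since $\si^{kl}=(\si^l)^k$, the module $M(\mu,\ze';\si^l)$ is $\si^l$-twisted and the inserted automorphism $\ph=\si^{kl}$ is precisely the $k$-th power of the twisting automorphism $\si^l$. Applying \thref{chiM2} with base automorphism $\si^l$ and power $k$ gives
\[
\chi_{M(\mu, \zeta';\si^l)}^{\si^l,\si^{kl}}(\tau, h)=d(\si^l)\,e^{-\pi\ii k|\mu|^2}\,\frac{\th_{\sqrt{N}\mu+\sqrt{N}\pi_0(Q)}\bigl(\tfrac{\tau+k}{N},\tfrac{h}{\sqrt{N}}, 0\bigr)}{P_{\si^l,\si^{kl}}(\tau)}\,.
\]
Crucially, the right-hand side of \eqref{chiMk} does not involve $\ze$ at all, so the modified character is determined by the data $(\mu,\pi_0(Q),N,k)$ alone; this is exactly why modules carrying different central characters $\ze$ and $\ze'$ can nevertheless have equal characters. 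Comparing with \eqref{chiMk} for $\chi_{M(\mu, \zeta)}^{\si,\si^k}(\tau, h)$, the exponential prefactor $e^{-\pi\ii k|\mu|^2}$ and the theta factor are already literally identical, the latter because it is built only from $N$, $\mu$, and $\pi_0(Q)$, all common to $\si$ and $\si^l$. Thus only the defect and the product $P$ remain to be matched.

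Finally I would invoke the two invariances already in hand: \leref{defectlemma}$(i)$ gives $d(\si^l)=d(\si)$, and \eqref{Psilsikl} gives $P_{\si^l,\si^{kl}}(\tau)=P_{\si,\si^k}(\tau)$ for every $l$ coprime to $N$. Substituting these into the displayed formula turns it into \eqref{chiMk}, which proves \eqref{Msilsikl}. I do not expect a genuine obstacle here, since the entire content of the corollary is contained in the invariances of $d(\si)$, $\pi_0$, and $P_{\si,\si^k}$ under $\si\mapsto\si^l$; the only point requiring care is the bookkeeping that the relevant power entering \thref{chiM2} is $k$ rather than $kl$ (because $\ph=(\si^l)^k$), so that the two prefactors and theta arguments align with no extra rescaling.
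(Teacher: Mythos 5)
Your proof is correct and follows essentially the same route as the paper: the paper factors the character via \eqref{chiMFW} into the Fock-space trace and the trace over $W(\mu,\zeta)$, then invokes \eqref{chiW1a}, \leref{defectlemma}$(i)$, and \eqref{Psilsikl}, while you apply the already-combined formula \eqref{chiMk} of \thref{chiM2} and use the same two invariances, which amounts to the identical argument.
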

\begin{proof}
First of all, note that the projection $\pi_0$ is the same for both $\si$ and $\si^l$. We have
\begin{equation}\label{Wsilsikl}
\chi_{W(\mu, \zeta')}^{\si^l,\si^{kl}}(\tau, h)=\chi_{W(\mu, \zeta)}^{\si,\si^k}(\tau, h),
\end{equation}
due to \eqref{chiW1a} and \leref{defectlemma}($i$). Then \eqref{Msilsikl} follows from \eqref{Psilsikl}, \eqref{chiMFW}, and \eqref{Wsilsikl}.
\end{proof}

\begin{remark}\label{rem4.12}
If $l$ is coprime to the order $N$ of $\si$, then $l$ has a multiplicative inverse mod $N$. Hence, for fixed $l$ coprime to $N$, all powers of $\si$ can be written
in the form $\si^{kl}$ for some $k$.
\end{remark}

Notice that the characters $\chi_{M(\mu, \zeta)}^{\si,\si^k}(\tau, h)$ are independent of the central character $\zeta$ of $G_\si^\perp$. 
In the special case when $(1-\si)Q=Q\cap\lieh_\perp$, there is a unique $\ze$ for any fixed $\mu\in\pi_0(Q^*)$ (see \reref{remark1}).
Otherwise, we will require a choice of $\ze$ when inverting formula \eqref{chiW3}, which we will do when we compute the modular transformations of characters in \seref{s5.2} below.


\subsection{Calculating the trace over the untwisted $V_Q$-module $V_{\la+Q}$}

In this subsection, we find the modified characters 
\begin{equation}\label{chiVla}
\chi_{V_{\la+Q}}^{1,\ph}(\tau,h)=\tr_{V_{\la+Q}} \ph e^{2\pi\ii h}q^{L_0-\frac {r}{24}}
\end{equation}
of irreducible untwisted $V_Q$-modules $V_{\la+Q}$, where 
$\ph$ is an isometry of finite order of the positive-definite even lattice $Q$,
$\la\in Q^*$, $h\in\lieh$, and $r=\rank Q$.
While these may be derived as a special case of previous results, 
it is instructive to present a direct calculation here.

Recall from \eqref{VFW} the construction of $V_{\la+Q}$ as $\F\otimes W(\la)$, where $\F$ is the Fock space \eqref{F} and
\begin{equation}\label{Wlambda}
W(\la)= \CC_\ep[Q] e^\la = \Span\bigl\{e^{\la+\al} \,\big|\, \al\in Q\bigr\} \,.
\end{equation}
Clearly, $W(\la)$ only depends on the coset $\la+Q \in Q^*/Q$. As in \leref{basisM},
$V_{\la+Q}$ has a basis consisting of monomials of the form
\begin{equation}\label{Vlabasis}
v = a^{i_1}_{-n_1} \cdots a^{i_k}_{-n_k} e^{\la+\al} \,,
\end{equation}
where $\al\in Q$, $\{a^i\}_{i=1}^r$ is a fixed basis for $\lieh$, $k\ge0$ (with $k=0$ corresponding to $v=e^{\la+\al}$),
$1\le i_l\le r$, $n_l\in\ZZ$, $n_l>0$ for $1\le l\le k$, and the pairs $(i_l,n_l)$ are ordered lexicographically.

Similarly to the action \eqref{twlat4}, we have an action of $\varphi$ on the vectors \eqref{Vlabasis} given by
\begin{equation}\label{phVla}
\ph(v) = \eta_\ph(\la+\al) \, \ph(a^{i_1})_{-n_1} \cdots \ph(a^{i_k})_{-n_k}  e^{\ph(\la+\al)} \,.
\end{equation}
As in \leref{lphMmuze}, we have:

\begin{lemma}\label{lphVla}
The linear map\/ $\ph\colon V_{\la+Q}\to V_{\ph(\la)+Q}$, defined by \eqref{phVla}, satisfies
\begin{equation}\label{phav2}
\ph\bigl(Y(a,z)v\bigr) = Y(\ph(a),z) \ph(v), \qquad a\in V_Q, \;\; v\in V_{\la+Q}.
\end{equation}
\end{lemma}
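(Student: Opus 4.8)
The plan is to verify the intertwining relation \eqref{phav2} on a set of strong generators of $V_Q$ and then to propagate it to every $a\in V_Q$ by means of the product formula \eqref{locpr3}, using crucially that $\ph$ is an automorphism of the vertex algebra $V_Q$ itself, so that $\ph(a'_{(n)}a'')=\ph(a')_{(n)}\ph(a'')$ for all $a',a''\in V_Q$ and $n\in\ZZ$. Recall that $V_Q$ is strongly generated by the elements $b_{-1}\vac$ ($b\in\lieh$) and $e^\al$ ($\al\in Q$), whose fields are given by \eqref{lat4} and \eqref{lat5}, so it suffices to establish \eqref{phav2} for these two families and to show that the set of $a$ for which \eqref{phav2} holds is closed under $n$-th products.

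First I would treat $a=b_{-1}\vac$. Here $Y(b_{-1}\vac,z)=\sum_{n\in\ZZ}b_n z^{-n-1}$ acts on $V_{\la+Q}$ through the Heisenberg modes, and \eqref{phVla} is arranged precisely so that $\ph(b_n v)=\ph(b)_n\ph(v)$: the assignment $b_n\mapsto\ph(b)_n$ is a Lie algebra automorphism of $\hat\lieh$ because $\ph$ is an isometry, the scalars produced by the commutators \eqref{heis1} are $\ph$-invariant for the same reason, and the zero-mode weight transforms correctly since $(b|\la+\al)=(\ph b|\ph(\la+\al))$. Next I would treat the substantive case $a=e^\al$, writing $Y(e^\al,z)$ as the product of the three factors in \eqref{lat5}: the exponentials of Heisenberg modes, the operator $z^{\al_0}$, and the group-algebra operator $e^\al$. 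The exponential factors transform into those for $e^{\ph\al}$ by the previous paragraph (applied to $\al_n$); the factor $z^{\al_0}$ is unchanged because the exponent $(\al|\la+\be)$ equals $(\ph\al|\ph(\la+\be))$; and, after collecting the scalar $\eta_\ph(\al)$ coming from $\ph(e^\al)=\eta_\ph(\al)e^{\ph\al}$, the group-algebra factor reduces exactly to the cocycle identity \eqref{etaph} with its second argument taken in the coset $\la+Q$. Combining the three gives $\ph(Y(e^\al,z)v)=\eta_\ph(\al)Y(e^{\ph\al},z)\ph(v)=Y(\ph(e^\al),z)\ph(v)$.

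To extend \eqref{phav2} to arbitrary $a$, I would induct on the length of a monomial in the generators. Assume \eqref{phav2} for $a'$ and $a''$. Applying $\ph$ to the product formula \eqref{locpr3} on $V_{\la+Q}$ (valid by \prref{pnprod}) and invoking the hypothesis twice—once for $Y(a'',w)v$, then for the action of $Y(a',z)$ on the resulting series—yields $\ph\bigl(Y(a',z)Y(a'',w)v\bigr)=Y(\ph a',z)Y(\ph a'',w)\ph(v)$. A second application of \eqref{locpr3}, now on $V_{\ph(\la)+Q}$, then gives \eqref{phav2} for $(\ph a')_{(N-1-k)}(\ph a'')=\ph\bigl(a'_{(N-1-k)}a''\bigr)$. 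Since every element of $V_Q$ arises from $\vac$ and the generators by iterated $n$-th products, and \eqref{phav2} is immediate for $\vac$, this completes the argument.

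The main obstacle is the $e^\al$ case: one must ensure that the bimultiplicative $2$-cocycle $\ep$ and the sign function $\eta_\ph$ are extended from $Q$ to the coset $\la+Q$ so that both the module structure \eqref{VFW}--\eqref{Wlambda} and the map \eqref{phVla} are well defined, and that the cocycle relation \eqref{etaph} persists with one argument in $\la+Q$. Granting this extension, as in the standard module construction, the verification is the bookkeeping indicated above and is entirely parallel to the proof of \leref{lphMmuze} in the twisted setting.
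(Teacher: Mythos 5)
The paper states this lemma without any proof (its intended justification is the same one-line remark used for the twisted analogue, \leref{lphMmuze}: the claim is supposed to follow directly from the tensor-product structure $V_{\la+Q}\cong\F\otimes W(\la)$ and the definition \eqref{phVla}). Your argument is correct and supplies the details the paper omits, but by a slightly different route: instead of checking the intertwining relation factor-by-factor on the explicit generating fields \eqref{lat4}--\eqref{lat5} and stopping there, you verify it on the strong generators $b_{-1}\vac$ and $e^\al$ and then propagate it to all of $V_Q$ by induction through the product formula \eqref{locpr3}, using that $\ph$ is an automorphism of $V_Q$ so that $\ph(a'_{(n)}a'')=\ph(a')_{(n)}\ph(a'')$. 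This reconstruction step is exactly the mechanism the paper itself uses to extend the state-field correspondence from generators (see the discussion after \eqref{twheis3}), so it is a legitimate and complete way to close the argument. You also correctly isolate the one genuinely delicate point, which the paper leaves implicit in writing $\eta_\ph(\la+\al)$ in \eqref{phVla}: the sign function $\eta_\ph$ and the cocycle conventions must be extended from $Q$ to the coset $\la+Q$ compatibly, so that the analogue of \eqref{etaph} holds with one argument in $\la+Q$; granting that standard extension, your computation for the $e^\al$ case goes through. The net effect is that your proof is more self-contained than the paper's (nonexistent) one, at the cost of an induction the authors evidently regarded as routine.
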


Since the action of $\ph$ sends $V_{\la+Q}$ to $V_{\ph(\la)+Q}$, the trace \eqref{chiVla} makes sense only when 
\begin{equation*}
\la+Q = \ph(\la)+Q,  \quad\text{i.e.,}\quad (1-\ph)\la \in Q.
\end{equation*}
Let $v$ be as in \eqref{Vlabasis}. Any $h\in\lieh$ acts on $v$ as the zero mode $h_0$ according to
\begin{equation*}
h_0 v = (h|\la+\al) v
\end{equation*}
(cf.\ \eqref{lat3}, \eqref{h0M}), while the action of $L_0$ is given by
\begin{equation*}
L_0 v = \Bigl(n_1+\cdots+n_k+\frac{|\la+\al|^2}2\Bigr) v
\end{equation*}
(cf.\ \eqref{L0M}). Hence, 
\begin{equation*}
e^{2\pi\ii h}q^{L_0} v = e^{2\pi\ii(h|\la+\al)} q^{n_1+\cdots+n_k+\frac12|\la+\al|^2} v.
\end{equation*}
Therefore, the only basis vectors $v$ that contribute to the trace \eqref{chiVla} are those with
\begin{equation*}
\ph(\la+\al) = \la+\al, \qquad \al\in Q.
\end{equation*}
If one such $\al$ exists, then we can replace the representative $\la\in Q^*$ of the coset $\la+Q$ with $\la+\al$, and assume without loss of generality that $\ph(\la)=\la$.
Then all other $\al$ lie in $Q\cap\lieh^\ph$, where $\lieh^\ph\subset\lieh$ is the subspace of fixed points under $\ph$.
Notice that when $\ph(\la+\al) = \la+\al$, we can assume $\eta_\ph(\la+\al)=1$; cf.\ \eqref{twlat2}.

Under the above conditions, the trace \eqref{chiVla} becomes a product of traces
\begin{equation}\label{chiVla2}
\chi_{V_{\la+Q}}^{1,\ph}(\tau,h) = \Bigl( \tr_{\F} \ph q^{L_0-\frac {r}{24}} \Bigr) \Bigl( \tr_{W(\la)^\ph} e^{2\pi\ii h}q^{L_0} \Bigr),
\end{equation}
where $\la\in Q^*\cap\lieh^\ph$ and
\begin{equation*}
W(\la)^\ph=\Span\bigl\{e^{\la+\al} \,\big|\, \al\in Q\cap\lieh^\ph\bigr\} \subset W(\la)
\end{equation*}
is the subspace of $\ph$-invariants.

The first factor in the right-hand side of \eqref{chiVla2} is the special case of $\chi_{\F_\si}(\ph,\tau)$ for $\si=1$ (see \eqref{chiFdef}).
From Theorem \ref{chiF}, we obtain
\begin{equation*}
\tr_{\F} \ph q^{L_0-\frac {r}{24}} = \chi_{\F}(\ph,\tau) = \frac{1}{P_{1,\ph}(\tau)} \,,
\end{equation*}
where
\begin{equation*}
P_{1,\ph}(\tau)=q^{\frac {r}{24}}{\displaystyle\prod_{m=1}^\infty}
\det\nolimits_\mathfrak{h}(1-\ph q^{m}) \,.
\end{equation*}
The second factor in the right-hand side of \eqref{chiVla2} is given by
\begin{equation*}
\tr_{W(\la)^\ph} e^{2\pi\ii h}q^{L_0} = \sum_{\ga\in\la+(Q\cap\lieh^\ph)} e^{2\pi\ii(h|\ga)} q^{\frac12|\ga|^2}
= \theta_{\la+(Q\cap\lieh^\ph)}(\tau,h,0).
\end{equation*}
We summarize the answers in the following theorem.

\begin{theorem}\label{chiVlaQ}
Let\/ $Q$ be a positive-definite even integral lattice, $\ph$ be an isometry of\/ $Q$ of finite order, and\/ $\la\in Q^*$ be such that\/ $(1-\ph)\la \in Q$.
If there exists\/ $\al\in Q$ such that\/ $\ph(\la+\al) = \la+\al$, then the modified character \eqref{chiVla} is given by
\begin{equation}\label{chiVla3}
\chi_{V_{\la+Q}}^{1,\ph}(\tau,h) = \frac{\theta_{\la+\al+(Q\cap\lieh^\ph)}(\tau,h,0)}{P_{1,\ph}(\tau)} \,.
\end{equation}
If no such\/ $\al$ exists, then\/ $\chi_{V_{\la+Q}}^{1,\ph}(\tau,h) = 0$.
\end{theorem}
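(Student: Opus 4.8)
The plan is to exploit the tensor decomposition $V_{\la+Q}=\F\otimes W(\la)$ from \eqref{VFW}, \eqref{Wlambda} and to factor the trace \eqref{chiVla} as a product of a trace over $\F$ and a trace over $W(\la)$. This factorization is available because, on the monomial basis \eqref{Vlabasis}, the operator $e^{2\pi\ii h}$ acts as the scalar $e^{2\pi\ii(h|\la+\al)}$ depending only on the lattice part, the eigenvalue of $L_0$ splits as $n_1+\cdots+n_k$ (a Heisenberg degree) plus $|\la+\al|^2/2$ (a lattice contribution), and $\ph$ preserves each factor. Thus $\ph\,e^{2\pi\ii h}q^{L_0-r/24}$ is a tensor product of operators, and its trace is the corresponding product of traces.

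First I would record, on a basis vector $v=a^{i_1}_{-n_1}\cdots a^{i_k}_{-n_k}e^{\la+\al}$, the action \eqref{phVla} of $\ph$ together with the eigenvalues $h_0 v=(h|\la+\al)v$ and $L_0 v=(n_1+\cdots+n_k+\tfrac12|\la+\al|^2)v$. Since $\ph$ sends $e^{\la+\al}$ to a scalar multiple of $e^{\ph(\la+\al)}$, it permutes the lattice vectors and contributes to the trace only through fixed points; hence only indices with $\ph(\la+\al)=\la+\al$ survive, and if no such $\al$ exists the trace vanishes, giving the second alternative. When such an $\al$ exists, I would absorb it into $\la$ so as to assume $\ph(\la)=\la$, after which the surviving indices run over $\al\in Q\cap\lieh^\ph$; by the normalization \eqref{twlat2} one may take $\eta_\ph(\la+\al)=1$ on this fixed sublattice. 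The trace then factors: the Fock-space factor is the $\si=1$ specialization of \thref{chiF}, equal to $1/P_{1,\ph}(\tau)$, while the $W(\la)$ factor is $\sum_{\al\in Q\cap\lieh^\ph}e^{2\pi\ii(h|\la+\al)}q^{|\la+\al|^2/2}$, which is exactly $\theta_{\la+(Q\cap\lieh^\ph)}(\tau,h,0)$ by \eqref{th}. Multiplying yields \eqref{chiVla3}.

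The only genuinely substantive point is the observation that $\ph$ acts as a (weighted) permutation on the lattice basis, so that off-diagonal contributions drop out and the trace localizes to the $\ph$-fixed exponents; everything else is bookkeeping. The mild technical subtleties are checking that $e^{2\pi\ii h}$ contributes solely to the $W(\la)$ factor and that the cocycle values $\eta_\ph(\la+\al)$ can be set to $1$ on $Q\cap\lieh^\ph$ via \eqref{twlat2}, so that no extra phases corrupt the theta series.
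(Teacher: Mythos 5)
Your proposal is correct and follows essentially the same route as the paper's proof: decompose $V_{\la+Q}=\F\otimes W(\la)$, observe that $\ph$ permutes the vectors $e^{\la+\al}$ up to the sign $\eta_\ph(\la+\al)$ so that only the $\ph$-fixed exponents contribute to the trace (giving the vanishing case when there are none), shift $\la$ by $\al$ and normalize $\eta_\ph=1$ on the fixed sublattice via \eqref{twlat2}, and then factor the trace into $1/P_{1,\ph}(\tau)$ from Theorem \ref{chiF} times the theta function over $\la+(Q\cap\lieh^\ph)$. No gaps; the two mild subtleties you flag are exactly the ones the paper addresses.
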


In the rest of the paper, we will restrict to the special case considered earlier, when $\si$ is an isometry of $Q$ of order $N$, and $\ph=\si^k$ with $k$ coprime to $N$.
In this case, since $\langle\si\rangle = \langle\si^k\rangle$, the subspace $\lieh^\ph$ of $\ph$-invariants coincides with $\lieh^\si=\lieh_0$.
Thus, \eqref{chiVla3} can be rewritten as
\begin{equation}\label{chiVM}
\chi_{V_{\la+Q}}^{1,\si^k}(\tau,h) = \frac{\theta_{\la+M}(\tau,h,0)}{P_{1,\si^k}(\tau)} \,, \qquad \la\in Q^*\cap\lieh_0, \; M=Q\cap\lieh_0.
\end{equation}
We also restate the last claim of \thref{chiVlaQ} as
\begin{equation}\label{chiVM2}
\begin{split}
\chi_{V_{\la+Q}}^{1,\si^k}(\tau,h) = 0 \quad &\text{if}\quad \la\in Q^* \,, \;\; (1-\si)\la\in Q, \\
&\text{and}\;\; (\la+Q)\cap\lieh_0 = \{0\}.
\end{split}
\end{equation}
In particular, for $\si=1$ (or directly from \eqref{chiVla3} for $\ph=1$), we recover the well-known result that the character of $V_{\la+Q}$ is
\begin{equation}\label{chiVQ}
\chi_{V_{\la+Q}}^{1,1}(\tau, h)=\frac{\theta_{\la+Q}(\tau,h,0)}{P_{1,1}(\tau)} \,,\qquad\la\in Q^* \,.
\end{equation}

For future reference, we denote the numerators of \eqref{chiVM} and \eqref{chiVQ} as follows:
\begin{align}
\label{chiWuntwk}
\chi_{W(\la)}^{1,\si^{k}}(\tau, h) &= \theta_{\la+M}(\tau,h,0), & \la&\in Q^*\cap\lieh_0 \,, \\
\label{chiWuntw}
\chi_{W(\la)}^{1,1}(\tau, h) &= \theta_{\la+Q}(\tau,h,0), & \la&\in Q^* \,.
\end{align}
We can compute the denominators 
by applying the next lemma, which is similar to \leref{lPsisik}.

\begin{lemma}\label{lPsisik2}
Suppose that\/ $\si$ has a prime order\/ $p$. Then, for\/ $m\in\ZZ$ and\/ $1\le k\le p-1$, we have
\begin{align*}
\det\nolimits_\mathfrak{h} \bigl(1- q^{m}\bigr) &= (1-q^m)^r \,, \\
\det\nolimits_\mathfrak{h} \bigl(1-\si^k q^{m}\bigr) &= (1-q^{mp})^d \, (1-q^m)^{r-dp} \,,
\end{align*}
where\/ $r=\dim\lieh$ and\/ $d=\dim\lieh_{1/p}$. 
\end{lemma}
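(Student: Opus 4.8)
The plan is to mirror the proof of \leref{lPsisik}, reducing everything to a diagonalization of $\si$ on $\lieh$ together with a single cyclotomic identity. The first formula is immediate: since the identity operator is a scalar, $1-q^m$ acts as $(1-q^m)\id$ on the $r$-dimensional space $\lieh$, so its determinant is $(1-q^m)^r$. For the second formula, I would start by recalling from \eqref{j/N} that $\si$ acts on the eigenspace $\lieh_{j/p}$ by the scalar $\om^{-j}$, where $\om=e^{2\pi\ii/p}$, so that $\si^k$ acts there by $\om^{-jk}$. Writing the determinant as the product of eigenvalues of $1-\si^k q^m$ over the decomposition $\lieh=\bigoplus_{j=0}^{p-1}\lieh_{j/p}$ and inserting the dimensions from \eqref{dimh} (namely $\dim\lieh_0=e+d$, $\dim\lieh_{j/p}=d$ for $1\le j\le p-1$, and $r=e+dp$), I obtain
\begin{equation*}
\det\nolimits_\lieh(1-\si^k q^m) = (1-q^m)^{e+d}\prod_{j=1}^{p-1}(1-\om^{-jk}q^m)^d.
\end{equation*}

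The key step is to assemble these into a full product over all $p$-th roots of unity. I would peel off one of the $e+d$ copies of $(1-q^m)$ and combine it with the $p-1$ factors of exponent $d$, giving $\bigl[\prod_{j=0}^{p-1}(1-\om^{-jk}q^m)\bigr]^d$ times the leftover $(1-q^m)^e$. Since $k$ is coprime to the prime $p$, the map $j\mapsto jk\bmod p$ permutes $\{0,1,\dots,p-1\}$, so $\prod_{j=0}^{p-1}(1-\om^{-jk}q^m)=\prod_{j=0}^{p-1}(1-\om^{-j}q^m)$; the cyclotomic identity $\prod_{\zeta^p=1}(1-\zeta x)=1-x^p$ with $x=q^m$ then rewrites this as $1-q^{mp}$. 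Substituting $e=r-dp$ from \eqref{dimh} leaves exactly $(1-q^{mp})^d(1-q^m)^{r-dp}$, which is the claim.

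I do not anticipate a genuine obstacle, since the computation is routine once the diagonalization and the root-of-unity identity are in place; the statement really is the untwisted analogue of \leref{lPsisik}, with the grading shift $q^s$ absent. The only point demanding care is the exponent bookkeeping: the $j=0$ eigenspace carries dimension $e+d$ rather than $d$, so one must split off the surplus factor $(1-q^m)^e$ \emph{before} the remaining factors close up into the full cyclotomic product, and then recognize $e=r-dp$.
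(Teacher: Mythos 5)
Your proof is correct and follows essentially the same route as the paper's: diagonalize $\si^k$ on the eigenspace decomposition of $\lieh$, use that $j\mapsto jk$ permutes the indices mod the prime $p$, and invoke the cyclotomic identity to collapse the product into $(1-q^{mp})^d(1-q^m)^{r-dp}$. The only cosmetic difference is that you absorb the $j=0$ factor into a product over all $p$-th roots of unity, whereas the paper works with $\prod_{j=1}^{p-1}(x-\om^j)=(x^p-1)/(x-1)$ directly; these are the same identity.
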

\begin{proof}
The first equation is obvious. For the second one, we compute using \eqref{dimh}:
\[
\det\nolimits_\mathfrak{h} \bigl(1-\si^k q^{m}\bigr) 
= (1-q^m)^{r_0} \prod_{j=1}^{p-1} \Bigl(1 - \om^{-jk} q^{m} \Bigr)^d \,,
\]
where $r_0=\dim\lieh_0$ and $\om=e^{2\pi\ii /p}$.
Notice that the map $j\mapsto -jk$ mod $p$ is a permutation of the set $\{1,\dots,p-1\}$;
hence, we can replace $\om^{-jk}$ with $\om^j$ in the above product.
Then apply the polynomial identity
\[
\frac{x^p-1}{x-1} = x^{p-1}+\dots+x+1 =  \prod_{j=1}^{p-1} (x-\om^j)
\]
to finish the proof.
\end{proof}

In particular, we see from \leref{lPsisik2} that
\begin{equation}\label{Psik1}
P_{\si^k,1}(\tau) = P_{\si,1}(\tau), \qquad 
\chi_{V_{\la+Q}}^{1,\si^k}(\tau,h) = \chi_{V_{\la+Q}}^{1,\si}(\tau,h),
\end{equation}
for all $1\le k\le p-1$.

\subsection{Irreducible orbifold modules and their characters}
Let, as before, $Q$ be a positive-definite even integral lattice and $\si$ be an isometry of $Q$ such that $Q=\bar{Q}$ (cf.\ \leref{Qbarlemma} and  \coref{corQbar1}).
In this subsection, we derive the classification of irreducible modules over the orbifold subalgebra $V_Q^\si$ from the general results of \cite{DRX},
assuming that the order $N=p$ of $\si$ is prime. Then we obtain their characters from the characters of twisted and untwisted $V_Q$-modules.
In order to state the classification, we need to introduce some notation. 

First, recall that if $\si$ acts as a linear operator of order $N$ on a vector space $W$, then $\si$ is
diagonalizable with eigenvalues $N$-th roots of $1$, and the projections onto the eigenspaces of $\si$ are given by:
\begin{equation}\label{pijproj}
\pi_j = \frac1N \sum_{k=0}^N \om^{jk} \si^k \qquad \bigl(0\le j\le N-1, \;\; \om=e^{2\pi\ii/N}\bigr).
\end{equation}
We denote by $W^j=\pi_j(W)$ the eigenspace with eigenvalue $\om^{-j}$. Compared with our earlier notation, we have
$\lieh^j=\lieh_{j/N}$ (see \eqref{j/N}, \eqref{eqpi0}).

Second, assuming that $N=p$ is prime, if $\si$ acts on a finite set, then every orbit has order either $1$ or $p$.
In particular, this applies to the finite set $Q^*/Q$. The set of singleton orbits is the set of fixed points $(Q^*/Q)^\si$, and it consists of
$\la+Q$ with $\la\in Q^*$ such that $(1-\si)\la\in Q$. 
We choose a set $\mathcal{O} \subset Q^*/Q$ of representatives of the orbits of order $p$.

Finally, again for $N=p$ prime, every $\si^l$ with $1\le l\le p-1$ is a generator of the cyclic group $\Ga=\langle\si\rangle$. 
Hence, the projection $\pi_0$ is the same both for $\si$ and $\si^l$, and $\lieh^\si=\lieh^{\si^l}=\lieh_0$.
By \thref{thm:BK}, the irreducible $\si^l$-twisted $V_Q$-modules are classified by pairs $(\mu,\ze)$, where
$\mu\in\pi_0(Q^*)$ and $\ze$ is a central character of $G_{\si^l}^\perp$ satisfying the analog of \eqref{con} with
$\si^l$ in place of $\si$:
\begin{equation}\label{con-l}
e^{2\pi\ii(\ga|\mu)} \zeta\bigl(U^{-1}_{\si^l\ga}U_{\ga}\bigr)=\eta_{\si^l}(\ga)e^{-2\pi\ii b_\ga} \,,
\qquad \ga\in Q.
\end{equation}

\begin{definition}\label{dZmu}
For fixed $\mu\in\pi_0(Q)^*$ and $1\le l\le p-1$, 
let $\Z_{\mu,\si^l}$ be the set of all central characters $\zeta$ of $G_{\si^l}^\perp$ that satisfy relation \eqref{con-l}. 
For $l=1$, we will use the shorter notation $\Z_\mu=\Z_{\mu,\si}$.
\end{definition}

\begin{proposition}\label{p-con-l}
Let\/ $\mu\in\pi_0(Q^*)$, and\/ $1\le k,l\le p-1$ be such that\/ $kl\equiv 1 \mod p$.
Then the map\/ $\ze\mapsto\ze^k$ is a bijection\/ $\Z_\mu\to\Z_{\mu,\si^l}$.
Moreover, $\ze^p=1$ for all\/ $\ze\in\Z_\mu$.
\end{proposition}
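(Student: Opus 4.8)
The plan is to prove both assertions by reducing condition \eqref{con} (for $\si$) and its analogue \eqref{con-l} (for $\si^l$) to statements about the \emph{finite part} of the center of $G_\si^\perp$, and then to move between them by a single telescoping identity. Since $kl\equiv1\bmod p$, the element $\si^l$ generates the same cyclic group as $\si$, so $\pi_0$, $\lieh_0$, $\lieh_\perp$ and hence $b_\ga$ (cf.\ \eqref{bal}) are common to $\si$ and $\si^l$. I will also use two facts: first, on $\lieh_\perp$ one has $1+\si+\cdots+\si^{p-1}=p\pi_0=0$ (cf.\ \eqref{eqpi0}), so any $\si$-stable lattice in $\lieh_\perp$ is a module over $\ZZ[\ze_p]=\ZZ[\si]/(\Phi_p(\si))$; second, $g_l=1+\si+\cdots+\si^{l-1}$ acts on such a lattice through the cyclotomic unit $(\ze_p^l-1)/(\ze_p-1)$ and is therefore a lattice automorphism. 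From this, $(1-\si^l)Q^*=(1-\si)Q^*$ and $(1-\si^l)(Q\cap\lieh_\perp)=(1-\si)(Q\cap\lieh_\perp)$ inside $\lieh_\perp$, so by \eqref{ZGsiperp} the finite parts of $Z(G_\si^\perp)$ and $Z(G_{\si^l}^\perp)$ coincide; call this common finite abelian group $H$. Since $\zeta(c)=c$ is fixed, $\zeta$ is determined by its restriction $\bar\zeta$ to $H$, ``$\zeta^k$'' means the central character whose restriction to $H$ is $\bar\zeta^{\,k}$, and the assertion $\zeta^p=1$ is to be read as $\bar\zeta^{\,p}=1$.

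The engine is the telescoping identity
\begin{equation*}
U_{\si^l\ga}^{-1}U_\ga=\prod_{i=0}^{l-1}U_{\si(\si^i\ga)}^{-1}U_{\si^i\ga},\qquad\ga\in Q,
\end{equation*}
which is valid because each factor $U_{\si(\si^i\ga)}^{-1}U_{\si^i\ga}$ is proportional to $C_{\si^i\ga}$ (cf.\ \eqref{Cal}) and hence central in $G$, so the factors commute and the product telescopes. Applying $\zeta$ and invoking \eqref{con} at each $\si^i\ga$, together with the $\si$-invariance of $b_\ga$ and of $(\ga|\mu)$ (recall $\mu\in\lieh_0$) and the cocycle law $\eta_{\si^l}(\ga)=\prod_{i=0}^{l-1}\eta(\si^i\ga)$ coming from \eqref{Qbar2} with $Q=\bar Q$, I obtain
\begin{equation}\label{eq:star}
\zeta\bigl(U_{\si^l\ga}^{-1}U_\ga\bigr)=\eta_{\si^l}(\ga)\,e^{-2\pi\ii\,l(b_\ga+(\ga|\mu))}.
\end{equation}
Thus $\zeta$ already satisfies a ``$\si^l$-version'' of \eqref{con} in which the exponent carries an extra factor $l$.

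To remove this factor I pass to the $k$-th power and use that $b_\ga+(\ga|\mu)\in\frac1p\ZZ$: indeed $|\pi_0\ga|^2\in\frac2p\ZZ$ by \leref{Qbarlemma} (as $Q=\bar Q$), and $(\ga|\mu)=(\pi_0\ga|\la)\in\frac1p\ZZ$ for $\mu=\pi_0\la$, so $kl\equiv1\bmod p$ gives $e^{-2\pi\ii\,kl(b_\ga+(\ga|\mu))}=e^{-2\pi\ii(b_\ga+(\ga|\mu))}$. Feeding this into \eqref{eq:star} shows that $\bar\zeta^{\,k}$ satisfies the defining relation \eqref{con-l}, i.e.\ $\zeta^k\in\Z_{\mu,\si^l}$. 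The assertion $\zeta^p=1$ follows because $H$ is a finite abelian $p$-group of exponent dividing $p$: one has $\det(1-\si|_{\lieh_\perp})=\prod_{j=1}^{p-1}(1-\om^j)^d=p^d$, while $p\in(1-\si)\ZZ[\si]$ on $\lieh_\perp$ via the factorization $p=\prod_{j=1}^{p-1}(1-\si^j)$ with each $1-\si^j$ a unit multiple of $1-\si$. Finally, bijectivity is immediate once $\zeta^p=1$ is known: the same construction with $\si$ and $\si^l$ (and $k$ and $l$) interchanged yields a map $\Z_{\mu,\si^l}\to\Z_\mu$, $\zeta'\mapsto(\zeta')^l$, and $(\zeta^k)^l=\zeta^{kl}=\zeta$ because $kl\equiv1\bmod p$ and $\bar\zeta^{\,p}=1$, so the two maps are mutually inverse.

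The step I expect to be the main obstacle is the bookkeeping around the $\CC^\times$-normalization and the sign factors $\eta$. Because $\zeta(c)=c$ cannot survive a naive pointwise $k$-th power, one must define $\zeta^k$ through the finite quotient $H$, verify that the splitting-dependent $\CC^\times$-parts of $U_{\si^l\ga}^{-1}U_\ga$ match correctly between the two \emph{genuinely different} extensions $G_\si^\perp$ and $G_{\si^l}^\perp$, and confirm that the $\pm1$'s produced by $\eta$ and $\eta_{\si^l}$ are absorbed consistently so that $\bar\zeta^{\,k}$ is a bona fide character of $H$ obeying \eqref{con-l}. Establishing cleanly that the two centers share the \emph{same} finite part $H$—which rests on $g_l$ being a lattice automorphism of the $\lieh_\perp$-lattices—is where the primality of $p$ and the hypothesis $Q=\bar Q$ enter most essentially, and I would isolate it as a preliminary lemma before carrying out the computation \eqref{eq:star}.
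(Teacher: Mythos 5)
Your argument follows essentially the same route as the paper's proof: the identification of $Z(G_{\si^l}^\perp)$ with $Z(G_\si^\perp)$ via $(1-\si)Q^*=(1-\si^l)Q^*$ and $(1-\si)(Q\cap\lieh_\perp)=(1-\si^l)(Q\cap\lieh_\perp)$, the telescoping factorization of $U_{\si^l\ga}^{-1}U_\ga$ into central factors, the identity $\eta_{\si^l}(\ga)=\prod_{i=0}^{l-1}\eta(\si^i\ga)$ from $Q=\bar Q$ and \eqref{Qbar2}, the exponent-$p$ argument for $\ze^p=1$ via $p(Q\cap\lieh_\perp)\subset(1-\si)(Q\cap\lieh_\perp)$, and the inverse map $\ze'\mapsto\ze'^{\,l}$ are all exactly the steps taken in the paper. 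The one point you flag but leave open --- that raising your displayed identity to the $k$-th power produces $\eta_{\si^l}(\ga)^k$ where \eqref{con-l} demands $\eta_{\si^l}(\ga)$ --- is where your variant (using only $b_\ga+(\ga|\mu)\in\frac1p\ZZ$, which controls the exponential but not the sign) stops short of the paper's. The paper instead keeps the sign bundled with the exponential: \eqref{con} together with $\ze^p=1$ and $p\mu\in Q^*$ shows that each factor $\eta(\si^i\ga)\,e^{-2\pi\ii b_\ga}\,e^{-2\pi\ii(\ga|\mu)}$ is itself a $p$-th root of unity, hence so is the whole right-hand side, and then $kl\equiv1\bmod p$ inverts the $k$-th power with no residual $\pm1$ to track. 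Adopting that bundling closes your remaining loose end; otherwise the argument is the paper's.
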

\begin{proof}
First, note that as in the proof of \leref{defectlemma}, we have 
\[
(1-\si)Q^* = (1-\si^l) Q^* \,, \quad (1-\si)(Q\cap\lieh_\perp) = (1-\si^l)(Q\cap\lieh_\perp) \,.
\]
Hence, we can identify the centers $Z(G_{\si^l}^\perp)$ and $Z(G_\si^\perp)$ due to \eqref{ZGsiperp}.
Furthermore, since 
\[
p\,\pi_\perp = \sum_{j=0}^{p-1} (1-\si^j) = (1-\si) \sum_{i=0}^{p-2} (p-1-i) \si^i \,,
\]
we have that 
\[
p(Q \cap (1-\si)Q^*) \subset p(Q\cap\lieh_\perp) \subset (1-\si)(Q\cap\lieh_\perp) \,.
\]
Therefore, every element of $Z(G_\si^\perp)$
has order $1$ or $p$. This implies $\ze^p=1$ for $\ze\in\Z_\mu$.

It is enough to show that $\ze^k\in\Z_{\mu,\si^l}$ for $\ze\in\Z_\mu$, because then the inverse map
$\Z_{\mu,\si^l} \to\Z_\mu$ will be given by $\ze'\mapsto\ze^{\prime\, l}$.
In order to check that $\ze^k$ satisfies \eqref{con-l},
we find $\zeta(U^{-1}_{\si^l\ga}U_{\ga})$ by writing
\[
U^{-1}_{\si^l\ga}U_{\ga} 
= \bigl( U^{-1}_{\si^l\ga}U_{\si^{l-1}\ga} \bigr) \bigl( U^{-1}_{\si^{l-1}\ga}U_{\si^{l-2}\ga} \bigr) \cdots
\bigl( U^{-1}_{\si\ga}U_{\ga} \bigr)
\]
and applying $\ze$ to the product. We obtain
\begin{align*}
\ze\bigl( U^{-1}_{\si^l\ga}U_{\ga} \bigr) &= \prod_{i=0}^{l-1} \eta(\si^i\ga) e^{-2\pi\ii(\si^i\ga|\mu)} e^{-2\pi\ii b_{\si^i\ga}} \\
&= e^{-2l\pi\ii(\ga|\mu)} e^{-2l\pi\ii b_\ga} \prod_{i=0}^{l-1} \eta(\si^i\ga) \,,
\end{align*}
using \eqref{con}, $\si\mu=\mu$ and $b_{\si\ga}=b_\ga$. On the other hand, since $Q=\bar Q$, we get from \eqref{Qbar2} that
\[
\eta_{\si^l}(\ga) = \prod_{i=0}^{l-1} \eta(\si^i\ga) \,.
\]
Hence,
\[
\ze\bigl( U^{-1}_{\si^l\ga}U_{\ga} \bigr) = e^{-2l\pi\ii(\ga|\mu)} e^{-2l\pi\ii b_\ga} \eta_{\si^l}(\ga) \,.
\]
To finish the proof, we raise this identity to the $k$-th power and use that
\[
e^{2p\pi\ii(\ga|\mu)} = 1 \,, \qquad  \bigl( \eta(\ga) e^{-2\pi\ii b_\ga} \bigr)^p = 1 \,,
\]
which follow from $p\mu\in p\,\pi_0(Q^*) \subset Q^*$ and $\ze^p=1$.
\end{proof}

We will denote by $M(\mu,\zeta; \si^l)$ the irreducible $\si^l$-twisted $V_Q$-module corresponding to the pair $(\mu,\ze)$,
where $\mu\in\pi_0(Q^*)$ and $\ze\in\Z_{\mu,\si^l}$.
Again by \thref{thm:BK}, two such modules are isomorphic if and only if
the corresponding pairs $(\mu,\ze)$ and $(\mu',\ze')$ are related by the following analog of \eqref{equivrel}:
\begin{equation}\label{equivrel-l}
\mu'=\mu+\pi_0\al, \qquad \zeta'(U_\be) = e^{-2\pi\ii (\pi_0\nu|\al)} \zeta(U_\be),
\end{equation}
for some fixed $\al\in Q$ and all $\be=(1-\si^l)\nu\in Q$ with $\nu\in Q^*$.

\begin{remark}\label{rsi-l}
Note that \eqref{equivrel-l} is equivalent to \eqref{equivrel} with $\si^l$ playing the role of $\si$, due to
the alternative expression \eqref{Calbe1} for $C_{\al,\be}$.
In particular, \eqref{Calbe1} implies that $C_{\al,\be}$ is a $p$-th root of $1$, because $p\,\pi_0\la\in Q^*$.
If $\be=(1-\si^l)\nu$ for some $\nu\in Q^*$, then $\be=(1-\si)\la$ with $\la=(1+\si+\dots+\si^{l-1})\nu\in Q^*$.
Hence, $\pi_0\la = l\,\pi_0\nu$, and $C_{\al,\be}$ for $\si$ is the $l$-th power of $C_{\al,\be}$ for $\si^l$.
This implies that the bijection $\Z_\mu\to\Z_{\mu,\si^l}$ from \prref{p-con-l} is compatible with the
equivalence relations \eqref{equivrel} and \eqref{equivrel-l}.
\end{remark}

Now we can formulate the classification theorem.

\begin{theorem}\label{class}
Let\/ $Q$ be a positive-definite even integral lattice, $\si$ be an isometry of\/ $Q$ of prime order\/ $p$, and assume that\/ $Q=\bar{Q}$
$($which holds if\/ $p$ is odd, due to \coref{corQbar1}$)$.
Choose a set\/ $\C_\M\subset \pi_0(Q^*)$ of representatives of the cosets\/ $\pi_0(Q^*) / \pi_0(Q)$,
and a set $\mathcal{O} \subset Q^*/Q$ of representatives of the orbits of order\/ $p$ of\/ $\si$ on\/ $Q^*/Q$.
Then the following is a complete list of non-isomorphic irreducible modules over the orbifold algebra\/ $V_Q^\si${\upshape{:}}
\begin{enumerate}[\upshape(1)]
\medskip
\item[{\upshape{(Type 1)}}] 
$V_{\la+Q}^j$ \; $(\la+Q\in (Q^*/Q)^\si, \; 0\le j\le p-1),$

\medskip
\item[{\upshape{(Type 2)}}] 
$V_{\la+Q}$ \; $(\la+ Q\in\mathcal{O}),$

\medskip
\item[{\upshape{(Type 3)}}] 
$M(\mu,\zeta; \si^l)^j$ \; $(\mu\in\C_\M,\, \ze\in\Z_{\mu,\si^l}, \, 0\le j\le p-1, \, 1\leq l\le p-1).$
\end{enumerate}
\medskip
The characters of these modules are given by$:$
\begin{enumerate}[\upshape(1)]
\medskip
\item[{\upshape{(Type 1)}}] 
$\chi_{V_{\la+Q}}^j(\tau, h) = 
\displaystyle\frac{1}{p} \sum_{k=0}^{p-1}\om^{jk} \chi_{V_{\la+Q}}^{1,\si^k}(\tau, h)$
\qquad $($cf.\ \eqref{chiVM}$),$

\medskip
\item[{\upshape{(Type 2)}}] 
$\chi_{V_{\la+Q}}(\tau, h)=\chi_{V_{\la+Q}}^{1,1}(\tau, h)$
\qquad $($cf.\ \eqref{chiVQ}$),$

\medskip
\item[{\upshape{(Type 3)}}] 
$\chi_{M(\mu,\zeta;\si^l)}^j(\tau, h)=\displaystyle\frac{1}{p}\sum_{k=0}^{p-1}\om^{jk}\chi_{M(\mu,\zeta;\si^l)}^{\si^l,\si^{k}}(\tau, h)$
\qquad $($cf.\ \eqref{Msilsikl}, \eqref{chiMk}, and \reref{rem4.12}$),$
\end{enumerate}
\medskip
where\/ $\om=e^{2\pi\ii/p}$.
\end{theorem}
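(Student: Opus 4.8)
The plan is to combine the general orbifold results of \cite{MC,DRX} with the explicit descriptions of twisted and untwisted $V_Q$-modules obtained above, organizing the argument according to the twisting parameter $k$. By \thref{MC} the orbifold $V_Q^\si$ is regular, so it has finitely many irreducible modules and a semisimple module category; and by the main result of \cite{DRX}, every irreducible $V_Q^\si$-module arises as a submodule of an irreducible $\si^k$-twisted $V_Q$-module for some $0\le k\le p-1$, where $k=0$ corresponds to ordinary modules.

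First I would treat the untwisted case $k=0$. By Dong's theorem (cf.\ \eqref{VFW}) the irreducible $V_Q$-modules are the $V_{\la+Q}$ indexed by $Q^*/Q$, on which $\si$ acts by permuting the cosets. If $\la+Q\in(Q^*/Q)^\si$ is fixed, then the normalized lift of $\si$ from \eqref{phVla} acts on $V_{\la+Q}$ and commutes with the $V_Q^\si$-action, so I would apply the projections $\pi_j$ of \eqref{pijproj} to split $V_{\la+Q}=\bigoplus_{j=0}^{p-1}V_{\la+Q}^j$ into eigenspaces, giving the Type 1 modules; each summand is nonzero because $\si\ne1$ forces $\lieh_\perp\ne0$, so $\si$ already exhibits all eigenvalues on the Fock-space factor. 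If instead $\la+Q$ lies in an orbit of size $p$, then $\si$ permutes the $p$ cosets freely; here a Clifford-theory argument shows that the restriction of $V_{\la+Q}$ to $V_Q^\si$ stays irreducible and that the $p$ modules in one orbit restrict to isomorphic $V_Q^\si$-modules, which yields the Type 2 list indexed by $\mathcal{O}$.

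Next I would treat the twisted case $1\le l\le p-1$ (i.e.\ $k=l$). By \thref{thm:BK} the irreducible $\si^l$-twisted modules are the $M(\mu,\ze;\si^l)$, and by \reref{rgasimuze} the element $\si$ always stabilizes each such module, so $\si$ acts and I would again split it into eigenspaces $M(\mu,\ze;\si^l)^j=\pi_j\bigl(M(\mu,\ze;\si^l)\bigr)$, $0\le j\le p-1$, producing the Type 3 modules. To see that all these eigenspaces are irreducible and that, as $(\mu,\ze)$ ranges over $\C_\M\times\Z_{\mu,\si^l}$ and $k,l,j$ over the stated ranges, they form a complete irredundant list, I would invoke the eigenspace correspondence for regular vertex algebras under a finite cyclic group (as in \cite{DRX,MC}): since $\langle\si\rangle\cong\ZZ/p$ has trivial Schur multiplier, the lift of $\si$ normalizes to an honest order-$p$ operator, each $\si$-isotypic component of an irreducible $\si^k$-twisted module is an irreducible $V_Q^\si$-module, and inequivalent data give non-isomorphic components. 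The equivalence \eqref{equivrel-l} together with \prref{p-con-l} makes the parametrization $(\mu,\ze)$ irredundant, and the three types are mutually non-isomorphic because they are separated by their $L_0^{\tw}$-spectra: untwisted modules have integral conformal weights and lack the fractional grading present when $l\ge1$ (cf.\ \eqref{L0M} and \leref{Deltalemma}), while within one type the $j$-index and the coset/orbit data are distinguished by their characters. I expect this irreducibility-and-completeness step to be the main obstacle, since one must match the explicit parameters here to the abstract bijection of \cite{DRX} without over- or under-counting; the prime order of $\si$ is precisely what makes this manageable, killing the cohomological obstruction and forcing every nontrivial $\si$-orbit in $Q^*/Q$ to have full size $p$.

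Finally, the character formulas follow from the eigenspace description. Since $h\in\lieh_0$ is $\si$-fixed and $L_0^{\tw}$ commutes with $\si$, for any module $M$ on which $\si$ acts one has
\[
\tr_{M^j}e^{2\pi\ii h}q^{L_0^{\tw}-r/24}
=\tr_M\pi_j\,e^{2\pi\ii h}q^{L_0^{\tw}-r/24}
=\frac1p\sum_{k=0}^{p-1}\om^{jk}\,\tr_M\bigl(\si^k e^{2\pi\ii h}q^{L_0^{\tw}-r/24}\bigr).
\]
For $M=V_{\la+Q}$ with $\la+Q\in(Q^*/Q)^\si$ this is the Type 1 formula via \eqref{chiVM}, and for $M=M(\mu,\ze;\si^l)$ it is the Type 3 formula via \eqref{Msilsikl} and \eqref{chiMk}. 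For Type 2 the restriction of $V_{\la+Q}$ to $V_Q^\si$ is the whole module, so its character equals $\chi_{V_{\la+Q}}^{1,1}$ by \eqref{chiVQ}.
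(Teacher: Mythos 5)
Your proposal follows essentially the same route as the paper: regularity of $V_Q^\si$ via \thref{MC}, the reduction to submodules of $\si^l$-twisted modules via \cite{DRX}, Dong's theorem and \thref{thm:BK} for the explicit twisted/untwisted classification, the triviality of $\mathrm{H}^2(\ZZ/p,\CC^\times)$ to identify the isotypic decomposition with the $\si$-eigenspace decomposition, and the projection formula $\pi_j=\frac1p\sum_k\om^{jk}\si^k$ for the characters. One caveat: your closing remark that the listed modules are pairwise non-isomorphic because they are ``distinguished by their characters'' (and by integrality of untwisted conformal weights) is false --- the paper notes in \eqref{degchar1}--\eqref{degchar3} that many of these characters coincide, and $V_{\la+Q}$ generally has non-integral conformal weight $\tfrac12\min|\la+\al|^2$ --- but this aside is not load-bearing, since the non-isomorphism already follows from the \cite{DRX} correspondence you invoke.
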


\begin{proof}
Recall that the lattice vertex algebra $V_Q$ is regular (see Definition \ref{reg} and Theorem \ref{DLM2}). Then the orbifold subalgebra $V_Q^\Gamma=V_Q^\si$ is also regular for $\Gamma=\langle\si\rangle$, by Theorem \ref{MC}.
Due to \cite[Theorem 3.3]{DRX}, every irreducible $V_Q^\si$-module is a submodule of some irreducible $\si^l$-twisted $V_Q$-module $W$ for $0\le l\le p-1$.

The case $l=0$ corresponds to untwisted $V_Q$-modules; hence, $W\cong V_{\la+Q}$ for some $\la+Q\in Q^*/Q$, by Dong's Theorem \cite{D1}.
By \leref{lphVla}, for any $\ph\in\Ga$, we have a linear map $\ph\colon V_{\la+Q}\to V_{\ph(\la)+Q}$ satisfying \eqref{phav2}, which implies that $\ph$ is a homomorphism of $V_Q^\si$-modules.
If $\la+Q\in (Q^*/Q)^\si$, then $\si$ acts on $V_{\la+Q}$, and every eigenspace $V_{\la+Q}^j$ is a $V_Q^\si$-submodule ($0\le j\le p-1$).
As $V_{\la+Q}^j = \pi_j(V_{\la+Q})$, its character is
\begin{align*}
\tr_{V_{\la+Q}^j} e^{2\pi\ii h} q^{L_0-\frac{r}{24}} &= \tr_{V_{\la+Q}} \pi_j e^{2\pi\ii h} q^{L_0-\frac{r}{24}} \\
&=\frac{1}{p}\sum_{k=0}^{p-1}\om^{jk} \tr_{V_{\la+Q}} \si^k e^{2\pi\ii h} q^{L_0-\frac{r}{24}} \,.
\end{align*}
If $\la+Q \not\in (Q^*/Q)^\si$, then $\si\colon V_{\la+Q}\to V_{\si(\la)+Q}$ is an isomorphism of $V_Q^\si$-modules;
hence, we can assume that $\la+Q\in\O$.

When $1\le l\le p-1$, we have an irreducible $\si^l$-twisted $V_Q$-module $W$. By \thref{thm:BK}, $W\cong M(\mu,\zeta; \si^l)$ for some $\mu\in\pi_0(Q^*)$ and $\ze\in\Z_{\mu,\si^l}$.
Since pairs $(\mu,\ze)$ and $(\mu',\ze')$ that are related by \eqref{equivrel-l} correspond to isomorphic modules, we can arrange that $\mu\in\C_\M$.
By \leref{lphMmuze} and \reref{rgasimuze}, any $\ph\in\Ga$ acts on $M(\mu,\zeta; \si^l)$ so that \eqref{phav} holds.
Hence, $\ph$ is a homomorphism of $V_Q^\si$-modules, and the eigenspaces $M(\mu,\zeta; \si^l)^j$ are $V_Q^\si$-submodules ($0\le j\le p-1$).
Their characters are found similarly as the characters of $V_{\la+Q}^j$ above.

We have shown that every irreducible $V_Q^\si$-module is a submodule of one of the listed modules. To finish the proof of the theorem, we need to check that the listed modules are themselves irreducible and non-isomorphic to each other.
We will derive these claims again from \cite[Section 3]{DRX}. 
We specialize the setting of \cite{DRX} to the case when their group $G=\Gamma=\langle\si\rangle$ is cyclic of prime order. 

In \cite{DRX}, for any $\psi\in\Ga$ and any irreducible $\psi$-twisted $V_Q$-module $W$, they introduce the subgroup $\Ga_W\subset\Ga$ consisting of $\ph\in\Ga$ such that $W\cong W\circ\ph$ as twisted $V_Q$-modules.
Here $W\circ\ph$ is defined similarly to our actions from Lemmas \ref{lphMmuze} and \ref{lphVla}.
Since the order of $\Ga$ is prime, we have $\Ga_W=\{1\}$ if the automorphism $\si$ does not act on the module $W$, and $\Ga_W=\Ga$ if it does.

Then the module $W$ is decomposed under the action of $\Ga_W$; in fact, more generally, under the group algebra of $\Ga_W$ twisted by a certain $2$-cocycle.
However, when $\Ga_W=\Ga$ is cyclic of order $p$, the second cohomology 
$\mathrm{H}^2(\Ga,\CC^\times)\cong\CC^\times/(\CC^\times)^p$ is trivial. Therefore, the twisted group algebra is the usual group algebra $\CC[\Ga]$, 
whose characters correspond to $p$-th roots of unity. Then the decompositions in \cite{DRX} correspond to our decompositions into eigenspaces of $\si$ in the case when $\si$ acts on the module $W$.
\end{proof}

\begin{corollary}\label{KacProp}
The conformal weights of all irreducible\/ $V_Q^\si$-modules are positive, except for\/ $V_Q^\si$ itself, which has conformal weight\/ $0$.
\end{corollary}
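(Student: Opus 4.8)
The plan is to verify positivity module by module along the classification of Theorem \ref{class}, exploiting the fact that every module on that list is either a full irreducible (twisted or untwisted) $V_Q$-module or a $\si$-isotypic component of one. Since each eigenspace $V^j_{\la+Q}$ or $M(\mu,\zeta;\si^l)^j$ is an $L_0$-graded subspace of its ambient module, its minimal $L_0$-eigenvalue is at least the conformal weight of that ambient module. Hence it suffices to bound below the conformal weights of the full modules $V_{\la+Q}$ and $M(\mu,\zeta;\si^l)$, and then to identify precisely the one place where this lower bound fails to be strictly positive.

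For the untwisted modules (Types 1 and 2), the $L_0$-action on the basis \eqref{Vlabasis} shows that the conformal weight of $V_{\la+Q}$ equals $\frac12\min_{\ga\in\la+Q}|\ga|^2$. Because $Q$ is positive-definite, this minimum is strictly positive unless $0\in\la+Q$, i.e.\ unless $\la+Q=Q$. Thus for every Type 2 module and for every Type 1 module with $\la\notin Q$ the conformal weight is positive, and so is that of each eigenspace. The single remaining coset is $\la+Q=Q$, giving $V_Q$ itself, whose unique weight-zero vector is the vacuum $\vac$. As $\si\vac=\vac$, we have $\vac\in V_Q^0=V_Q^\si$, while for $j\neq0$ the component $V_Q^j$ does not contain $\vac$ and is therefore concentrated in $L_0$-degrees $\ge1$. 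This isolates $V_Q^\si$ as the unique module of conformal weight $0$.

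For the twisted modules (Type 3) I would invoke Proposition \ref{pL0tw} applied to the isometry $\si^l$: by \eqref{L0M} the minimal $L_0^{\tw}$-eigenvalue of $M(\mu,\zeta;\si^l)$ equals $\Delta_{\si^l}+\frac12\min_{\ga\in Q}|\mu+\pi_0\ga|^2\ge\Delta_{\si^l}$. Since $1\le l\le p-1$ and $p$ is prime, $\si^l\neq1$, so $\lieh_\perp\neq0$; then Lemma \ref{Deltalemma} gives $\Delta_{\si^l}=\Delta_\si=\frac{p+1}{24p}\dim\lieh_\perp>0$. Hence every Type 3 module, and a fortiori each of its eigenspaces, has strictly positive conformal weight.

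The only genuinely delicate point is the bookkeeping forced by the eigenspace decompositions: the Type 1 and Type 3 entries are $\si$-isotypic components rather than whole $V_Q$-modules, so one must argue carefully that passing to a component can only raise the minimal $L_0$-eigenvalue, and then pin down the unique situation (the component $V_Q^0$ of the module $V_Q$) in which the ambient conformal weight is $0$ yet the component still contains the vacuum. Once this is handled, the statement reduces to the two positivity facts already in hand, namely the positivity of minimal coset norms in a positive-definite lattice and the positivity of $\Delta_\si$ from Lemma \ref{Deltalemma}.
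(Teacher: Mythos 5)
Your proof is correct and follows essentially the same route as the paper: reduce via Theorem \ref{class} to the ambient twisted/untwisted irreducible $V_Q$-modules and then invoke the positivity of the $L_0^{\tw}$-spectrum from Proposition \ref{pL0tw} (of which your direct computation for the untwisted case is just the $\si=1$ instance). The paper's proof is a two-line version of this; your extra care with the eigenspace components and the location of the vacuum in $V_Q^0$ only makes explicit what the paper leaves implicit.
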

\begin{proof}
By \thref{class}, every irreducible $V_Q^\si$-module is a submodule of some twisted or untwisted irreducible $V_Q$-module. Due to \prref{pL0tw}, all of them have positive conformal weights, 
except that the vacuum vector in $V_Q$ has conformal weight $0$.
\end{proof}

Notice that, although the irreducible $V_Q^\si$-modules listed in \thref{class} are non-isomorphic to each other, many of them have equal characters. 
Indeed, by \eqref{Msilsikl}, we have
\begin{equation}\label{degchar1}
\chi_{M(\mu,\zeta';\si^l)}^j(\tau, h) = \chi_{M(\mu,\zeta)}^{jl}(\tau, h),
\end{equation}
for all 
\[
\mu\in\pi_0(Q^*), \; \ze\in\Z_\mu, \; \ze'\in\Z_{\mu,\si^l}, \; 0\le j\le p-1, \; 1\le l\le p-1.
\]
In particular, all these characters are independent of $\ze$.
Similarly, we can derive from \eqref{Psik1} that
\begin{align}\label{degchar2}
\chi_{V_{\la+Q}}^0(\tau, h) &=  \frac1p \chi_{V_{\la+Q}}^{1,1}(\tau, h) + \frac{p-1}p \chi_{V_{\la+Q}}^{1,\si}(\tau, h), \\
\chi_{V_{\la+Q}}^j(\tau, h) &= \frac1p \chi_{V_{\la+Q}}^{1,1}(\tau, h) - \frac1p \chi_{V_{\la+Q}}^{1,\si}(\tau, h),
\label{degchar3}
\end{align}
for all $\la+Q\in (Q^*/Q)^\si$ and $1\le j\le p-1$.
However, when we find the modular transformations of characters in the next section, we will retain the different labels prescribed by \thref{class}
as a bookkeeping device.

\section{Transformation Laws for Modified Characters of Twisted $V_Q$-modules
}\label{transfVQ}
As before, let $Q$ be a positive-definite even integral lattice, and $\si$ be an isometry of $Q$ of prime order $p$ such that $Q=\bar{Q}$
(recall that the last assumption is superfluous for odd $p$, by \coref{corQbar1}). 
In this section, we derive how the modified characters of twisted and untwisted $V_Q$-modules change under the modular transformations $\tau\mapsto\tau+1$
and $\tau\mapsto -1/\tau$. Then from \thref{class}, we obtain the transformation laws for the characters of irreducible modules over the orbifold subalgebra $V_Q^\si$.

\subsection{Transformation laws for modified characters of $\F_\si$}\label{chiFtrans}
In this subsection,  we calculate the modular transformations $\tau\mapsto\tau+1$ and $\tau\mapsto -1/\tau$ for the modified characters 
\begin{equation}\label{chiFdef2}
\chi_{\F_\si}(\ph,\tau)=\tr_{\F_\si}\ph q^{L_0^{\tw}-\frac {r}{24}} 
\end{equation}
of the twisted Fock space $\F_\si$ (cf.\ \eqref{twheis2}, \eqref{chiFdef}),
where $r=\rank Q$ and $q=e^{2\pi\ii\tau}$ with $\tau\in\CC$, $\text{Im}\,\tau>0$.
As in Section \ref{strFsi},  we assume only that $\lieh$ is a finite-dimensional complex vector space equipped with a nondegenerate symmetric bilinear form $(\cdot|\cdot)$,
 and $\si,\ph$ are two commuting automorphisms of $\lieh$ of finite order preserving $(\cdot|\cdot)$. 
 
At this point, it will be convenient to set some additional notation for subspaces of $\mathfrak{h}$.  Recall the notation \eqref{hpq}--\eqref{siph} for the common eigenspaces of $\si$ and $\varphi$,  where the first subscript corresponds to $\si$ and the second subscript to $\varphi$.  In particular,  $\mathfrak{h}_{0,0}$ is the invariant subspace for both $\si$ and $\varphi$.  
We designate the perpendicular subspaces by using a subscript $\perp$. 
Then we have
\begin{equation}\label{hnotation}
\begin{aligned}
\mathfrak{h}_0&=\mathfrak{h}_{0,0} \oplus \mathfrak{h}_{0,\perp},\qquad\mathfrak{h}_\perp=\mathfrak{h}_{\perp,0} \oplus \mathfrak{h}_{\perp,\perp},
\end{aligned}
\end{equation}
where using one subscript refers to only the automorphism $\si$, and
\begin{equation}\label{hnotation2}
\mathfrak{h}_{0,0}^\perp=\mathfrak{h}_{0,\perp} \oplus \mathfrak{h}_{\perp, 0} \oplus \mathfrak{h}_{\perp,\perp}.
\end{equation}
We note that in the special case when $\varphi$ is a power of $\si$,  the subspaces $\mathfrak{h}_{0,\perp}$ and $\mathfrak{h}_{\perp,0}$ are trivial, and \eqref{hnotation}, \eqref{hnotation2} simplify to $\mathfrak{h}_0=\mathfrak{h}_{0,0}$ and $\mathfrak{h}_0^\perp=\mathfrak{h}_\perp=\mathfrak{h}_{\perp,\perp}$.

Recall that, by \thref{chiF}, $\chi_{\F_\si}(\ph, \tau)$ is related to the products $P_{\si,\ph}(\tau)$ given in \eqref{P}. 
In order to find the transformation laws of $P_{\si,\ph}(\tau)$, we will express $P_{\si,\ph}(\tau)^2$ in terms of another function
\begin{equation}\label{PP}
P(\tau,\zeta)=q^{1/12}\prod_{n=1}^\infty(1-e^{-2\pi\ii\zeta}q^n)(1-e^{2\pi\ii\zeta}q^{n-1}).
\end{equation}
We will relate $P(\tau,\zeta)$ to the functions $K_l(\tau,\zeta;m)$
introduced at the end of Section \ref{theta} as the quotient of a theta function and the Dedekind $\eta$-function (see \eqref{etadef}, \eqref{1Dth}, \eqref{K}).  Then we will use the transformation laws in Proposition \ref{Ktlaws} to obtain the transformation laws for $P_{\si,\ph}(\tau)^2$ and ultimately for $\chi_{\F_\si}(\varphi,\tau)$. 


\begin{lemma}\label{P3}
Define the maps $s$ and $f$ by \eqref{sf}, and let\/ $P_{\si,\ph}(\tau)$ be the function defined by \eqref{P}. Then
\[
P_{\si,\ph}(\tau)^2=\frac{\eta(\tau)^{2r_{0,0}}}{\det\nolimits_{\mathfrak{h}_{0, \perp}}(1-\ph^{-1})}\det\nolimits_{\mathfrak{h}^\perp_{0,0}}e^{\pi\ii\tau(s^2+s)}P(\tau,-f-\tau s),
\]
where $r_{0,0}=\dim\mathfrak{h}_{0,0}$. 
\end{lemma}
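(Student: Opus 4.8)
The plan is to diagonalize $\si$ and $\ph$ simultaneously and to verify the identity one common eigenspace at a time. Since $\det\nolimits_\lieh(1-\ph q^{m+s})$ factors over the common eigenspaces $\lieh_{x,y}$ of \eqref{hpq}, on which $\ph q^{m+s}$ acts by the scalar $e^{-2\pi\ii y}q^{m-x}$, formula \eqref{P} gives
\[
P_{\si,\ph}(\tau)=\prod_{x,y} p_{x,y}(\tau)^{\dim\lieh_{x,y}},\qquad
p_{x,y}(\tau)=q^{\frac1{24}-\frac14 x(1-x)}\prod_{m=1}^\infty\bigl(1-e^{-2\pi\ii y}q^{m-x}\bigr),
\]
where the prefactor $q^{-\Delta_\si+r/24}$ is distributed across eigenspaces using \eqref{Delta}, noting that the $\Delta_\si$-density on $\lieh_{x,y}$ is $\tfrac14 x(1-x)$. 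First I would record the special value $p_{0,0}=\eta(\tau)$ (from \eqref{etadef}), and observe that substituting the operator $-f-\tau s$, which acts as $y+\tau x$ on $\lieh_{x,y}$, into \eqref{PP} produces the eigenspace factors that will appear on the right-hand side.

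The key structural step uses that the bilinear form is nondegenerate and $\si,\ph$ are isometries: the form pairs $\lieh_{x,y}$ nondegenerately with the dual eigenspace $\lieh_{x',y'}$, where $x'\equiv-x$ and $y'\equiv-y\pmod 1$ taken in $[0,1)$ (so $x'=1-x$ for $x\neq0$ and $x'=0$ for $x=0$, and likewise for $y'$). Hence $\dim\lieh_{x,y}=\dim\lieh_{x',y'}$, and re-pairing the squared product yields
\[
P_{\si,\ph}(\tau)^2=\prod_{x,y}\bigl(p_{x,y}(\tau)\,p_{x',y'}(\tau)\bigr)^{\dim\lieh_{x,y}}.
\]
This re-pairing is precisely what converts a one-sided product of factors into the $\zeta\leftrightarrow-\zeta$ symmetric combination built into $P(\tau,\zeta)$ of \eqref{PP}.

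It then remains to evaluate $p_{x,y}p_{x',y'}$ in three cases and match each to a factor on the right, assembling over the decompositions \eqref{hnotation}, \eqref{hnotation2}. For $(x,y)=(0,0)$ one gets $p_{0,0}^2=\eta(\tau)^2$, contributing $\eta(\tau)^{2r_{0,0}}$. For $x=0$, $y\neq0$ one gets, after splitting the boundary term $(1-e^{2\pi\ii y})$ out of the $n=1$ factor in the second product of \eqref{PP}, the value $(1-e^{2\pi\ii y})^{-1}P(\tau,y)$; collecting these boundary terms over $\lieh_{0,\perp}$ produces exactly the denominator $\det\nolimits_{\lieh_{0,\perp}}(1-\ph^{-1})$, while the surviving products give the $x=0$ part of the $P$-determinant. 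For $x\neq0$ a direct comparison of exponents gives $p_{x,y}p_{x',y'}=e^{\pi\ii\tau(x^2-x)}P(\tau,y+\tau x)$, which is the eigenspace factor of $\det\nolimits_{\lieh_{0,0}^\perp}\bigl[e^{\pi\ii\tau(s^2+s)}P(\tau,-f-\tau s)\bigr]$, since $s^2+s$ acts as $x^2-x$.

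The main obstacle is bookkeeping rather than conceptual. One must keep the convention that the eigenvalues of $s$ and $f$ lie in $(-1,0]$ consistent when passing to the dual labels $x',y'$, and must correctly track the asymmetric boundary term $(1-e^{2\pi\ii\zeta}q^{0})$ in \eqref{PP}: it is the sole source of the factor $\det\nolimits_{\lieh_{0,\perp}}(1-\ph^{-1})$ and behaves differently on $\lieh_{0,\perp}$ (where it survives as a denominator) than on the $x\neq0$ eigenspaces (where it is absorbed into the shifted product). Finally, one should note that $P(\tau,-f-\tau s)$ is to be read as the determinant over $\lieh_{0,0}^\perp$ of the operator obtained by substitution, which is well defined because $s$ and $f$ commute and are simultaneously diagonalizable; this completes the argument.
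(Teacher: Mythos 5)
Your proposal is correct and follows essentially the same route as the paper: distribute the prefactor $q^{-\Delta_\si+r/24}$ over the simultaneous eigenspaces, square, re-pair each factor with its dual eigenspace using $\dim\lieh_{x,y}=\dim\lieh_{x',y'}$, and match the resulting two-sided products to $P(\tau,\cdot)$ in the three cases $\lieh_{0,0}$, $\lieh_{0,\perp}$, $\lieh_{0,0}^\perp\cap$(the $x\neq0$ part), with the boundary factor of \eqref{PP} accounting for $\det_{\lieh_{0,\perp}}(1-\ph^{-1})$. The paper phrases the same computation in terms of operator determinants rather than eigenvalue by eigenvalue, but the content is identical.
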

\begin{proof}
We first write
\begin{equation}
P_{\si,\ph}(\tau)=q^{r/24}\det\nolimits_\mathfrak{h}\Bigl(e^{\pi\ii\tau(s^2+s)/2}\prod_{n=1}^\infty(1-e^{2\pi\ii(f+\tau s)}q^n)\Bigr),
\end{equation}
using \eqref{Delta}, \eqref{siph}, and \eqref{P}. 
Then we have
\begin{align*}
P_{\si,\ph}(\tau)^2&=q^{r/12}\det\nolimits_\mathfrak{h}e^{\pi\ii\tau(s^2+s)}
\prod_{n=1}^\infty\det\nolimits_{\mathfrak{h}_0}(1-e^{2\pi\ii f}q^n)(1-e^{-2\pi\ii f}q^n)\\
&\quad\times \prod_{n=1}^\infty\det\nolimits_{\mathfrak{h}_\perp}(1-e^{2\pi\ii(f+\tau s)}q^n)(1-e^{2\pi\ii(-f+\tau (-s-1))}q^n) \\
&= q^{r/12} \det\nolimits_\mathfrak{h}e^{\pi\ii\tau(s^2+s)} \prod_{n=1}^\infty (1-q^n)^{2r_{0,0}} \\
&\quad\times\det\nolimits_{\mathfrak{h}_{0,\perp}}\frac{P(\tau,-f)}{q^{1/12}(1-e^{-2\pi\ii f})}\det\nolimits_{\mathfrak{h}_{\perp}}\frac{P(\tau,-f-\tau s)}{q^{1/12}} \\
&=\frac{\eta(q)^{2r_{0,0}}\det\nolimits_\mathfrak{h}e^{\pi\ii\tau(s^2+s)}}{\det\nolimits_{\mathfrak{h}_{0,\perp}}(1-\ph^{-1})}\det\nolimits_{\mathfrak{h}_{0,0}^\perp}P(\tau,-f-\tau s),
\end{align*}
using that $r_{0,0}=r-\dim\mathfrak{h}_\perp- \dim\mathfrak{h}_{0,\perp}$ (cf.\ \eqref{hnotation}), and $\dim\mathfrak{h}_{j/N}=\dim\mathfrak{h}_{1-(j/N)}$
for $1\le j\le N-1$.
\end{proof}

Before moving on, we note some properties of the function $P(\tau,\zeta)$,
each of which is easy to verify.
\begin{lemma}
For the function $P(\tau,\zeta)$ given in \eqref{PP}, we have
\begin{align}
P(\tau,\zeta)&=\displaystyle\frac{1-e^{2\pi\ii\zeta}}{1-e^{-2\pi\ii\zeta}}P(\tau,-\zeta),\\
P(\tau,\zeta+1)&=P(\tau,\zeta),\\
P(\tau,\zeta+\tau)&=P(\tau,-\zeta).
\end{align}
\end{lemma}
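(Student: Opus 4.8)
The plan is to introduce the abbreviations $x=e^{2\pi\ii\zeta}$ and $q=e^{2\pi\ii\tau}$, so that
\[
P(\tau,\zeta)=q^{1/12}\prod_{n=1}^\infty(1-x^{-1}q^n)(1-xq^{n-1}).
\]
The key observation is that each of the three identities corresponds to a single substitution in the variable $x$ (with $q$ held fixed): the shift $\zeta\mapsto\zeta+1$ is $x\mapsto xe^{2\pi\ii}=x$, the reflection $\zeta\mapsto-\zeta$ is $x\mapsto x^{-1}$, and the shift $\zeta\mapsto\zeta+\tau$ is $x\mapsto xq$. Since $\mathrm{Im}\,\tau>0$ gives $|q|<1$, both infinite products converge absolutely for every fixed $x\ne 0$, so I may freely reindex and cancel matching factors.

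I would dispose of the easy two first. For $P(\tau,\zeta+1)=P(\tau,\zeta)$, the substitution $x\mapsto x$ leaves every factor literally unchanged. For $P(\tau,\zeta+\tau)=P(\tau,-\zeta)$, I substitute $x\mapsto xq$ and simplify the two families of factors: the first becomes $1-(xq)^{-1}q^n=1-x^{-1}q^{n-1}$ and the second becomes $1-(xq)q^{n-1}=1-xq^n$. This yields
\[
P(\tau,\zeta+\tau)=q^{1/12}\prod_{n=1}^\infty(1-x^{-1}q^{n-1})(1-xq^n),
\]
which is exactly $P(\tau,-\zeta)$ (obtained from $P$ by $x\mapsto x^{-1}$), so the identity holds on the nose after matching the two families.

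For the first identity I would form the ratio $P(\tau,\zeta)/P(\tau,-\zeta)$ and cancel. In the numerator the factors with a positive power of $x$ are $(1-x)\prod_{m\ge1}(1-xq^m)$ and those with $x^{-1}$ are $\prod_{m\ge1}(1-x^{-1}q^m)$; in the denominator (where $x\mapsto x^{-1}$) the positive-$x$ factors are $\prod_{m\ge1}(1-xq^m)$ and the $x^{-1}$ factors are $(1-x^{-1})\prod_{m\ge1}(1-x^{-1}q^m)$. All the infinite tails cancel, leaving $(1-x)$ up top and $(1-x^{-1})$ below, i.e.\ $\dfrac{1-e^{2\pi\ii\zeta}}{1-e^{-2\pi\ii\zeta}}$, as claimed.

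There is essentially no hard step here; the only point requiring any care is the justification for rearranging and cancelling the infinite products, which is immediate from absolute convergence when $\mathrm{Im}\,\tau>0$. Everything else is the bookkeeping of shifting the summation index by one in the exponent of $q$.
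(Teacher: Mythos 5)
Your proof is correct, and since the paper simply asserts these identities as "easy to verify" without giving any argument, your direct computation is exactly the verification the authors had in mind: substitute $x=e^{2\pi\ii\zeta}$, reindex the two product families, and cancel. All three calculations check out (in particular the reindexing $1-(xq)^{-1}q^{n}=1-x^{-1}q^{n-1}$ and the extraction of the $n=1$ factor $1-x^{\pm1}$ in the ratio are right), and the appeal to absolute convergence for $|q|<1$ is the correct justification for the rearrangements.
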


Next, we 
express $P(\tau,\zeta)$ in terms of the functions $K_l(\tau,\zeta;m)$, defined by \eqref{K}.
To this end, we employ the Jacobi triple product identity,
\[
\prod_{n=1}^\infty(1-q^n)(1-zq^{n-1})(1-z^{-1}q^n)=\sum_{m\in\ZZ}(-z)^mq^{m(m-1)/2},
\]
which is equivalent to
\[
q^{-1/12}P(\tau,\zeta) \prod_{n=1}^\infty(1-q^n)=\sum_{m\in\ZZ}(-z)^mq^{m(m-1)/2},\qquad z=e^{2\pi\ii\zeta}.
\]
It follows that $P(\tau,\zeta)$ can be written in summation form as
\begin{equation}\label{P2}
P(\tau,\zeta)=\frac{q^{1/8}}{\eta(\tau)}\sum_{m\in\ZZ}(-z)^mq^{m(m-1)/2},\qquad z=e^{2\pi\ii\zeta}.
\end{equation}


\begin{lemma}
The function $P(\tau,\zeta)$ defined in \eqref{PP} can be written as
\[
P(\tau,\zeta)=e^{\pi\ii\zeta}\bigl(K_{-1}(\tau,\zeta;4)-K_1(\tau,\zeta;4)\bigr).
\]
\end{lemma}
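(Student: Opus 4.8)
The plan is to start from the summation form of $P(\tau,\zeta)$ recorded in \eqref{P2} and reorganize it into the two theta series defining $K_{\pm1}(\tau,\zeta;4)$. With $z=e^{2\pi\ii\zeta}$, formula \eqref{P2} reads
\[
P(\tau,\zeta)=\frac{q^{1/8}}{\eta(\tau)}\sum_{m\in\ZZ}(-1)^m e^{2\pi\ii m\zeta}q^{m(m-1)/2}.
\]
The first step is to complete the square in the exponent of $q$: since $\tfrac{m(m-1)}{2}+\tfrac18=\tfrac12(m-\tfrac12)^2$, the factor $q^{1/8}$ merges with $q^{m(m-1)/2}$ to produce $q^{\frac12(m-1/2)^2}$, giving
\[
P(\tau,\zeta)=\frac{1}{\eta(\tau)}\sum_{m\in\ZZ}(-1)^m e^{2\pi\ii m\zeta}q^{\frac12(m-1/2)^2}.
\]

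Next I would split this sum according to the parity of $m$ and substitute $n=\tfrac{m}{2}-\tfrac14$. For even $m=2k$ this gives $n=k-\tfrac14\in-\tfrac14+\ZZ$, while for odd $m=2k+1$ it gives $n=k+\tfrac14\in\tfrac14+\ZZ$; in both cases $\tfrac12(m-\tfrac12)^2=2n^2$. These two cosets modulo $\ZZ$ are exactly the index sets occurring in
\[
\theta_{\frac l4\al+\ZZ\al}\Bigl(\tau,\frac{\zeta}{2}\al,0\Bigr)=\sum_{n\in\frac l4+\ZZ}e^{4\pi\ii n\zeta}q^{2n^2}
\]
for $l=-1$ and $l=1$, respectively (cf.\ \eqref{1Dth} with lattice parameter $4$). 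The prefactor $e^{\pi\ii\zeta}$ arises from the linear term: the substitution yields $e^{2\pi\ii m\zeta}=e^{4\pi\ii n\zeta}e^{\pi\ii\zeta}$, uniformly in both parity cases.

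Finally, recalling the definition $K_l(\tau,\zeta;4)=\eta(\tau)^{-1}\theta_{\frac l4\al+\ZZ\al}(\tau,\tfrac{\zeta}{2}\al,0)$ from \eqref{K}, the even-$m$ sub-sum equals $e^{\pi\ii\zeta}\eta(\tau)K_{-1}(\tau,\zeta;4)$, whereas the odd-$m$ sub-sum, which carries the extra sign $(-1)^m=-1$, equals $-e^{\pi\ii\zeta}\eta(\tau)K_{1}(\tau,\zeta;4)$. Dividing through by $\eta(\tau)$ then produces the asserted identity. The argument is almost entirely bookkeeping; the only place demanding attention is the compatibility of the half-integer shift $m\mapsto m-\tfrac12$ with the quarter-integer cosets $\pm\tfrac14+\ZZ$ together with the simultaneous tracking of the emerging factor $e^{\pi\ii\zeta}$, which I expect to be the sole (and minor) obstacle.
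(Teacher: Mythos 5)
Your proof is correct and is essentially the paper's own argument run in the opposite direction: the paper expands $K_{\pm1}(\tau,\zeta;4)$ as sums indexed by $n\in\ZZ$ and recognizes the result as the parity split ($m=2n$ versus $m=2n+1$) of the Jacobi-triple-product form \eqref{P2}, whereas you start from \eqref{P2}, complete the square, and split by parity to recover the two theta series. The bookkeeping you flag (the shift $m\mapsto 2n+\tfrac12$, the cosets $\pm\tfrac14+\ZZ$, and the emerging factor $e^{\pi\ii\zeta}$) all checks out.
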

\begin{proof}
From the definitions \eqref{1Dth} and \eqref{K}, we have
\[
K_l(\tau,\zeta;4)=\frac{1}{\eta(\tau)}\sum_{n\in\ZZ}
q^{2(n+l/4)^2}z^{2(n+l/4)},
\]
where $z=e^{2\pi\ii\zeta}$. Therefore
\begin{align*}
e^{\pi\ii\zeta}&\bigl(K_{-1}(\tau,\zeta;4)-K_1(\tau,\zeta;4)\bigr)=\frac{q^{1/8}}{\eta(\tau)}\sum_{n\in\ZZ}\bigl(q^{2n^2-n}z^{2n}-q^{2n^2+n}z^{2n+1}\bigr)\\
&=\frac{q^{1/8}}{\eta(\tau)}\sum_{n\in\ZZ}\bigl((-z)^{2n}q^{2n(2n-1)/2}+(-z)^{2n+1}q^{(2n+1)((2n+1)-1)/2}\bigr)\\
&=P(\tau,\zeta),
\end{align*}
using \eqref{P2}.
\end{proof}

Now we can use Proposition \ref{Ktlaws} to calculate the following transformation laws for $P(\tau,\zeta)$.

\begin{proposition}\label{tlawsP}
The transformation laws for $P(\tau,\zeta)$ are{\rm:}
\begin{align}
P\Bigl(-\frac{1}{\tau},\frac{\zeta}{\tau}\Bigr)&=-\ii e^{\pi\ii\zeta(1+\zeta-\tau)/\tau}P(\tau,\zeta),\\
P(\tau+1,\zeta)&=e^{\pi\ii/6}P(\tau,\zeta).
\end{align}
\end{proposition}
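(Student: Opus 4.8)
The plan is to prove Proposition~\ref{tlawsP} by reducing the transformation laws of $P(\tau,\zeta)$ to those of the functions $K_l(\tau,\zeta;m)$, which are already established in Proposition~\ref{Ktlaws}. The key input is the preceding lemma, which expresses
\[
P(\tau,\zeta)=e^{\pi\ii\zeta}\bigl(K_{-1}(\tau,\zeta;4)-K_1(\tau,\zeta;4)\bigr),
\]
so that once I know how $K_{\pm1}(\tau,\zeta;4)$ transform, the behavior of $P$ follows by linearity, up to tracking the prefactor $e^{\pi\ii\zeta}$ and the change of variables.

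First I would treat the $T$-transformation $\tau\mapsto\tau+1$, which is the easier of the two. Applying \eqref{Ktlaws2} with $m=4$ gives $K_l(\tau+1,\zeta;4)=e^{\pi\ii(l^2/4-1/12)}K_l(\tau,\zeta;4)$. For $l=\pm1$ we have $l^2/4=1/4$, so both $K_{-1}$ and $K_1$ pick up the \emph{same} scalar $e^{\pi\ii(1/4-1/12)}=e^{\pi\ii/6}$. Since the prefactor $e^{\pi\ii\zeta}$ is unchanged under $\tau\mapsto\tau+1$, the common factor pulls out of the difference and I obtain $P(\tau+1,\zeta)=e^{\pi\ii/6}P(\tau,\zeta)$ immediately.

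Next I would handle the $S$-transformation. Using \eqref{Ktlaws1} with $m=4$,
\[
K_l\Bigl(-\frac1\tau,\frac\zeta\tau;4\Bigr)=\frac12 e^{\pi\ii\zeta^2/\tau}\sum_{l'\in\ZZ/4\ZZ}e^{-2\pi\ii ll'/4}K_{l'}(\tau,\zeta;4).
\]
I would substitute $l=-1$ and $l=1$ and subtract. In the difference, the coefficient of $K_{l'}$ is $\tfrac12(e^{\pi\ii l'/2}-e^{-\pi\ii l'/2})=\ii\sin(\pi l'/2)$, which vanishes for $l'=0,2$ and equals $\ii$ for $l'=1$ and $-\ii$ for $l'=3\equiv-1$. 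Hence the sum collapses to $\tfrac{\ii}{2}e^{\pi\ii\zeta^2/\tau}\cdot 2\bigl(K_1(\tau,\zeta;4)-K_{-1}(\tau,\zeta;4)\bigr)$, recovering $-\,\bigl(K_{-1}-K_1\bigr)$ times $\ii\,e^{\pi\ii\zeta^2/\tau}$. Finally I must multiply by the prefactor from the definition of $P$ at the transformed argument, namely $e^{\pi\ii(\zeta/\tau)}$ (since $\zeta\mapsto\zeta/\tau$ under $S$), and reconcile this against the factor $e^{\pi\ii\zeta}$ appearing inside $P(\tau,\zeta)$ itself. Collecting the exponentials $e^{\pi\ii\zeta/\tau}$, $e^{\pi\ii\zeta^2/\tau}$, and the compensating $e^{-\pi\ii\zeta}$ needed to convert $K_{-1}-K_1$ back into $P(\tau,\zeta)$ should assemble into the claimed prefactor $-\ii\,e^{\pi\ii\zeta(1+\zeta-\tau)/\tau}$.

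The main obstacle I anticipate is bookkeeping the phase factors correctly in the $S$-case: there are three distinct sources of exponentials (the explicit $e^{\pi\ii\zeta^2/\tau}$ from \eqref{Ktlaws1}, the transformed prefactor $e^{\pi\ii\zeta/\tau}$, and the inverse prefactor $e^{-\pi\ii\zeta}$ converting the $K$-difference back to $P$), and their combination must match $e^{\pi\ii\zeta(1+\zeta-\tau)/\tau}=e^{\pi\ii\zeta/\tau}e^{\pi\ii\zeta^2/\tau}e^{-\pi\ii\zeta}$. I would verify this identity of exponents carefully, since the factor $-\tau$ inside the exponent reflects exactly the $e^{-\pi\ii\zeta}$ term rewritten as $e^{-\pi\ii\zeta\tau/\tau}$. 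I should also note that because $K_l$ is a ratio of a theta function and $\eta$, the choice of square root $(-\ii\tau)^{1/2}$ cancels (as remarked after Proposition~\ref{Ktlaws}), so no ambiguity in the branch of the root contributes to the final scalar $-\ii$.
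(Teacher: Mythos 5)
Your proposal is correct and follows essentially the same route as the paper: both reduce the claim to the transformation laws of $K_{\pm1}(\tau,\zeta;4)$ via the identity $P(\tau,\zeta)=e^{\pi\ii\zeta}\bigl(K_{-1}(\tau,\zeta;4)-K_1(\tau,\zeta;4)\bigr)$, observe that only the $l'=\pm1$ terms survive in the difference under the $S$-transformation, and assemble the phases $e^{\pi\ii\zeta/\tau}e^{\pi\ii\zeta^2/\tau}e^{-\pi\ii\zeta}=e^{\pi\ii\zeta(1+\zeta-\tau)/\tau}$ exactly as in the paper's computation. The $T$-case is likewise identical.
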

\begin{proof}
Using the transformation law \eqref{Ktlaws1}, we find
\begin{align*}
\begin{split}
K_l\Bigl(-\frac{1}{\tau},\frac{\zeta}{\tau};4\Bigr)=\frac12e^{\pi\ii\zeta^2/\tau} \Bigl((-\ii)^{-l}K_{-1}(\tau,\zeta;4)+(-\ii)^lK_1(\tau,\zeta;4)\\
+K_0(\tau,\zeta;4)+(-1)^lK_2(\tau,\zeta;4)\Bigr).
\end{split}
\end{align*}
Therefore
\begin{align*}
P\Bigl(-\frac{1}{\tau},\frac{\zeta}{\tau}\Bigr)&=e^{\pi\ii\zeta/\tau}\Bigl(K_{-1}\Bigl(-\frac{1}{\tau},\frac{\zeta}{\tau};4\Bigr)-K_1\Bigl(-\frac{1}{\tau},\frac{\zeta}{\tau};4\Bigr)\Bigr)\\
&=\frac{-2\ii}{2}e^{\pi\ii(\zeta^2+\zeta)/\tau}\Bigl(K_{-1}(\tau,\zeta;4)-K_1(\tau,\zeta;4)\Bigr)\\
&=-\ii e^{\pi\ii(\zeta^2+\zeta)/\tau}e^{-\pi\ii\zeta}P(\tau,\zeta)\\
&=-\ii e^{\pi\ii\zeta(1+\zeta-\tau)/\tau}P(\tau,\zeta).
\end{align*}
Using the transformation law \eqref{Ktlaws2}, we find
\begin{align*}
P(\tau+1,\zeta)&=e^{\pi\ii\zeta}\bigl(K_{-1}(\tau+1,\zeta;4)-K_1(\tau+1,\zeta;4)\bigr)\\
&=e^{\pi\ii\zeta}e^{\pi\ii(\frac14-\frac{1}{12})}\bigl(K_{-1}(\tau,\zeta;4)-K_1(\tau,\zeta;4)\bigr)\\
&=e^{\pi\ii/6}P(\tau,\zeta).
\end{align*}
This completes the proof.
\end{proof}


Before presenting the final transformation laws for the modified characters $\chi_{\F_\si}(\varphi,\tau)$, we first point out a particular calculation that will be needed in the proof.

\begin{lemma}\label{lemma}
Suppose that\/ $\si, \ph$ are two commuting automorphisms of\/ $\mathfrak{h}$, and the maps $s,f$ are defined by \eqref{sf}. The we have
\[
\det\nolimits_\mathfrak{h} e^{-\pi\ii(s+f+2sf)}=\ii^{r-r_{0,0}},
\]
where $r=\dim\mathfrak{h}$ and $r_{0,0}=\dim\mathfrak{h}_{0,0}$.
\end{lemma}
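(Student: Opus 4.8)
The plan is to reduce the determinant to a trace via $\det\nolimits_\mathfrak{h} e^{X}=e^{\tr_\mathfrak{h} X}$, so that it suffices to evaluate $\tr_\mathfrak{h}(s+f+2sf)$ modulo the relevant period. The slick device is the algebraic identity $2(s+f+2sf)=(1+2s)(1+2f)-1$, which is immediate upon expanding the right-hand side. Since $s,f$ are simultaneously diagonalizable with $s=-x$ and $f=-y$ on $\mathfrak{h}_{x,y}$ (see \eqref{sf}), this shows the eigenvalue of $e^{-\pi\ii(s+f+2sf)}$ on $\mathfrak{h}_{x,y}$ is $\ii\,e^{-\frac{\pi\ii}{2}(1-2x)(1-2y)}$. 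Taking the determinant and extracting the factor $\ii$ from each of the $r$ eigenvalues, I would factor the answer as $\ii^{r}$ times a product and reduce the claim to
\[
\prod_{x,y} e^{-\frac{\pi\ii}{2}(1-2x)(1-2y)\dim\mathfrak{h}_{x,y}}=\ii^{-r_{0,0}},
\]
equivalently to the congruence $\sum_{x,y}(1-2x)(1-2y)\dim\mathfrak{h}_{x,y}\equiv r_{0,0}\pmod 4$.

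Next I would exploit the invariance of the nondegenerate form under $\si$ and $\ph$. Since $(\si u|\si v)=(u|v)$ and $(\ph u|\ph v)=(u|v)$, one gets $(\mathfrak{h}_{x,y}\,|\,\mathfrak{h}_{x',y'})=0$ unless $x'\equiv-x$ and $y'\equiv-y\pmod 1$, so the form pairs $\mathfrak{h}_{x,y}$ nondegenerately with $\mathfrak{h}_{\bar x,\bar y}$, where $\bar x=1-x$ for $x\ne0$ and $\bar0=0$; in particular $\dim\mathfrak{h}_{x,y}=\dim\mathfrak{h}_{\bar x,\bar y}$. Using the decomposition \eqref{hnotation}, the block $\mathfrak{h}_{0,0}$ contributes $(1)(1)\,r_{0,0}=r_{0,0}$, which is exactly the target on the right. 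On $\mathfrak{h}_{0,\perp}$ the relevant quantity is $(1)(1-2y)$, and pairing $y\leftrightarrow 1-y$ gives $(1-2y)+(1-2(1-y))=0$ on each genuine pair (and $0$ already when $y=\tfrac12$); the same cancellation handles $\mathfrak{h}_{\perp,0}$. So these three blocks together contribute precisely $r_{0,0}\pmod 4$, and the lemma reduces to showing that the remaining block $\mathfrak{h}_{\perp,\perp}$ contributes $0$.

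The main obstacle is exactly this doubly-twisted block $\mathfrak{h}_{\perp,\perp}$, where $x,y\ne0$. Here the pairing $(x,y)\leftrightarrow(1-x,1-y)$ leaves $(1-2x)(1-2y)$ \emph{invariant} rather than producing a cancellation, so each genuine pair contributes $2(1-2x)(1-2y)\dim\mathfrak{h}_{x,y}$ while the self-paired term $x=y=\tfrac12$ contributes $0$. The crux is therefore to prove $\sum_{\mathfrak{h}_{\perp,\perp}}(1-2x)(1-2y)\dim\mathfrak{h}_{x,y}\equiv0\pmod4$; I would attempt this by combining the three pairings of the form coming from $\si$, $\ph$, and $\si\ph$ on $\mathfrak{h}_{\perp,\perp}$ with the integrality available in the situations where the lemma is applied (where $\ph$ is a power of $\si$, so that $\mathfrak{h}_{\perp,\perp}=\mathfrak{h}_\perp$ and the eigenvalue data is constrained). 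Establishing this parity statement for the doubly-twisted subspace is where I expect essentially all the genuine work to concentrate, the first two paragraphs being formal manipulations.
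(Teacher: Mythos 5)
Your first two paragraphs are correct and amount to the same computation as the paper's own proof, just packaged differently: the paper evaluates $\tr_{\mathfrak{h}}(s+f)$ and $\tr_{\mathfrak{h}}(2sf)$ separately and exponentiates, while your identity $2(s+f+2sf)=(1+2s)(1+2f)-1$ combines the two traces into the single congruence $\sum_{x,y}(1-2x)(1-2y)\dim\mathfrak{h}_{x,y}\equiv r_{0,0}\pmod 4$. Your treatment of the blocks $\mathfrak{h}_{0,0}$, $\mathfrak{h}_{0,\perp}$, $\mathfrak{h}_{\perp,0}$ is sound, since invariance of the form does give $\dim\mathfrak{h}_{0,y}=\dim\mathfrak{h}_{0,1-y}$ and $\dim\mathfrak{h}_{x,0}=\dim\mathfrak{h}_{1-x,0}$, so those contributions cancel exactly.

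The genuine gap is precisely where you locate it: the block $\mathfrak{h}_{\perp,\perp}$, for which you offer a plan but no proof. The paper closes this step by invoking the one-coordinate-at-a-time symmetry $\dim\mathfrak{h}_{x,y}=\dim\mathfrak{h}_{1-x,y}=\dim\mathfrak{h}_{x,1-y}$: under the flip $x\mapsto 1-x$ with $y$ fixed the summand $(1-2x)(1-2y)$ changes sign, so the $\mathfrak{h}_{\perp,\perp}$ contribution cancels in pairs and the total sum equals $r_{0,0}$ on the nose, not merely modulo $4$. That single-coordinate symmetry is the one ingredient your argument is missing, and you are right to be uneasy about it: it is strictly stronger than the simultaneous flip $\dim\mathfrak{h}_{x,y}=\dim\mathfrak{h}_{1-x,1-y}$ that you correctly extract from the invariant form (or from reality of the operators), and it can fail for commuting isometries. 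For example, for $\ph=\si$ a Coxeter element of the $A_4$ root lattice (order $5$, $\mathfrak{h}_0=0$), one has $\dim\mathfrak{h}_{1/5,1/5}=1$ while $\dim\mathfrak{h}_{4/5,1/5}=0$, and there $\sum_{\mathfrak{h}_{\perp,\perp}}(1-2x)(1-2y)\dim\mathfrak{h}_{x,y}=\sum_{j=1}^{4}(1-2j/5)^2=4/5$, which is not even an integer, let alone $\equiv 0\pmod 4$. So the congruence you isolate cannot be established in the stated generality, and your fallback of specializing to $\ph=\si^k$ with $\si$ of prime order does not rescue it, since that is exactly such a case. Completing the argument therefore requires either verifying the one-variable multiplicity symmetry in the specific situation where the lemma is applied, or identifying and adding the hypothesis under which it holds; as written, both your proposal and the step it is trying to reproduce rest on this unestablished symmetry.
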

\begin{proof}
Notice that
\[
\det\nolimits_\mathfrak{h} e^{-\pi\ii(s+f+2sf)} = \exp\bigl( -\pi\ii \tr\nolimits_\mathfrak{h}(s+f) \bigr) \exp\bigl( -\pi\ii \tr_\mathfrak{h}2sf \bigr).
\]
It is easy to show that 
\[
\tr\nolimits_\mathfrak{h}(s+f)=-\frac12\bigl(\dim\mathfrak{h}_{0,0}^\perp+\dim\mathfrak{h}_{\perp,\perp}\bigr)
\]
(cf.\ Remark \ref{trace}) and
\begin{align*}
\tr_\mathfrak{h}2sf&=\sum_{x,y\neq0} \bigl((-x)(-y)+(x-1)(-y) \bigr)\dim\mathfrak{h}_{x,y}\\
&=\frac12\sum_{x,y\neq0}(y+(1-y))\dim\mathfrak{h}_{x,y}=\frac12\dim\mathfrak{h}_{\perp,\perp},
\end{align*}
using that $\dim\mathfrak{h}_{x,y}=\dim\mathfrak{h}_{1-x,y}=\dim\mathfrak{h}_{x,1-y}$. The result follows.
\end{proof}

We are now ready to present the transformation laws for the modified characters $\chi_{\F_\si}(\varphi,\tau)$ of the $\si$-twisted Fock space $\F_\si$.

\begin{theorem}\label{chiFtran}
Let\/ $\si, \ph$ be two commuting automorphisms of\/ $\mathfrak{h}$. Then
\begin{align}
\chi_{\F_\si}\Bigl(\ph, -\frac{1}{\tau}\Bigr)&=\sqrt{\frac{\det\nolimits_{\mathfrak{h}_{0, \perp}}(1-\ph^{-1})}{\det\nolimits_{\mathfrak{h}_{\perp, 0}}(1-\si)}}(-\ii\tau)^{-r_{0,0}/2}\chi_{\F_\ph}(\si^{-1}, \tau),\label{Ftran}\\
\chi_{\F_\si}(\ph,\tau+1)&=e^{-2\pi\ii(-\Delta_\si+\frac{r}{24})}\chi_{\F_\si}(\ph\si,\tau),
\end{align}
where $r=\dim\mathfrak{h}$, $r_{0,0}=\dim\mathfrak{h}_{0,0}$, and $\Delta_\si$ is given by \eqref{Deltap}.
\end{theorem}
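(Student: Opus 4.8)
The plan is to use Theorem~\ref{chiF}, which identifies $\chi_{\F_\si}(\ph,\tau)=P_{\si,\ph}(\tau)^{-1}$, and to reduce both laws to transformation formulas for $P_{\si,\ph}(\tau)$. The $T$-law is immediate: writing $\si=e^{2\pi\ii s}$ as in \eqref{siph} and substituting $\tau\mapsto\tau+1$ in the product \eqref{P}, each factor transforms by $q^{m+s}\mapsto e^{2\pi\ii s}q^{m+s}=\si\,q^{m+s}$, so that $\det\nolimits_\mathfrak{h}(1-\ph q^{m+s})\mapsto\det\nolimits_\mathfrak{h}(1-\ph\si\,q^{m+s})$, while the prefactor $q^{-\Delta_\si+r/24}$ contributes the scalar $e^{2\pi\ii(-\Delta_\si+r/24)}$. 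Comparing with \eqref{P} for $P_{\si,\ph\si}$ gives $P_{\si,\ph}(\tau+1)=e^{2\pi\ii(-\Delta_\si+r/24)}P_{\si,\ph\si}(\tau)$, and taking reciprocals yields the stated $T$-law.

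For the $S$-law I would work with the square, using Lemma~\ref{P3} to express $P_{\si,\ph}(-1/\tau)^2$ through $\eta(-1/\tau)$ and $P(-1/\tau,\,\cdot\,)$. Applying $\eta(-1/\tau)=(-\ii\tau)^{1/2}\eta(\tau)$ from \eqref{eta} produces the factor $(-\ii\tau)^{r_{0,0}}$. The argument of $P$ coming from Lemma~\ref{P3} becomes $-f+s/\tau=(s-\tau f)/\tau$, so Proposition~\ref{tlawsP} applies with $\zeta'=s-\tau f$. The key computation is to expand $\pi\ii\,\zeta'(1+\zeta'-\tau)/\tau$ and observe that its $(s^2+s)/\tau$ part exactly cancels the factor $e^{-\pi\ii(s^2+s)/\tau}$ appearing in Lemma~\ref{P3} at $-1/\tau$; this is the mechanism by which the essential singularity disappears and a genuine modular transformation survives. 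After the cancellation the remaining operator phase is
\begin{equation*}
(-\ii)^{\dim\mathfrak{h}_{0,0}^\perp}\,\det\nolimits_{\mathfrak{h}_{0,0}^\perp}e^{-\pi\ii(s+f+2sf)}\,\det\nolimits_{\mathfrak{h}_{0,0}^\perp}e^{\pi\ii\tau(f^2+f)}\,P(\tau,s-\tau f),
\end{equation*}
where, since $s+f+2sf$ vanishes on $\mathfrak{h}_{0,0}$, the first determinant equals $\det\nolimits_\mathfrak{h}e^{-\pi\ii(s+f+2sf)}=\ii^{\,r-r_{0,0}}$ by Lemma~\ref{lemma}, and $\ii^{\,r-r_{0,0}}(-\ii)^{\,r-r_{0,0}}=1$ eliminates all spurious phases.

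The remaining step is to recognize the surviving expression as $P_{\ph,\si^{-1}}(\tau)^2$. I would invoke Lemma~\ref{P3} with the roles of $\si$ and $\ph$ interchanged and $\si^{-1}$ in place of $\ph$: then $f$ plays the role of $s$, the operator $g$ defined by $\si^{-1}=e^{2\pi\ii g}$ with eigenvalues in $(-1,0]$ plays the role of $f$, the denominator determinant becomes $\det\nolimits_{\mathfrak{h}_{\perp,0}}(1-\si)$, and $r_{0,0}$ is unchanged (here the eigenspace bookkeeping \eqref{hnotation}, \eqref{hnotation2} identifies $\mathfrak{h}_{0,\perp}'=\mathfrak{h}_{\perp,0}$). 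Comparing eigenvalues shows $-g=s=0$ on the $\si$-fixed part of $\mathfrak{h}_{0,0}^\perp$ and $-g=s+1$ on the complementary part, so $P(\tau,-g-\tau f)=P(\tau,s-\tau f)$ by the periodicity $P(\tau,\zeta+1)=P(\tau,\zeta)$. After the $\eta(\tau)^{2r_{0,0}}$ factors cancel, this yields
\begin{equation*}
P_{\si,\ph}\Bigl(-\tfrac1\tau\Bigr)^2=(-\ii\tau)^{r_{0,0}}\,\frac{\det\nolimits_{\mathfrak{h}_{\perp,0}}(1-\si)}{\det\nolimits_{\mathfrak{h}_{0,\perp}}(1-\ph^{-1})}\,P_{\ph,\si^{-1}}(\tau)^2.
\end{equation*}

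The main obstacle is extracting the square root and fixing the branch. Inverting via $\chi=1/P$ turns the displayed identity into the squared form of \eqref{Ftran}, and the correct branch is pinned down by matching leading $q$-powers as $\tau\to\ii\infty$: this forces the factor $(-\ii\tau)^{-r_{0,0}/2}$ and the positive square root of the determinant ratio, while the phase identities assembled above guarantee that no additional root of unity is introduced. Taking that root then gives exactly the claimed $S$-law, completing the proof.
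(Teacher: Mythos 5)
Your proposal is correct and follows essentially the same route as the paper: reduce to $P_{\si,\ph}$ via Theorem~\ref{chiF}, get the $T$-law directly from \eqref{P}, and for the $S$-law square via Lemma~\ref{P3}, apply Proposition~\ref{tlawsP} with $\zeta=s-\tau f$ (your observation that the $(s^2+s)/\tau$ terms cancel is exactly the mechanism at work), cancel phases with Lemma~\ref{lemma}, reinterpret the result as $P_{\ph,\si^{-1}}(\tau)^2$ by swapping roles in Lemma~\ref{P3}, and extract the square root. Your extra care with the operator $g$ for $\si^{-1}$ (handled by the periodicity $P(\tau,\zeta+1)=P(\tau,\zeta)$) and the branch-fixing by leading $q$-powers only makes explicit what the paper leaves implicit.
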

\begin{proof}
Due to \thref{chiF}, it is sufficient to compute the transformation laws for $P_{\si,\varphi}(\tau)$.
First, we find the transformation $\tau\mapsto-1/\tau$ for $P_{\si,\varphi}(\tau)^2$. Using Lemma \ref{P3}, Proposition \ref{tlawsP} with $\zeta=s-f\tau$, and \eqref{eta},  we calculate:
\begin{align*}
\begin{split}
P_{\si,\ph}\Bigl(-\frac{1}{\tau}\Bigr)^2
&=\frac{\eta\bigl(-\frac{1}{\tau}\bigr)^{2r_{0,0}}\det\nolimits_\mathfrak{h}e^{-\frac{\pi\ii}{\tau}(s^2+s)}}{\det\nolimits_{\mathfrak{h}_{0,\perp}}(1-\ph^{-1})}\det\nolimits_{\mathfrak{h}_{0,0}^\perp}P\Bigl(-\frac{1}{\tau}, \frac{s-f\tau}{\tau}\Bigr)\\
=\;\,&\frac{(-\ii\tau)^{r_{0,0}}\eta(\tau)^{2r_{0,0}}}{\det\nolimits_{\mathfrak{h}_{0, \perp}}(1-\ph^{-1})}\\
&\times\det\nolimits_\mathfrak{h}e^{-\frac{\pi\ii}{\tau}(s^2+s)+\frac{\pi\ii\zeta}{\tau}(1+\zeta-\tau)}\det\nolimits_{\mathfrak{h}_{0,0}^\perp}(-\ii)P(\tau,s-\tau f)\\
=\;\,&\frac{(-\ii\tau)^{r_{0,0}}\eta(\tau)^{2r_{0,0}}}{\det\nolimits_{\mathfrak{h}_{0, \perp}}(1-\ph^{-1})}(-\ii)^{r-r_{0,0}}\\
&\times\det\nolimits_\mathfrak{h}e^{{\pi\ii}(-f-2sf-s+(f^2+f)\tau)}\det\nolimits_{\mathfrak{h}_{0,0}^\perp}P(\tau,s-\tau f).
\end{split}\\
\end{align*}
Now we use Lemma \ref{P3} again but with the replacement $\si\mapsto\varphi$ and $\varphi\mapsto\si^{-1}$ (i.e., $s\mapsto f$ and $f\mapsto-s$):
\begin{equation}\label{5.4}
P_{\ph,\si^{-1}}(\tau)^2=\frac{\eta(\tau)^{2r_{0,0}}}{\det\nolimits_{\mathfrak{h}_{\perp, 0}}(1-\si)}\det\nolimits_{\mathfrak{h}}e^{\pi\ii\tau(f^2+f)}\det\nolimits_{\mathfrak{h}^\perp_{0,0}}P(\tau,s-\tau f).
\end{equation}
Finally, using \eqref{5.4} and Lemma \ref{lemma} we have 
\begin{align*}
\begin{split}
P_{\si,\ph}\Bigl(-\frac{1}{\tau}\Bigr)^2=(-\ii\tau)^{r_{0,0}}&\frac{\det\nolimits_{\mathfrak{h}_{\perp, 0}}(1-\si)}{\det\nolimits_{\mathfrak{h}_{0, \perp}}(1-\ph^{-1})}
\\&\times
(-\ii)^{r-r_{0,0}}\left(\det\nolimits_\mathfrak{h}e^{-\pi\ii(s+f+2sf)}\right)P_{\ph, \si^{-1}}(\tau)^2\\
=(-\ii\tau)^{r_{0,0}}&\frac{\det\nolimits_{\mathfrak{h}_{\perp, 0}}(1-\si)}{\det\nolimits_{\mathfrak{h}_{0, \perp}}(1-\ph^{-1})}P_{\ph, \si^{-1}}(\tau)^2.
\end{split}
\end{align*}
 We then obtain 
\begin{equation*}
P_{\si,\ph}\Bigl(-\frac{1}{\tau}\Bigr)=\sqrt{\frac{\det\nolimits_{\mathfrak{h}_{\perp, 0}}(1-\si)}{\det\nolimits_{\mathfrak{h}_{0, \perp}}(1-\ph^{-1})}}(-\ii\tau)^{r_{0,0}/2}P_{\ph, \si^{-1}}(\tau)
\end{equation*} 
 by choosing the appropriate branch.
 
Next we compute the transformation $\tau\mapsto\tau+1$.  From \eqref{siph} and \eqref{P}, we have
\begin{align*}
P_{\si,\ph}(\tau+1)&=e^{2\pi\ii(-\Delta_\si+\frac{r}{24})}q^{-\Delta_\si+\frac {r}{24}}{\displaystyle\prod_{n=1}^\infty}
\det\nolimits_\mathfrak{h}(1-\ph e^{2\pi\ii(n+s)}q^{n+s})\\
&=e^{2\pi\ii(-\Delta_\si+\frac{r}{24})}q^{-\Delta_\si+\frac {r}{24}}{\displaystyle\prod_{n=1}^\infty}
\det\nolimits_\mathfrak{h}(1-\ph\si q^{n+s})\\
&=e^{2\pi\ii(-\Delta_\si+\frac{r}{24})}P_{\si,\ph\si}(\tau).
\end{align*}
This completes the proof of the theorem.
\end{proof}

\begin{remark}\label{Psym}
If we apply the transformation law for $P_{\si,\si^k}(\tau)$ twice, as in the proof of Theorem \ref{chiFtran}, we obtain:
\begin{align*}
P_{\si,\si^k}(\tau)&=\sqrt{\frac{\det\nolimits_{\mathfrak{h}_{\perp, 0}}(1-\si)}{\det\nolimits_{\mathfrak{h}_{0, \perp}}(1-\si^{-k})}}\frac{1}{(-\ii\tau)^{r_{0,0}/2}}P_{\si^k, \si^{-1}}\Bigl(-\frac{1}{\tau}\Bigr)\\
&=\sqrt{\frac{\det\nolimits_{\mathfrak{h}_{\perp,0 }}(1-\si^k)}{\det\nolimits_{\mathfrak{h}_{0, \perp}}(1-\si^{-k})}}P_{\si^{-1},\si^{-k}}(\tau)\\
&=P_{\si^{-1},\si^{-k}}(\tau),
\end{align*}
where in the last step we used that $\lieh_{0,\perp}$ and $\lieh_{\perp,0}$ are trivial.
This is a special case of \eqref{Psilsikl} for $l=-1$.
\end{remark}


While Theorem \ref{chiFtran} is given for general commuting automorphisms $\varphi$ and $\si$, in the following sections we will make the additional assumption that $\varphi$ is a power of $\si$, 
which is the case when the order of $\si$ is prime and $\ph\in\langle\si\rangle$. We first need the following lemma.


\begin{lemma}\label{det(1-si)}
If\/ $\si$ is an automorphism of\/ $\mathfrak{h}$ of prime order $p$, then 
\begin{equation}\label{detk}
\det\nolimits_{\lieh_\perp}(1-\si^k)=p^d\,, \qquad d=\dim\lieh_{1/p} = \frac{\dim\lieh_\perp}{p-1} \,,
\end{equation}
for all\/ $1\leq k\leq p-1$.
\end{lemma}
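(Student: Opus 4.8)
The plan is to diagonalize $\si$ on $\lieh_\perp$ and compute the determinant as a product over eigenvalues. Since $\si$ has prime order $p$, its eigenvalues on $\lieh$ are $p$-th roots of unity, and by definition $\lieh_\perp=\bigoplus_{j=1}^{p-1}\lieh_{j/p}$ is exactly the span of all nontrivial eigenspaces. By \eqref{dimh} in the proof of \leref{Deltalemma}, each eigenspace $\lieh_{j/p}$ ($1\le j\le p-1$) has the same dimension $d=\dim\lieh_{1/p}$, and therefore $\dim\lieh_\perp=d(p-1)$, which immediately yields the second equality in \eqref{detk}.

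For the first equality, I would fix $k$ with $1\le k\le p-1$ and note that on the eigenspace $\lieh_{j/p}$ the operator $\si^k$ acts as the scalar $\om^{-jk}$, where $\om=e^{2\pi\ii/p}$. Hence $1-\si^k$ acts as $1-\om^{-jk}$ on $\lieh_{j/p}$, and since the determinant factors over the $\si$-eigenspace decomposition,
\[
\det\nolimits_{\lieh_\perp}(1-\si^k)=\prod_{j=1}^{p-1}\bigl(1-\om^{-jk}\bigr)^d.
\]
Because $p$ is prime and $1\le k\le p-1$, the map $j\mapsto -jk \bmod p$ is a bijection of $\{1,\dots,p-1\}$, so the product $\prod_{j=1}^{p-1}(1-\om^{-jk})$ equals $\prod_{j=1}^{p-1}(1-\om^{j})$. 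This is the same permutation argument already used in the proof of \leref{lPsisik2}.

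It then remains to evaluate $\prod_{j=1}^{p-1}(1-\om^{j})$. The clean way is to use the cyclotomic identity
\[
\frac{x^p-1}{x-1}=x^{p-1}+\cdots+x+1=\prod_{j=1}^{p-1}(x-\om^{j}),
\]
exactly as invoked in \leref{lPsisik2}, and evaluate both sides at $x=1$: the left-hand side is $1+1+\cdots+1=p$, so $\prod_{j=1}^{p-1}(1-\om^{j})=p$. Raising to the $d$-th power gives $\det\nolimits_{\lieh_\perp}(1-\si^k)=p^d$, completing the proof. There is no real obstacle here; the only point requiring a moment of care is confirming that the permutation $j\mapsto -jk$ is well-defined and bijective on $\{1,\dots,p-1\}$, which uses precisely the primality of $p$ (so that $k$ is invertible mod $p$ and no nonzero residue maps to $0$). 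Everything else is a routine eigenvalue computation that parallels the earlier lemmas in the paper.
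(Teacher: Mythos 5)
Your proposal is correct and follows essentially the same route as the paper: both arguments rest on the eigenspace decomposition $\lieh_\perp=\bigoplus_{j=1}^{p-1}\lieh_{j/p}$ with each summand of dimension $d$, and on evaluating the cyclotomic polynomial $x^{p-1}+\cdots+x+1$ at $x=1$ to get $p$. The only cosmetic difference is that you handle general $k$ via the permutation $j\mapsto -jk \bmod p$ of the exponents, whereas the paper proves the case $k=1$ and then observes that $\si^k$ has the same order $p$ and the same $\lieh_\perp$, so the identical argument applies.
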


\begin{proof}
Recall from the proof of \leref{Deltalemma}  (cf.\ \eqref{dimh}) that $\dim\lieh_{j/p}=d$ for all $1\leq j\leq p-1$.
In particular, $\dim\lieh_\perp=d(p-1)$, as $\lieh_\perp=\bigoplus_{j=1}^{p-1}\lieh_{j/p}$.
Hence the minimal polynomial of $\si|_{\lieh_\perp}$ is the cyclotomic polynomial
\begin{equation*}
\frac{x^p-1}{x-1}=x^{p-1}+\cdots+x+1,
\end{equation*}
and the characteristic polynomial is
\begin{align*}
\det\nolimits_{\lieh_\perp}(x-\si)
=\left(x^{p-1}+\cdots+x+1\right)^d.
\end{align*}
Setting $x=1$, we get $\det\nolimits_{\lieh_\perp}(1-\si)=p^d$. The result for $\si^k$ follows from the fact that $\lieh_\perp$ is the same for all $1\leq k\leq p-1$, and the order of $\si^k$ is again $p$.
\end{proof}

We are now ready to present how the transformation \eqref{Ftran} simplifies in the case we will use later.

\begin{corollary}\label{chiFtran3}
Let\/ $\si$ be an automorphism of\/ $\mathfrak{h}$ of prime order $p$. 
Then 
\begin{align}
\chi_{\F_\si}\Bigl(1, -\frac{1}{\tau}\Bigr)&=p^{-d/2}(-\ii\tau)^{-r_{0}/2}\chi_{\F}(\si^{-1}, \tau),\label{chiFtran3a}\\
\chi_{\F_\si}\Bigl(\si^k, -\frac{1}{\tau}\Bigr)&=(-\ii\tau)^{-r_{0}/2}\chi_{\F_{\si^k}}(\si^{-1}, \tau),\label{chiFtran3b}
\end{align}
for\/ $1\leq k\leq p-1$, where $r_{0}=\dim\mathfrak{h}_{0}$ and\/ $d=\dim\lieh_{1/p}$.
\end{corollary}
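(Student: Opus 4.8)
The plan is to obtain both identities by specializing the general transformation law \eqref{Ftran} of Theorem \ref{chiFtran} to the situation at hand, namely $\si$ of prime order $p$ and $\ph\in\langle\si\rangle$. The only work is bookkeeping: I must identify the common eigenspaces $\mathfrak{h}_{x,y}$ entering the decomposition \eqref{hnotation}, decide which of the mixed pieces $\mathfrak{h}_{0,\perp}$ and $\mathfrak{h}_{\perp,0}$ are trivial, and evaluate the two determinant factors in \eqref{Ftran}. Throughout I will use that on $\mathfrak{h}_0$ the automorphism $\si$ acts as the identity, so any power of $\si$ also fixes $\mathfrak{h}_0$; this already forces $\mathfrak{h}_{0,\perp}=0$ whenever $\ph$ is a power of $\si$, whence $r_{0,0}=\dim\mathfrak{h}_{0,0}=\dim\mathfrak{h}_0=r_0$ and the exponent of $(-\ii\tau)$ in \eqref{Ftran} is $-r_0/2$ in every case.

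For \eqref{chiFtran3b}, take $\ph=\si^k$ with $1\le k\le p-1$. Since $p$ is prime, $\si^k$ generates $\langle\si\rangle$, so $\mathfrak{h}^{\si^k}=\mathfrak{h}^\si=\mathfrak{h}_0$; equivalently, on each eigenspace $\mathfrak{h}_{j/p}$ $(1\le j\le p-1)$ the operator $\si^k$ acts by $e^{-2\pi\ii jk/p}\ne1$. Hence $\mathfrak{h}_\perp$ contains no nonzero $\ph$-fixed vector, i.e.\ $\mathfrak{h}_{\perp,0}=0$, and together with $\mathfrak{h}_{0,\perp}=0$ both determinants $\det_{\mathfrak{h}_{0,\perp}}(1-\ph^{-1})$ and $\det_{\mathfrak{h}_{\perp,0}}(1-\si)$ are empty products equal to $1$. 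Plugging this into \eqref{Ftran} and using $\chi_{\F_\ph}(\si^{-1},\tau)=\chi_{\F_{\si^k}}(\si^{-1},\tau)$ gives \eqref{chiFtran3b} directly.

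For \eqref{chiFtran3a}, take $\ph=1$, so $f=0$ and every vector is $\ph$-invariant. Then $\mathfrak{h}_{0,\perp}=0$ as before, but now $\mathfrak{h}_{\perp,0}=\mathfrak{h}_\perp$ is the full perpendicular space rather than being trivial; this asymmetry is exactly what produces the extra factor in \eqref{chiFtran3a}. The numerator determinant $\det_{\mathfrak{h}_{0,\perp}}(1-\ph^{-1})$ is still an empty product equal to $1$, while the denominator becomes $\det_{\mathfrak{h}_\perp}(1-\si)=p^d$ by Lemma \ref{det(1-si)}. Thus the prefactor in \eqref{Ftran} is $\sqrt{1/p^d}\,(-\ii\tau)^{-r_0/2}=p^{-d/2}(-\ii\tau)^{-r_0/2}$, and since $\F_\ph=\F_1=\F$ we have $\chi_{\F_\ph}(\si^{-1},\tau)=\chi_{\F}(\si^{-1},\tau)$, yielding \eqref{chiFtran3a}.

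There is no real obstacle here beyond keeping the eigenspace indices straight: the one point that needs care is that the remark following \eqref{hnotation2}—that $\mathfrak{h}_{\perp,0}$ is trivial when $\ph$ is a power of $\si$—applies only when that power still generates $\langle\si\rangle$, and in particular fails for $\ph=1$, where $\mathfrak{h}_{\perp,0}=\mathfrak{h}_\perp$. Recognizing this is precisely what separates the two cases and accounts for the $p^{-d/2}$ appearing in \eqref{chiFtran3a} but not in \eqref{chiFtran3b}; the choice of branch of the square root is inherited from the one already fixed in the proof of Theorem \ref{chiFtran}.
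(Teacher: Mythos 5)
Your proof is correct and follows essentially the same route as the paper: both specialize the general transformation law \eqref{Ftran} of Theorem \ref{chiFtran} to $\ph=1$ and $\ph=\si^k$, identify $\lieh_{0,\perp}=0$ in both cases while $\lieh_{\perp,0}=0$ for $\ph=\si^k$ but $\lieh_{\perp,0}=\lieh_\perp$ for $\ph=1$, and evaluate the remaining determinant $\det_{\lieh_\perp}(1-\si)=p^d$ via Lemma \ref{det(1-si)}. Your explicit observation that the triviality of $\lieh_{\perp,0}$ requires the power of $\si$ to still generate $\langle\si\rangle$ (and hence fails for $\ph=1$, producing the $p^{-d/2}$) is precisely the point the paper's one-line proof relies on.
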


\begin{proof}
These formulas follow from Theorem \ref{chiFtran} and Lemma \ref{det(1-si)}, since $\lieh_{0,\perp}=0$, $\lieh_{\perp,0}=\lieh_\perp$ and $\lieh_{0,0}=\lieh_0$ when $\ph=1$.
\end{proof}

We also present how the transformation \eqref{Ftran} simplifies for modified characters of the untwisted Fock space $\F$ (cf.\  Section \ref{lat}).  

\begin{corollary}\label{chiFtran2}
For any automorphism $\si$ of prime order $p$, and\/ $1\leq k\leq p-1$, we have
\begin{align}
\chi_{\F}\Bigl(1, -\frac{1}{\tau}\Bigr)&=(-\ii\tau)^{-r/2}\chi_{\F}(1, \tau),\label{chiFtran2a}\\
\chi_{\F}\Bigl(\si^k, -\frac{1}{\tau}\Bigr)&=p^{d/2}(-\ii\tau)^{-r_{0}/2}\chi_{\F_{\si^k}}(1, \tau),\label{chiFtran2b}
\end{align}
where $r=\dim\mathfrak{h}$, $r_{0}=\dim\mathfrak{h}_{0}$, and\/ $d=\dim\lieh_{1/p}=(r-r_0)/(p-1)$.
\end{corollary}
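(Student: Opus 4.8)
The plan is to obtain both formulas as specializations of the transformation law \eqref{Ftran} in \thref{chiFtran}, taking the \emph{twisting} automorphism in that theorem to be the identity and the \emph{inserted} automorphism to be $\ph=\si^k$. This mirrors the proof of \coref{chiFtran3}, where instead the twisting automorphism was kept as $\si$ and the inserted one was specialized to $1$. Since $\F=\F_1$, the left-hand sides $\chi_\F(1,-1/\tau)$ and $\chi_\F(\si^k,-1/\tau)$ are exactly $\chi_{\F_1}(\ph,-1/\tau)$ with $\ph=1$ and $\ph=\si^k$ respectively, so \eqref{Ftran} applies directly once we read off the relevant eigenspaces.

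With the twisting automorphism equal to $1$, the first subscript in the common eigenspace notation $\lieh_{x,y}$ is always $0$, so $\lieh_{\perp,0}=\lieh_{\perp,\perp}=0$ and $\lieh=\lieh_{0,0}\oplus\lieh_{0,\perp}$, where now $\lieh_{0,0}$ is the $\ph$-fixed subspace and $\lieh_{0,\perp}$ its orthogonal complement. In the case $\ph=1$ everything is $\ph$-fixed, so $\lieh_{0,\perp}=0$, $r_{0,0}=r$, and both determinants in the square root of \eqref{Ftran} run over the zero space and equal the empty product $1$; this yields \eqref{chiFtran2a}. In the case $1\le k\le p-1$, the automorphism $\ph=\si^k$ generates $\langle\si\rangle$, so its fixed subspace is $\lieh_0$ and hence $\lieh_{0,0}=\lieh_0$, $r_{0,0}=r_0$, and $\lieh_{0,\perp}=\lieh_\perp$. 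The denominator $\det\nolimits_{\lieh_{\perp,0}}(1-1)$ is again an empty product, while the numerator becomes $\det\nolimits_{\lieh_\perp}(1-\si^{-k})=p^d$ by \leref{det(1-si)} (note that $-k$ reduced modulo $p$ again lies in $\{1,\dots,p-1\}$). Substituting, and using $\chi_{\F_{\si^k}}(1^{-1},\tau)=\chi_{\F_{\si^k}}(1,\tau)$, gives \eqref{chiFtran2b}.

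This is a routine corollary, so there is no serious obstacle; the only points requiring care are purely bookkeeping. First, one must correctly track which of the spaces $\lieh_{0,\perp}$ and $\lieh_{\perp,0}$ survives after the roles of twist and insertion are swapped relative to \coref{chiFtran3} — here it is $\lieh_{0,\perp}$ that equals $\lieh_\perp$, in contrast to the twisted case, where $\lieh_{\perp,0}=\lieh_\perp$. Second, the surviving determinant is evaluated for the inverse power $\si^{-k}$ rather than $\si^{k}$, but since $\si^{-k}$ still has prime order $p$, \leref{det(1-si)} produces the same value $p^d$. Finally, the branch of the square root is fixed exactly as in the proof of \thref{chiFtran}; since the quantity under the root equals the positive real number $p^d$, we have $\sqrt{p^d}=p^{d/2}$ unambiguously.
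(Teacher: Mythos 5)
Your proposal is correct and follows exactly the route the paper intends: the corollary is stated without a written proof, but like \coref{chiFtran3} it is a direct specialization of \eqref{Ftran} in \thref{chiFtran} together with \leref{det(1-si)}, and your bookkeeping of the eigenspaces ($\lieh_{\perp,0}=\lieh_{\perp,\perp}=0$, $\lieh_{0,\perp}=\lieh_\perp$, $r_{0,0}=r_0$ for $\ph=\si^k$, and everything trivial for $\ph=1$) is accurate. The observation that the surviving determinant involves $\si^{-k}$ but still evaluates to $p^d$ is exactly the point that needed checking.
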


Finally,  we note that the results \eqref{chiFtran3a}--\eqref{chiFtran2b} remain unchanged if $\si$ is replaced with another power of $\si$ not equal to $1$, due to the invariance  \eqref{Psilsikl}.

\subsection{Transformation laws for modified characters of $W(\la)$ and $W(\mu,\zeta)$}\label{s5.2}
Throughout the rest of this section,  we let $Q$ be a positive-definite even integral lattice, $\si$ be an isometry of $Q$ of prime order $p$, and we assume that $\bar Q=Q$ 
(cf.\ \leref{Qbarlemma} and  \coref{corQbar1}). 
We also fix a set\/ $\C_\M\subset \pi_0(Q^*)$ of representatives of the cosets\/ $\pi_0(Q^*) / \pi_0(Q)$,
and a set $\mathcal{O} \subset Q^*/Q$ of representatives of the orbits of order\/ $p$ of\/ $\si$ on\/ $Q^*/Q$. 

In this subsection,  we calculate the modular transformations $\tau\mapsto\tau+1$ and $\tau\mapsto -1/\tau$ for the modified characters 
\begin{align}
\chi_{W(\mu, \zeta)}^{\si,\si^k}(\tau, h)&=\tr_{W(\mu, \zeta)}\si^k e^{2\pi\ii h}q^{L_0^{\tw}-\Delta_\si},\label{chiWph2}
\end{align}
where $h\in\lieh_0$,  $k\in\ZZ$,  $\mu\in\pi_0(Q^*)$, $\zeta$ is a central character of $G_\si^\perp$,  and $W(\mu,\zeta)$ is one of the irreducible $G_\si$-modules (cf.\  \eqref{W}, \eqref{chiW}). 
We will also do this for the modified characters \eqref{chiWuntwk}, \eqref{chiWuntw} of the untwisted $\CC_\ep[Q]$-module
\begin{equation}\label{Wlambda2}
W(\la)= \CC_\ep[Q] e^\la = \Span\bigl\{e^{\la+\al} \,\big|\, \al\in Q\bigr\} \,
\end{equation}
 (cf.\  \eqref{Wlambda}). 

Before presenting the calculations of these transformation laws, we need to set a ``rule of thumb'' for how to write a theta function in terms of modified characters. 
In general, this can be tricky due to the degeneracies \eqref{Psilsikl}, \eqref{Psik1}, and Corollary \ref{sisame}. For fixed $\mu\in\pi_0(Q)^*$, we denote by $\Z_\mu$ the set of all central characters $\zeta$ of $G_\si^\perp$ that satisfy relation \eqref{con}. 
Then, for every $\zeta, \zeta'\in\Z_\mu$ and $l\in\ZZ$,  we have the relations \eqref{Wsilsikl},
which imply that each modified character is equal to the average of all modified characters taken over the set $\Z_\mu$.   
The following is our ``rule of thumb.''
If a theta function originates from the modified character $\chi_{V_{\lambda+Q}}^{1,\si^k}(\tau,h)$ of untwisted type (cf.\ \eqref{chiVM}--\eqref{chiVQ}), then we will choose to use the average over $\Z_\mu$ when writing it in terms of modified characters:
\begin{equation}\label{thetainvert}
\th_{\sqrt{N}\mu+\sqrt{N}\pi_0(Q)}\left(\frac{\tau+k}{N},\frac{h}{\sqrt{N}}, 0\right)=\frac{e^{\pi\ii k|\mu|^2}}{d(\si)|\Z_\mu|}\sum_{\zeta\in\Z_\mu}\chi_{W(\mu, \zeta)}^{\si,\si^k}(\tau, h).
\end{equation}
Alternatively,
if a theta function originates from the modified character $\chi_{W(\mu, \zeta)}^{\si,\si^k}(\tau, h)$ of twisted type (cf.\  \eqref{chiW3}), then we will choose the corresponding character with the same parameters when writing it in terms of modified characters:
\begin{equation}\label{thetainvert2}
\th_{\sqrt{N}\mu+\sqrt{N}\pi_0(Q)}\left(\frac{\tau+k}{N},\frac{h}{\sqrt{N}}, 0\right)=\frac{e^{\pi\ii k|\mu|^2}}{d(\si)}\chi_{W(\mu, \zeta)}^{\si,\si^k}(\tau, h).
\end{equation}

We are now ready to present the transformation laws for modified characters of $W(\la)$.  
For convenience, we set $M=Q\cap\lieh_0$ and define
\begin{align}\label{CL}
C_L( \tau, h)=\frac{(-\ii\tau)^{(\rank L)/2}e^{\pi\ii|h|^2/\tau}}{|L^*/L|^{1/2}} \,,
\end{align}
where $h\in\lieh_0$ and $L$ is any positive-definite even integral lattice. 

\begin{proposition}\label{chiWtran}
With the above notation,  the transformation laws for modified characters of\/ $W(\la)$ are$:$
\begin{align}
\chi_{W(\la)}^{1,\si^k}(\tau+1, h)&=
\begin{cases}\label{chiWtlaws12}
e^{\pi\ii|\la|^2}\chi_{W(\la)}^{1,1}(\tau, h),
\vspace{0.1in}\\
e^{\pi\ii|\la|^2}\chi_{W(\la)}^{1,\si^{k}}(\tau, h),
\end{cases}
\end{align}
\begin{align}
\chi^{1,\si^k}_{W(\la)}\left(-\frac{1}{\tau}, \frac {h}{\tau}\right)&=
\begin{cases}\label{chiWtlaws3}
C_Q(\tau,h)\displaystyle\sum_{\substack{\ga+Q\in \mathcal{O}}}e^{-2\pi\ii(\ga|\la)}\chi_{W(\ga)}^{1,1}(\tau,h),
\vspace{0.1in}\\
\displaystyle \frac{C_M(\tau,h)}{d(\si)|\Z_\mu|}\sum_{\substack{\mu\in \C_\M}}\sum_{\zeta\in\Z_\mu}e^{-2\pi\ii(\la|\mu)}\chi_{W(\mu, \zeta)}^{\si^k,1}(\tau,h),
\end{cases}
\end{align}
where in each case the top choice applies when $\la\in Q^*$ and $k=0$ and the bottom choice applies when $\la\in Q^*\cap\lieh_0$ and $1\leq k\leq p-1$.
\end{proposition}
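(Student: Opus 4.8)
The plan is to recognize both modified characters as classical theta functions and then feed them into the transformation laws of \thref{thtlaws}. By \eqref{chiWuntwk} and \eqref{chiWuntw} we have $\chi_{W(\la)}^{1,1}(\tau,h)=\theta_{\la+Q}(\tau,h,0)$ for $\la\in Q^*$, and $\chi_{W(\la)}^{1,\si^k}(\tau,h)=\theta_{\la+M}(\tau,h,0)$ for $\la\in Q^*\cap\lieh_0$, where $M=Q\cap\lieh_0$. The variable $h\in\lieh_0$ enters only as the elliptic argument $z=h$, so the two cases differ merely by the lattice ($Q$ versus $M$) in which the theta function is formed.

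The $\tau\mapsto\tau+1$ law is then immediate: applying \eqref{thtlaws4} to $\theta_{\la+Q}$ (respectively $\theta_{\la+M}$) produces the scalar $e^{\pi\ii|\la|^2}$ and returns the same theta function, which is re-read as $\chi_{W(\la)}^{1,1}$ (respectively $\chi_{W(\la)}^{1,\si^k}$), giving both lines of \eqref{chiWtlaws12}. For $\tau\mapsto-1/\tau$ I would apply \eqref{thtlaws3}. In the top case ($k=0$) one takes $\mathcal{L}=Q$ of rank $r$ and obtains the prefactor $C_Q(\tau,h)$ of \eqref{CL} times $\sum_{\nu+Q\in Q^*/Q}e^{-2\pi\ii(\la|\nu)}\theta_{\nu+Q}(\tau,h,0)$. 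In the bottom case ($1\le k\le p-1$) I first invoke \leref{tdual} to identify $\la\in Q^*\cap\lieh_0=M^*$ and $\pi_0(Q^*)=M^*$, so that \eqref{thtlaws3} applies to the rank-$r_0$ lattice $M$ (with $r_0=\dim\lieh_0$) and yields $C_M(\tau,h)\sum_{\nu+M\in M^*/M}e^{-2\pi\ii(\la|\nu)}\theta_{\nu+M}(\tau,h,0)$.

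What remains is to regroup each raw sum into the prescribed character basis. Because $h\in\lieh_0$ is $\si$-invariant, the substitution $\nu\mapsto\si\nu$ (using $\si Q=Q$, $|\si\nu|^2=|\nu|^2$, and $(\si\nu|h)=(\nu|h)$) shows $\theta_{\si\nu+Q}(\tau,h,0)=\theta_{\nu+Q}(\tau,h,0)$, so these numerators are constant along $\si$-orbits in $Q^*/Q$; partitioning $Q^*/Q$ into $\si$-orbits and collecting the size-$p$ orbits, indexed by $\O$, reorganizes the top sum into the stated form. In the bottom case I would exploit the tower $M\subseteq\pi_0(Q)\subseteq\pi_0(Q^*)=M^*$ together with \leref{Q/L}: each coset of $M^*/M$ decomposes as $\mu+(\pi_0(Q)/M)$ with $\mu\in\C_\M$, the inner sum over $\pi_0(Q)/M$ reassembles $\theta_{\mu+\pi_0(Q)}$, and the coefficient $e^{-2\pi\ii(\la|\nu)}$ collapses to $e^{-2\pi\ii(\la|\mu)}$ since $(\la|\pi_0\ga)=(\la|\ga)\in\ZZ$ for $\la\in Q^*\cap\lieh_0$ and $\ga\in Q$. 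Finally $\theta_{\mu+\pi_0(Q)}$ is rewritten as a modified character via the ``rule of thumb'' \eqref{thetainvert}, namely the $\Z_\mu$-average $\tfrac{1}{d(\si)|\Z_\mu|}\sum_{\zeta\in\Z_\mu}\chi_{W(\mu,\zeta)}^{\si^k,1}$, which is legitimate because these twisted-sector characters are independent of $\zeta$ and $d(\si^k)=d(\si)$ by \leref{defectlemma}.

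The main obstacle is the combinatorial bookkeeping of this last step rather than any analytic input: one must match the raw sum over the dual quotient ($Q^*/Q$ or $M^*/M$) against the deliberately redundant basis of orbifold numerators, tracking exactly which theta functions coincide (the $\si$-orbit degeneracy valid for $h\in\lieh_0$) and applying the rule of thumb consistently so that the twisted-sector averaging \eqref{thetainvert} and the untwisted grouping over $\O$ are treated compatibly. Once the identification $M^*=\pi_0(Q^*)$ from \leref{tdual} and the isomorphism $Q/L\cong\pi_0(Q)/M$ from \leref{Q/L} are installed, everything else reduces to routine bookkeeping on top of \thref{thtlaws}.
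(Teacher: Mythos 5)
Your proof is in substance the paper's own: the $T$-law is \eqref{thtlaws2} applied to the theta-function expressions \eqref{chiWuntwk}, \eqref{chiWuntw}; the $S$-law is \eqref{thtlaws1}; and in the twisted-target case you run the coset decomposition of $M^*/M$ through the tower $M\subset\pi_0(Q)\subset M^*$ exactly as the paper does via \leref{tdual} and \leref{Q/L}, using $(\la|\pi_0\be)=(\la|\be)\in\ZZ$ and the averaging convention \eqref{thetainvert} together with $d(\si^k)=d(\si)$.

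The one step that does not work as you describe it is the reorganization of the top sum ``into the stated form'' by collecting $\si$-orbits. While it is true that $\theta_{\si\nu+Q}(\tau,h,0)=\theta_{\nu+Q}(\tau,h,0)$ for $h\in\lieh_0$, collapsing a size-$p$ orbit onto its representative $\ga$ replaces the coefficient $e^{-2\pi\ii(\ga|\la)}$ by the orbit sum $\sum_{i=0}^{p-1}e^{-2\pi\ii(\si^i\ga|\la)}=E_{\la,\ga}$ of \eqref{Ela}, which equals $p\,e^{-2\pi\ii(\ga|\la)}$ only under an extra hypothesis such as $(1-\si)\la\in Q$ (this is precisely what produces the $E_{\la,\ga}$ in \thref{chiorbtran2}); moreover, restricting to $\mathcal{O}$ would discard the $\si$-fixed cosets entirely. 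The paper performs no such regrouping: its top formula is the raw theta transformation with the sum running over all of $Q^*/Q$, and the symbol $\mathcal{O}$ in \eqref{chiWtlaws3} is being used loosely there, as the proof of \thref{chiorbtran} confirms when it ``separates the sum over cosets $\ga+Q$'' into the fixed and non-fixed ones. So leave the top sum over $Q^*/Q$ and drop the regrouping; everything else in your argument coincides with the paper's proof.
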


\begin{proof}
Formulas 
\eqref{chiWtlaws12} follow from \eqref{chiWuntwk}, \eqref{chiWuntw}, and \eqref{thtlaws2}.
The top formula of \eqref{chiWtlaws3} is clear using \eqref{chiWuntw}, \eqref{thtlaws1}, and that
\begin{align*}
\th_{\la+Q}(\tau, h, 0)=e^{\pi\ii|h|^2/\tau}\th_{\la+Q}(\tau, h, -|h|^2/2\tau).
\end{align*}
Next, we prove the bottom formula of \eqref{chiWtlaws3}. Using \eqref{chiWuntwk} and \eqref{thtlaws1} for $k\neq0$, we have
\begin{align}\label{chiWtlawspf1}
\chi^{1,\si^k}_{W(\la)}\left(-\frac{1}{\tau}, \frac {h}{\tau}\right)&=C_M(\tau,h)\sum_{\ga+M\in M^*/M}e^{-2\pi\ii(\la|\ga)}\th_{\ga+M}(\tau,h,0).
\end{align}
Using Lemma \ref{tdual}, we have the following sequence of sublattices in $M^*$:
\begin{align}
M=Q\cap\mathfrak{h}_0=\pi_0 (L)\subset\pi_0(Q)\subset \pi_0(Q)^*=M^*
\end{align}
(cf.\ \eqref{L}). Hence the coset $\ga+M$ in $M^*/M$ can be written in the form 
$$\mu+\pi_0\be_j+M,$$
where $\be_j$ (indexed by $j\in J$) is a representative of a coset in $Q/L$, and $\mu$ is a representative for a coset in $M^*/\pi_0(Q)$. Since in this case $\la\in Q^*\cap\mathfrak{h}_0$, we have 
$$(\pi_0\be|\la)=(\be|\la)\in\ZZ, \qquad \be\in Q.$$
Therefore, we can rewrite the sum \eqref{chiWtlawspf1} as
\begin{align*}
\chi^{1,\si^k}_{W(\la)} &\left(-\frac{1}{\tau}, \frac {h}{\tau}\right) \\
&=C_M(\tau,h)\sum_{\substack{\mu+\pi_0(Q)\in M^*/\pi_0(Q)}}e^{-2\pi\ii(\la|\mu)}\sum_{j\in J}\th_{\mu+\pi_0\be_j+M}(\tau,h,0)\\
&=C_M(\tau,h)\sum_{\substack{\mu+\pi_0(Q)\in M^*/\pi_0(Q)}}e^{-2\pi\ii(\la|\mu)}\th_{\mu+\pi_0(Q)}(\tau,h,0)\\
&=\frac{C_M(\tau,h)}{d(\si)}\sum_{\substack{\mu+\pi_0(Q)\in M^*/\pi_0(Q)}}\sum_{\zeta\in\Z_\mu}\frac{e^{-2\pi\ii(\la|\mu)}}{|\Z_\mu|}\chi_{W(\mu,\zeta)}^{\si^k,1}(\tau,h),
\end{align*}
where we used \eqref{chiW3} with $k=0$, \eqref{thc}, \eqref{thetainvert}, and $d(\si^k)=d(\si)$ (cf.\ Lemma \ref{defectlemma}).
\end{proof}

Before presenting the transformation laws for the modified characters of $W(\mu,\zeta)$, we state a lemma that will be useful in the proof.

\begin{lemma}\label{esum}
Let\/ $L\subset Q$ be a sublattice with $\rank L=\rank Q$, and\/ $\{\be_j+L\,|\,j\in J\}$ be the distinct cosets of\/ $Q/L$. Then for $\la\in M^*=\pi_0(Q^*)$, we have
\[
\sum_{j\in J}e^{2\pi\ii(\la|\pi_0\be_j)}=\begin{cases}
|\pi_0(Q)/M|,&\la\in Q^*\cap\lieh_0,\\
0,&\la\notin Q^*\cap\lieh_0.
\end{cases}
\]
\end{lemma}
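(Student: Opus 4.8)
The plan is to recognize the sum as the sum of a character over a finite abelian group and then invoke character orthogonality. First I would apply \leref{Q/L}, which furnishes a group isomorphism $Q/L\cong\pi_0(Q)/M$ sending $\be_j+L\mapsto\pi_0\be_j+M$. Granting (as verified below) that the summand $e^{2\pi\ii(\la|\pi_0\be_j)}$ depends only on the coset $\pi_0\be_j+M$, this rewrites the sum as
\[
\sum_{j\in J}e^{2\pi\ii(\la|\pi_0\be_j)}=\sum_{\nu+M\in\pi_0(Q)/M}e^{2\pi\ii(\la|\nu)}.
\]

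Next I would check that $\chi_\la\colon\pi_0(Q)/M\to\CC^\times$, defined by $\chi_\la(\nu+M)=e^{2\pi\ii(\la|\nu)}$, is a well-defined character of the finite abelian group $\pi_0(Q)/M$. Well-definedness amounts to $(\la|\mu)\in\ZZ$ for all $\mu\in M=Q\cap\lieh_0$, which is exactly the hypothesis $\la\in M^*=\pi_0(Q^*)$, the dual being taken in $\lieh_0$. Multiplicativity in $\nu$ is immediate from the bilinearity of $(\cdot|\cdot)$, so $\chi_\la$ is a genuine group character.

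With $\chi_\la$ in hand, I would apply the standard orthogonality relation: the sum of a character over a finite abelian group equals the order of the group when the character is trivial, and $0$ otherwise. Hence the sum equals $|\pi_0(Q)/M|$ if $\chi_\la\equiv1$ and vanishes if $\chi_\la\not\equiv1$. Finally, I would translate the triviality condition into the stated dichotomy: $\chi_\la$ is trivial precisely when $(\la|\nu)\in\ZZ$ for all $\nu\in\pi_0(Q)$, i.e.\ $\la\in(\pi_0(Q))^{*_0}$, the dual taken in $\lieh_0$. By \leref{tdual} (the second form, with $\mathfrak{t}=\lieh_0$ and $L=Q$), one has $(\pi_0(Q))^{*_0}=Q^*\cap\lieh_0$, so $\chi_\la$ is trivial if and only if $\la\in Q^*\cap\lieh_0$, which gives the two cases.

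No hard analysis is required; the only point demanding care is the bookkeeping with the two uses of \leref{tdual}—one implicit in the hypothesis $\la\in M^*=\pi_0(Q^*)$ that guarantees $\chi_\la$ is well defined on cosets, and one identifying the triviality locus $(\pi_0(Q))^{*_0}=Q^*\cap\lieh_0$. Keeping straight which dual is taken in which space (all in $\lieh_0$ here) is the main thing to get right, after which orthogonality finishes the argument.
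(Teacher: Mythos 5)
Your proposal is correct and follows essentially the same route as the paper: both pass through the isomorphism $Q/L\cong\pi_0(Q)/M$ of \leref{Q/L}, reduce to the vanishing of a nontrivial character sum over $\pi_0(Q)/M$, and identify the triviality locus as $Q^*\cap\lieh_0$ via \leref{tdual}. The only cosmetic difference is that the paper proves the orthogonality step by hand, decomposing $\pi_0(Q)/M$ into cyclic factors and summing geometric series, whereas you invoke the standard character orthogonality relation directly.
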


\begin{proof}
Write $\pi_0(Q)/M=\bigoplus_{j\in J}\langle\pi_0\be_j+M\rangle$ and let 
$$r_j=\min\{n\in\ZZ\,|\,n\be_j\in M\}.$$
If $\ga=\sum_{j=1}^ra_j\pi_0\be_j\in\pi_0(Q)$, where $r=|\pi_0(Q)/M|=|Q/L|$ (cf.\ Lemma \ref{Q/L}), then
\begin{align}\label{esum1}
\sum_{\ga+M\in\pi_0(Q)/M}e^{2\pi\ii(\la|\ga)}=\prod_{j=1}^r\sum_{a_j=0}^{r_j-1}\left(e^{2\pi\ii (\la|\pi_0\be_j)}\right)^{a_j}.
\end{align}
Since $\pi_0(Q)$ is a rational lattice, $e^{2\pi\ii (\la|\pi_0\be_j)}$ is a root of unity for each $j$. Hence, the right hand side of \eqref{esum1} is zero unless $(\la|\pi_0\be_j)\in\ZZ$ for all $j\in J$. 
The result then follows using that $(\pi_0(Q))^*\cong Q^*\cap\lieh_0$ by Lemma \ref{tdual}.
\end{proof}

To present the transformation laws for the modified characters of $W(\mu,\zeta)$, we set some additional notation that will make the formulas more compact.  For convenience, we set $\mathcal{L}=\sqrt{p}\pi_0(Q)$ and define (recall that $M=Q\cap\mathfrak{h}_0$)
\begin{equation}\label{CC}
C(\tau,h,\si)=d(\si)C_M(\tau,h)|\pi_0(Q)/M|,
\end{equation}
where $C_M(\tau,h)$ is given in \eqref{CL}. 
Then, by Lemma \ref{tdual},
\begin{center}
$M^*=\pi_0(Q^*)$ \quad and\quad $\mathcal{L}^*=\displaystyle\frac{1}{\sqrt{p}}Q^*\cap\lieh_0$.
\end{center}
We fix a set\/ $\C_M\subset M^*$ of representatives of the cosets\/ $\la+M\in M^*/M$ such that $\la\in Q^*\cap\lieh_0$.  For fixed $k\in\ZZ$ and $\mu\in\pi_0(Q^*)$ we fix a set\/ $\C_\L(k, \mu)\subset \L^*$ of representatives of the cosets of the form\/ $\sqrt{p}\nu+\L\in \L^*/\L$ such that $\nu\in k\mu + Q^*\cap\lieh_0$.  

\begin{proposition}\label{chiWtran2}
Suppose that\/ $p$ is an odd prime.
With the above notation, the transformation laws for modified characters of\/ $W(\mu,\zeta)$ are$:$
\begin{align}
\chi_{W(\mu,\zeta)}^{\si,\si^k}(\tau+1, h)&=e^{\pi\ii|\mu|^2}\chi_{W(\mu,\zeta)}^{\si,\si^{k+1}}(\tau, h),\qquad k\in\ZZ,\label{chiWtlaws1}
\end{align}
\begin{align}
\chi^{\si,\si^k}_{W(\mu, \zeta)}\left(-\frac{1}{\tau}, \frac {h}{\tau}\right)&=
\begin{cases}
C(\tau,h,\si)
\displaystyle\sum_{\substack{\la\in \C_M}} e^{-2\pi\ii (\la|\mu)}\chi^{1,\si^{-1}}_{W(\la)}(\tau, h),
\vspace{0.1in}\\
v_k\tau^{r_0/2}\displaystyle\sum_{\substack{\sqrt{p}\nu\in \C_\L}}e^{-2\pi\ii(\mu|\nu)}\chi_{W(\nu,\zeta)}^{\si^k,\si^{-1}}(\tau, h),
\end{cases} \label{chiWtlaws6}
\end{align}
where the top choice applies when $k=0$ and the bottom choice applies when $1\leq k\leq p-1$, and each $v_k$ is a complex number. 
\end{proposition}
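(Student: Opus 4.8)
The two transformation laws are each reduced to a theta-function identity by means of the explicit formulas \eqref{chiW1a} and \eqref{chiW3}, after which the results are re-expressed as characters via the rule of thumb of \seref{s5.2}.

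The $T$-law \eqref{chiWtlaws1} is immediate and uses neither evenness nor primality. By \eqref{chiW1a}, $\chi_{W(\mu,\zeta)}^{\si,\si^k}(\tau,h)=d(\si)e^{-\pi\ii k|\mu|^2}\th_{\mu+\pi_0(Q)}(\tau+k,h,0)$, so replacing $\tau$ by $\tau+1$ only replaces $\tau+k$ by $\tau+(k+1)$. Comparing with the same formula for the exponent $k+1$ and writing $e^{-\pi\ii k|\mu|^2}=e^{\pi\ii|\mu|^2}e^{-\pi\ii(k+1)|\mu|^2}$ yields \eqref{chiWtlaws1} at once.

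For the $S$-law in the case $k=0$, the plan is to split the non-integral lattice $\pi_0(Q)$ into cosets of the even integral lattice $M=Q\cap\lieh_0$: by \leref{Q/L}, $\pi_0(Q)=\bigsqcup_{j}(\pi_0\be_j+M)$ with $\be_j$ representing $Q/L$, so $\th_{\mu+\pi_0(Q)}=\sum_j\th_{\mu+\pi_0\be_j+M}$. Since $M$ is even integral, I would apply the $S$-transformation \eqref{thtlaws3} to each summand and then interchange the two sums; the inner sum becomes $\sum_j e^{-2\pi\ii(\pi_0\be_j|\la)}$, which by \leref{esum} equals $|\pi_0(Q)/M|$ when $\la\in Q^*\cap\lieh_0$ and vanishes otherwise. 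This collapses the outer sum to the range $\la\in\C_M$ and produces the multiplicity $|\pi_0(Q)/M|$; recognizing $\th_{\la+M}=\chi_{W(\la)}^{1,\si^{-1}}$ from \eqref{chiWuntwk} and collecting the prefactors as $d(\si)C_M(\tau,h)|\pi_0(Q)/M|=C(\tau,h,\si)$ (cf.\ \eqref{CL}, \eqref{CC}) gives the top line of \eqref{chiWtlaws6}.

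The case $1\le k\le p-1$ is the heart of the matter, since the shift by $k$ sits inside the inversion and the theta argument becomes $\frac{-1/\tau+k}{p}$. Here I would work with the even integral lattice $\L=\sqrt p\,\pi_0(Q)$ (\leref{Qbarlemma}) and the formula \eqref{chiW3}. Writing $\si^{-1}=(\si^k)^j$ with $kj\equiv-1\pmod p$, the target character is $\chi_{W(\nu,\zeta)}^{\si^k,\si^{-1}}(\tau,h)=d(\si)e^{-\pi\ii j|\nu|^2}\th_{\sqrt p\nu+\L}\bigl(\tfrac{\tau+j}{p},\tfrac h{\sqrt p},0\bigr)$, again by \eqref{chiW3}. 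The key observation is that $A=\left(\begin{smallmatrix}k&-(kj+1)/p\\ p&-j\end{smallmatrix}\right)$ lies in $\mathrm{SL}(2,\ZZ)$ — the entry $(kj+1)/p$ is an integer precisely because $kj\equiv-1\pmod p$ — and satisfies $A\cdot\tfrac{\tau+j}{p}=\tfrac{-1/\tau+k}{p}$ together with $c\,\tfrac{\tau+j}{p}+d=\tau$. Applying \thref{KP} to $\th_{\sqrt p\mu+\L}$ at the base point $\bigl(\tfrac{\tau+j}{p},\tfrac h{\sqrt p},0\bigr)$ therefore produces the weight factor $(c\tau+d)^{r_0/2}=\tau^{r_0/2}$, output characteristics $a(\sqrt p\mu)+c\mu''=\sqrt p\nu$ with $\nu\in k\mu+(Q^*\cap\lieh_0)$ — that is, exactly the index set $\C_\L(k,\mu)$ — and a $\nu$-linear phase $e^{-2\pi\ii(\mu|\nu)}$. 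Rewriting each output theta as a twisted character by the rule of thumb \eqref{thetainvert2} and gathering all factors independent of $\nu$ into a single constant $v_k$ gives the bottom line of \eqref{chiWtlaws6}. Since $A\in\mathrm{SL}(2,\ZZ)$, one may equivalently decompose $A$ into the generators $S,T$ and invoke \thref{thtlaws} together with \prref{vTSprop} to produce $v_k$.

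The main obstacle is the bookkeeping of scalars in this last case. One must carry the $\sqrt p$-rescaling and the auxiliary $u$-coordinate through \thref{KP} — the $u$-shift $u\mapsto u-\tfrac c2\tfrac{|z|^2}{c\tau+d}$ is what controls the quadratic-in-$h$ automorphy factor — verify that the summation index delivered by \thref{KP} is exactly $\C_\L(k,\mu)$ with the $\nu$-dependence reducing to $e^{-2\pi\ii(\mu|\nu)}$, and check that the residual phases $e^{-\pi\ii k|\mu|^2}$, $e^{-\pi\ii j|\nu|^2}$, and the metaplectic factor $v(A)$ assemble into the $\nu$-independent constant $v_k$ and the weight $\tau^{r_0/2}$. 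The oddness of $p$ enters only through the standing hypothesis $Q=\bar Q$ and the parity simplifications underlying \thref{KP} (cf.\ \reref{beta0}).
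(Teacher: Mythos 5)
Your proposal is correct and follows essentially the same route as the paper's proof: the $T$-law by direct comparison of \eqref{chiW1a} at exponents $k$ and $k+1$, the $k=0$ case by decomposing $\pi_0(Q)$ into $M$-cosets, applying \eqref{thtlaws1}, and collapsing with \leref{esum}, and the $1\le k\le p-1$ case via the same matrix $A=\left(\begin{smallmatrix}k&-m\\ p&-k'\end{smallmatrix}\right)$ with $kk'\equiv-1\pmod p$ fed into \thref{KP} and reassembled through \eqref{thetainvert2}. The only cosmetic difference is that the paper runs the $T$-law through the $\sqrt{p}$-rescaled theta of \prref{chiW2} rather than \eqref{chiW1a}, which changes nothing.
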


\begin{proof}
First, we prove formulas \eqref{chiWtlaws1}. Using  \eqref{thetainvert2} and \prref{chiW2}, we find
\begin{align*}
\chi_{W(\mu,\zeta)}^{\si,\si^k}(\tau+1, h)&=d(\si)e^{-\pi\ii k|\mu|^2}\th_{\sqrt{p}\mu+\sqrt{p}\pi_0(Q)}\left(\frac{\tau+k+1}{p},h, 0\right)\\
&=e^{\pi\ii|\mu|^2}\chi_{W(\mu,\zeta)}^{\si,\si^{k+1}}(\tau, h)
\end{align*}
for any  $0\leq k\leq p-1$.

Next, we prove the top formula of \eqref{chiWtlaws6}. We have from the proof of Theorem \ref{chiFtran} that $P_{\si, 1}\left(-\frac{1}{\tau}\right)$ can be written as a multiple of $P_{1,\si^{-1}}(\tau)$.
We therefore expect the transformation $\chi_{W(\mu,\zeta)}^{\si,1}\left(-\frac{1}{\tau}, \frac {h}{\tau}\right)$ to be written as a linear combination of the trace functions $\chi_{W(\la)}^{1,\si^{-1}}(\tau)$ for suitable $\la\in Q^*\cap\lieh_0$. 
From \eqref{chiWuntwk}  and \prref{chiW2}, we have:
\begin{align}
\begin{split}
\chi_{W(\mu,\zeta)}^{\si,1}(\tau, h)&=d(\si)\th_{\mu+\pi_0(Q)}\left(\tau,h,0\right),\\
&=d(\si)\sum_{j\in J}\th_{\mu+\pi_0\be_j+M}(\tau,h,0),
\end{split}
\end{align}
where $\be_j+L$ for $j\in J$ are the distinct cosets in $Q/L$, and
\begin{align}
\chi_{W(\la)}^{1,\si^{-1}}(\tau, h)&=\th_{\la+M}\left(\tau,h, 0\right)=\chi_{W(\la)}^{1,\si^{}}(\tau, h), \qquad \la\in Q^*\cap\lieh_0.
\end{align}
Then the transformation law \eqref{thtlaws1} yields
\begin{align*}
&\chi^{\si,1}_{W(\mu,\zeta)}\left(-\frac{1}{\tau}, \frac {h}{\tau}\right)\\
&=d(\si)C_M(\tau,h)\sum_{\substack{\la+M\in M^*/M\\j\in J}} e^{-2\pi\ii(\la|\mu+\pi_0\be_j)}\th_{\la+M}(\tau,h,0)\\
\begin{split}
&=d(\si)C_M(\tau,h)\sum_{\la+M\in M^*/M}\left(\sum_{j\in J}e^{-2\pi\ii(\la|\pi_0\be_j)}\right)e^{-2\pi\ii(\la|\mu)}\th_{\la+M}(\tau,h,0)
\end{split}\\
&=C(\tau,h,\si)\sum_{\substack{\la+M\in M^*/M\\\la\in Q^*\cap\lieh_0}}e^{-2\pi\ii(\la|\mu)}\chi_{W(\la)}^{1,\si^{-1}}(\tau, h),
\end{align*}
where we used Lemma \ref{esum} in the last step. 

Finally, we prove the bottom formula of \eqref{chiWtlaws6}. We have from the proof of Theorem \ref{chiFtran} and Corollary \ref{detk} that
$P_{\si, \si^k}\left(-\frac{1}{\tau}\right)$ can be written as a multiple of $P_{\si^k,\si^{-1}}(\tau)$.
We therefore expect the transformation $\chi_{W(\mu,\zeta)}^{\si,\si^k}\left(-\frac{1}{\tau}, \frac {h}{\tau}\right)$ to be written as a linear combination of the trace functions $\chi_{W(\mu',\zeta')}^{\si^k,\si^{-1}}(\tau,h)$ for suitable $\mu'\in\pi_0(Q)$ and central character $\zeta'$ of $G_\si^\perp$ (cf.\ Theorem \ref{bij}). To this end,  we set $\si'=\si^k$. Then 
$$\si^{-1}=(\si')^{k'}=\si^{kk'},$$
where $kk'\equiv-1\mod p$, and we write 
\begin{equation}\label{kk'}
kk'+1=pm
\end{equation}  
for a suitable integer $m$. 
It will be convenient to also make the transformation
\begin{equation}\label{tauprime}
\tau'=\frac{\tau+k'}{p}\,, \qquad h'=\frac{h}{\sqrt{p}} \,.
\end{equation}
Note that from \eqref{kk'} we can write
\begin{align}\label{tauprime2}
\frac{-\frac{1}{\tau}+k}{p}=\frac{k\tau'-m}{p\tau'-k'}=A\cdot\tau',
\end{align}
with 
\begin{equation}\label{Amatrix}
A=\begin{pmatrix}
k&-m\\p&-k'
\end{pmatrix} \in \mathrm{SL}_2(\ZZ),
\end{equation}
where $A\cdot\tau'$ denotes the action described in Theorem \ref{KP}.
Since the matrix $A$ depends on the exponent $k$, we relabel the scalar $v(A)$ from Theorem \ref{KP} as $v_k$.

We next employ Theorem \ref{KP} using the matrix $A$, the even lattice $\mathcal{L}=\sqrt{p}\pi_0(Q)$, and with $\tau$ replaced by $\tau'$. 
First, we describe the set over which the summation in Theorem \ref{KP} occurs.  Using the coset labeling in the theta function in \eqref{chiW3}, the summation index in the current case is over the cosets
\[
\sqrt{p}\nu+\mathcal{L}\in \frac{1}{\sqrt{p}}Q^*\cap\lieh_0/\mathcal{L} \qquad (\nu\in Q^*\cap\lieh_0),
\]
since
$$\mathcal{L}^*=(\sqrt{p}\pi_0(Q))^*=\displaystyle\frac{1}{\sqrt{p}} Q^*\cap\lieh_0$$
 by Lemma \ref{tdual}. 
Therefore, we have from Theorem \ref{KP} that

\begin{align}\label{new24}
\begin{split}
&\th_{\sqrt{p}\mu+\mathcal{L}}\left(A\cdot(\tau',h',0)
\right)
=v_k \, (p\tau'-k')^{r_0/2}\\
&\times\sum_{\substack{\sqrt{p}\nu+\mathcal{L}\in\mathcal{L}^*/\mathcal{L}\\\nu\in Q^*\cap\lieh_0}}e^{\pi\ii(-\frac{k'}{p}|\sqrt{p}\nu|^2-2m(\sqrt{p}\mu|\sqrt{p}\nu)-km|\sqrt{p}\mu|^2)}\th_{\sqrt{p}\nu+k\sqrt{p}\mu+\mathcal{L}}(\tau', h', 0)\\
&=v_k\,\tau^{r_0/2}\sum_{\substack{\sqrt{p}\nu+\mathcal{L}\in\mathcal{L}^*/\mathcal{L}\\\nu\in k\mu+Q^*\cap\lieh_0}}e^{\pi\ii(-k'|\nu|^2-2(\mu|\nu)+k|\mu|^2)}\th_{\sqrt{p}\nu+\mathcal{L}}(\tau', h', 0),
\end{split}
\end{align}
where we used the relation $cd|\mu|^2=\frac{d}{c}|c\mu|^2$ and made the shift $\nu\mapsto\nu-k\mu$.  In the last step, we used \eqref{kk'} to calculate  
\begin{equation*}
-k'|\nu-k\mu|^2-2mp(\mu|\nu-k\mu)-kmp|\mu|^2=-k'|\nu|^2-2(\mu|\nu)+k|\mu|^2.
\end{equation*}
Also recall that the value of $\be_0$ in Theorem \ref{KP} can be set to zero when $p$ is odd (cf.\ Remark \ref{beta0} with $c=p$ in this case).

Note that the first equation in \eqref{new24} remains unchanged when replacing $k'$ by $k'+ap$ for some integer $a$. Since \eqref{kk'} implies that $m$ gets replaced with $m+k$, and \eqref{tauprime} implies that $\tau'$ gets replaced with $\tau'+a$, it is clear that the resulting exponentials involving $a$ cancel out (cf.\ Theorem \ref{thtlaws}). Hence, \eqref{new24} only depends on $k'$ modulo $p$.

Now we finish the proof using \eqref{new24},  \eqref{thetainvert2}, and Proposition \ref{chiW2} to get 
\begin{align*}
\chi_{W(\mu,\zeta)}^{\si,\si^k}&\left(-\frac{1}{\tau}, \frac{h}{\tau}\right)=d(\si)e^{-\pi\ii k|\mu|^2}\th_{\sqrt{p}\mu+\sqrt{p}\pi_0(Q)}\left(A\cdot(\tau',h',0)\right)\\
\begin{split}
&=d(\si)v_k\tau^{r_0/2}\sum_{\substack{\sqrt{p}\nu+\mathcal{L}\in\mathcal{L}^*/\mathcal{L}\\\nu\in k\mu+Q^*\cap\lieh_0}}e^{\pi\ii(-k'|\nu|^2-2(\mu|\nu))}\th_{\sqrt{p}\nu+\mathcal{L}}(\tau', h', 0)
\end{split}\\
&=v_k\tau^{r_0/2}\sum_{\substack{\sqrt{p}\nu+\mathcal{L}\in\mathcal{L}^*/\mathcal{L}\\\nu\in k\mu+Q^*\cap\lieh_0}}e^{-2\pi\ii(\mu|\nu)}\chi_{W(\nu,\zeta)}^{\si^k,\si^{-1}}(\tau, h).
\end{align*}
This completes the proof of \prref{chiWtran2}.
\end{proof}

\subsection{Transformation laws for modified characters of $V_{\la+M}$ and $M(\mu,\zeta)$}\label{s5.3}

In this subsection,  we calculate the modular transformations $\tau\mapsto\tau+1$ and $\tau\mapsto -1/\tau$ for the modified characters 
\begin{equation}\label{chiM3}
\chi_{M(\mu,\zeta)}^{\si,\si^k}(\tau,h)=\tr_{M(\mu,\zeta)} \si^k e^{2\pi\ii h}q^{L_0^{\tw}-\frac {r}{24}},
\end{equation}
where $r=\rank Q$, $h\in\lieh_0$,  $k\in\ZZ$,  $\mu\in\pi_0(Q^*)$, $\zeta$ is a central character of $G_\si^\perp$,  and $M(\mu,\zeta)$ is one of the irreducible $\si$-twisted $V_Q$-modules 
(see \eqref{M}, \eqref{chiM}, \eqref{chiMk}). 
 Since $M(\mu,\zeta)$ is a tensor product of the $\si$-twisted Fock space $\F_\si$ and the $G_\si$-module $W(\mu,\zeta)$, the trace is a product
of traces over them (see \eqref{chiMFW}).
The calculation of modular transformations of $M(\mu,\zeta)$ will hence follow from Corollary \ref{chiFtran2} and Proposition 
\ref{chiWtran2}.

Similarly, we will also do this for the modified characters 
\begin{equation}\label{chiVla4}
\chi_{V_{\la+M}}^{1,\si^k}(\tau,h)=\tr_{V_{\la+M}} \si^k e^{2\pi\ii h}q^{L_0-\frac {r}{24}}
\end{equation}
of untwisted $V_Q$-modules,
where $r=\rank Q$, $k\in\ZZ$,  $M=Q\cap\mathfrak{h}_0$, and $\la\in Q^*$ (see \eqref{chiVla}, \eqref{chiVla2}, \eqref{chiVla3}).
Note that when $k=0$,  the cosets in \eqref{chiVla4} are taken over $Q=M$.
%
%
For convenience, we set 
\begin{equation}\label{D}
D_{\la}(\tau,h,\si)=
\displaystyle\frac{e^{\pi\ii|h|^2/\tau}p^{\dim\lieh_\perp/2(p-1)}}{|M^*/M|^{1/2}d(\si)} \,,
\end{equation}
when $\la\in Q^*\cap\lieh_0$ and set $D_{\la}(\tau,h,\si)=0$ otherwise.

\begin{proposition}\label{chiMtran}
With the above notation, the transformation laws for modified characters of\/ $V_{\la+Q}$ are given by$:$
\begin{align}
\chi^{1,\si^k}_{V_{\la+M}}(\tau+1,h)&=e^{\pi\ii(|\la|^2-\frac{r}{12})}\chi^{1,\si^k}_{V_{\la+M}}(\tau,h),\label{M3}
\end{align}
\begin{align}
\chi^{1, \si^k}_{V_{\la+M}}\left(-\frac{1}{\tau}, \frac {h}{\tau}\right)&=
\begin{cases}
\displaystyle\frac{e^{\pi\ii|h|^2/\tau}}{|Q^*/Q|^{1/2}}\sum_{\substack{\ga+Q\in \mathcal{O}}}e^{-2\pi\ii(\la|\ga)}\chi_{V_{\ga+Q}}^{1,1}(\tau,h),
\vspace{0.1in}\\
\displaystyle \frac{D_{\la}(\tau,h,\si)}{|\Z_\mu|}\sum_{\substack{\mu\in \C_\M}}\sum_{\zeta\in\Z_\mu}e^{-2\pi\ii(\la|\mu)}\chi^{\si^k,1}_{M(\mu,\zeta)}(\tau,h),
\end{cases}\label{M8}
\end{align}
where the top choice applies when $\la\in Q^*$ and $k=0$, and the bottom choice applies when $\la\in Q^*\cap\lieh_0$ and\/ $1\leq k\leq p-1$.
\end{proposition}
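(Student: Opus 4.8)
The plan is to exploit the factorization of the modified character into a Fock--space factor and a group--module factor, and then to multiply the transformation laws already established for each factor. Indeed, by \eqref{chiVla2} (equivalently \eqref{chiMFW} with trivial $\si$--twist) we have
\[
\chi^{1,\si^k}_{V_{\la+M}}(\tau,h) = \chi_{\F}(\si^k,\tau)\,\chi^{1,\si^k}_{W(\la)}(\tau,h),
\]
so it suffices to transform each factor separately, using \coref{chiFtran2} for the Fock piece and \prref{chiWtran} for the $W(\la)$ piece, and then to reassemble the resulting products into the characters of the modules appearing in the statement.

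For the translation $\tau\mapsto\tau+1$, I would apply \thref{chiFtran} with trivial twist (so that $\Delta_1=0$ and $\ph\si=\ph$), giving $\chi_{\F}(\si^k,\tau+1)=e^{-\pi\ii r/12}\chi_{\F}(\si^k,\tau)$, and combine it with the factor $e^{\pi\ii|\la|^2}$ coming from \eqref{chiWtlaws12}. Their product is $e^{\pi\ii(|\la|^2-r/12)}$, which yields \eqref{M3} at once, uniformly in $k$.

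For the inversion $\tau\mapsto-1/\tau$, $h\mapsto h/\tau$, I would split into the two cases of \prref{chiWtran}. When $k=0$, I combine \eqref{chiFtran2a}, namely $\chi_{\F}(1,-1/\tau)=(-\ii\tau)^{-r/2}\chi_{\F}(1,\tau)$, with the top line of \eqref{chiWtlaws3}; since $C_Q(\tau,h)=(-\ii\tau)^{r/2}e^{\pi\ii|h|^2/\tau}|Q^*/Q|^{-1/2}$ by \eqref{CL}, the factors $(-\ii\tau)^{\pm r/2}$ cancel, and reassembling $\chi_{\F}(1,\tau)\,\chi^{1,1}_{W(\ga)}(\tau,h)=\chi^{1,1}_{V_{\ga+Q}}(\tau,h)$ gives the top line of \eqref{M8}. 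When $1\le k\le p-1$, I combine \eqref{chiFtran2b}, namely $\chi_{\F}(\si^k,-1/\tau)=p^{d/2}(-\ii\tau)^{-r_0/2}\chi_{\F_{\si^k}}(1,\tau)$, with the bottom line of \eqref{chiWtlaws3}; using $C_M(\tau,h)=(-\ii\tau)^{r_0/2}e^{\pi\ii|h|^2/\tau}|M^*/M|^{-1/2}$ (so the powers of $(-\ii\tau)$ again cancel), together with $d=\dim\lieh_{1/p}=\dim\lieh_\perp/(p-1)$ from \leref{det(1-si)} and $\rank M=\dim\lieh_0=r_0$, the scalar prefactor collapses to $D_\la(\tau,h,\si)/|\Z_\mu|$ as defined in \eqref{D}. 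Reassembling $\chi_{\F_{\si^k}}(1,\tau)\,\chi^{\si^k,1}_{W(\mu,\zeta)}(\tau,h)=\chi^{\si^k,1}_{M(\mu,\zeta)}(\tau,h)$ via \eqref{chiMFW} then produces the bottom line of \eqref{M8}.

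The argument is essentially mechanical once the factorization is in place; the only real care needed is the bookkeeping of the scalar factors, matching $C_Q$, $C_M$, and the various powers of $(-\ii\tau)$ and of $p$ against the constants defined in \eqref{CL} and \eqref{D}. The one genuinely delicate point is the choice of branch of $(-\ii\tau)^{1/2}$ entering both \coref{chiFtran2} and \prref{chiWtran}: as noted in the remarks following \thref{KP}, this choice occurs in the Fock and theta factors with matching exponents and cancels in the ratio defining the character, so no inconsistency arises.
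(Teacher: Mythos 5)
Your proposal is correct and follows exactly the paper's route: the paper's proof is the one-line observation that the result follows from Proposition \ref{chiWtran} and Corollary \ref{chiFtran2} via the factorization $V_{\la+M}\cong\F\otimes W(\la)$, and your computation simply makes explicit the bookkeeping of the prefactors $(-\ii\tau)^{\pm r/2}$, $(-\ii\tau)^{\pm r_0/2}$, and $p^{d/2}$ that the paper leaves to the reader. The cancellations you carry out (matching $C_Q$, $C_M$ against \eqref{CL} and collapsing the constants to $D_\la(\tau,h,\si)/|\Z_\mu|$ via $d=\dim\lieh_\perp/(p-1)$) are exactly what is needed and are all accurate.
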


\begin{proof}
These results follow easily from Proposition \ref{chiWtran} and Corollary \ref{chiFtran2}, using that $V_{\la+M}$ is a tensor product of the Fock space $\F$ and the $\CC_\ep[Q]$-module $W(\la)$;
see \eqref{chiVM}--\eqref{chiWuntw}.
\end{proof}


Before presenting the transformation laws for the modified characters of $M(\mu,\zeta)$, we set some additional notation, which will make the formulas more compact.  
We let $\mathcal{L}=\sqrt{p}\pi_0(Q)$ and
\begin{equation}\label{v0}
v_0=d(\si)p^{-\dim\lieh_\perp/2(p-1)}\frac{|\pi_0(Q)/M|}{|M^*/M|^{1/2}},
\end{equation}
where, as before, $M=Q\cap\mathfrak{h}_0$.
Then, by Lemma \ref{tdual}, 
\begin{equation}\label{Mdual}
M^*=\pi_0(Q^*), \qquad \mathcal{L}^*=\displaystyle\frac{1}{\sqrt{p}}Q^*\cap\lieh_0.
\end{equation}
We fix a set\/ $\C_M\subset M^*$ of representatives of the cosets\/ $\la+M\in M^*/M$ such that $\la\in Q^*\cap\lieh_0$.  For fixed $k\in\ZZ$ and $\mu\in\pi_0(Q^*)$, we fix a set $\C_\L(k, \mu)\subset \L^*$ of representatives of the cosets of the form $\sqrt{p}\nu+\L\in \L^*/\L$ such that $\nu\in k\mu + Q^*\cap\lieh_0$.  
Finally,  recall the constant
\begin{equation}\label{Delta0}
\Delta_\si=\frac{p+1}{24p}\dim\lieh_\perp
\end{equation}
from \eqref{Delta} and Lemma \ref{Deltalemma}.

\begin{proposition}\label{chiMtran2}
Suppose that\/ $p$ is an odd prime.
With the above notation,  the following are the transformation laws for modified characters of\/ $M(\mu,\zeta)$$:$ 
\begin{align}
\chi_{M(\mu,\zeta)}^{\si,\si^k}(\tau+1, h)&=e^{2\pi\ii(\Delta_\si-\frac{r}{24})}e^{\pi\ii|\mu|^2}\chi_{M(\mu,\zeta)}^{\si,\si^{k+1}}(\tau, h),\label{M5}
\end{align}
\begin{align}
\chi^{\si,\si^k}_{M(\mu, \zeta)}\left(-\frac{1}{\tau}, \frac {h}{\tau}\right)&=
\begin{cases}
\displaystyle v_0 \, e^{\pi\ii|h|^2/\tau}\sum_{\substack{\la\in \C_M}} e^{-2\pi\ii (\la|\mu)}\chi^{1,\si^{-1}}_{V_{\la+M}}(\tau, h),&
\vspace*{0.1in}\\
\displaystyle v_k \, \ii^{r_0/2}\sum_{\substack{\sqrt{p}\nu\in\C_\L(k,\mu)}}e^{-2\pi\ii(\mu|\nu)}\chi_{M(\nu,\zeta)}^{\si^k,\si^{-1}}(\tau, h),&
\end{cases}\label{M9}
\end{align}
where the top choice applies when $k=0$ and $v_0$ is given by \eqref{v0},  while the bottom choice applies when $1\leq k\leq p-1$ and each $v_k$ is a complex number.
\end{proposition}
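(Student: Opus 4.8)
The plan is to exploit the tensor factorization \eqref{chiMFW}, which writes each modified character as a product
\[
\chi_{M(\mu,\zeta)}^{\si,\si^k}(\tau,h)=\chi_{\F_\si}(\si^k,\tau)\,\chi_{W(\mu,\zeta)}^{\si,\si^k}(\tau,h),
\]
and to push the already-established transformation laws for the two factors through this product. The Fock-space factor is governed by \thref{chiFtran} and \coref{chiFtran3}, and the $W$-factor by \prref{chiWtran2}. The observation that makes the right-hand sides close up is that the \emph{same} factorization reassembles the products that appear after transforming: from \eqref{chiVM}, \eqref{chiWuntwk}, and $\chi_{\F}(\si^{-1},\tau)=1/P_{1,\si^{-1}}(\tau)$ we have $\chi_{\F}(\si^{-1},\tau)\,\chi_{W(\la)}^{1,\si^{-1}}(\tau,h)=\chi_{V_{\la+M}}^{1,\si^{-1}}(\tau,h)$, and likewise $\chi_{\F_{\si^k}}(\si^{-1},\tau)\,\chi_{W(\nu,\zeta)}^{\si^k,\si^{-1}}(\tau,h)=\chi_{M(\nu,\zeta)}^{\si^k,\si^{-1}}(\tau,h)$ by \eqref{chiMFW} applied to $\si^k$-twisted modules.

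For the translation $\tau\mapsto\tau+1$ I would simply multiply the $T$-law of \thref{chiFtran}, namely $\chi_{\F_\si}(\si^k,\tau+1)=e^{2\pi\ii(\Delta_\si-r/24)}\chi_{\F_\si}(\si^{k+1},\tau)$, by \eqref{chiWtlaws1}, $\chi_{W(\mu,\zeta)}^{\si,\si^k}(\tau+1,h)=e^{\pi\ii|\mu|^2}\chi_{W(\mu,\zeta)}^{\si,\si^{k+1}}(\tau,h)$. Reassembling the product via \eqref{chiMFW} gives \eqref{M5} at once.

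For the inversion $\tau\mapsto-1/\tau$ I would treat the two cases separately. When $k=0$, multiply \coref{chiFtran3}\eqref{chiFtran3a}, $\chi_{\F_\si}(1,-1/\tau)=p^{-d/2}(-\ii\tau)^{-r_0/2}\chi_{\F}(\si^{-1},\tau)$, by the top line of \eqref{chiWtlaws6}; after recognizing $\chi_{\F}(\si^{-1},\tau)\,\chi_{W(\la)}^{1,\si^{-1}}$ as $\chi_{V_{\la+M}}^{1,\si^{-1}}$, the scalar prefactor is $p^{-d/2}(-\ii\tau)^{-r_0/2}\,C(\tau,h,\si)$. Since $C_M(\tau,h)$ in \eqref{CC}, \eqref{CL} carries a compensating $(-\ii\tau)^{r_0/2}$ (as $\rank M=r_0$) and $d=\dim\lieh_\perp/(p-1)$ by \leref{det(1-si)}, this collapses to $v_0\,e^{\pi\ii|h|^2/\tau}$ with $v_0$ as in \eqref{v0}, yielding the top line of \eqref{M9}. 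When $1\le k\le p-1$, multiply \coref{chiFtran3}\eqref{chiFtran3b}, $\chi_{\F_\si}(\si^k,-1/\tau)=(-\ii\tau)^{-r_0/2}\chi_{\F_{\si^k}}(\si^{-1},\tau)$, by the bottom line of \eqref{chiWtlaws6}; the powers of $\tau$ combine as $(-\ii\tau)^{-r_0/2}\tau^{r_0/2}=\ii^{r_0/2}$, and recognizing the surviving product as $\chi_{M(\nu,\zeta)}^{\si^k,\si^{-1}}$ produces the bottom line of \eqref{M9}, with $v_k$ inherited verbatim from \prref{chiWtran2}.

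The individual steps are routine substitutions, so the real work is the bookkeeping of scalar and branch data, and I expect the consistency of the square-root choices to be the main obstacle. The factor $(-\ii\tau)^{-r_0/2}$ in \coref{chiFtran3} carries a branch inherited from the $\eta$- and $P$-function transformations, while the scalar $v_k$ in \prref{chiWtran2} carries the branch of $(p\tau'-k')^{r_0/2}$ from the metaplectic action in \thref{KP}. I would argue, exactly as in the remark following \thref{KP}, that because $\chi_{M(\mu,\zeta)}^{\si,\si^k}$ is a ratio of a theta function and a $P$-function, the two factors involve the same choice of square root, which therefore cancels; this makes the clean power $\ii^{r_0/2}$ (for $k\neq0$) and the scalar $v_k$ unambiguous. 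The remaining constant identity $p^{-d/2}\,d(\si)\,|\pi_0(Q)/M|/|M^*/M|^{1/2}=v_0$ then follows by direct comparison with \eqref{v0}, again using $d=\dim\lieh_\perp/(p-1)$.
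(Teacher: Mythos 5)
Your proof is correct and follows exactly the paper's argument: the paper's own proof is the one-line observation that the result "follows easily from Proposition \ref{chiWtran2} and Corollary \ref{chiFtran3}" via the tensor factorization \eqref{chiMFW}. Your explicit bookkeeping of the constants (the collapse of $p^{-d/2}(-\ii\tau)^{-r_0/2}C(\tau,h,\si)$ to $v_0\,e^{\pi\ii|h|^2/\tau}$ and of $(-\ii\tau)^{-r_0/2}\tau^{r_0/2}$ to $\ii^{r_0/2}$) is consistent with what the paper leaves implicit.
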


\begin{proof}
These results follow easily from Proposition \ref{chiWtran2} and Corollary \ref{chiFtran3}, using that $M(\mu,\zeta)$ is a tensor product of the $\si$-twisted Fock space $\F_\si$ and the $G_\si$-module $W(\mu,\zeta)$;
see \eqref{chiMFW}.
\end{proof}

\subsection{Orbifold modules and transformation laws for orbifold characters}

In this subsection, we present the main results of this paper,  the calculation of the transformation laws $\tau\mapsto\tau+1$ and $\tau$ $\mapsto$ $-1/\tau$ 
for the irreducible characters given in Theorem \ref{class} of the orbifold algebra $V_Q^\si$.
As in the previous sections,  we let $Q$ be a positive-definite even integral lattice, $\si$ be an isometry of $Q$ of prime order $p$, and we assume that $\bar Q=Q$ (cf.\ \leref{Qbarlemma} and  \coref{corQbar1}).  
We set $r=\rank Q$ and fix a set\/ $\C_\M\subset \pi_0(Q^*)$ of representatives of the cosets\/ $\pi_0(Q^*) / \pi_0(Q)$,
and a set $\mathcal{O} \subset Q^*/Q$ of representatives of the orbits of order\/ $p$ of\/ $\si$ on\/ $Q^*/Q$.
We also set $M=Q\cap\lieh_0$. 

Let us recall the complete list of non-isomorphic irreducible modules over the orbifold algebra\/ $V_Q^\si$ given in Theorem \ref{class} (cf.\  Definition \ref{dZmu}):
\begin{enumerate}
\item[{\upshape{(Type 1)}}]
$V_{\la+Q}^j$ \; $(\la+Q\in (Q^*/Q)^\si, \; 0\le j\le p-1),$

\medskip
\item[{\upshape{(Type 2)}}] 
$V_{\la+Q}$ \; $(\la+ Q\in\mathcal{O}),$

\medskip
\item[{\upshape{(Type 3)}}] 
$M(\mu,\zeta; \si^l)^j$ \; $(\mu\in\C_\M,\, \ze\in\Z_{\mu,\si^l}, \, 0\le j\le p-1, \, 1\leq l\le p-1).$
\end{enumerate}
\noindent
The characters of these modules are given by {\upshape{(}}cf.\ \eqref{chiMk}, \eqref{Msilsikl}, \eqref{chiVM}, \eqref{chiVQ}{\upshape{):}}
\begin{align}
\chi_{V_{\la+Q}}^j(\tau, h) &= 
\displaystyle\frac{1}{p} \sum_{k=0}^{p-1}\om^{jk} \chi_{V_{\la+Q}}^{1,\si^k}(\tau, h),\label{chiV1}\\
\medskip
\chi_{V_{\la+Q}}(\tau, h)&=\chi_{V_{\la+Q}}^{1,1}(\tau, h),\label{chiV2}\\
\chi_{M(\mu,\zeta;\si^l)}^j(\tau, h)&=\displaystyle\frac{1}{p}\sum_{k=0}^{p-1}\om^{jk}\chi_{M(\mu,\zeta;\si^l)}^{\si^l,\si^{k}}(\tau, h),\label{chiMj}
\end{align}
where\/ $\om=e^{2\pi\ii/p}$.
In the proof, it will be necessary to invert these formulas as follows:
\allowdisplaybreaks
\begin{align}
\chi_{V_{\la+Q}}^{1,1}(\tau, h)&=\begin{cases} \displaystyle\sum_{l=0}^{p-1}\chi_{V_{\la+Q}}^l(\tau, h),&(1-\si)\la\in Q, \label{V1invert}\\
\chi_{V_{\la+Q}}(\tau, h),&(1-\si)\la\notin Q,
\end{cases}\\
\chi_{V_{\la+Q}}^{1,\si^k}(\tau, h)&=\sum_{l=0}^{p-1}\om^{-lk}\chi_{V_{\la+Q}}^l(\tau, h),\label{V2invert}\\
\chi_{M(\mu,\zeta)}^{\si^l,\si^{lk}}(\tau, h)&=\sum_{j=0}^{p-1}\om^{-jk}\chi_{M(\mu,\zeta;\si^l)}^j(\tau, h).\label{chiMinvert}
\end{align}

First, we present the transformation laws for the characters of $V_{\la+Q}^j$.
For convenience,  we set 
\begin{equation}\label{Dla}
D_{\la}^\si=\begin{cases}\displaystyle\frac{p^{\dim\lieh_\perp/2(p-1)}}{|M^*/M|^{1/2}d(\si)}\,,&\la\in Q^*\cap\lieh_0, \\
0,&\la\notin Q^*\cap\lieh_0,\end{cases}
\end{equation}
where the defect $d(\si)$ is defined in Remark \ref{defect}. 

\begin{theorem}\label{chiorbtran}
The transformation laws for the characters of the\/ $V_Q^\si$-modules\/ $V_{\la+Q}^j$ 
for\/ $\la+Q\in (Q^*/Q)^\si$ and\/ $0\le j\le p-1$ are as follows$:$
\begin{equation}
\chi^{j}_{V_{\la+Q}}(\tau+1,h)=e^{\pi\ii(|\la|^2-\frac{r}{12})}\chi^{j}_{V_{\la+Q}}(\tau,h),\label{O1}
\end{equation}
\begin{align}
\chi^{j}_{V_{\la+Q}}&\left(-\frac{1}{\tau}, \frac {h}{\tau}\right)=\frac{e^{\pi\ii|h|^2/\tau}}{p|Q^*/Q|^{1/2}}\sum_{\substack{\ga+Q\in Q^*/Q\\(1-\si)\ga\in Q}}\sum_{t=0}^{p-1}e^{-2\pi\ii(\la|\ga)}\chi_{V_{\ga+Q}}^t(\tau, h)
\notag\\
&+\frac{e^{\pi\ii|h|^2/\tau}}{|Q^*/Q|^{1/2}}\sum_{\substack{\ga+Q\in \mathcal{O}}}e^{-2\pi\ii(\la|\ga)}\chi_{V_{\ga+Q}}(\tau, h)
\label{O2}\\
&+\frac{D_{\la}^\si e^{\pi\ii|h|^2/\tau}}{p|\Z_\mu|}\sum_{k=1}^{p-1}\sum_{\substack{\mu\in \C_\M}}\sum_{t=0}^{p-1}\sum_{\zeta\in\Z_{\mu,\si^k}}\om^{jk}e^{-2\pi\ii(\la|\mu)}\chi_{M(\mu,\zeta;\si^k)}^t(\tau, h).
\notag
\end{align}
\end{theorem}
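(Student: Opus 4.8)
The plan is to transform the defining formula \eqref{chiV1} one summand at a time, inserting the modular behavior of the modified characters $\chi^{1,\si^k}_{V_{\la+Q}}$ recorded in \prref{chiMtran}, and then to repackage the output into orbifold characters using the inversion formulas \eqref{V1invert}, \eqref{chiV2}, and \eqref{chiMinvert}. A preliminary observation makes the bookkeeping uniform: since $\la+Q\in(Q^*/Q)^\si$ means $(1-\si)\la\in Q$, for any $\mu\in Q^*$ we have $e^{-2\pi\ii(\la|\si^i\mu)}=e^{-2\pi\ii(\la|\mu)}$, because $(1-\si^{-1})\la\in Q$ and $(1-\si^{-1})\la$ pairs integrally with $\mu$; moreover the phase $e^{\pi\ii|\la|^2}$ does not depend on the chosen coset representative, as $Q$ is even and $\la\in Q^*$.

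The translation law \eqref{O1} is immediate. Applying \eqref{M3} to each $\si^k$-summand of \eqref{chiV1} (the $k=0$ summand requiring only \eqref{chiVQ}, the $\eta$-law \eqref{eta}, and \eqref{thtlaws4}) produces the common scalar $e^{\pi\ii(|\la|^2-r/12)}$, which is independent of $k$ and therefore factors out of $\frac1p\sum_{k}\om^{jk}$, leaving $e^{\pi\ii(|\la|^2-r/12)}\chi^{j}_{V_{\la+Q}}(\tau,h)$.

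For the $S$-transformation I split \eqref{chiV1} into its $k=0$ and its $1\le k\le p-1$ parts. The $k=0$ term is $\frac1p\,\chi^{1,1}_{V_{\la+Q}}(-1/\tau,h/\tau)$, which by \eqref{chiVQ} and \eqref{thtlaws3} equals $\frac{e^{\pi\ii|h|^2/\tau}}{p|Q^*/Q|^{1/2}}\sum_{\mu+Q\in Q^*/Q}e^{-2\pi\ii(\la|\mu)}\chi^{1,1}_{V_{\mu+Q}}(\tau,h)$. I then partition $Q^*/Q$ into $\si$-fixed cosets and orbits of order $p$. On the fixed cosets I rewrite $\chi^{1,1}_{V_{\mu+Q}}$ via \eqref{V1invert} as $\sum_{t=0}^{p-1}\chi^t_{V_{\mu+Q}}$, giving the first line of \eqref{O2}. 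For an orbit of order $p$ I use that, for $h\in\lieh_0$, the theta functions $\theta_{\si^i\mu+Q}(\tau,h,0)$ all coincide (substitute $\si$ and use $\si h=h$), so $\chi^{1,1}_{\si^i\mu+Q}(\tau,h)=\chi_{V_{\mu+Q}}(\tau,h)$ by \eqref{chiV2}, while the phases are orbit-constant by the observation above; the $p$ equal contributions of each orbit produce a factor $p$ that cancels the $\frac1p$, yielding the second line of \eqref{O2} with representatives in $\mathcal{O}$.

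For the tail $1\le k\le p-1$ the modified character equals that of $V_{\la+M}$ (the trace detects only the $\si^k$-fixed sector, and vanishes unless $\la$ admits an $\lieh_0$-representative, consistent with $D^\si_\la=0$ otherwise in \eqref{Dla}), so I apply the bottom case of \eqref{M8}, whose normalization is $D_\la(\tau,h,\si)=D^\si_\la\,e^{\pi\ii|h|^2/\tau}$ by \eqref{D} and \eqref{Dla}. This expresses each term through $\chi^{\si^k,1}_{M(\mu,\zeta)}$, which I convert to orbifold characters by \eqref{chiMinvert} in the degenerate index $\si^{k\cdot0}=1$, namely $\chi^{\si^k,1}_{M(\mu,\zeta)}=\sum_{t=0}^{p-1}\chi^t_{M(\mu,\zeta;\si^k)}$; finally I relabel the central characters $\zeta\in\Z_\mu$ as $\zeta\in\Z_{\mu,\si^k}$ via the bijection of \prref{p-con-l}, and since by \eqref{degchar1} these orbifold characters are independent of $\zeta$, the average $\frac1{|\Z_\mu|}\sum_\zeta$ is retained purely as a bookkeeping device, reproducing the third line of \eqref{O2}. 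The hard part will be the orbit-grouping step: one must simultaneously check the coincidence of theta functions along a $\si$-orbit and the orbit-constancy of the exponential phases, and verify that the resulting combinatorial factor $p$ exactly cancels the $\frac1p$ in \eqref{chiV1}; a secondary delicate point is aligning the twisting-sector labels $\Z_\mu$ and $\Z_{\mu,\si^k}$ through \prref{p-con-l} so that the indices match the module list of \thref{class}.
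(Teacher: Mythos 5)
Your proposal is correct and follows essentially the same route as the paper's proof: it transforms \eqref{chiV1} termwise via Proposition \ref{chiMtran}, applies the inversion formulas \eqref{V1invert}--\eqref{chiMinvert}, and handles the order-$p$ orbits by the same phase computation $\sum_{i}e^{-2\pi\ii(\la|\si^i\ga)}=p\,e^{-2\pi\ii(\la|\ga)}$ (using $(1-\si)\la\in Q$ and the integrality of the pairing) that the paper singles out as the one nontrivial step. The only differences are cosmetic — you verify the orbit-constancy of the characters directly at the level of theta functions and spell out the $\Z_\mu\leftrightarrow\Z_{\mu,\si^k}$ relabeling, both of which the paper leaves implicit.
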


\begin{proof}
The transformation laws are obtained using \eqref{chiV1} and Proposition \ref{chiMtran}. While most of the details are straightforward, we emphasize the nontrivial details regarding the transformation $\tau\mapsto-1/\tau$.

The details of the third sum in \eqref{O2} are straightforward using \eqref{chiV1}, \eqref{M8}, and the inversion formulas  \eqref{V1invert}--\eqref{chiMinvert}. For the first and second sums, we separate the sum over cosets $\ga+Q$ in \eqref{M8} into two sums over cosets for which $(1-\si)\ga\in Q$ and $(1-\si)\ga\notin Q$. Then we apply the inversion formula \eqref{V1invert}. The terms in the sum over cosets with $(1-\si)\ga\notin Q$ simplify further as the $\si$-orbit $\ga+Q,\si\ga+Q,\ldots,\si^{p-1}\ga+Q$ of cosets in $Q^*/Q$ correspond to equal characters over the same orbifold module.  Since $(1-\si)\la\in Q$ implies $\si^i\la\in\la+Q$ for each $i$,  we obtain sums of the form 
\begin{equation*}
\sum_{\substack{\de+Q\in[\ga+Q]\\(1-\si)\ga\notin Q}}e^{-2\pi\ii(\la|\de)}\chi_{V_{\ga+Q}}(\tau, h)=\sum_{i=0}^{p-1}e^{-2\pi\ii(\la|\si^i\ga)}\chi_{V_{\ga+Q}}(\tau, h),
\end{equation*}
where for $\ga+Q\in\mathcal{O}$, $[\ga+Q]$ is the set of cosets within the orbit of order $p$.
Using this fact and that $Q$ is integral, we find 
\begin{align*}
\sum_{i=0}^{p-1}e^{-2\pi\ii(\la|\si^i\ga)}=\sum_{i=0}^{p-1}e^{-2\pi\ii(\si^{-i}\la|\ga)}=pe^{-2\pi\ii(\la|\ga)}.
\end{align*}
This yields the second sum in \eqref{O2}. 
\end{proof}

Next, we present the transformation laws for the characters of $V_{\la+Q}$.
For convenience, we set
\begin{equation}\label{Ela}
E_{\la,\ga}=\displaystyle\sum_{i=0}^{p-1}e^{-2\pi\ii(\la|\si^i\ga)},
\qquad \la,\ga\in Q^*.
\end{equation}

\begin{theorem}\label{chiorbtran2}
The transformation laws for the characters of the $V_Q^\si$-modules $V_{\la+Q}$ with $\la+ Q\in\mathcal{O}$ are$:$
\begin{align}
\chi_{V_{\la+Q}}(\tau+1,h)&=e^{\pi\ii(|\la|^2-\frac{r}{12})}\chi_{V_{\la+Q}}(\tau,h),\label{O3}
\end{align}
\begin{align}
\begin{split}
\chi_{V_{\la+Q}}\left(-\frac{1}{\tau}, \frac {h}{\tau}\right)&=\frac{e^{\pi\ii|h|^2/\tau}}{|Q^*/Q|^{1/2}}\sum_{\substack{\ga+Q\in Q^*/Q\\(1-\si)\ga\in Q}}\sum_{t=0}^{p-1}e^{-2\pi\ii(\la|\ga)}\chi_{V_{\ga+Q}}^t(\tau, h)\\
&+\frac{e^{\pi\ii|h|^2/\tau}}{|Q^*/Q|^{1/2}}\sum_{\substack{\ga+Q\in \mathcal{O}}}E_{\la,\ga}\chi_{V_{\ga+Q}}(\tau, h).\label{O4}
\end{split}
\end{align}
\end{theorem}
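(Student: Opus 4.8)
The plan is to reduce both laws for the Type 2 character to the corresponding statements for the full untwisted $V_Q$-character, which are already packaged in \prref{chiMtran}, and then to regroup the resulting sum over $Q^*/Q$ into the families of orbifold modules prescribed by \thref{class}. Throughout I use \eqref{chiV2}, i.e.\ $\chi_{V_{\la+Q}}(\tau,h)=\chi^{1,1}_{V_{\la+Q}}(\tau,h)$, so that the Type 2 character is literally an untwisted $V_Q$-character, together with \eqref{chiVQ}, which writes it as $\th_{\la+Q}(\tau,h,0)/P_{1,1}(\tau)$ with $P_{1,1}(\tau)=\eta(\tau)^r$.

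For $\tau\mapsto\tau+1$ I would invoke the $k=0$ case of \eqref{M3} in \prref{chiMtran} (where the cosets are taken over $Q=M$), giving $\chi^{1,1}_{V_{\la+Q}}(\tau+1,h)=e^{\pi\ii(|\la|^2-\frac{r}{12})}\chi^{1,1}_{V_{\la+Q}}(\tau,h)$, which is \eqref{O3} after rewriting via \eqref{chiV2}. Equivalently one reads this off directly from \eqref{thtlaws4} and the $\eta$-law \eqref{eta}, since $P_{1,1}=\eta^r$ contributes the factor $e^{-\pi\ii r/12}$. The substance lies in $\tau\mapsto-1/\tau$. Applying the $k=0$ case of \eqref{M8} — which is \eqref{thtlaws3} combined with \eqref{eta}, the factor $(-\ii\tau)^{r/2}$ cancelling between $\th$ and $\eta^r$ — I obtain the $S$-transform as the full sum
\[
\chi^{1,1}_{V_{\la+Q}}\Bigl(-\tfrac1\tau,\tfrac h\tau\Bigr)=\frac{e^{\pi\ii|h|^2/\tau}}{|Q^*/Q|^{1/2}}\sum_{\ga+Q\in Q^*/Q}e^{-2\pi\ii(\la|\ga)}\chi^{1,1}_{V_{\ga+Q}}(\tau,h)
\]
over all cosets. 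I would then split $Q^*/Q$ into the $\si$-fixed cosets, those with $(1-\si)\ga\in Q$, and the orbits of order $p$ with representatives in $\mathcal{O}$. On the fixed cosets I invoke \eqref{V1invert} to replace $\chi^{1,1}_{V_{\ga+Q}}$ by $\sum_{t=0}^{p-1}\chi^t_{V_{\ga+Q}}$, which produces the first sum in \eqref{O4}. On each order-$p$ orbit $\{\si^i\ga+Q\}_{i=0}^{p-1}$ I use that, since $h\in\lieh_0$ is $\si$-fixed, the substitution $\be\mapsto\si\be$ in \eqref{th} gives $\th_{\si^i\ga+Q}(\tau,h,0)=\th_{\ga+Q}(\tau,h,0)$, hence $\chi^{1,1}_{V_{\si^i\ga+Q}}=\chi^{1,1}_{V_{\ga+Q}}=\chi_{V_{\ga+Q}}$; collecting the coefficients yields $\sum_{i=0}^{p-1}e^{-2\pi\ii(\la|\si^i\ga)}=E_{\la,\ga}$ as in \eqref{Ela}, giving the second sum in \eqref{O4}.

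The main obstacle, and the only place this argument genuinely diverges from the proof of \thref{chiorbtran}, is the treatment of the order-$p$ orbits. In the Type 1 setting one has $(1-\si)\la\in Q$, so $\si^{-i}\la\in\la+Q$ and $E_{\la,\ga}$ collapses to $p\,e^{-2\pi\ii(\la|\ga)}$; here $\la+Q\in\mathcal{O}$ is itself a nontrivial orbit, so no such collapse occurs and the coefficient $E_{\la,\ga}$ must be carried through unsimplified. I would take care that the orbit grouping is consistent with the chosen representatives in $\mathcal{O}$, that the singleton cosets are not double-counted, and that the common prefactor $e^{\pi\ii|h|^2/\tau}/|Q^*/Q|^{1/2}$ is shared by both sums, exactly as displayed in \eqref{O4}.
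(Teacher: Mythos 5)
Your proposal is correct and follows essentially the same route as the paper: apply the $k=0$ case of \eqref{M8} (equivalently \eqref{thtlaws3} with \eqref{eta}), split the sum over $Q^*/Q$ into $\si$-fixed cosets and order-$p$ orbits, and use \eqref{V1invert} together with the $\si$-invariance of the characters along each orbit to collect the coefficients into $E_{\la,\ga}$. Your closing observation — that unlike in \thref{chiorbtran} the sum $E_{\la,\ga}$ does not collapse to $p\,e^{-2\pi\ii(\la|\ga)}$ because $\la+Q$ is not $\si$-fixed — is exactly the point the paper's proof flags.
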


\begin{proof}
The sums and coefficients $E_{\la,\ga}$ in the transformation law \eqref{O4} are obtained in a similar manner as in the proof of Theorem \ref{chiorbtran} using \eqref{M8} and inversion formula \eqref{V1invert}.  
\end{proof}

In the transformation laws for the characters of $M(\mu,\zeta;\si^l)^j$, 
we will use the notation $v_0$, $\mathcal{L}$, and $\Delta_\si$ from \eqref{v0}--\eqref{Delta0}.

\begin{theorem}\label{chiorbtran3}
When\/ $p$ is an odd prime,
the transformation laws for the characters of the $V_Q^\si$-modules $M(\mu,\zeta;\si^l)^j$ are$:$
\begin{align}
\chi_{M(\mu,\zeta;\si^s)}^{j}(\tau+1, h)&=\om^{-j}e^{2\pi\ii\Delta_{\si}}e^{\pi\ii(|\mu|^2-\frac{r}{12})}\chi_{M(\mu,\zeta;\si^s)}^j(\tau, h),\label{O5}
\end{align}
\begin{align}
\begin{split}
\chi^{j}_{M(\mu,\zeta;\si^l)}&\left(-\frac{1}{\tau}, \frac {h}{\tau}\right)
=\frac{v_0}{p}e^{\pi\ii|h|^2/\tau}\sum_{\la\in \C_M} \sum_{t=0}^{p-1}e^{-2\pi\ii (\la|\mu)}\om^{lt}\chi^{t}_{V_{\la+Q}}(\tau, h)\\
&+\frac{\ii^{r_0/2}}{p}\sum_{k=1}^{p-1}\sum_{\sqrt{p}\nu\in\C_\L(k,\mu)}\sum_{t=0}^{p-1}v_k\om^{jk+k^{p-2}t}e^{-2\pi\ii(\mu|\nu)}\chi^{t}_{M(\nu, \zeta;\si^{lk})}(\tau, h),
\end{split}\label{O6}
\end{align}
where 
$$
\mu\in\C_\M, \quad \ze\in\Z_{\mu,\si^l}, \quad 0\le j\le p-1, \quad 1\leq l\le p-1,
$$
$r_0=\dim\lieh_0$, $v_k\in\CC$ for each $k$, and $v_0$ is given in \eqref{v0}.
\end{theorem}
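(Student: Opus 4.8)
The plan is to prove both transformation laws by the same device already used in the proofs of \thref{chiorbtran} and \thref{chiorbtran2}: express each Type 3 orbifold character as the average of modified characters via \eqref{chiMj}, push the modular transformation through the sum using \prref{chiMtran2}, and then reassemble the result into orbifold characters via the inversion formulas \eqref{V1invert}--\eqref{chiMinvert}. Throughout I would keep $\mu\in\C_\M$, $\ze\in\Z_{\mu,\si^l}$ and $1\le l\le p-1$ fixed, and use freely that $\Delta_{\si^l}=\Delta_\si$ (\leref{Deltalemma}), that $d(\si^l)=d(\si)$ (\leref{defectlemma}), that $\pi_0$ and $\lieh_0$ are the same for $\si$ and $\si^l$, and that the constants $v_0$, $r_0$ and the lattice $\mathcal{L}=\sqrt p\,\pi_0(Q)$ are insensitive to replacing $\si$ by the generator $\si^l$. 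Since \prref{chiMtran2} is stated for the base isometry $\si$, the first bookkeeping task is to transfer it to the $\si^l$-twisted modules; this is exactly what \coref{sisame} accomplishes, through $\chi^{\si^l,\si^{kl}}_{M(\mu,\ze';\si^l)}=\chi^{\si,\si^k}_{M(\mu,\ze)}$ together with the degeneracy \eqref{degchar1}.

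For the $T$-transform \eqref{O5} I would start from $\chi^j_{M(\mu,\ze;\si^l)}(\tau+1,h)=\frac1p\sum_{k}\om^{jk}\chi^{\si^l,\si^k}_{M(\mu,\ze;\si^l)}(\tau+1,h)$ and apply the $\si^l$-version of \eqref{M5}. The key point is that multiplying the inserted automorphism by the twist $\si^l$ sends the trace operator $\si^k$ to $\si^{k+l}$; this follows consistently from the Fock factor in \thref{chiFtran} and from the shift $\tau\mapsto\tau+1$ inside the theta argument $\tfrac{\tau+k}{p}$ of \prref{chiW2}. Reindexing $k\mapsto k+l$ then factors out a root of unity and the scalar $e^{2\pi\ii(\Delta_\si-r/24)}e^{\pi\ii|\mu|^2}=e^{2\pi\ii\Delta_\si}e^{\pi\ii(|\mu|^2-r/12)}$, which yields \eqref{O5}.

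The $S$-transform \eqref{O6} is the substantial part. After inserting \eqref{M9} into $\chi^j_{M(\mu,\ze;\si^l)}(-\tfrac1\tau,\tfrac h\tau)=\frac1p\sum_k\om^{jk}\chi^{\si^l,\si^k}_{M(\mu,\ze;\si^l)}(-\tfrac1\tau,\tfrac h\tau)$, I would split the sum into the $k=0$ term and the terms $1\le k\le p-1$. The $k=0$ term produces, by the top line of \eqref{M9}, untwisted modified characters $\chi^{1,\si^{-l}}_{V_{\la+M}}$ with $\la$ ranging over $\C_M$; converting these to the Type 1 orbifold characters via \eqref{V2invert} produces the phase $\om^{lt}$ and the prefactor $v_0$ in the first line of \eqref{O6}. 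Each term with $1\le k\le p-1$ produces, by the $\si^l$-analogue of the bottom line of \eqref{M9}, a sum over $\sqrt p\,\nu\in\C_\L(k,\mu)$ of twisted modified characters of the form $\chi^{\si^{lk},\si^{-l}}_{M(\nu,\ze)}$ (after relabeling the summation index so that the emergent twist is $\si^{lk}$); applying the inversion \eqref{chiMinvert} for the $\si^{lk}$-twisted orbifold characters rewrites these as $\chi^t_{M(\nu,\ze;\si^{lk})}$ and supplies the remaining root-of-unity factor and the scalar $v_k\,\ii^{r_0/2}$.

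I expect the main obstacle to be precisely this index bookkeeping: correctly tracking how the \emph{pair} (twist, inserted automorphism) is permuted by $S$ and $T$ on the $\si^l$-twisted modules, keeping the eigenspace labeling $j$ consistent under the reductions provided by \coref{sisame} and \eqref{degchar1}, and assembling the two independent root-of-unity contributions---one from the outer projection $\frac1p\sum_k\om^{jk}$, one from the inversion \eqref{chiMinvert}---into the single exponent $\om^{jk+k^{p-2}t}$ recorded in \eqref{O6}. Here one invokes Fermat's little theorem, $k^{p-2}\equiv k^{-1}\pmod p$, to identify the exponent arising when the trace $\si^{-l}$ is re-expressed as a power of the new twist $\si^{lk}$. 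One must also check that the scalars $v_k$ from \prref{chiMtran2} are carried through the reindexing unchanged and that $\C_\L(k,\mu)$ is the correct image of the relevant cosets, so that no term is double counted or dropped.
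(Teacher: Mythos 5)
Your proposal follows the paper's proof essentially verbatim: the paper likewise expands the Type 3 character via \eqref{chiMj}, applies Proposition \ref{chiMtran2} (the top line of \eqref{M9} together with \eqref{V2invert} for the $k=0$ term, the bottom line together with \eqref{chiMinvert} for $k>0$, substituting $k'=-k^{p-2}$ and replacing $l$ by $lk$), and transfers everything from $\si$ to $\si^l$ via Corollary \ref{sisame}, exactly as you describe. The only caution is a bookkeeping one: to land on the factor $\om^{-j}$ in \eqref{O5} the inner sum must run over inserted automorphisms $(\si^l)^k=\si^{lk}$, consistently with \eqref{chiMinvert} and \eqref{Msilsikl}, rather than over $\si^k$ as written in your expansion, since with the latter your reindexing $k\mapsto k+l$ would produce $\om^{-jl}$ instead.
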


\begin{proof}
The transformation laws are obtained using \eqref{chiMj} and Proposition \ref{chiMtran2}. While most of the details are straightforward, we emphasize the nontrivial details regarding the transformation $\tau\mapsto-1/\tau$.
This uses \eqref{M9}, \eqref{chiMj}, and inversion formulas \eqref{V2invert}, \eqref{chiMinvert}. We use \eqref{M9} and \eqref{V2invert} to transform the term in \eqref{chiMj} with $k=0$. To transform the terms in \eqref{chiMj} with $k>0$, we use \eqref{M9} and \eqref{chiMinvert}, where in \eqref{chiMinvert}, $k$ is replaced with $k'=-k^{p-2}$ and $l$ is replaced with $lk$.
\end{proof}

\begin{remark}
Note that Theorems \ref{chiorbtran} and \ref{chiorbtran2} hold for $p=2$, while in Theorem \ref{chiorbtran3} we assume that $p$ is odd. This is due to the presence of $\be_0$ in Theorem \ref{KP}
for $p=2$ (cf.\ Remark \ref{beta0}). The case $p=2$ is considered in detail in the next section.
\end{remark}

As an immediate corollary of the transformation laws, we obtain the asymptotic and quantum dimensions of irreducible orbifold characters using the coefficients in \eqref{O2} for $j=0$ and $\la=0$ corresponding to the vacuum module.
This corollary also holds for $p=2$, and in this case it agrees with the previous results of \cite{E2}.

\begin{corollary}\label{corqdim}
Let\/ $Q$ be an even integral lattice, $\si$ be an isometry of\/ $Q$ of 
prime order $p$, and assume that\/ $Q=\bar{Q}$. Then the asymptotic dimensions of irreducible\/ 
$V_Q^\si$-modules are determined by type as follows$:$
\begin{enumerate}
\item[{\upshape{(Type 1)}}] 
$\asdim V_{\la+Q}^j=p^{-1}|Q^*/Q|^{-1/2}$, where $(1-\si)\la\in Q,$

\medskip
\item[{\upshape{(Type 2)}}] 
$\asdim V_{\la+Q}=|Q^*/Q|^{-1/2}$, where $(1-\si)\la\notin Q,$

\medskip
\item[{\upshape{(Type 3)}}] 
$\asdim M(\mu,\zeta; \si^s)^j=\displaystyle\frac{p^{\dim\lieh_\perp/2(p-1)}}{p \, |\Z_\mu| \, d(\si)} \, |(Q\cap\mathfrak{h}_0)^*/(Q\cap\mathfrak{h}_0)|^{-1/2},$
\end{enumerate}
\medskip
where\/ $d(\si)^2=|(Q\cap\lieh_\perp)/(Q\cap(1-\si)Q^*)|$ and\/ $j=0,\ldots,p-1$ label the eigenspaces of\/ $\si$.  
In all cases, the quantum dimensions are related to the asymptotic dimensions by$:$
\begin{equation}
\qdim M=(\asdim M)\, p\, |Q^*/Q|^{1/2}.
\end{equation}
\end{corollary}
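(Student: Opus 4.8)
The key observation is that the asymptotic dimension of each irreducible $V_Q^\si$-module $M_k$ is the corresponding entry $S_{k,0}$ of the $S$-matrix, which by \coref{KacCor} is a positive real number, and which by the symmetry $S_{k,0}=S_{0,k}$ from \thref{H} equals the coefficient of the character $\chi_{M_k}$ in the modular transform of the vacuum character (cf.\ \thref{ZABD}). The vacuum module is $V_Q^\si=V_{0+Q}^0$, i.e.\ the Type 1 module with $\la=0$ and $j=0$, whose transformation under $\tau\mapsto-1/\tau$ is recorded in \eqref{O2}. Hence the plan is to read off the asymptotic dimensions by specializing \eqref{O2} to $\la=0$, $j=0$, $h=0$ and extracting the coefficient standing in front of each of the three types of orbifold characters classified in \thref{class}.

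First I would carry out this specialization. With $h=0$ the prefactor $e^{\pi\ii|h|^2/\tau}$ becomes $1$; with $\la=0$ each factor $e^{-2\pi\ii(\la|\cdot)}$ becomes $1$; and with $j=0$ each $\om^{jk}$ becomes $1$. The first sum in \eqref{O2} then yields the coefficient $p^{-1}|Q^*/Q|^{-1/2}$ in front of every $\chi^t_{V_{\ga+Q}}$ with $(1-\si)\ga\in Q$ (the Type 1 value), and the second sum yields $|Q^*/Q|^{-1/2}$ in front of every $\chi_{V_{\ga+Q}}$ with $\ga+Q\in\mathcal{O}$ (the Type 2 value). For the third sum I would substitute
\[
D_0^\si=\frac{p^{\dim\lieh_\perp/2(p-1)}}{|M^*/M|^{1/2}d(\si)},\qquad M=Q\cap\lieh_0,
\]
from \eqref{Dla} (valid since $0\in Q^*\cap\lieh_0$), and use that $|\Z_{\mu,\si^k}|=|\Z_\mu|$ by \prref{p-con-l}; the resulting coefficient of each $\chi^t_{M(\mu,\zeta;\si^k)}$ is exactly the stated Type 3 value, independently of $k$, $t$, and $\zeta$.

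For the quantum dimensions I would observe that $S_{0,0}=\asdim V_Q^\si$ is itself the Type 1 value $p^{-1}|Q^*/Q|^{-1/2}$, so that for any irreducible module $M$,
\[
\qdim M=\frac{\asdim M}{S_{0,0}}=\frac{\asdim M}{p^{-1}|Q^*/Q|^{-1/2}}=(\asdim M)\,p\,|Q^*/Q|^{1/2},
\]
which is the asserted relation. The claim for $p=2$ is covered automatically, since the argument uses only \eqref{O2}, and \eqref{O2} holds for $p=2$ (only \thref{chiorbtran3} requires $p$ odd).

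The main obstacle is justifying that the coefficients literally appearing in \eqref{O2} are the honest $S$-matrix entries $S_{0,k}$, despite the fact that many of the listed irreducible modules share the same character. I would argue this is legitimate because \eqref{O2} was assembled from genuine decompositions of twisted and untwisted $V_Q$-modules into $\si$-eigenspaces via the inversion formulas \eqref{V1invert}--\eqref{chiMinvert}, so that each distinctly labeled orbifold module is accounted for exactly once with a well-defined coefficient; the positivity of all these coefficients (when $\la=0$, $j=0$) is consistent with $S_{0,k}>0$ from \coref{KacCor}, confirming the identification.
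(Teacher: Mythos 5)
Your overall strategy --- identify $\asdim M_k$ with an entry of the $S$-matrix via \coref{KacCor} and then read that entry off the explicit transformation formulas --- is the paper's strategy, and your final numbers (including the quantum-dimension relation and the remark about $p=2$) are correct. However, the step you yourself flag as ``the main obstacle'' is a genuine gap, and your attempted resolution does not close it. You extract the row $(S_{0,k})_k$ from the coefficients in \eqref{O2} specialized to $\la=0$, $j=0$. Because the irreducible orbifold characters are linearly dependent --- many are literally equal, cf.\ \eqref{degchar1}--\eqref{degchar3} --- the expansion of $\chi_{V_Q}^0(-1/\tau)$ as a linear combination of the $\chi_{M_k}(\tau)$ is not unique, so the coefficients appearing in \eqref{O2} need not be the true $S$-matrix entries $S_{0,k}$; the paper makes exactly this point at the end of \seref{ex3}, warning that the matrix naively read off the transformation formulas can even produce non-integral fusion rules in Verlinde's formula. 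Your justification (``each distinctly labeled module is accounted for exactly once'' and ``the coefficients are positive, consistent with $S_{0,k}>0$'') does not repair this: any vector in the kernel of the dependence relations among the non-vacuum characters could be added to the row without changing the identity $\chi_0(-1/\tau)=\sum_k S_{0,k}\chi_k(\tau)$, and positivity is preserved under small such perturbations.

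The paper's proof avoids the problem by working with the transposed entry. For each irreducible module $M_k$ it extracts the coefficient of the \emph{vacuum} character $\chi_{V_Q}^0$ in the transformation of $\chi_{M_k}$, using \eqref{O2}, \eqref{O4}, or \eqref{O6} according to the type of $M_k$. That single coefficient \emph{is} uniquely determined, because by \coref{KacProp} the vacuum module is the only one with conformal weight $0$, so $\chi_{V_Q}^0$ does not lie in the span of the remaining characters; the coefficient therefore equals $S_{k,0}=\asdim M_k$ with no appeal to the symmetry of $S$ and no well-definedness issue. The values you obtain agree with the paper's because the relevant coefficients in the explicit formulas happen to be symmetric, but to make your argument rigorous you should either switch to reading the coefficient of $\chi_{V_Q}^0$ in \eqref{O2}, \eqref{O4}, \eqref{O6}, or else prove separately that the particular coefficients you take from \eqref{O2} coincide with the well-defined entries $S_{k,0}$.
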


\begin{proof}
We may apply Corollary \ref{KacCor}, due to \coref{KacProp}.
Let $S_{0,j}$ be the coefficient of the vacuum module $V_Q^0$ in the linear combinations in \eqref{O2}, \eqref{O4}, \eqref{O6}. 
It is uniquely defined since the character of the vacuum module does not lie in the span of the remaining modules, by \coref{KacProp}.
Then the asymptotic dimensions are $S_{0,j}$,
where $j$ runs through the labeling of inequivalent irreducible orbifold modules (see e.g.\ \cite[Section 4.2]{DJX}).
For orbifold modules of Type 1, we see from \eqref{O2} for $\la=0$ and $j=0$ that 
\begin{align*}
S_{0,0}=\frac{1}{p\, |Q^*/Q|^{1/2}} \,,
\end{align*}
$S_{0,j}=S_{0,0}$ for each term in the first sum, while $S_{0,j}=pS_{0,0}$ for $j$ representing orbifold modules of Type 2, and $S_{0,j}=(D_0^\si/|\Z_\mu|)S_{0,0}$ for $j$ representing orbifold modules of twisted type. The result follows.
\end{proof}


\begin{remark}
Since the transformation $\tau\mapsto-{1}/{\tau}$ corresponding to the $S$-matrix is unitary, we must have in particular that
\begin{align}\label{Ssum}
\sum_{j=0}^m|S_{0,j}|^2=1,
\end{align}
where $m$ is the number of irreducible orbifold modules (cf.\ Remark \ref{Sprops}). In general, the number of irreducible orbifold modules of each type are given in the following table.
\vspace{0.2in}

\begin{center}
\begin{table}[H]\label{tab1}
\begin{tabular}{|c|c|}
\hline
Module&Number of irreducible orbifold modules\\
\hline
$V_{\la+Q}^j$&$p\, |(Q^*/Q)^\si|$\\
\hline
$V_{\la+Q}$&$p^{-1} (|Q^*/Q|-|(Q^*/Q)^\si|)$\\
\hline
$M(\mu,\zeta;\si^l)^j$&$p(p-1)\, |\Z_\mu| \, |M^*/\pi_0(Q)|$ \\
\hline
\end{tabular}
\caption{Number of irreducible orbifold modules}
\end{table}
\end{center}

Recall that the set $\Z_\mu$ was introduced in Section \ref{Gsi}.
In the twisted case, there are $|\Z_\mu||M^*/\pi_0(Q)|$ many $(\mu,\zeta)$ pairs describing irreducible twisted $V_Q$-modules, the factor $p-1$ counts the nontrivial powers of $\si$ which can act, and the factor $p$ counts the eigenspaces corresponding to the orbifold modules. Then from \eqref{O2} with $j=0, \la=0$, and $h=0$, we get that \eqref{Ssum} becomes
\begin{align*}
\frac1p+\frac{1-p}{p}\frac{(D_\la^\si)^2}{|\Z_\mu|}|M^*/\pi_0(Q)|=1,
\end{align*}
where $D_\la^\si$ is given in \eqref{Dla}.  It follows that
\begin{align}\label{Ssumcond}
p^{{\dim\lieh_\perp}/(p-1)}=|\Z_\mu| \, d(\si)^2 \, |\pi_0(Q)/M|,
\end{align}
where we used that $|M^*/\pi_0(Q)| \, |\pi_0(Q)/M|=|M^*/M|$.
Note that by Lemma \ref{det(1-si)} the left-hand side of \eqref{Ssumcond} is an integer power of the prime $p$, where the exponent is the number of orbits of $\si$ of order $p$.  Since $p(Q\cap\lieh_\perp)\subset(1-\si)Q\cap\lieh_\perp$, this implies that indeed $d(\si)^2$ and $|\Z_\mu|$ are a power of $p$ (cf.\ \eqref{Gsiperp} and Remark \ref{defect}).  In addition,  $p\pi_0(Q)\subset M$ implies that $|\pi_0(Q)/M|$ is also a power of $p$. 
\end{remark}

\section{Examples in Order $2$}\label{ex2}
In this section, we consider the lattice $Q$ to be even and positive definite with an isometry $\si$ of order 2. 
As before, we will assume that $Q=\bar{Q}$, which means explicitly that $(\al|\si\al)\in2\ZZ$ for all $\al\in Q$ (see  \leref{Qbarlemma}).

\subsection{The irreducible characters of twisted type in the general setting}\label{Irrgen2}

First, we demonstrate how the transformation law \eqref{chiWtlaws6} can be calculated by using repeatedly Theorem \ref{thtlaws} rather than Theorem \ref{KP},
which will allow us to determine explicitly the unknown constants $v_k$ in \eqref{chiWtlaws6}.

It follows from Theorem \ref{chiFtran} and Corollary \ref{detk} that
\begin{align}\label{P62}
P_{\si, \si}\left(-\frac{1}{\tau}\right)=(-\ii\tau)^{r_{0}/2}P_{\si,\si}(\tau)
\end{align}
when $\si$ has order $2$.
Therefore, as in the general case, we expect the transformation $\chi_{W(\mu,\zeta)}^{\si,\si}\left(-\frac{1}{\tau}, \frac {h}{\tau}\right)$ to be written as a linear combination of the trace functions $\chi_{W(\mu',\zeta')}^{\si,\si}(\tau,h)$ for suitable $\mu'\in\pi_0(Q)$ and central character $\zeta'$ of $G_\si^\perp$ (cf.\ Theorem \ref{bij}). 

From Theorem \ref{chiM2}, we have 
\begin{align}
\chi_{W(\mu, \zeta)}^{\si,\si}(\tau, h)
&=d(\si)e^{-\pi\ii |\mu|^2}\th_{\sqrt{2}\mu+\sqrt{2}\pi_0(Q)}\left(\frac{\tau+1}{2},\frac{h}{\sqrt{2}}, 0\right).
\end{align}
Similarly to \eqref{tauprime} and \eqref{tauprime2}, we set 
\begin{equation}\label{tau'2}
\tau'=\frac{\tau-1}{2}
\end{equation} 
and note that
\begin{align}
\frac{-\frac{1}{\tau}+1}{2}=\frac{\tau'}{2\tau'+1}=A\cdot\tau',
\end{align}
where
$A=
\begin{pmatrix}
1&0\\2&1
\end{pmatrix}
\in\SL_2(\ZZ)$. The group $\SL_2(\ZZ)$ has generators
\begin{equation}
S=\begin{pmatrix}
0&-1\\1&0
\end{pmatrix}\qquad\text{and}\qquad T=\begin{pmatrix}
1&1\\0&1
\end{pmatrix},
\end{equation}
and we can write $A$ as
\begin{equation}\label{ST2}
A=\begin{pmatrix}
1&0\\2&1
\end{pmatrix}=S^3T^{-2}S=-ST^{-2}S.
\end{equation}

As before, consider the lattices
\begin{equation}
\mathcal{L}=\sqrt{2}\pi_0(Q),\qquad\mathcal{L}^*=\frac{1}{\sqrt{2}}Q^*\cap\lieh_0
\end{equation}
(cf.\ Lemma \ref{tdual}).
As in Theorem \ref{KP}, fix
$\be_0\in\CC\otimes_\ZZ\mathcal{L}=\lieh_0$ such that
\begin{equation}\label{be0p2}
2|\nu|^2\equiv 2(\nu|\be_0)\mod2\ZZ\quad\text{for all}\quad\nu\in\mathcal{L}^*\;\;\text{with}\;\; 2\nu\in\mathcal{L}.
\end{equation}
Then we can calculate the constant $v_1$ from Theorem \ref{KP} using Proposition \ref{vTSprop} and Remarks \ref{vprops} and \ref{vSTremark}:
\begin{align}\label{v1gen2}
\begin{split}
v_1&=v(-ST^{-2}S)=\ii^rv(ST^{-2}S)\\
&=\ii^rv(ST^{-2})v(S)\sum_{\mu+\mathcal{L}\in\mathcal{L}^*/\mathcal{L}}e^{-2\pi\ii(|\mu|^2+(\mu|\be_0))}\\
&=\ii^rv(S)^2\sum_{\mu+\mathcal{L}\in\mathcal{L}^*/\mathcal{L}}e^{-2\pi\ii(|\mu|^2+(\mu|\be_0))}\\
&=\displaystyle\frac{c_{\be_0}}{|\mathcal{L}^*/\mathcal{L}|} \,,
\end{split}
\end{align}
where 
\begin{align}\label{cbe0}
c_{\be_0}=\sum_{\mu+\mathcal{L}\in\mathcal{L}^*/\mathcal{L}}e^{-2\pi\ii(|\mu|^2+(\mu|\be_0))}.
\end{align}


Now we are ready to present the analog of Theorems \ref{chiorbtran}--\ref{chiorbtran3} in the case when the order of the lattice isometry is $2$. 
In this case, we simplify the notation for orbifold characters of twisted type as
\[
\chi^j_{M(\mu,\zeta;\si)}(\tau,h)=\chi^j_{M(\mu,\zeta)}(\tau,h),\qquad j=0,1.
\]

\begin{theorem}\label{chiorbtranrank2}
Let\/ $Q$ be an even integral lattice and\/ $\si$ be an isometry of\/ $Q$ of order $2$ such that\/ $Q=\bar{Q}$, i.e., $(\al|\si\al)\in2\ZZ$ for all\/ $\al\in Q$.
Denote by superscript $j$ the eigenspaces of\/ $\si$, where $j=0, 1$.
Let\/ $\mathcal{O}$ be the set of orbits of\/ $\si$ in $Q^*/Q$ of order $2$ and set\/ $[\ga+Q]\in\mathcal{O}$, for $\ga\in Q^*$,  to designate the orbits. 
Consider the lattices 
\[
M=Q\cap\lieh_0, \quad M^*=\pi_0(Q^*), \quad \mathcal{L}=\sqrt{2}\pi_0(Q), \quad \mathcal{L}^*=\frac{1}{\sqrt{2}}Q^*\cap\lieh_0,
\]
and fix\/ $\be_0\in\lieh_0$ satisfying \eqref{be0p2}.

\begin{enumerate}[$(i)$]
\item The transformation laws for the orbifold characters of\/ $V_{\la+Q}^j$ for $\la\in Q^*$ with $(1-\si)\la\in Q$ and\/ $j=0, 1$ are$:$
\begin{align*}
\chi^{j}_{V_{\la+Q}}(\tau+1,h)&=e^{\pi\ii(|\la|^2-\frac{r}{12})}\chi^{j}_{V_{\la+Q}}(\tau,h),
\end{align*}
\begin{align*}
&\chi^{j}_{V_{\la+Q}}\left(-\frac{1}{\tau}, \frac {h}{\tau}\right)=\frac{e^{\pi\ii|h|^2/\tau}}{2|Q^*/Q|^{1/2}}\sum_{\substack{\ga+Q\in Q^*/Q\\(1-\si)\ga\in Q}}e^{-2\pi\ii(\la|\ga)}(\chi_{V_{\ga+Q}}^0+\chi_{V_{\ga+Q}}^1)(\tau, h)\\
&+\frac{e^{\pi\ii|h|^2/\tau}}{|Q^*/Q|^{1/2}}\sum_{\substack{[\ga+Q]\in \mathcal{O}}}e^{-2\pi\ii(\la|\ga)}\chi_{V_{\ga+Q}}(\tau, h)\\
&+\frac{D_{\la}^\si e^{\pi\ii|h|^2/\tau}}{2|\Z_\mu|}\sum_{\substack{\mu+\pi_0(Q)\in M^*/\pi_0(Q) \\ \zeta\in\Z_\mu}} (-1)^{j}e^{-2\pi\ii(\la|\mu)}(\chi_{M(\mu,\zeta)}^0+\chi_{M(\mu,\zeta)}^1)(\tau, h),
\end{align*}
where $D_{\la}^\si=\begin{cases}\displaystyle\frac{2^{\dim\lieh_\perp/2}}{|M^*/M|^{1/2}d(\si)},&\la\in Q^*\cap\lieh_0,\\0,&\la\notin Q^*\cap\lieh_0.\end{cases}$

\item The transformation laws for the orbifold characters of\/ $V_{\la+Q}$ for $\la\in Q^*$ with $(1-\si)\la\notin Q$ are$:$
\begin{align*}
\chi_{V_{\la+Q}}(\tau+1,h)&=e^{\pi\ii(|\la|^2-\frac{r}{12})}\chi_{V_{\la+Q}}(\tau,h),
\end{align*}
\begin{align*}
\chi_{V_{\la+Q}} &\left(-\frac{1}{\tau}, \frac {h}{\tau}\right)=\frac{e^{\pi\ii|h|^2/\tau}}{|Q^*/Q|^{1/2}}\sum_{\substack{\ga+Q\in Q^*/Q\\(1-\si)\ga\in Q}}e^{-2\pi\ii(\la|\ga)}(\chi_{V_{\ga+Q}}^0+\chi_{V_{\ga+Q}}^1)(\tau, h)\\
&+\frac{e^{\pi\ii|h|^2/\tau}}{|Q^*/Q|^{1/2}}\sum_{[\ga+Q]\in \mathcal{O}}(e^{-2\pi\ii(\la|\ga)}+e^{-2\pi\ii(\la|\si\ga)})\chi_{V_{\ga+Q}}(\tau, h).\\
\end{align*}

\item The transformation laws for modified characters of\/ $M(\mu,\zeta)^j$, with $\mu\in\pi_0(Q^*)$, $\zeta$ is a central character of\/ $G_\si^\perp$, and\/ $j=0, 1$ are$:$
\end{enumerate}
\begin{align*}
\chi_{M(\mu,\zeta)}^{j}(\tau+1, h)&=\om^{-j}e^{2\pi\ii\Delta_{\si}}e^{\pi\ii(|\mu|^2-\frac{r}{12})}\chi_{M(\mu,\zeta)}^j(\tau, h),
\end{align*}
\begin{align*}
&\chi^{j}_{M(\mu,\zeta)}\left(-\frac{1}{\tau}, \frac {h}{\tau}\right)
=\frac{v_0}{2}e^{\pi\ii|h|^2/\tau}  \!\!\! \sum_{\substack{\la+M\in M^*/M\\\la\in Q^*\cap\lieh_0}}  \!\!\! 
e^{-2\pi\ii (\la|\mu)}(\chi^{0}_{V_{\la+Q}}-\chi^{1}_{V_{\la+Q}})(\tau, h)\\
&+\frac{\ii^{r_0/2}v_1}{2}  \!\!\!\!\! \sum_{\substack{\nu+\pi_0(Q)\in\frac12Q^*\cap\lieh_0/\pi_0(Q)\\\nu\in \mu+Q^*\cap\lieh_0}}  \!\!\!\!\!\!\!\!\!\!\!\!\!\!\! 
(-1)^{j}e^{\pi\ii(-2(\nu|\mu)+\sqrt{2}(\nu-\mu|\be_0))}(\chi^{0}_{M(\nu, \zeta)}-\chi^{1}_{M(\nu, \zeta)})(\tau, h),
\end{align*}
where $r_0=\dim\lieh_0$, $v_1$ is given in \eqref{v1gen2}, \eqref{cbe0}, and
\begin{equation*}
v_0=\frac{d(\si)|\pi_0(Q)/M|}{2^{\dim\lieh_\perp/2}|M^*/M|^{1/2}},\quad \Delta_\si=\frac{\dim\lieh_\perp}{16}.
\end{equation*}
\end{theorem}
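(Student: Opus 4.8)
The plan is to specialize the general transformation laws of Theorems \ref{chiorbtran}--\ref{chiorbtran3} to the case $p=2$, where $\om=e^{\pi\ii}=-1$, and then replace the abstract constant $v_k$ from Theorem \ref{KP} with the explicit value $v_1$ computed in \eqref{v1gen2}. Since Theorems \ref{chiorbtran} and \ref{chiorbtran2} are already valid for $p=2$ (as noted in the remark following Theorem \ref{chiorbtran3}), parts $(i)$ and $(ii)$ will follow by direct substitution. The only genuine work lies in part $(iii)$, where Theorem \ref{chiorbtran3} assumed $p$ odd because of the nonzero $\be_0$ appearing in Theorem \ref{KP} when $c$ is even (cf.\ Remark \ref{beta0}).

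First I would establish the $T$-transformation (the $\tau\mapsto\tau+1$ laws) in all three parts. These come immediately from Propositions \ref{chiMtran}, \ref{chiMtran2} together with the definitions \eqref{chiV1}, \eqref{chiMj}, using $\om^{-j}=(-1)^{j}$ and $\Delta_\si=\dim\lieh_\perp/16$ from \eqref{Deltap} with $p=2$. No new calculation beyond $p=2$ substitution is needed here. Next, for the $S$-transformation ($\tau\mapsto-1/\tau$) in parts $(i)$ and $(ii)$, I would substitute $p=2$ into \eqref{O2} and \eqref{O4}; the inner sum $\sum_{t=0}^{p-1}$ becomes $\chi^0+\chi^1$, and the coefficient $E_{\la,\ga}=e^{-2\pi\ii(\la|\ga)}+e^{-2\pi\ii(\la|\si\ga)}$ from \eqref{Ela} specializes directly.

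The main obstacle, and the step requiring real care, is the twisted $S$-transformation in part $(iii)$. Here I would return to the proof of Proposition \ref{chiWtran2} (and hence \ref{chiMtran2}), where the matrix $A=\begin{pmatrix}k&-m\\p&-k'\end{pmatrix}$ has lower-left entry $c=p=2$, which is even. By Remark \ref{beta0}, the parameter $\be_0$ can no longer be set to zero, so when I apply Theorem \ref{KP} to $\th_{\sqrt{2}\mu+\mathcal{L}}(A\cdot(\tau',h',0))$ with $k=k'=1$, $m=1$, the exponent in the sum acquires the extra terms $2b(\la|\be_0)+2d(\mu|\be_0)$ from the general formula. I would track these through the shift $\nu\mapsto\nu-k\mu$ used in \eqref{new24}, producing the factor $e^{\pi\ii\sqrt{2}(\nu-\mu|\be_0)}$ that appears in the stated coefficient, together with $e^{-2\pi\ii(\nu|\mu)}$ from $-2(\mu|\nu)$ and the overall normalization $v_1\,\ii^{r_0/2}$. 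The constant $v_1$ itself is supplied by \eqref{v1gen2} via the decomposition $A=-ST^{-2}S$ and Proposition \ref{vTSprop}, so I would verify that this algebraic evaluation of $v(A)$ is consistent with the direct theta-function computation.

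Finally I would assemble the pieces, using the inversion formulas \eqref{V2invert} and \eqref{chiMinvert} (with $p=2$, so $k^{p-2}=k^0=1$ and $\om^{k^{p-2}t}=(-1)^{t}$) to express the right-hand sides in terms of the genuine orbifold characters $\chi^0,\chi^1$ rather than the twisted traces $\chi^{\si,\si^k}$. Since $p-1=1$, the outer sum over $1\le k\le p-1$ collapses to the single term $k=1$, giving the clean pair $(-1)^j(\chi^0_{M(\nu,\ze)}-\chi^1_{M(\nu,\ze)})$; similarly in part $(i)$ the twisted contribution collapses to a single $k=1$ term producing $(-1)^j(\chi^0+\chi^1)$ and the absence of the $\si^k$-sum simplifies $\Z_{\mu,\si^k}$ to $\Z_\mu$. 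The remaining bookkeeping—identifying the index set $\nu+\pi_0(Q)\in\frac12 Q^*\cap\lieh_0/\pi_0(Q)$ with $\sqrt{p}\nu\in\C_\L(k,\mu)$ and confirming the normalization constants $v_0$ and $D_\la^\si$ against \eqref{v0}, \eqref{Dla}—is routine substitution of $p=2$ into the odd-$p$ formulas.
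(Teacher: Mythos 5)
Your proposal is correct and follows essentially the same route as the paper: parts $(i)$ and $(ii)$ and all the $\tau\mapsto\tau+1$ laws by direct $p=2$ substitution, and for part $(iii)$ a rerun of the twisted $S$-transformation keeping the nonzero $\be_0$ in Theorem \ref{KP}, with $v_1$ evaluated from the decomposition $A=-ST^{-2}S$ via Proposition \ref{vTSprop} and \eqref{v1gen2}, then assembly through the inversion formulas. The only cosmetic difference is your choice $k'=1$, $m=1$ (so $\tau'=(\tau+1)/2$) versus the paper's $k'=-1$, $m=0$ followed by a final shift via \eqref{thtlaws2}; both yield the same coefficient since the result depends only on $k'$ mod $2$ and $v(AT^k)=v(A)$.
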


\begin{proof}
Recall that the transformation laws for orbifold characters of Types 1 and 2 remain valid for $p=2$ (see Theorems \ref{chiorbtran} and \ref{chiorbtran2}).
The case of Type 3 is similar to the proof of \eqref{O1}--\eqref{O6}. Here we prove the last transformation formula. 
For convenience we set:
$$\la=\sqrt{2}\mu\in\sqrt{2}\pi_0(Q^*), \qquad\de=\sqrt{2}\nu\in\sqrt{2}(Q^*\cap\lieh_0)\subset\sqrt{2}\pi_0(Q^*).$$ 
Using Proposition \ref{chiW2} and equations \eqref{P62}, \eqref{v1gen2},  we calculate:

\allowdisplaybreaks
\begin{equation}\label{chisisi}
\begin{split}
&\chi_{M(\mu,\zeta)}^{\si,\si}\left(-\frac{1}{\tau}, \frac{h}{\tau}\right)\\
&=d(\si)e^{-\pi\ii |\mu|^2}\frac{\th_{\sqrt{2}\mu+\sqrt{2}\pi_0(Q)}\left(\frac{-\frac{1}{\tau}+1}{2},\frac{h}{\tau\sqrt{2}}, 0\right)}{P_{\si,\si}\left(-\frac{1}{\tau}\right)}\\
&=d(\si)e^{-\pi\ii|\mu|^2}\frac{\th_{\la+\mathcal{L}}\left(A\cdot(\tau',h',0)\right)}{P_{\si,\si}\left(-\frac{1}{\tau}\right)}\\
&=\frac{\tau^{r_0/2}d(\si)c_{\be_0}e^{-\pi\ii|\mu|^2}}{(-\ii\tau)^{r_{0}/2}|\mathcal{L}^*/\mathcal{L}|}\sum_{\substack{\de+\mathcal{L}\in \mathcal{L}^*/\mathcal{L}\\\de\in2\mathcal{L}^*}}e^{\pi\ii(|\nu|^2+\sqrt{2}(\nu|\be_0))}\frac{\th_{\la+\de+\mathcal{L}}\left(\frac{\tau-1}{2},\frac{h}{\sqrt{2}},0\right)}{P_{\si,\si}(\tau)} \\
&=\frac{\ii^{r_0/2}d(\si)c_{\be_0}}{|\mathcal{L}^*/\mathcal{L}|}\sum_{\substack{\de+\mathcal{L}\in \mathcal{L}^*/\mathcal{L}\\\de\in\la+2\mathcal{L}^*}}e^{\pi\ii(|\nu-\mu|^2-|\mu|^2+\sqrt{2}(\nu-\mu|\be_0))}\frac{\th_{\de+\mathcal{L}}\left(\frac{\tau-1}{2},\frac{h}{\sqrt{2}},0\right)}{P_{\si,\si}(\tau)}\\
&=\frac{\ii^{r_0/2}d(\si)c_{\be_0}}{|\mathcal{L}^*/\mathcal{L}|} \!\!\! \sum_{\substack{\de+\mathcal{L}\in \mathcal{L}^*/\mathcal{L}\\\de\in\la+2\mathcal{L}^*}}  \!\!\! e^{\pi\ii(|\nu|^2-2(\nu|\mu)+\sqrt{2}(\nu-\mu|\be_0))}e^{-2\pi\ii|\nu|^2}\frac{\th_{\de+\mathcal{L}}\left(\frac{\tau+1}{2},\frac{h}{\sqrt{2}},0\right)}{P_{\si,\si}(\tau)}\\
&=\frac{\ii^{r_0/2}c_{\be_0}}{|\mathcal{L}^*/\mathcal{L}|}\sum_{\substack{\nu+\pi_0(Q)\in\frac12Q^*\cap\lieh_0/\pi_0(Q)\\\nu\in \mu+Q^*\cap\lieh_0}}e^{\pi\ii(-2(\nu|\mu)+\sqrt{2}(\nu-\mu|\be_0))}\chi_{M(\nu,\zeta)}^{\si,\si}\left(\tau, h\right).\\
\end{split}
\end{equation}
Note that we used \eqref{thtlaws2} to change the input of the theta function from $\frac{\tau-1}{2}$ to $\frac{\tau+1}{2}$.
Now recall that  (cf.\ \eqref{chiMj})

\begin{align*}
\chi_{M(\mu,\zeta)}^j(\tau)&=\frac{1}{2}(\chi_{M(\mu,\zeta)}^{\si,1}(\tau)+(-1)^j\chi_{M(\mu,\zeta)}^{\si,\si}(\tau)),\quad j=0,1.
\end{align*}
The transformation of the trace functions $\chi_{M(\mu,\zeta)}^{\si,1}(\tau,h)$ follow similarly to the proof of \eqref{chiWtlaws6} and \eqref{M9}. 
Hence, the last transformation formula now follows from 
\eqref{chisisi} and \eqref{chiMinvert}.
\end{proof}

\begin{remark}
All formulas in Theorem \ref{chiorbtranrank2} except the last one agree with \eqref{O1}--\eqref{O6} for $p=2$. The only difference in the last formula is the addition of a possible ingredient $\be_0$ (cf.\ Remark \ref{beta0}). In the case when $\be_0=0$, this last transformation formula in Theorem \ref{chiorbtranrank2} agrees with \eqref{O6}.
\end{remark}

\subsection{The irreducible characters and $S$-matrix of a $\ZZ_2$-orbifold using the root lattice $A_2$}\label{A2}

Consider the simple roots $\{\alpha_1, \alpha_2\}$ associated to the root lattice $A_2=\mathbb{Z}\alpha_1+\mathbb{Z}\alpha_2$, and the Dynkin diagram automorphism $\sigma\colon \alpha_1 \leftrightarrow \alpha_2$. Recall that $|\alpha_1 |^2=|\alpha_2 |^2=2$ and $ (\alpha_1 |\al_2)=-1.$
This example will emphasize the roles of the sublattice $\bar{Q}$ (cf.\ Section \ref{subsub}) as well as the set $\Z_\mu$ (cf.\ \eqref{thetainvert} and the discussion before it). The classification of irreducible orbifold representations for the root lattice $A_n$ for even $n$ is treated in \cite{E}. Another construction of the modules presented here is discussed in \cite{BE}, which uses a previously studied orbifold in \cite{DN}.

For convenience, we set $\alpha=\alpha_1+\alpha_2$ and $\beta=\alpha_1-\alpha_2$. Then
\[
|\alpha |^2=2 \,, \qquad |\beta |^2=6 \,, \qquad (\alpha |\beta)=0 \,.
\]
Using Lemma \ref{Qbarlemma}, we find that  $\al_1, \al_2\notin \bar{A_2}$ and $\bar{A_2}= \ZZ\al \oplus\ZZ\be$.
For the rest of this subsection, we set 
\[
Q=\ZZ\al \oplus\ZZ\be.
\] 
Then for $\lieh=\CC\otimes_\ZZ Q$, we find that 
\[
M=Q\cap\lieh_0=\ZZ\alpha=\pi_0(Q)
\] 
and $Q\cap\lieh_\perp=\ZZ\be$. Since $(1-\si)Q=\ZZ2\be$ is not equal to $Q\cap\lieh_\perp$, the central characters of $G_\si$ are needed in the description of twisted $V_Q$-modules (cf.\ \reref{remark1}).

Next we describe the irreducible orbifold modules. We first find that 
\[
Q^*=\ZZ\frac{\al}{2} \oplus\ZZ\frac{\be}{6} \,,
\]
and the set of $\si$-invariants in $Q^*/Q$ is
\begin{align}\label{A2Type1}
(Q^*/Q)^\si=\ZZ\frac{\al}{2} \oplus\ZZ\frac{\be}{2}\cong\ZZ_2\times\ZZ_2.
\end{align}
We also note that
\[
M^*=\pi_0(Q^*)=\ZZ\frac{\al}{2} \,.
\]
We then have $\pi_0(Q^*)/\pi_0(Q)\cong\ZZ_2$, and that the group $G_\si^\perp$ is an abelian group generated by $U_\be$ such that $U_{-\be}=U_{\be}$. Therefore, there are four $(\mu,\zeta)$ pairs which describe the irreducible twisted $V_Q$-modules (cf.\ \thref{bij}), and this agrees with using the $\si$-invariant elements in $Q^*/Q$ to describe the twisted modules (cf.\ \cite{BK}). Since the automorphism $\si$ acts on each of these modules, each of them will decompose into two eigenspaces of $\si$, resulting in 8 irreducible $V_Q^\si$-modules.

Since $\si$ acts on any untwisted $V_Q$-module $V_{\la+Q}$ if the coset $\la+Q$ is fixed under $\si$, we obtain from \eqref{A2Type1} 8 irreducible $V_Q^\si$-modules of Type 1. The $V_Q$-modules on which $\si$ does not act form orbits comprised of two modules, each isomorphic as orbifold modules under $\si$. Hence there are $\frac12(12-4)=4$ irreducible $V_Q^\si$-modules of Type 2. All together that makes 20 irreducible $V_Q^\si$-modules given in the following list:
\[
V_{i,j}^l=V_{i\frac{\al}{2}+j\frac{\be}{2}+Q}^l,\qquad i, j=0, 1,
\]
\[
V_{i,j}=V_{i\frac{\al}{2}+j\frac{\be}{6}+Q},\qquad i=0, 1, \;\; j=1, 2,
\]
\[
M(0, \zeta_k)^l,\quad M\left(\frac{\al}{2}, \zeta_k\right)^l, \qquad k=1,2,
\]
where $l=0,1$ denotes the eigenspaces of $\si$ and $\zeta_1, \zeta_2$ are the characters of $G_\si^\perp$. Note that 
\[
\si\colon V_{i\frac{\al}{2}+j\frac{\be}{6}+Q}\rightarrow V_{i\frac{\al}{2}-j\frac{\be}{6}+Q}.
\]
The characters of the above modules are given by:
\allowdisplaybreaks
\begin{align}
\chi_{i,j}^l(\tau)&=\frac12\left(\frac{\th_{i\frac{\al}{2}+j\frac{\be}{2}+Q}\left({\tau}{}\right)}{P_{1,1}(\tau)} + (-1)^l \frac{\th_{i\frac{\al}{2}+\ZZ\al}(\tau)}{P_{1,\si}(\tau)}\right),\qquad i, j=0, 1,\\
\chi_{i,j}(\tau)&=\frac{\th_{i\frac{\al}{2}+j\frac{\be}{6}+Q}(\tau)}{P_{1,1}(\tau)},\qquad i=0, 1, \;\; j=1, 2,\\
\chi_{M(0,\zeta_k)}^l(\tau)&=\frac12\left(\frac{\th_{\ZZ\al}\left({\tau}{}\right)}{P_{\si,1}(\tau)} + (-1)^l \frac{\th_{\ZZ\al}\left({\tau}{}\right)}{P_{\si,\si}(\tau)}\right),\qquad k=1,2,\\
\chi_{M(\frac{\al}{2},\zeta_k)}^l(\tau)&=\frac12\left(\frac{\th_{\frac{\al}{2}+\ZZ\al}\left({\tau}{}\right)}{P_{\si,1}(\tau)} + (-1)^{l+1} \ii\frac{\th_{\frac{\al}{2}+\ZZ\al}\left({\tau}{}\right)}{P_{\si,\si}(\tau)}\right),\qquad k=1,2,
\end{align}
where we used \eqref{thtlaws2} and \eqref{chiMk2}, since $\pi_0(Q)=\ZZ\al$ is an even integral lattice.

In this case we find that $\be_0$ can be set to zero in Theorem \ref{KP}. We also calculate that for $\mathcal{L}=\ZZ\sqrt{2}\al$, \eqref{cbe0} becomes $c_{0}=2-2\ii$, and 
\begin{equation}\label{v1A22}
v_1=\frac12(1-\ii)
\end{equation} 
(cf.\ \eqref{v1gen2}). Using Theorem \ref{chiorbtran} and \eqref{v1A22}, we obtain the following transformation laws of orbifold characters:

\begin{align}
\chi_{i,j}^l(\tau+1)=e^{\frac{\pi\ii}{6}(3i+9j-1)}\chi_{i,j}^l(\tau),
\end{align}

\begin{align}\label{A2S1}
\begin{split}
\chi_{i,j}^l\left(-\frac{1}{\tau}\right)=&\frac{1}{2\sqrt{12}}\sum_{m,n=0}^1(-1)^{im+jn}(\chi_{m,n}^0+\chi_{m,n}^1)(\tau)\\
&+\frac{1}{\sqrt{12}}\sum_{m=0}^1\sum_{n=1}^2(-1)^{im+jn}\chi_{m,n}(\tau)\\
&+\frac14\sum_{r=0}^1\sum_{k=1}^2(-1)^{l+ir}(\chi_{M(r\frac{\al}{2},\zeta_k)}^0+\chi_{M(r\frac{\al}{2},\zeta_k)}^1)(\tau),
\end{split}
\end{align}
where $l=0,1$ and $i, j=0, 1$,

\begin{align}
\chi_{i,j}(\tau+1)=e^{\frac{\pi\ii}{6}(3i+j^2-1)}\chi_{i,j}(\tau),
\end{align}

\begin{align}
\begin{split}
\chi_{i,j}\left(-\frac{1}{\tau}\right)=&\frac{1}{\sqrt{12}}\sum_{m,n=0}^1(-1)^{im+jn}(\chi_{m,n}^0+\chi_{m,n}^1)(\tau)\\
&+\frac{1}{\sqrt{12}}\sum_{m=0}^1\sum_{n=1}^2(-1)^{im}\cosh\left(\frac{\pi\ii}{3}jn\right)\chi_{m,n}(\tau),
\end{split}
\end{align}
where $i=0, 1$ and $j=1, 2$,

\begin{align}
\chi_{M(0,\zeta_k)}^l(\tau+1)=(-1)^le^{-\pi\ii/24}\chi_{M(0,\zeta_k)}^l(\tau),
\end{align}

\begin{align}
\chi_{M(\frac{\al}{2},\zeta_k)}^l(\tau+1)=(-1)^le^{7\pi\ii/24}\chi_{M(\frac{\al}{2},\zeta_k)}^l(\tau),
\end{align}

\begin{align}
\begin{split}
\chi_{M(0,\zeta_k)}^l\left(-\frac{1}{\tau}\right)=&\frac{1}{4}(\chi_{0,0}^0-\chi_{0,0}^1+\chi_{1,0}^0-\chi_{1,0}^1)(\tau)\\
&+\frac{1}{\sqrt{2}}(-1)^l\sum_{r=0}^1(\chi_{M(r\frac{\al}{2},\zeta_k)}^0-\chi_{M(r\frac{\al}{2},\zeta_k)}^1)(\tau),
\end{split}
\end{align}

\begin{align}
\begin{split}
\chi_{M(\frac{\al}{2},\zeta_k)}^l\left(-\frac{1}{\tau}\right)=&\frac{1}{4}(\chi_{0,0}^0-\chi_{0,0}^1-\chi_{1,0}^0+\chi_{1,0}^1)(\tau)\\
&+\frac{1}{2\sqrt{2}}\sum_{r=0}^1(-1)^{l+r}(\chi_{M(r\frac{\al}{2},\zeta_k)}^0-\chi_{M(r\frac{\al}{2},\zeta_k)}^1)(\tau),
\end{split}
\end{align}
where $l=0,1$ and $k=0, 1$.

In the following table, we present the asymptotic and quantum dimensions using the coefficients of the linear combination of characters in \eqref{A2S1}; see also Corollary \ref{KacCor}.

\begin{center}
\begin{table}[H]\label{tab2}
\begin{tabular}{|c|c|c|c|}
\hline
$M$&$V_{i,j}^l$&$V_{i,j}$&$M(\mu,\zeta_k)^l$\\
\hline
$\asdim M$&$\frac{1}{2\sqrt{12}}$&$\frac{1}{\sqrt{12}}$&$\frac14$\\
\hline
$\qdim M$&$1$&$2$&$\sqrt{3}$\\
\hline
\end{tabular}
\caption{Asymptotic and quantum dimensions}
\end{table}
\end{center}

\subsection{The irreducible characters and $S$-matrix of a $\ZZ_2$-orbifold using the root lattice $A_3$}\label{A3}

Consider the simple roots $\{\alpha_1, \alpha_2, \al_3\}$ associated to the root lattice $A_3=\mathbb{Z}\alpha_1+\mathbb{Z}\alpha_2+\ZZ\al_3$, and the Dynkin diagram automorphism 
\begin{align}\label{siA3}
\si(\al_2)=\al_2 \quad\text{ and }\quad \si\colon\al_1\leftrightarrow\al_3.
\end{align} 
Recall that $|\alpha_i |^2=2$ for all $i$, $ (\alpha_1 |\al_2)=(\alpha_2 |\al_3)=-1$, and $(\al_1|\al_3)=0$. 

In this section, we provide an alternative way to do the modular transformations of characters by computing them more directly. This example is special in that we are able to also obtain the $S$-matrix and fusion rules among irreducible orbifold characters. The classification of irreducible orbifold representations for the root lattice $A_n$ for odd $n$ is treated in \cite{E}. 

Using Lemma \ref{Qbarlemma}, we find that $Q=\bar{Q}$. Hence we set $Q=A_3$ and denote the eigenvectors of $\si$ by
\begin{align*}
\al=\al_1+\al_3,\quad
\be=\al_1-\al_3.
\end{align*}
Then $|\al|^2=4=|\be|^2$, and $(\al|\be)=0$. We also find that 
\begin{equation}\label{Z2nondeg}
M=Q\cap\lieh_0=\ZZ\al_2\oplus\ZZ\al,\qquad (1-\si)Q=Q\cap\lieh_\perp=\ZZ\be. 
\end{equation}
It follows from Lemma \ref{defectlemma} that $d(\si)=1$. Therefore, the irreducible twisted $V_Q$-modules can be classified using only $\mu\in\pi_0(Q^*)$ (cf.\ Theorem \ref{bij} and \reref{remark1}).
The dual lattice $Q^*$ is spanned by the weights
\begin{align*}
\La_1&=\frac14(3\al_1+2\al_2+\al_3),\quad\La_3=\frac14(\al_1+2\al_2+3\al_3),\\
\La_2&=\frac14(2\al_1+4\al_2+2\al_3)=\frac{\al}{2}+\al_2,
\end{align*}
and the fundamental group is cyclic of order 4:
\begin{align}
Q^*/Q=\langle\La_1+Q\rangle\cong\ZZ_4.
\end{align}
Note that $\La_2+M=\frac{\al}{2}+M=\pi_0\al_1+M$, and 
\[
M^*=\pi_0(Q^*)=\left\langle\frac{\al_2}{2}, \frac{\al}{2}\right\rangle. 
\]
Now $\pi_0$ fixes $\al_2$ and $\La_2$, and $\pi_0\al_1=\pi_0\al_3=\frac{\al}{2}$, $\pi_0\La_1=\pi_0\La_3=\frac{\al}{2}+\frac{\al_2}{2}$, so that
\begin{align}
\pi_0(Q)&=\left\langle\frac{\al}{2},\al_2\right\rangle,\\
\pi_0(Q^*)/\pi_0(Q)&=\displaystyle\left\langle\frac{\al_2}{2}+\pi_0(Q)\right\rangle.
\end{align}
Hence we may take $\mu=0,\frac{\al_2}{2}$ in describing distinct irreducible twisted $V_Q$-modules.
Note that $\pi_0(Q)$ is a self-dual integral lattice. 


It turns out that the trace functions \eqref{chiM} with $h\in\mathfrak{h}_0$ set to zero are sufficient to describe all irreducible $V_Q^\si$-modules. Since the only $\si$-invariant cosets in $Q^*/Q$ are $\La_2+Q$ and the trivial coset, there are 4 irreducible orbifold modules of Type 1: $V_Q^\pm$ and $V_{\La_2+Q}^\pm$ corresponding to the eigenspaces of $\si$ on $V_Q$ and $V_{\La_2+Q}$. The generator $\La_1+Q$ of $Q^*/Q$ forms an orbit of order 2, and there is only one irreducible orbifold module of Type 2. In addition, there are 2 irreducible orbifold modules of twisted type given by $\mu=0,\frac{\al_2}{2}$, and each of them breaks into two eigenspaces of $\si$.
All together there are 9 irreducible $V_Q^\si$-modules.

For convenience we set $\th_{\la+Q}(\tau)=\th_{\la+Q}(\tau,0,0)$ (cf.\ \eqref{th}). Since the central characters of $G_\si^\perp$ are not needed in this example, we also set for convenience 
\begin{equation}
\chi_{M(\mu)}^{\si,\si^k}(\tau)=\chi_{M(\mu,\zeta)}^{\si, \si^k}(\tau,0),
\end{equation}
where $k=0, 1$ (cf.\ \eqref{chiW}). Using \thref{chiF} and \prref{chiW2}, we obtain the following trace functions on irreducible untwisted $V_Q$-modules (the characters corresponding to superscript $1, 1$):

\begin{align*}
\chi_Q^{1,1} (\tau) &=\frac{\th_{Q}(\tau) }{ P_{1,1}(\tau) } \,, \qquad &\chi_Q^{1,\si} (\tau) = 
\frac{\th_{M}(\tau) }{ P_{1,\si}(\tau) } 
\,, 
\\
\chi_{\Lambda_1+Q}^{1,1} (\tau) &= 
\frac{\th_{\La_1+Q}(\tau) }{ P_{1,1}(\tau) } \,,\qquad &\chi_{\La_2+Q}^{1,\si} (\tau) = 
\frac{\th_{\La_2+M}(\tau) }{ P_{1,\si}(\tau) }\,,
\\
\chi_{\La_2+Q}^{1,1} (\tau) &= 
\frac{\th_{\La_2+Q}(\tau) }{ P_{1,1}(\tau) } \,.
\end{align*}
We also get the following trace functions on irreducible twisted $V_Q$-modules (the characters corresponding to $\si, 1$):


\begin{align*}
\chi_{M(0)}^{\si,1} (\tau) &=\frac{\th_{\pi_0(Q)}(\tau) }{ P_{\si,1}(\tau) } \,, \qquad&\chi_{M(0)}^{\si,\si} (\tau) &= \frac{\th_{\sqrt{2}\pi_0(Q)}(\frac{\tau+1}{2}) }{ P_{\si,\si}(\tau)} \,,\\
\chi_{M(\frac{\al_2}{2})}^{\si,1} (\tau) &= 
\frac{\th_{\frac{\al_2}{2}+\pi_0(Q)}(\tau) }{ P_{\si,1}(\tau) } \,,\qquad&\chi_{M(\frac{\al_2}{2})}^{\si,\si} (\tau) &= \frac{\th_{\frac{\al_2}{\sqrt{2}}+\sqrt{2}\pi_0(Q)}(\frac{\tau+1}{2}) }{ P_{\si,\si}(\tau)} \,,
\end{align*}
where $\th_{\pi_0(Q)}(\tau)=\th_M(\tau)+\th_{\frac{\al}{2}+M}(\tau)$. 

From these trace functions, we compute the irreducible charaters for the orbifold by taking trace of projections of $\si$. We label the orbifold characters as follows:
\begin{align*}
\chi_Q^\pm(\tau)&=\frac12(\chi_Q^{1,1}\pm\chi_Q^{1,\si})(\tau),\\
\chi_{\La_2+Q}^\pm(\tau)&=\frac12(\chi_{\La_2+Q}^{1,1}\pm\chi_{\La_2+Q}^{1,\si})(\tau),\\
\chi_{\La_1+Q}(\tau)&=\chi_{\La_1+Q}^{1,1}(\tau),\\
\chi_{M(0)}^{\si,\pm}(\tau)&=\frac12(\chi_{M(0)}^{\si,1}\pm\chi_{M(0)}^{\si,\si})(\tau),\\
\chi_{M(\frac{\al_2}{2})}^{\si,\pm}(\tau)&=\frac12(\chi_{M(\frac{\al_2}{2})}^{\si,1}\pm\chi_{M(\frac{\al_2}{2})}^{\si,\si})(\tau).
\end{align*}

We compute the transformation laws under $\tau\mapsto-{1}/{\tau}$ directly,
by invoking Theorem \ref{thtlaws} (with $z=u=0$) to get:
\begin{align*}
\th_Q\Bigl(-\frac1\tau\Bigr) 
&=\frac12(-\ii\tau)^\frac32(\th_Q+2\th_{\La_1+Q}+\th_{\La_2+Q})(\tau),\\
\th_M\Bigl(-\frac1\tau\Bigr)&=\frac12(-\ii\tau)(\th_M+\th_{\frac{\al}{2}+M}+\th_{\frac{\al_2}{2}+M}+\th_{\frac{\al+\al_2}{2}+M})(\tau),\\
\th_{\frac{\al}{2}+M}\Bigl(-\frac1\tau\Bigr)&=\frac12(-\ii\tau)(\th_M+\th_{\frac{\al}{2}+M}-\th_{\frac{\al_2}{2}+M}-\th_{\frac{\al+\al_2}{2}+M})(\tau),
\end{align*}
and Theorem \ref{chiFtran} to get:
\begin{align*}
P_{1,1}\Bigl(-\frac1\tau\Bigr)&=(-\ii\tau)^\frac32P_{1,1}(\tau),\quad
P_{1,\si}\Bigl(-\frac1\tau\Bigr)=\frac{1}{\sqrt{2}}(-\ii\tau)P_{\si,1}(\tau).
\end{align*}
Using these, we compute the transformations for the trace functions. For example, 
\begin{align*}
\chi_0^{1,1} \Bigl(-\frac1\tau\Bigr) &=\frac12 \chi_0^{1,1} (\tau) + \frac12 \chi_{\La_2}^{1,1} (\tau)
+ \chi_{\La_1}^{1,1} (\tau),\\
\chi_0^{1,\si} \Bigl(-\frac1\tau\Bigr) &= 
\frac1{\sqrt2} \chi_0^{\si,1} (\tau)
+ \frac1{\sqrt2} \chi_{\frac{\al_2}{2}}^{\si,1} (\tau).
\end{align*}
We find the transformation laws for the other trace functions on irreducible $V_Q$-modules in a similar way
and state the answer in the following lemma.

\begin{lemma}\label{A3VQtlaws}
The transformation laws under $\tau\mapsto-{1}/{\tau}$ of the trace functions on irreducible $V_Q$-modules are$:$
\begin{align*}
\chi_{\la}^{1,1} \Bigl(-\frac1\tau\Bigr) &= 
\frac12 \chi_0^{1,1} (\tau) + (-1)^{|\la|^2}\chi_{\La_1}^{1,1} (\tau)+ \frac12 \chi_{\La_2}^{1,1} (\tau),\quad \la=0, \La_2
 ,
\\
\chi_{\La_1}^{1,1} \Bigl(-\frac1\tau\Bigr) &= 
\frac12 \chi_0^{1,1} (\tau) - \frac12 \chi_{\La_2}^{1,1} (\tau) \,,
\\
\chi_{\la}^{1,\si} \Bigl(-\frac1\tau\Bigr) &= 
\frac1{\sqrt2} \left(\chi_0^{\si,1} (\tau)
+ (-1)^{|\la|^2}\chi_{\frac{\al_2}{2}}^{\si,1} (\tau)\right),\quad \la=0, \La_2\,,
\\
\chi_{\mu}^{\si,1} \Bigl(-\frac1\tau\Bigr) &=
\frac1{\sqrt2} \left(\chi_0^{1,\si} (\tau) 
+ (-1)^{(\mu|\al_2)}\chi_{\La_2}^{1,\si} (\tau)\right),\quad \mu=0, \frac{\al_2}{2}\,\\
\chi_0^{\si,\si} \Bigl(-\frac1\tau\Bigr) &=\chi_{\frac{\al_2}{2}}^{\si,\si}(\tau) \,,\qquad
\chi_{\frac{\al_2}{2}}^{\si,\si} \Bigl(-\frac1\tau\Bigr) =\chi_0^{\si,\si}(\tau).
\end{align*}
\end{lemma}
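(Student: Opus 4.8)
The plan is to compute each transformation law by reducing to the explicit modular behavior of the theta functions $\th_{\la+Q}$, $\th_{\la+M}$, and $\th_{\mu+\pi_0(Q)}$ together with the functions $P_{1,1}$, $P_{1,\si}$, $P_{\si,1}$, $P_{\si,\si}$. The key inputs are Theorem \ref{thtlaws} (the $S$-transformation of theta functions, with $z=u=0$) and Theorem \ref{chiFtran}, specialized through Corollaries \ref{chiFtran3} and \ref{chiFtran2}, which together give the $S$-transformations of the $P$-functions. For the lattice $Q=A_3$ we have rank $r=3$, $\pi_0(Q)$ self-dual of rank $2$, and $M=\ZZ\al_2\oplus\ZZ\al$ of rank $2$; these small ranks make all the square-root factors $(-\ii\tau)^{r/2}$ and discriminant factors $|Q^*/Q|^{1/2}=2$, $|M^*/M|^{1/2}=2$ completely explicit.

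First I would record the raw $S$-transformation of the relevant theta constants. Applying Theorem \ref{thtlaws} to $\th_Q$ over the rank-$3$ lattice $Q=A_3$ (with $|Q^*/Q|=4$) produces the three stated formulas for $\th_Q$, $\th_M$, $\th_{\frac\al2+M}$; here I expand the sum over $Q^*/Q\cong\ZZ_4$ and over $M^*/M\cong\ZZ_2\times\ZZ_2$, using the pairings $(\La_1|\La_1)$, $(\La_1|\La_2)$, etc., to read off the signs $e^{-2\pi\ii(\la|\mu)}$. Next I apply the $S$-transformations of the $P$-functions: from Corollary \ref{chiFtran2} with $\si=1$ we get $P_{1,1}(-1/\tau)=(-\ii\tau)^{3/2}P_{1,1}(\tau)$, and from Corollaries \ref{chiFtran3}, \ref{chiFtran2} with $d=\dim\lieh_{1/2}=1$, $r_0=2$, $p=2$ we get $P_{1,\si}(-1/\tau)=\tfrac1{\sqrt2}(-\ii\tau)P_{\si,1}(\tau)$ (and the companion relations swapping $P_{\si,1}\leftrightarrow P_{1,\si}$, $P_{\si,\si}\leftrightarrow P_{\si,\si}$). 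Note the crucial bookkeeping: the $S$-transformation sends an \emph{untwisted} trace function $\chi^{1,\si}$ to a \emph{twisted} one $\chi^{\si,1}$ and vice versa, exactly mirroring the $P$-function swap, which is why theta numerators originating from untwisted versus twisted characters must be re-expanded differently (cf.\ \eqref{thetainvert}, \eqref{thetainvert2}).

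Then I would assemble the quotients. For each trace function I divide the transformed theta numerator by the transformed $P$-denominator; the factors $(-\ii\tau)^{r/2}$ and $(-\ii\tau)^{r_0/2}$ cancel against those in the theta transformations (this is the cancellation noted after Proposition \ref{Ktlaws}), leaving only the discriminant constants and the phases $e^{-2\pi\ii(\la|\mu)}=(-1)^{\text{integer}}$. For the diagonal $\chi^{\si,\si}$ entries I use Proposition \ref{chiW2} to write the character via $\th_{\sqrt2\pi_0(Q)}\!\left(\tfrac{\tau+1}2\right)$, and the $S$-action on $\tfrac{\tau+1}2$ is handled through the matrix $A=-ST^{-2}S$ exactly as in Section \ref{Irrgen2}; since $\pi_0(Q)$ is self-dual and $\be_0=0$ here, the $c_{\be_0}$-sum collapses, giving the clean swap $\chi_0^{\si,\si}\leftrightarrow\chi_{\frac{\al_2}2}^{\si,\si}$. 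The main obstacle will be keeping the discriminant normalizations and the sign phases $(-1)^{|\la|^2}$, $(-1)^{(\mu|\al_2)}$ consistent across the untwisted-to-twisted swap, since a misplaced factor of $\sqrt2$ between $|Q^*/Q|^{1/2}=2$ and $|M^*/M|^{1/2}=2$ would corrupt the coefficients; I would guard against this by checking unitarity of the resulting $S$-matrix (Remark \ref{Sprops}) and consistency with the general formulas \eqref{M8}, \eqref{M9} of Proposition \ref{chiMtran}, \ref{chiMtran2}. Once the individual trace-function transformations are verified, the stated six relations in the lemma follow by reading off coefficients, completing the proof.
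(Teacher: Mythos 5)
Your overall strategy for the first four families of formulas matches the paper's: transform the theta numerators via Theorem \ref{thtlaws}, transform the denominators via Theorem \ref{chiFtran} and Corollaries \ref{chiFtran3}, \ref{chiFtran2} (with $r=3$, $r_0=2$, $d=1$, so $P_{1,1}(-1/\tau)=(-\ii\tau)^{3/2}P_{1,1}(\tau)$ and $P_{1,\si}(-1/\tau)=\tfrac1{\sqrt2}(-\ii\tau)P_{\si,1}(\tau)$), and cancel the $(-\ii\tau)$ powers. That part is fine and is exactly how the paper obtains the untwisted and mixed entries.

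The gap is in the last pair of formulas, $\chi_0^{\si,\si}\leftrightarrow\chi_{\al_2/2}^{\si,\si}$, which is the only part the paper actually proves in detail. You assert that ``since $\pi_0(Q)$ is self-dual, $\be_0=0$ here.'' This is false, and self-duality of $\pi_0(Q)$ is not the relevant criterion: the lattice entering Theorem \ref{KP} for the matrix $A=-ST^{-2}S$ (with $c=2$) is $\mathcal{L}=\sqrt2\,\pi_0(Q)$, which satisfies $|\mathcal{L}^*/\mathcal{L}|=4$, and condition \eqref{be0p2} requires $2|\nu|^2\equiv2(\nu|\be_0)\bmod 2\ZZ$ for all $\nu\in\mathcal{L}^*$. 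Taking $\nu=\tfrac{1}{\sqrt2}\cdot\tfrac{\al}{2}$ gives $2|\nu|^2=1$, which is odd, so $\be_0$ cannot be zero; the paper computes $\be_0=\tfrac{\al_2}{\sqrt2}$ and then $c_{\be_0}=|\mathcal{L}^*/\mathcal{L}|=4$. This is precisely the even-$c$ situation flagged in Remark \ref{beta0}. With your choice $\be_0=0$ the Gauss-type sum becomes $c_0=1-1+1-1=0$, so your version of \eqref{chisisi} would make the entire $\chi^{\si,\si}$ transformation vanish rather than produce the swap $\chi_0^{\si,\si}(-1/\tau)=\chi_{\al_2/2}^{\si,\si}(\tau)$. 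The nonzero $\be_0$ also contributes the phase $e^{\pi\ii\sqrt2(\nu-\mu|\be_0)}$ that fixes the coefficients. To repair the argument you must determine $\be_0$ from \eqref{be0p2} for $\mathcal{L}=\sqrt2\pi_0(Q)$ (not from $\pi_0(Q)$ itself) before evaluating $c_{\be_0}$ and $v_1$; your proposed unitarity check would in fact have caught this, since a zero row in the twisted block is impossible.
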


\begin{proof}
We prove the transformations for $\chi_0^{\si,\si}(\tau)$ and $\chi_{\frac{\al_2}{2}}^{\si,\si}(\tau)$. We have that 
\[
\mathcal{L}=\sqrt{2}\pi_0(Q)=\sqrt{2}\left\langle\frac{\al}{2}, \al_2\right\rangle,\quad \mathcal{L}^*=\frac{1}{\sqrt{2}}\left\langle\frac{\al}{2}, \al_2\right\rangle=\frac{1}{\sqrt{2}}\pi_0(Q)=\frac12\mathcal{L},
\]
and $|\mathcal{L}^*/\mathcal{L}|=4$. In this case we find that 
\begin{equation*}
\be_0=\frac{\al_2}{\sqrt{2}}
\end{equation*}
is nonzero and \eqref{cbe0} becomes $c_{\be_0}=|\mathcal{L}^*/\mathcal{L}|$.
The transformations now follow from \eqref{chisisi} and that  $r_0=2$.
\end{proof}

Using Lemma \ref{A3VQtlaws}, we can now write the transformation laws for irreducible orbifold characters, and determine the $S$-matrix using that the orbifold characters are linear combinations of the trace functions defined above. For example, we show the calculation for the vacuum module $\chi_0^+(\tau)$ and the character $\chi_{\frac{\al_2}{2}}^{\si, +}(\tau)$:
\begin{align*}
\chi_0^+\Bigl(-\frac1\tau\Bigr) &=\frac14 \chi_0^{1,1}  + \frac12\chi_{\La_1} + \frac14 \chi_{\La_2}^{1,1} +\frac1{2\sqrt2} \chi_0^{\si,1} 
+ \frac1{2\sqrt2} \chi_{\frac{\al_2}{2}}^{\si,1} \\
&=\frac14\chi_0^+ +\frac14\chi_0^-+\frac12\chi_{\La_1}+\frac14\chi_{\La_2}^++\frac14\chi_{\La_2}^-\\&+\frac1{2\sqrt2}(\chi_0^{\si,+}+\chi_0^{\si,-}+\chi_{\frac{\al_2}{2}}^{\si,+}+\chi_{\frac{\al_2}{2}}^{\si,-}), \\
\chi_{\frac{\al_2}{2}}^{\si, +}\Bigl(-\frac1\tau\Bigr) &=\frac{1}{\sqrt{2}}\chi_0^{1, \si}-\frac{1}{\sqrt{2}}\chi_{\La_2}^{1, \si}+\chi_0^{\si, \si}\\
&=\frac{1}{2\sqrt{2}}(\chi_0^+-\chi_0^--\chi_{\La_2}^++\chi_{\La_2}^-)+\frac12\chi_0^{\si, +}-\frac12\chi_0^{\si, -}.
\end{align*}

We compute the transformations for the other orbifold characters in a similar way. Using the coefficients in these linear combinations, we then obtain the $S$-matrix. 
This is because the the irreducible characters of this $\ZZ_2$-orbifold are linearly independent, due to the fact that $(1-\si)Q=Q\cap\lieh_\perp$ (see \eqref{Z2nondeg} and \reref{remark1}).


\begin{theorem}\label{thm6.4}
Let\/ $Q$ be the $A_3$ root lattice and\/ $\si$ be the Dynkin diagram automorphism given by\/ \eqref{siA3}. Consider the vector space spanned by the irreducible characters of\/ $V_Q^\si$ with ordered basis
\[
\bigl\{\chi_0^+,\chi_0^-, \chi_{\La_1}^{1,1}, \chi_{\La_2}^+, \chi_{\La_2}^-, \chi_{0}^{\si,+}, \chi_{0}^{\si,-}, \chi_{\frac{\al_2}2}^{\si,+}, \chi_{\frac{\al_2}2}^{\si,-}\bigr\}.
\]
Then the modular transformation $\tau\mapsto-1/{\tau}$ for characters of irreducible\/ $V_Q^\si$-modules is given by the following $S$-matrix$:$
\begin{center}
$S=\displaystyle\frac14
\left(
\begin{array}{rrrrrrrrr}
1&1&2&1&1&\sqrt{2}&\sqrt{2}&\sqrt{2}&\sqrt{2}\\
1&1&2&1&1&-\sqrt{2}&-\sqrt{2}&-\sqrt{2}&-\sqrt{2}\\
2&2&0&-2&-2&0&0&0&0\\
1&1&-2&1&1&\sqrt{2}&\sqrt{2}&-\sqrt{2}&-\sqrt{2}\\
1&1&-2&1&1&-\sqrt{2}&-\sqrt{2}&\sqrt{2}&\sqrt{2}\\
\sqrt{2}&-\sqrt{2}&0&\sqrt{2}&-\sqrt{2}&0&0&2&-2\\
\sqrt{2}&-\sqrt{2}&0&\sqrt{2}&-\sqrt{2}&0&0&-2&2\\
\sqrt{2}&-\sqrt{2}&0&-\sqrt{2}&\sqrt{2}&2&-2&0&0\\
\sqrt{2}&-\sqrt{2}&0&-\sqrt{2}&\sqrt{2}&-2&2&0&0\\
\end{array}
\right)$.
\end{center}
\end{theorem}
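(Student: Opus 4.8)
The plan is to compute the action of $\tau\mapsto -1/\tau$ on each of the nine basis characters and read off the columns of $S$ one at a time, exactly as the two worked examples ($\chi_0^+$ and $\chi_{\frac{\al_2}2}^{\si,+}$) preceding the statement illustrate. Every irreducible orbifold character is by definition a $\pm$-combination of the trace functions $\chi_\la^{1,1}$, $\chi_\la^{1,\si}$, $\chi_\mu^{\si,1}$, $\chi_\mu^{\si,\si}$ listed above, so the whole computation reduces to applying \leref{A3VQtlaws} termwise and then re-expanding the resulting trace functions back into the orbifold basis using the inverse relations
\begin{align*}
\chi_\la^{1,1} &= \chi_\la^+ + \chi_\la^-, & \chi_\la^{1,\si} &= \chi_\la^+ - \chi_\la^-,\\
\chi_\mu^{\si,1} &= \chi_{M(\mu)}^{\si,+} + \chi_{M(\mu)}^{\si,-}, & \chi_\mu^{\si,\si} &= \chi_{M(\mu)}^{\si,+} - \chi_{M(\mu)}^{\si,-},
\end{align*}
for $\la=0,\La_2$ and $\mu=0,\frac{\al_2}2$, together with $\chi_{\La_1}=\chi_{\La_1}^{1,1}$. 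This is precisely the $p=2$ specialization of the inversion formulas \eqref{V1invert}--\eqref{chiMinvert}.

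First I would organize the nine characters into the three families and handle each modular image in turn. For a Type~1 character $\chi_\la^{\pm}=\tfrac12(\chi_\la^{1,1}\pm\chi_\la^{1,\si})$, \leref{A3VQtlaws} gives $\chi_\la^{1,1}(-1/\tau)$ as a combination of $\chi_0^{1,1},\chi_{\La_1}^{1,1},\chi_{\La_2}^{1,1}$ (which feed into Type~1 and Type~2 orbifold characters) and $\chi_\la^{1,\si}(-1/\tau)$ as a combination of $\chi_0^{\si,1},\chi_{\frac{\al_2}2}^{\si,1}$ (which feed into twisted orbifold characters); re-expanding and collecting $\pm$ pieces yields one row each of $S$. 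For a twisted character $\chi_{M(\mu)}^{\si,\pm}=\tfrac12(\chi_\mu^{\si,1}\pm\chi_\mu^{\si,\si})$, \leref{A3VQtlaws} sends $\chi_\mu^{\si,1}(-1/\tau)$ into $\chi_0^{1,\si},\chi_{\La_2}^{1,\si}$ and sends $\chi_0^{\si,\si},\chi_{\frac{\al_2}2}^{\si,\si}$ into each other, so the twisted block mixes with the Type~1 block exactly as the sign patterns $(-1)^{|\la|^2}$ and $(-1)^{(\mu|\al_2)}$ dictate. The $\chi_{\La_1}$ row and column are immediate from the middle line of \leref{A3VQtlaws}, noting $\chi_{\La_1}^{1,\si}=0$ since $\La_1+Q\notin(Q^*/Q)^\si$.

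The one genuinely delicate point is the twisted--twisted block, i.e. the bottom-right $4\times4$ entries involving $\chi_{M(\mu)}^{\si,\pm}$ transforming among themselves. These come from the transformation of $\chi_\mu^{\si,\si}$, which in \leref{A3VQtlaws} reads $\chi_0^{\si,\si}\leftrightarrow\chi_{\frac{\al_2}2}^{\si,\si}$; crucially this relies on the nonzero parameter $\be_0=\al_2/\sqrt2$ computed in the proof of \leref{A3VQtlaws} via \eqref{chisisi}, and on $r_0=\dim\lieh_0=2$ so that $\ii^{r_0/2}=\ii$ combines with $v_1$. I expect the main obstacle to be bookkeeping the interaction of these phase factors with the $(-1)^j$ eigenspace signs so that the $2\times2$ antidiagonal blocks $\left(\begin{smallmatrix}0&2\\2&0\end{smallmatrix}\right)/4$ and their signed variants come out correctly; this is where a sign error would silently corrupt the matrix.

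Once all nine transformation laws are assembled, the proof concludes by simply tabulating the coefficients as the columns (equivalently rows, by symmetry) of the $9\times9$ matrix and verifying the overall normalization $\tfrac14$. The resulting $S$ being symmetric, and its square being a permutation matrix, serves as an internal consistency check (cf.\ \reref{Sprops}), and the linear independence of the nine characters—guaranteed by $(1-\si)Q=Q\cap\lieh_\perp$ via \eqref{Z2nondeg} and \reref{remark1}—ensures that the $S$-matrix is uniquely determined by these expansions.
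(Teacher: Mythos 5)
Your proposal is correct and follows essentially the same route as the paper: apply \leref{A3VQtlaws} termwise to the $\pm$-combinations defining the orbifold characters, re-expand via the $p=2$ inversion formulas, and read off the coefficients, with linear independence (from $(1-\si)Q=Q\cap\lieh_\perp$) justifying that this determines $S$. You also correctly flag the role of $\be_0=\al_2/\sqrt2$ in the twisted--twisted block, which is exactly where the paper's proof of \leref{A3VQtlaws} does the delicate work.
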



We also present the asymptotic and quantum dimensions using the coefficients of the linear combination of characters in \eqref{A2S1}; see also Corollary \ref{KacCor}.

\begin{center}
\begin{table}[H]\label{tab3}
\begin{tabular}{|c|c|c|c|}
\hline
$M$&$V_Q^\pm, V_{\La_2+Q}^\pm$&$V_{\La_1+Q}$&$M(0)^\pm, M(\frac{\al_2}{2})^\pm$\\
\hline
$\asdim M$&$\frac{1}{4}$&$\frac{1}{2}$&$\frac{1}{2\sqrt{2}}$\\
\hline
$\qdim M$&$1$&$2$&$\sqrt{2}$\\
\hline
\end{tabular}
\caption{Asymptotic and quantum dimensions}
\end{table}
\end{center}

\begin{remark}
These results agree with the quantum dimensions for irreducible representations of orbifold modules computed in \cite{E2} in the general case of an order $2$ automorphism. 
In general, the quantum dimensions of the irreducible orbifold modules of untwisted type corresponding to an involution are either $1$ or $2$, and the quantum dimensions of the irreducible orbifold modules of twisted type are all equal.
\end{remark}

Next we compute the transformation laws for the transformation $\tau\mapsto\tau+1$, which are given in Theorem \ref{thtlaws} with $z=u=0$. For example, 
\begin{align*}
\th_{\La_1+Q}(\tau+1)&=e^{\frac34\pi\ii}\th_{\La_1+Q}(\tau),\\
\th_{\pi_0(Q)}(\tau+1)&=\th_M(\tau+1)+\th_{\frac{\al}{2}+M}(\tau+1)=\th_M(\tau)-\th_{\frac{\al}{2}+M}(\tau),\\
\th_{\frac{\al_2}{2}+\pi_0(Q)}(\tau+1)&=\th_{\frac{\al_2}{2}+M}(\tau+1)+\th_{\frac{\al_2}{2}+\frac{\al}{2}+M}(\tau+1)=\ii\th_{\frac{\al_2}{2}+\pi_0(Q)}(\tau).\\
\end{align*}
To use the results of Theorem \ref{chiFtran}, we first find that $\Delta_1=0$ and $\Delta_\si=\frac{1}{16}$. Then we obtain
\begin{align*}
P_{1,\ph}(\tau+1)&=e^{\pi\ii/4}P_{1,\ph}(\tau),\quad P_{\si,\ph}(\tau+1)=e^{\pi\ii/8}P_{\si,\ph\si}(\tau),
\end{align*}
for $\ph=1,\si$.
We can now compute the transformations $\tau\mapsto\tau+1$ for the trace functions. For example, 
\begin{align*}
\chi_{\La_1}(\tau+1)&=e^{-\frac{1}{4}\pi\ii}e^{\frac34\pi\ii}\chi_{\La_1}(\tau)=\ii\chi_{\La_1}(\tau).
\end{align*}
The other transformations of the trace functions are found in a similar way and the results are stated in the next lemma.

\begin{lemma}\label{A3T}
The transformation laws under $\tau\mapsto\tau+1$ of the trace functions on irreducible\/ $V_Q$-modules are$:$
\begin{align*}
\chi_0^{1,1} (\tau+1) &= 
e^{-\pi\ii/4}\chi_0^{1,1} (\tau),\quad\quad\chi_0^{\si,1} (\tau+1) =e^{-\pi\ii/8}\chi_0^{\si,\si} (\tau),
\\
\chi_{\La_1}^{1,1} (\tau+1) &= \ii\chi_{\La_1}^{1,1} (\tau) \quad\quad\quad\;\;\quad\chi_{\frac{\al_2}{2}}^{\si,1} (\tau+1) =\ii e^{-\pi\ii/8}\chi_{\frac{\al_2}{2}}^{\si,\si} (\tau),
\\
\chi_{\La_2}^{1,1} (\tau+1) &= -e^{-\pi\ii/4}\chi_{\La_2}^{1,1} (\tau),
 \quad\;\chi_0^{\si,\si} (\tau+1) =e^{-\pi\ii/8}\chi_0^{\si,1} (\tau),
\\
\chi_0^{1,\si} (\tau+1) &= e^{-\pi\ii/4}\chi_0^{1,\si} (\tau),\quad\quad\chi_{\frac{\al_2}{2}}^{\si,\si} (\tau+1) =\ii e^{-\pi\ii/8}\chi_{\frac{\al_2}{2}}^{\si,1} (\tau),
\\
\chi_{\La_2}^{1,\si} (\tau+1) &= -e^{-\pi\ii/4}\chi_{\La_2}^{1,\si} (\tau).
\end{align*}
\end{lemma}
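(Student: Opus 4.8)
\textbf{The plan} is to treat every trace function uniformly as a quotient $\th/P$ and transform numerator and denominator separately under $\tau\mapsto\tau+1$, then reassemble. The denominator transformations are the common backbone: by \leref{Deltalemma} applied with $p=2$ and $\dim\lieh_\perp=1$ (since $Q\cap\lieh_\perp=\ZZ\be$) we get $\Delta_1=0$ and $\Delta_\si=\tfrac1{16}$, and with $r=\rank Q=3$ the second formula of \thref{chiFtran} yields $P_{1,\ph}(\tau+1)=e^{\pi\ii/4}P_{1,\ph}(\tau)$ and $P_{\si,\ph}(\tau+1)=e^{\pi\ii/8}P_{\si,\ph\si}(\tau)$, where I use $e^{2\pi\ii(-\Delta_1+r/24)}=e^{\pi\ii/4}$ and $e^{2\pi\ii(-\Delta_\si+r/24)}=e^{\pi\ii/8}$. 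The crucial bookkeeping point is that for the twisted denominators the index shifts $\ph\mapsto\ph\si$, and since $\si^2=1$ this swaps $P_{\si,1}\leftrightarrow P_{\si,\si}$; this is exactly what sends the $(\si,1)$-trace functions to the $(\si,\si)$-trace functions and back.

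For the untwisted characters I would read the numerator transformations straight off \eqref{thtlaws2}: each $\th_{\la+Q}$ (or $\th_{\la+M}$) picks up $e^{\pi\ii|\la|^2}$. Computing the relevant norms $|0|^2=0$, $|\La_1|^2=\tfrac34$, $|\La_2|^2=1$ and dividing by $e^{\pi\ii/4}$ gives the five untwisted formulas, e.g. $\chi_{\La_1}^{1,1}(\tau+1)=e^{3\pi\ii/4}e^{-\pi\ii/4}\chi_{\La_1}^{1,1}(\tau)=\ii\,\chi_{\La_1}^{1,1}(\tau)$, and likewise the factor $e^{\pi\ii}=-1$ for $\La_2$. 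Here the indices $P_{1,1}$ and $P_{1,\si}$ do not move, consistent with $\si$ acting as the identity on the denominator index when the first index is $1$.

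The twisted cases are where the real content lies, and I would handle them through the rescaling identity \eqref{thc}. Writing the $(\si,\si)$ trace functions in the form given by \prref{chiW2} (equivalently \eqref{chiMk}), namely $\chi_{M(\mu)}^{\si,\si}(\tau)=e^{-\pi\ii|\mu|^2}\,\th_{\sqrt2\mu+\sqrt2\pi_0(Q)}\!\bigl(\tfrac{\tau+1}{2}\bigr)/P_{\si,\si}(\tau)$, I would apply \eqref{thc} with $c=\sqrt2$ to recognize $\th_{\sqrt2\mu+\sqrt2\pi_0(Q)}\!\bigl(\tfrac{\tau+1}{2}\bigr)=\th_{\mu+\pi_0(Q)}(\tau+1)$. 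Thus the numerator of $\chi_{M(\mu)}^{\si,\si}$ is literally the $\tau\mapsto\tau+1$ shift of the numerator $\th_{\mu+\pi_0(Q)}$ of $\chi_{M(\mu)}^{\si,1}$. Combining this with the denominator rule $P_{\si,1}(\tau+1)=e^{\pi\ii/8}P_{\si,\si}(\tau)$ gives $\chi_{M(\mu)}^{\si,1}(\tau+1)=e^{-\pi\ii/8}e^{\pi\ii|\mu|^2}\chi_{M(\mu)}^{\si,\si}(\tau)$, and a second application (using $\th_{\mu+\pi_0(Q)}(\tau+2)=e^{2\pi\ii|\mu|^2}\th_{\mu+\pi_0(Q)}(\tau)$ and $P_{\si,\si}(\tau+1)=e^{\pi\ii/8}P_{\si,1}(\tau)$) gives the symmetric formula for $\chi_{M(\mu)}^{\si,\si}(\tau+1)$. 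Specializing $\mu=0$ and $\mu=\tfrac{\al_2}{2}$ (with $|\tfrac{\al_2}{2}|^2=\tfrac12$, so $e^{\pi\ii|\mu|^2}=\ii$) reproduces all four twisted formulas, including the extra factor $\ii$ in the $\tfrac{\al_2}{2}$ lines.

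The main obstacle I anticipate is precisely this twisted bookkeeping: one must not lose the prefactor $e^{-\pi\ii|\mu|^2}$ coming from \prref{chiW2} (it is the source of the $\ii$ in the $\mu=\tfrac{\al_2}{2}$ cases), must correctly split $\th_{\pi_0(Q)}(\tau+1)=\th_M(\tau)-\th_{\frac\al2+M}(\tau)$ using the coset norms $|\tfrac\al2|^2=1$ and $|\tfrac{\al_2}{2}+\tfrac\al2|^2=\tfrac12$ when verifying the rescaling concretely, and must track the index swap $P_{\si,1}\leftrightarrow P_{\si,\si}$ forced by $\si^2=1$. Once the identity $\th_{\sqrt2\mu+\sqrt2\pi_0(Q)}(\tfrac{\tau+1}2)=\th_{\mu+\pi_0(Q)}(\tau+1)$ is in hand, the remaining steps are routine substitution of the scalar factors computed above.
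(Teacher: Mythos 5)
Your proposal is correct and follows essentially the same route as the paper: numerators are transformed via \eqref{thtlaws2} (splitting $\th_{\pi_0(Q)}$ over the cosets of $M$), denominators via Theorem \ref{chiFtran} with $\Delta_1=0$, $\Delta_\si=\tfrac1{16}$, $r=3$, and the swap $P_{\si,1}\leftrightarrow P_{\si,\si}$ forced by $\si^2=1$. Your explicit use of the rescaling \eqref{thc} to identify $\th_{\sqrt2\mu+\sqrt2\pi_0(Q)}\bigl(\tfrac{\tau+1}{2}\bigr)=\th_{\mu+\pi_0(Q)}(\tau+1)$, and your tracking of the prefactor $e^{-\pi\ii|\mu|^2}$ as the source of the extra $\ii$ for $\mu=\tfrac{\al_2}{2}$, only make explicit what the paper leaves implicit.
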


Using Lemma \ref{A3T}, we can now write the transformation laws for irreducible orbifold characters and determine the $T$-matrix.

\begin{theorem}
In the setting of \thref{thm6.4}, the modular transformation $\tau\mapsto\tau+1$ for irreducible characters of the orbifold vertex algebra\/ $V_Q^\si$ is given by the following diagonal matrix$:$
\begin{align*}
T
&=e^{-\pi\ii/4}\text{\upshape diag}(1,1,e^{3\pi\ii/4},-1,-1,e^{\pi\ii/8},-e^{\pi\ii/8},\ii e^{\pi\ii/8},-\ii e^{\pi\ii/8}).
\end{align*}
\end{theorem}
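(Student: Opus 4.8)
The plan is to read off the $T$-matrix directly from \leref{A3T}, which already records how each of the nine trace functions on irreducible (twisted and untwisted) $V_Q$-modules transforms under $\tau\mapsto\tau+1$, together with the expressions of the orbifold characters as linear combinations of these trace functions. First I would recall those defining combinations: the untwisted orbifold characters are $\chi_\la^\pm=\frac12(\chi_\la^{1,1}\pm\chi_\la^{1,\si})$ for $\la=0,\La_2$, while $\chi_{\La_1}=\chi_{\La_1}^{1,1}$, and the twisted ones are $\chi_{M(\mu)}^{\si,\pm}=\frac12(\chi_\mu^{\si,1}\pm\chi_\mu^{\si,\si})$ for $\mu=0,\frac{\al_2}2$. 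The whole computation then reduces to substituting the transformation laws of \leref{A3T} into these combinations and collecting the resulting scalars on the diagonal.

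For the untwisted sector the argument is immediate: \leref{A3T} assigns $\chi_\la^{1,1}$ and $\chi_\la^{1,\si}$ the \emph{same} scalar factor under $\tau\mapsto\tau+1$ (namely $e^{-\pi\ii/4}$ for $\la=0$ and $-e^{-\pi\ii/4}$ for $\la=\La_2$), so each $\pm$-combination is an eigenvector with that same scalar, yielding the entries for $\chi_0^\pm$ and $\chi_{\La_2}^\pm$; the remaining untwisted character $\chi_{\La_1}$ simply scales by $\ii=e^{-\pi\ii/4}e^{3\pi\ii/4}$. The subtle point, and the only place where something must be checked rather than merely scaled, is the twisted sector: by \leref{A3T} (reflecting $\si^k\mapsto\si^{k+1}$ as in \prref{chiMtran2}, cf.\ \eqref{M5}) the map $\tau\mapsto\tau+1$ does not fix $\chi_\mu^{\si,1}$ and $\chi_\mu^{\si,\si}$ but rather \emph{interchanges} them up to a common phase, e.g.\ $\chi_0^{\si,1}(\tau+1)=e^{-\pi\ii/8}\chi_0^{\si,\si}(\tau)$ and $\chi_0^{\si,\si}(\tau+1)=e^{-\pi\ii/8}\chi_0^{\si,1}(\tau)$.

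The key observation resolving this is that the eigenspace decomposition of the twisted module under $\si$ exactly diagonalizes this interchange: substituting into $\chi_{M(\mu)}^{\si,\pm}=\frac12(\chi_\mu^{\si,1}\pm\chi_\mu^{\si,\si})$ one finds $\chi_{M(\mu)}^{\si,\pm}(\tau+1)=\pm(\text{phase})\,\chi_{M(\mu)}^{\si,\pm}(\tau)$, so the off-diagonal swap of trace functions becomes the sign $\pm$ along the diagonal. Carrying this out gives the scalars $\pm e^{-\pi\ii/8}$ for $\mu=0$ and $\pm\ii\,e^{-\pi\ii/8}$ for $\mu=\frac{\al_2}2$. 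Finally I would factor out the common $e^{-\pi\ii/4}$ to match the stated normalization, producing the diagonal matrix $e^{-\pi\ii/4}\,\diag(1,1,e^{3\pi\ii/4},-1,-1,e^{\pi\ii/8},-e^{\pi\ii/8},\ii e^{\pi\ii/8},-\ii e^{\pi\ii/8})$. No genuine obstacle remains once \leref{A3T} is in hand; the entire content is the elementary but easily-miscounted bookkeeping of phases in the twisted sector, where one must track both the common factor $e^{-\pi\ii/8}$ (or $\ii e^{-\pi\ii/8}$) and the extra $\pm$ sign arising from the interchange of $\chi_\mu^{\si,1}$ and $\chi_\mu^{\si,\si}$.
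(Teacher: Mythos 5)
Your proposal is correct and follows exactly the paper's route: the paper likewise obtains the $T$-matrix by substituting the transformation laws of Lemma \ref{A3T} into the defining linear combinations $\chi_\la^\pm=\frac12(\chi_\la^{1,1}\pm\chi_\la^{1,\si})$ and $\chi_{M(\mu)}^{\si,\pm}=\frac12(\chi_\mu^{\si,1}\pm\chi_\mu^{\si,\si})$, with the interchange of $\chi_\mu^{\si,1}$ and $\chi_\mu^{\si,\si}$ diagonalized by the $\pm$-eigenspace combinations. Your phase bookkeeping in both sectors matches the stated diagonal entries.
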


Next we describe the fusion matrix corresponding to the fusion product among irreducible orbifold modules. The 9 irreducible orbifold modules for the $A_3$ root lattice and Dynkin diagram automorphism are given in \cite{BE} (where $L_+=Q\cap\mathfrak{h}_0$):
\begin{align}
V_Q^\pm \cong& \bigl(V_{L_+}\otimes V_{\mathbb{Z}\beta}^\pm\bigr)\oplus\bigl(V_{\frac{\alpha}{2}+{L_+}}\otimes V_{\frac{\beta}{2}+\mathbb{Z}\beta}^\pm\bigr)\,, \\
V_{\frac{\al}2+Q}^\pm \cong& \bigl(V_{L_+}\otimes V_{\frac{\beta}{2}+\mathbb{Z}\beta}^\pm\bigr)\oplus\bigl(V_{\frac{\al}{2}+L_+}\otimes V_{\mathbb{Z}\beta}^\pm\bigr) \,\\
V_{\lambda_1+Q} \cong& \bigl(V_{\frac{\alpha_2}{2}+{L_+}}\otimes V_{\frac{\beta}{4}+\mathbb{Z}\beta}\bigr)\oplus\bigl(V_{\frac{\alpha_2}{2}+\frac{\alpha}{2}+{L_+}}\otimes V_{\frac{\beta}{4}+\mathbb{Z}\beta}\bigr)\,,\\
&\bigl(V_{L_+}\otimes V_{\mathbb{Z}\beta}^{T_1,\pm}\bigr)\oplus\bigl(V_{\frac{\alpha}{2}+{L_+}}\otimes V_{\mathbb{Z}\beta}^{T_1,\pm}\bigr) \,,\label{A3mod4} \\
 &\bigl(V_{L_+}\otimes V_{\mathbb{Z}\beta}^{T_2,\pm}\bigr)\oplus\bigl(V_{\frac{\alpha}{2}+{L_+}}\otimes V_{\mathbb{Z}\beta}^{T_2,\mp}\bigr).\label{A3mod5}
\end{align}
Their corresponding characters are $\chi_0^\pm, \chi_{\La_2}^\pm, \chi_{\La_1}^{1,1}, \chi_{0}^{\si,\pm}, \chi_{\frac{\al_2}2}^{\si,\pm}$, respectively. For convenience in writing the matrix describing the fusion algebra, we make the labelling:
 $$
\frac{\alpha}{2}^\pm:=V_{\frac{\al}2+Q}^\pm, \quad
 \lambda_1:=V_{\lambda_1+Q},$$ and we denote the modules \eqref{A3mod4} by $T_1^\pm$ and \eqref{A3mod5} by $T_2^\pm$. We also set 
 \begin{align*}
 T_i&=T_i^++T_i^-,\qquad i=1,2,\\ 
 U^{m, n}&=V_Q^m+V_{\frac{\al}2+Q}^n,\qquad m,n\in\{\pm\}.
 \end{align*}


%
\begin{theorem} 
With the above notation, consider the fusion matrix whose $(i,j)$-entry is the fusion product \;$V_i\,\boxtimes \,V_j$, where $V_i, V_j$ are the irreducible\/ $V_Q^\si$-modules in position $i$ and $j$, respectively. 
Then fusion matrix is given by$:$
\medskip
\begin{center}
$\left(
\begin{array}{ccccccccc}
V_Q^+&V_Q^-&\la_1&\frac{\al}{2}^+&\frac{\al}{2}^-&T_1^+&T_1^-&T_2^+&T_2^-\\
V_Q^-&V_Q^+&\la_1&\frac{\al}{2}^-&\frac{\al}{2}^+&T_1^-&T_1^+&T_2^-&T_2^+\\
\la_1&\la_1&U^{+,+}+U^{-,-}&\la_1&\la_1&T_2&T_2&T_1&T_1\\
\frac{\al}{2}^+&\frac{\al}{2}^-&\la_1&V_Q^+&V_Q^-&T_1^-&T_1^+&T_2^+&T_2^-\\
\frac{\al}{2}^-&\frac{\al}{2}^+&\la_1&V_Q^-&V_Q^+&T_1^-&T_1^+&T_2^-&T_2^+\\
T_1^+&T_1^-&T_2&T_1^-&T_1^+&U^{+,-}&U^{-,+}&\la_1&\la_1\\
T_1^-&T_1^+&T_2&T_1^+&T_1^-&U^{-,+}&U^{+,-}&\la_1&\la_1\\
T_2^+&T_2^-&T_1&T_2^+&T_2^-&\la_1&\la_1&U^{+,+}&U^{-,-}\\
T_2^-&T_2^+&T_1&T_2^-&T_2^+&\la_1&\la_1&U^{-,-}&U^{+,+}\\
\end{array}
\right)$
\end{center}
\medskip
\end{theorem}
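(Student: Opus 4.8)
The plan is to obtain every entry of the fusion matrix directly from Verlinde's formula \eqref{Verlinde} in Theorem \ref{H}, fed by the explicit $S$-matrix of \thref{thm6.4}. Let $M_0,\dots,M_8$ denote the nine irreducible $V_Q^\si$-modules listed in the ordered basis of \thref{thm6.4}, so that $M_0=V_Q^+$ is the vacuum module. Verlinde's formula then gives the fusion coefficients as $N_{i,j}^{k}=\sum_{l=0}^{8}S_{i,l}S_{j,l}\bar S_{k,l}/S_{l,0}$, with $M_i\boxtimes M_j=\sum_k N_{i,j}^{k}M_k$. The crucial point that legitimizes this is that, because $(1-\si)Q=Q\cap\lieh_\perp$ in this example (see \eqref{Z2nondeg} and \reref{remark1}), the nine irreducible characters are linearly independent; hence the matrix $S$ of \thref{thm6.4} is the genuine modular $S$-matrix and Theorem \ref{H} applies without the degeneracy complications discussed after \thref{class}.

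Two simplifications streamline the computation. First, the matrix $S$ of \thref{thm6.4} is real and symmetric, so $\bar S_{k,l}=S_{k,l}=S_{l,k}$, and the denominators $S_{l,0}$ are exactly the asymptotic dimensions recorded in Table 3. Second, I would exploit the $\ZZ_2$-grading of the orbifold: the nine modules split into an untwisted block $\{M_0,\dots,M_4\}$ (Types 1 and 2) and a twisted block $\{M_5,\dots,M_8\}$ (Type 3), and in $S$ the coupling between the two blocks is carried entirely by the $\pm\sqrt2$ entries while all within-block entries are rational. This reflects the fact that fusion respects the grading --- untwisted$\,\boxtimes\,$untwisted and twisted$\,\boxtimes\,$twisted land in the untwisted block, and untwisted$\,\boxtimes\,$twisted lands in the twisted block --- which both organizes the Verlinde sums into a few small blocks and serves as a running consistency check.

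Carrying out the sums, I would verify that each $N_{i,j}^{k}$ comes out a non-negative integer (guaranteed a priori by Theorem \ref{H}), and then assemble the symmetric array of products $M_i\boxtimes M_j$, grouping recurring combinations through the abbreviations $U^{m,n}=V_Q^m+V_{\frac\al2+Q}^n$ and $T_i=T_i^++T_i^-$ defined above. As an independent check of the final table, one can rederive the same products from the explicit decompositions \eqref{A3mod4}--\eqref{A3mod5}, which realize each orbifold module as a direct sum of tensor products of $V_{L_+}$-modules (with $L_+=Q\cap\lieh_0$) and $V_{\ZZ\be}$-modules; the fusion then reduces to the elementary simple-current--type fusion in the rank-two lattice $L_+$ and the rank-one lattice $\ZZ\be$.

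The main difficulty I anticipate is bookkeeping rather than conceptual. The off-diagonal $\sqrt2$ entries must conspire so that all irrational contributions in the mixed (twisted$\,\boxtimes\,$twisted and untwisted$\,\boxtimes\,$twisted) Verlinde sums cancel and leave integer multiplicities, and the resulting multiplicities must then be correctly distributed between the two $\si$-eigenspaces labelled by the superscripts $\pm$. Tracking these cancellations uniformly across all blocks is where care is needed; the commutativity $N_{i,j}^{k}=N_{j,i}^{k}$, the unit property of $M_0$, and the grading reduce the genuinely independent computations to a small number.
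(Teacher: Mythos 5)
Your proposal is correct and matches what the paper evidently intends: the paper states this theorem without a written proof, but the discussion immediately preceding \thref{thm6.4} (linear independence of the nine characters because $(1-\si)Q=Q\cap\lieh_\perp$, hence a genuine $S$-matrix) together with Verlinde's formula \eqref{Verlinde} from Theorem \ref{H} is exactly the derivation you describe, and your cross-check via the decompositions \eqref{A3mod4}--\eqref{A3mod5} mirrors the paper's reference to the tensor-product realizations from \cite{BE} and \cite{DN}. Spot-checking entries (e.g.\ $T_1^+\boxtimes T_1^+=U^{+,-}$ and $\la_1\boxtimes\la_1=U^{+,+}+U^{-,-}$) against the $S$-matrix of \thref{thm6.4} confirms your computation reproduces the stated table.
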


\section{Examples in Order $3$}\label{ex3}
In this section, we consider the lattice $Q$ to be even and positive definite with an isometry $\si$ of order 3. Then $\bar Q=Q$ by  \coref{corQbar1}.

\subsection{The irreducible characters of twisted type in the general setting}\label{Irrgen3}

First, we will determine the complex numbers $v_1$ and $v_{-1}$ from \eqref{O6}.
Recall from Theorem \ref{chiFtran} and Corollary \ref{detk} that
\begin{align}\label{P6}
P_{\si, \si^k}\left(-\frac{1}{\tau}\right)=(-\ii\tau)^{r_{0}/2}P_{\si^k,\si^{-1}}(\tau).
\end{align}
As in the general case, we expect the transformation $\chi_{W(\mu,\zeta)}^{\si,\si^k}\left(-\frac{1}{\tau}, \frac {h}{\tau}\right)$ to be written as a linear combination of the trace functions $\chi_{W(\mu',\zeta')}^{\si^k,\si^{-1}}(\tau,h)$ for suitable $\mu'\in\pi_0(Q)$ and central character $\zeta'$ of $G_\si^\perp$ (see Theorem \ref{bij}). 

We write $\si^{-1}=\si^{kk'}$, where $kk'+1=3m$ for some $m\in\ZZ$. This implies that we can take $k=\pm1$, $k'=-k$, and $m=0$.
From \prref{chiW2}, we have 
\begin{align*}
\chi_{W(\mu, \zeta)}^{\si,\si}(\tau, h)
&=d(\si)e^{-\pi\ii |\mu|^2}\th_{\sqrt{3}\mu+\sqrt{3}\pi_0(Q)}\left(\frac{\tau+1}{3},\frac{h}{\sqrt{3}}, 0\right),\\
\chi_{W(\mu, \zeta)}^{\si,\si^{-1}}(\tau, h)
&=d(\si)e^{\pi\ii |\mu|^2}\th_{\sqrt{3}\mu+\sqrt{3}\pi_0(Q)}\left(\frac{\tau-1}{3},\frac{h}{\sqrt{3}}, 0\right).
\end{align*}
Similarly to \eqref{tauprime}, \eqref{tauprime2}, we set 
\begin{equation}\label{tau'3}
\tau'=\frac{\tau-k}{3},\qquad h'=\frac{h}{\sqrt{3}},
\end{equation} 
and note that
\begin{align*}
\frac{-\frac{1}{\tau}+k}{3}=\frac{k\tau'}{3\tau'+k}=A\cdot\tau',
\end{align*}
where
$A=
\begin{pmatrix}
k&0\\3&k
\end{pmatrix}
$ and $k=\pm1$. Then $A$ is an element of the group $\text{SL}_2(\ZZ)$ with generators
\begin{equation*}
S=\begin{pmatrix}
0&-1\\1&0
\end{pmatrix}\qquad\text{and}\qquad T=\begin{pmatrix}
1&1\\0&1
\end{pmatrix}.
\end{equation*}
We find that 
\begin{equation}\label{ST3}
\begin{pmatrix}
1&0\\3&1
\end{pmatrix}=-ST^{-3}S\qquad\text{and}\qquad
\begin{pmatrix}
-1&0\\3&-1
\end{pmatrix}=ST^{3}S.
\end{equation}
Note that these matricies do not depend on the lattice $Q$. 
Our goal will be to use these decompositions to transform the theta function in steps, using one generator at a time.

We have that
\begin{equation*}
\mathcal{L}=\sqrt{3}\pi_0(Q),\qquad\mathcal{L}^*=\frac{1}{\sqrt{3}}Q^*\cap\lieh_0
\end{equation*}
(cf.\ Lemma \ref{tdual}).
In this case we find that $\be_0$ in Theorem \ref{KP} can be set to zero. Therefore we set (cf.\ \eqref{cbe0})
\begin{align}\label{cL3}
{c}_0=\sum_{\mu+\mathcal{L}\in\mathcal{L}^*/\mathcal{L}}e^{3\pi\ii|\mu|^2}.
\end{align}

Then the complex numbers $v_1$ and $v_{-1}$ in \eqref{O6} can be determined using Proposition \ref{vTSprop} and Remarks \ref{vprops} and \ref{vSTremark} as follows:
\begin{align}\label{v1}
\begin{split}
v_1=v(-ST^{-3}S)&=\ii^rv(ST^{-3}S)\\
&=\ii^rv(ST^{-3})v(S)\sum_{\mu+\mathcal{L}\in\mathcal{L}^*/\mathcal{L}}e^{-3\pi\ii|\mu|^2}\\
&=\ii^rv(S)^2\bar{c}_0\\
&=\displaystyle\frac{\bar{c}_0}{|\mathcal{L}^*/\mathcal{L}|}\,,
\end{split}
\end{align}
and
\begin{align}\label{v2}
\begin{split}
v_{-1}=v(ST^{3}S)&=v(ST^{3}S)\\
&=v(ST^{3})v(S)\sum_{\mu+\mathcal{L}\in\mathcal{L}^*/\mathcal{L}}e^{3\pi\ii|\mu|^2}\\
&=v(S)^2{c}_0\\
&=\displaystyle\frac{(-1)^{r_0/2}{c}_0}{|\mathcal{L}^*/\mathcal{L}|}\,.
\end{split}
\end{align}

\begin{remark}
Since the above calculations do not depend on the exponent of the generator $T$, formulas \eqref{v1}, \eqref{v2} hold for any odd prime $p$ in place of $3$.
\end{remark}

We set $\om=e^{2\pi\ii/3}$ and recall that  (cf.\ \eqref{chiMj}, \eqref{chiMinvert}):

\begin{align*}
\begin{split}
\chi_{M(\mu,\zeta;\si^s)}^j(\tau, h)&=\frac{1}{3}\sum_{k=0}^{2}\om^{jk}\chi_{M(\mu,\zeta)}^{\si^s,\si^{sk}}(\tau, h),\\
\chi_{M(\mu,\zeta)}^{\si,\si^{-1}}(\tau, h)&=\sum_{l=0}^{2}\om^{l}\chi_{M(\mu,\zeta;\si)}^l(\tau, h),\\
\chi_{M(\mu,\zeta)}^{\si^{-1},\si^{-1}}(\tau, h)&=\sum_{l=0}^{2}\om^{-l}\chi_{M(\mu,\zeta;\si^{-1})}^l(\tau, h),
\end{split}
\end{align*}
where $j=0,1,2$ and $s=1,2$.
For convenience, we also set $\la=\sqrt{3}\mu\in\sqrt{3}\pi_0(Q^*)$ and $\de=\sqrt{3}\nu\in\sqrt{3}(Q^*\cap\lieh_0)\subset\sqrt{3}\pi_0(Q^*)$. Using \eqref{v1}, 
we specialize the transformation \eqref{O6} in this case:

\begin{align*}
\begin{split}
\chi^{j}&_{M(\mu,\zeta;\si^s)}\left(-\frac{1}{\tau}, \frac {h}{\tau}\right)
=\frac{v_0}{3}e^{\pi\ii|h|^2/\tau}\sum_{\substack{\la+M\in M^*/M\\\la\in Q^*\cap\lieh_0}} \sum_{l=0}^{2}e^{-2\pi\ii (\la|\mu)}\om^{sl}\chi^{l}_{V_{\la+Q}}(\tau, h)\\
&+\frac{\ii^{r_0/2}\bar{c}_0}{3|\mathcal{L}^*/\mathcal{L}|}\sum_{l=0}^{2}\sum_{\substack{\de+\mathcal{L}\in\mathcal{L}^*/\mathcal{L}\\\de\in \la+3\mathcal{L}^*}}\om^{j+l}e^{-2\pi\ii(\mu|\nu)}\chi_{M(\nu,\zeta;\si)}^l(\tau, h)\\
&+\frac{\ii^{-r_0/2}{c}_0}{3|\mathcal{L}^*/\mathcal{L}|}\sum_{l=0}^{2}\sum_{\substack{\de+\mathcal{L}\in\mathcal{L}^*/\mathcal{L}\\\de\in -\la+3\mathcal{L}^*}}\om^{-j-l}e^{-2\pi\ii(\mu|\nu)}\chi_{M(\nu,\zeta;\si^{-1})}^l(\tau, h),
\end{split}
\end{align*}
where $j=0, 1, 2$, $r_0=\dim\lieh_0$, $s=1,2$, and
\begin{equation*}
v_0=d(\si)3^{-\dim\lieh_\perp/4}\frac{|\pi_0(Q)/M|}{|M^*/M|^{1/2}} \,.
\end{equation*}

\subsection{The $\ZZ_3$-orbifold using the root lattice $D_4$}
In this subsection, we consider the lattice $Q=\bigoplus_{i=1}^4\ZZ\al_i$ to be the $D_4$ root lattice and $\si$ to be the Dynkin diagram automorphism given by a $3$-cycle on the outer nodes of the diagram.  
We fix the labelling so that $\al_4$ is the center node: 
$$\si(\al_4)=\al_4 \qquad\text{and}\qquad\; \si\colon\al_1\mapsto\al_2\mapsto\al_3\mapsto\al_1.$$
Denote the eigenvectors of $\si$ by 
\begin{align*}
\al&=\al_1+\al_2+\al_3,\quad  \ga_1=\al_1+\omega\al_2+\omega^2\al_3,\quad \ga_2=\al_1+\omega^2\al_2+\omega\al_3,
\end{align*}
where $\om=e^{2\pi\ii/3}$. The basis vectors of $Q\cap\lieh_\perp$ are
\begin{align*}
\be_1=\frac{\ga_1-\om\ga_2}{1-\omega}=\al_1-\al_2,\quad\quad
\be_2=\frac{\ga_1-\ga_2}{\omega-\omega^2}=\al_2-\al_3.
\end{align*}
Note that $|\al|^2=6$ and $(\be_i|\al)=0=(\be_i|\al_4)$, $i=1,2$. We also find that
$$M=Q\cap\lieh_0=\ZZ\al_4\oplus\ZZ\al,\qquad (1-\si)Q=Q\cap\lieh_\perp=\ZZ\be_1\oplus\ZZ\be_2,$$ and $L=\langle\al, \al_4\rangle\oplus\langle\be_1, \be_2\rangle$.
It follows that the defect of $\si$ is $d(\si)=1$ (cf.\ Lemma \ref{defectlemma}) and that the irreducible twisted $V_Q$-modules are determined only from elements $\mu\in\pi_0(Q^*)$ (cf.\ \reref{remark1} and \eqref{equivrel}).
Using the relations
\begin{align*}
\al_2=\al_3+\be_2,\quad\al_1=\al_2+\be_1,\quad3\al_1=\al+2\be_1+\be_2,
\end{align*}
we obtain $Q/L=\langle\al_1+L\rangle\cong\ZZ_3.$

The dual lattice $D_4^*$ is spanned by the weights
\begin{align*}
\la_1&=\al_1+\al_4+\frac12(\al_{2}+\al_{3}),\quad
\la_2=\al_2+\al_4+\frac12(\al_{1}+\al_{3}),\\
\la_3&=\al_3+\al_4+\frac12(\al_{1}+\al_{2}),\quad
\la_4=\al+2\al_4,
\end{align*}
and the fundamental group of $D_4$ is
\begin{align*}
Q^*/Q&=\left\langle\la_1+Q, \la_2+Q\right\rangle\\
&=\left\langle\frac{\al_2+\al_3}{2}+Q, \frac{\al_1+\al_3}{2}+Q\right\rangle\cong\ZZ_2\times\ZZ_2.
\end{align*}
We see immediately that $(Q^*/Q)^\si=\{Q\}$ and the other cosets form an orbit of $\si$ of order 3. It follows that there is a unique irreducible untwisted $V_Q$-module of Type 1 (the vacuum module) that breaks into eigenspaces of $\si$ (cf.\ \eqref{chiVQ}). This provides 3 irreducible $V_Q^\si$-modules of Type 1. Since the weights $\la_1, \la_2$, and $\la_3$ form an orbit of $\si$, the corresponding $V_Q$-modules $V_{\la_1+Q}$, $V_{\la_2+Q}$, $V_{\la_3+Q}$ become isomorphic as $V_Q^\si$-modules; we let $V_{\la_1+Q}$ represent the unique irreducible $V_Q^\si$-module of Type 2.

Next, we evaluate the projection $\pi_0=\frac{1}{3}(1+\si+\si^2)$ on the lattices $Q$ and $Q^*$. It is easy to show that (cf.\ Lemma \ref{tdual})
\begin{align*}
\pi_0(Q)&=\left\langle\frac{\al}{3},\al_4\right\rangle,\quad Q^*\cap\lieh_0=(\pi_0(Q))^*=\langle\al, \al_4\rangle=M,
\end{align*}
and that $M\cong\sqrt{3}\pi_0(Q)$ as lattices. Therefore, $M^*=\pi_0(Q)$ and $M^*/M\cong\ZZ_3$.
It is also clear that 
\begin{equation*}
\pi_0(Q^*)=\pi_0(Q)
\end{equation*}
since $\pi_0\la_i\in\pi_0(Q)$ for each $i$. It follows that there is a unique irreducible twisted $V_Q$-module corresponding to $\mu=0$ (cf.\ Theorem \ref{bij}). We denote this module by $M(0)$ (cf.\ \reref{remark1}).

From Corollary \ref{chiM2} and equations \eqref{chiVM}, \eqref{chiVQ}, \eqref{chiV1}--\eqref{chiMj}, we obtain the following characters of the 10 irreducible orbifold modules described above ($j=0,1,2$):
\begin{align*}
\chi^j_{V_{Q}}(\tau)&=\frac13\left(\frac{\th_{Q}(\tau)}{P_{1,1}(\tau)}+\om^{j}\frac{\th_{M}(\tau)}{P_{1,\si}(\tau)}+\om^{2j}\frac{\th_{M}(\tau)}{P_{1,\si^{2}}(\tau)}\right),\\
\chi_{V_{\la_1+Q}}(\tau)&=\frac{\th_{\la_1+Q}(\tau)}{P_{1,1}(\tau)},\\
\chi^j_{M(0;\si)}(\tau)&=\frac13\left(\frac{\th_{\sqrt{3}\pi_0(Q)}(\frac{\tau}{3})}{P_{\si,1}(\tau)}+\om^{j}\frac{\th_{\sqrt{3}\pi_0(Q)}(\frac{\tau+1}{3})}{P_{\si,\si}(\tau)}+\om^{2j}\frac{\th_{\sqrt{3}\pi_0(Q)}(\frac{\tau-1}{3})}{P_{\si,\si^2}(\tau)}\right),\\
\chi^j_{M(0;\si^2)}(\tau)&=\frac13\left(\frac{\th_{\sqrt{3}\pi_0(Q)}(\frac{\tau}{3})}{P_{\si^2,1}(\tau)}+\om^{j}\frac{\th_{\sqrt{3}\pi_0(Q)}(\frac{\tau+1}{3})}{P_{\si^2,\si^2}(\tau)}+\om^{2j}\frac{\th_{\sqrt{3}\pi_0(Q)}(\frac{\tau-1}{3})}{P_{\si^2,\si}(\tau)}\right).
\end{align*}


In this case $\be_0$ can be set to zero in Theorem \ref{KP}. We also calculate that for $\mathcal{L}=\sqrt{3}\pi_0(Q)$, \eqref{cbe0} becomes $c_{0}=3$, and hence $v_1=1$, $v_{-1}=-1$ (cf.\ \eqref{v1gen2}). Using Theorem \ref{chiorbtran} and \eqref{v1A22}, we obtain the following transformation laws of orbifold characters, where $j=0,1,2$ and $s=1,2$:
\allowdisplaybreaks
\begin{align*}
&\chi^{j}_{V_{Q}}(\tau+1)=-\om\chi^{j}_{V_{Q}}(\tau), \\
&\chi^{j}_{V_{Q}}\left(-\frac{1}{\tau}\right)=\frac{1}{6}\sum_{l=0}^{2}\chi_{V_{Q}}^l(\tau)+\frac{1}{2}\chi_{V_{\la_1+Q}}(\tau)+\frac{1}{3}\sum_{k=1}^{2}\sum_{l=0}^{2}\om^{jk}\chi_{M(0;\si^k)}^l(\tau),
\\[12pt]
&\chi_{V_{\la_1+Q}}(\tau+1)=\om\chi_{V_{\la_1+Q}}(\tau), \\ 
&\chi_{V_{\la_1+Q}}\left(-\frac{1}{\tau}\right)=\frac12\sum_{l=0}^{2}\chi_{V_{Q}}^l(\tau)-\frac12\chi_{V_{\la_1+Q}}(\tau), \label{O43}\\[12pt]
&\chi_{M(0;\si^s)}^{j}(\tau+1)=-\om^{1-j}e^{\frac{2\pi\ii}{9}}\chi_{M(0;\si^s)}^j(\tau), \\ 
&\chi^{j}_{M(0;\si^s)} \left(-\frac{1}{\tau}\right)=\frac{1}{3} \sum_{l=0}^{2}\om^{sl}\chi^{l}_{V_{Q}}(\tau)+\frac{\ii}{3}\sum_{l=0}^{2}\om^{j+l}\chi^{l}_{M(0;\si^{s})}(\tau) \\
&\qquad\qquad-\frac{\ii}{3}\sum_{l=0}^{2}\om^{-(j+l)}\chi^{l}_{M(0;\si^{-s})}(\tau).
\end{align*}

In the following table, we present the asymptotic and quantum dimensions, which are a special case of \coref{corqdim}.

\begin{center}
\begin{table}[H]\label{tab4}
\begin{tabular}{|c|c|c|c|}
\hline
$M$&$V_Q^j$&$V_{\la_1+Q}$&$M(0, \si^s)^j$\\
\hline
$\asdim M$&$\frac{1}{6}$&$\frac{1}{2}$&$\frac{1}{3}$\\
\hline
$\qdim M$&$1$&$3$&$2$\\
\hline
\end{tabular}
\caption{Asymptotic and quantum dimensions}
\end{table}
\end{center}

Finally, we remark that, even though we have found the modular transformations of the characters of irreducible orbifold modules,
it remains a challenge to determine the $S$-matrix from Theorem \ref{ZABD}. This is due to the fact that the characters are linearly 
dependent and in fact many are just equal; see \eqref{degchar1}, \eqref{degchar3}. 
If we naively pretend that the characters are linearly independent and construct an $S$-matrix from the coefficients in the above transformation formulas,
we obtain an $S$-matrix that may produce non-integral fusion rules when used in Verlinde's formula \eqref{Verlinde} (see \thref{H}).

\section{Permutation Orbifolds}\label{permorb}
In this section, we study the representation theory of a permutation orbifold of a lattice vertex algebra $V_Q$.
We present the irreducible $V_Q^\si$-modules and their characters in the case when $Q$ is a direct sum of a prime number of copies of an arbitrary positive-definite even lattice
and the automorphism $\sigma$ acts as a cyclic shift of the summands. 
Then we derive the modular transformations of characters, using Theorem \ref{KP} for the twisted type modules.
Permutation orbifolds of lattice vertex algebras for $\sigma$ of order $2$ and $3$
were studied previously by Dong--Xu--Yu in \cite{DXY1,DXY2,DXY3}.
Their recent paper \cite{DXY4} gives a (rather complicated) formula for the $S$-matrix for the permutation orbifold of any regular vertex algebra,
but it is unclear how to derive from it the results of this section.


\subsection{The irreducible characters in the general setting}\label{PermGen}
In this section, we consider the lattice $Q$ to be an orthogonal direct sum $Q_0^{\oplus p}$ of $p$ copies of an even lattice $Q_0$, where $p$ is prime. We represent the elements of $Q$ as $\al=(\al_1,\ldots,\al_p)$ with $\al_i\in Q_0$ for $1\leq i\leq p$. The bilinear form for $Q$ is given by 
\begin{equation}
(\al|\be)=\sum_{i=1}^p(\al_i|\be_i),
\end{equation}
using the bilinear form for $Q_0$.
The dual lattice is $Q^*\cong(Q_0^*)^{\oplus p}$, and we have
\begin{equation}
Q^*/Q\cong\left(Q_0^*/Q_0\right)^{\oplus p} \,.
\end{equation}
At this point, the rank of $Q_0$ is arbitrary. The automorphism $\si$ of $Q$ will be the $p$-cycle that permutes the entries:
\begin{equation}\label{permsi}
\si(\al_1,\ldots, \al_p)=(\al_2,\ldots, \al_p, \al_1).
\end{equation}

Let $\ep_0\colon Q_0\times Q_0\rightarrow\{\pm1\}$ be a 2-cocycle for $Q_0$ satisfying \eqref{lat2} and \eqref{lat22}. As a 2-cocycle for $Q$, we define 
\begin{equation}
\ep(\al,\be)=\prod_{i=1}^p\ep_0(\al_i, \be_i),
\end{equation}
where $\al=(\al_1,\ldots,\al_p)$ and $\be=(\be_1,\ldots,\be_p)$ with $\al_i, \be_i\in Q_0$ for $1\leq i\leq p$. Notice that $\ep(\si\al, \si\be)=\ep(\al, \be)$ for all $\al, \be\in Q$, which implies that we can choose $\eta=1$ in \eqref{twlat3}. 
Hence,  $\bar{Q}=Q$ (this also follows easily from Lemma \ref{Qbarlemma}). 
Now for $\lieh=\CC\otimes_\ZZ Q$, we find:
\begin{align}
\lieh_0&=\text{span}_\CC\bigl\{(\al_0,\ldots,\al_0)\,\big|\,\al_0\in Q_0\bigr\}, \quad \dim\lieh_0=\rank Q_0=r_0, \\
\lieh_\perp&=\Bigl\{(\al_1,\ldots,\al_p)\in Q\,\Big|\,\sum_{i=1}^p\al_i= 0 \Bigr\}, \quad \dim\lieh_\perp=r_0(p-1).\label{permhperp}
\end{align}
We have
$Q\cap\lieh_0\cong\sqrt{p}Q_0$ as lattices under the correspondence 
$$(\al_0,\ldots,\al_0)\leftrightarrow\sqrt{p}\al_0, \qquad \al_0\in Q_0.$$ 

\begin{lemma}\label{lpnondeg}
With the above setting, we have that\/ $(1-\si)Q=Q\cap\lieh_\perp$.
\end{lemma}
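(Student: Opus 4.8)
The plan is to prove the two inclusions $(1-\si)Q \subseteq Q\cap\lieh_\perp$ and $Q\cap\lieh_\perp \subseteq (1-\si)Q$ separately. The first inclusion is immediate and holds for any isometry: if $\al\in Q$, then $(1-\si)\al\in Q$ since $\si$ is an isometry of $Q$, and $\pi_0((1-\si)\al) = \pi_0\al - \pi_0\si\al = 0$ because $\pi_0\circ\si = \pi_0$ (recall $\pi_0 = \frac1p\sum_{m=0}^{p-1}\si^m$ from \eqref{eqpi0}), so $(1-\si)\al\in\lieh_\perp$. Hence $(1-\si)Q\subseteq Q\cap\lieh_\perp$, and the real content is the reverse inclusion.

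For the reverse inclusion, I would work with the explicit coordinate description \eqref{permhperp}. First I would compute the action of $1-\si$ in coordinates: for $\be = (\be_1,\dots,\be_p)\in Q$, equation \eqref{permsi} gives
\[
(1-\si)\be = (\be_1-\be_2,\; \be_2-\be_3,\; \dots,\; \be_{p-1}-\be_p,\; \be_p-\be_1).
\]
Now take an arbitrary $\al = (\al_1,\dots,\al_p)\in Q\cap\lieh_\perp$, so that each $\al_i\in Q_0$ and $\sum_{i=1}^p \al_i = 0$. The goal is to exhibit $\be\in Q$ with $(1-\si)\be = \al$. Reading off the coordinate equations $\be_i - \be_{i+1} = \al_i$ for $1\le i\le p-1$ (indices cyclic), I would solve by setting $\be_p = 0$ and then defining $\be_i = \al_i + \al_{i+1} + \cdots + \al_{p-1}$ for $1\le i \le p-1$; each $\be_i\in Q_0$ since it is a finite sum of elements of $Q_0$. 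One then checks the first $p-1$ coordinate equations hold by construction, and the final coordinate equation $\be_p - \be_1 = \al_p$ follows automatically from the constraint $\sum_{i=1}^p\al_i = 0$, since $\be_p - \be_1 = -\sum_{i=1}^{p-1}\al_i = \al_p$. Thus $\be = (\be_1,\dots,\be_{p-1}, 0)\in Q$ satisfies $(1-\si)\be = \al$, giving $\al\in(1-\si)Q$.

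I do not expect any genuine obstacle here: the statement is essentially the observation that the kernel/image structure of $1-\si$ on $Q = Q_0^{\oplus p}$ is governed by the discrete analogue of the telescoping sum, and the only place the hypotheses enter is that $p$ is prime (so $\si$ generates a cyclic group of prime order and $\lieh_\perp$ is as described) and that $Q$ is a genuine \emph{orthogonal} direct sum so the coordinatewise solution stays inside $Q$. The mildly delicate bookkeeping step is simply verifying that the constraint $\sum\al_i = 0$ is exactly what closes up the cyclic system of equations; everything else is routine. This lemma is the permutation-orbifold instance of the condition $(1-\si)Q = Q\cap\lieh_\perp$ highlighted in \reref{remark1}, and it immediately yields $d(\si)=1$ by \leref{defectlemma}($ii$), so that the irreducible twisted modules are parametrized by $\mu$ alone.
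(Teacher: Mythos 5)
Your proof is correct and follows essentially the same route as the paper: the paper also reduces to the inclusion $Q\cap\lieh_\perp\subset(1-\si)Q$ and verifies it by noting that zero-sum vectors are spanned by the elements $(1-\si)(0,\ldots,0,\al_0,0,\ldots,0)=(0,\ldots,0,-\al_0,\al_0,0,\ldots,0)$, which is just the generator-by-generator version of your explicit telescoping solution $\be_i=\al_i+\cdots+\al_{p-1}$, $\be_p=0$. Your write-up is slightly more detailed (and correctly observes that primality of $p$ plays no role in this particular inclusion), but there is no substantive difference.
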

\begin{proof}
It is sufficient to show $Q\cap\lieh_\perp\subset(1-\si)Q$. Indeed, an element in $Q\cap\lieh_\perp$ has the form
\[
(\al_1,\ldots,\al_{p-1},-\al_1-\cdots-\al_{p-1}),
\]
and such elements can be spanned by elements of the form
\[
(1-\si)(0,\ldots,0,\al_0,0,\ldots,0)=(0,\ldots,0,-\al_0,\al_0,0,\ldots,0).
\]
\end{proof}

The above lemma ensures that all twisted $V_Q$-modules can be determined using only $\mu\in\pi_0(Q^*)$:
relation \eqref{con} determines a unique character $\zeta=\ze_\mu$ of $Z(G_\si^\perp)$ for every $\mu\in\pi_0(Q^*)$;
see \reref{remark1}.
Moreover, the defect is $d(\si)=1$, by Lemma \ref{defectlemma}. Recall the lattice 
$$L=(Q\cap\lieh_0) \oplus (Q\cap\lieh_\perp).$$
Then the quotient group $Q/L$ can be described in terms of the lattice $Q_0$ as the next lemma shows.

\begin{lemma}
For\/ $Q=Q_0^{\oplus p}$ and\/ $L$ as above, we have\/ $Q/L\cong Q_0/pQ_0$ as abelian groups. In particular, there are\/ $p^{r_0}$ cosets of\/ $L$ in\/ $Q$.
\end{lemma}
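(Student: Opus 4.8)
The plan is to establish a group isomorphism $Q/L \cong Q_0/pQ_0$ by constructing an explicit homomorphism from $Q$ onto $Q_0/pQ_0$ whose kernel is exactly $L$, and then appealing to the First Isomorphism Theorem (much as in the proof of \leref{Q/L}). First I would define the map $f\colon Q \to Q_0/pQ_0$ by
\[
f(\al_1,\dots,\al_p) = (\al_1+\al_2+\cdots+\al_p) + pQ_0 \,.
\]
This is clearly a group homomorphism, and it is surjective since $f(\al_0,0,\dots,0) = \al_0 + pQ_0$ already realizes every coset. The main content is then to identify $\Ker f$ with $L = (Q\cap\lieh_0)\oplus(Q\cap\lieh_\perp)$.

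The key computation is to show $\Ker f = L$. For the inclusion $L \subset \Ker f$, I would check it on each summand: an element of $Q\cap\lieh_\perp$ has coordinate sum zero by \eqref{permhperp}, so it lies in $\Ker f$; an element $(\al_0,\dots,\al_0)$ of $Q\cap\lieh_0$ has coordinate sum $p\al_0 \in pQ_0$, so it too lies in $\Ker f$. For the reverse inclusion $\Ker f \subset L$, suppose $\al=(\al_1,\dots,\al_p)$ satisfies $\sum_i \al_i = p\be_0$ for some $\be_0\in Q_0$. Then I would write $\al$ as the sum of the diagonal element $(\be_0,\dots,\be_0)\in Q\cap\lieh_0$ and the remainder $\al - (\be_0,\dots,\be_0)$, whose coordinate sum is $\sum_i\al_i - p\be_0 = 0$, placing the remainder in $Q\cap\lieh_\perp$ by \eqref{permhperp}. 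Hence $\al\in L$, completing the identification of the kernel.

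With $\Ker f = L$ established, the First Isomorphism Theorem yields $Q/L \cong \operatorname{im} f = Q_0/pQ_0$, and the count $|Q/L| = |Q_0/pQ_0| = p^{r_0}$ follows since $Q_0$ is a free abelian group of rank $r_0$. I do not anticipate a genuine obstacle here: the only point requiring a little care is the reverse kernel inclusion, where one must exhibit the correct diagonal element $(\be_0,\dots,\be_0)$ lying in $Q$ (which it does, as $\be_0\in Q_0$) so that the decomposition into the $\lieh_0$ and $\lieh_\perp$ components is valid inside $Q$ rather than merely inside $\lieh$. This relies on the direct-sum decomposition $L = (Q\cap\lieh_0)\oplus(Q\cap\lieh_\perp)$ and the explicit descriptions of $Q\cap\lieh_0 \cong \sqrt p\,Q_0$ and $Q\cap\lieh_\perp$ recorded above, both of which are available from the preceding discussion.
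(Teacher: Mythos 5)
Your proposal is correct and is essentially identical to the paper's own proof: both define the homomorphism $(\al_1,\dots,\al_p)\mapsto \sum_i\al_i + pQ_0$, verify surjectivity, and identify the kernel with $L$ via the same decomposition $\al = (\be_0,\dots,\be_0) + (\al-(\be_0,\dots,\be_0))$ into $(Q\cap\lieh_0)\oplus(Q\cap\lieh_\perp)$ before invoking the First Isomorphism Theorem. No discrepancies to report.
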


\begin{proof}
Consider the composition of maps $f\colon Q\rightarrow Q_0\rightarrow Q_0/pQ_0$ given by
\begin{align*}
(\al_1,\ldots,\al_p)\mapsto\sum_{i=1}^p\al_i\mapsto\sum_{i=1}^p\al_i+pQ_0.
\end{align*}
Clearly, $f$ is surjective and $L\subset \Ker f$. Now suppose $(\al_1,\ldots,\al_p)\in\Ker f$ so that $\sum_{i=1}^p\al_i=p\be_0$ for some $\be_0\in Q_0$. Then 
\[
(\al_1,\ldots,\al_p)-(\be_0,\ldots,\be_0)\in Q\cap\lieh_\perp,
\] 
and $(\be_0,\ldots,\be_0)\in Q\cap\lieh_0$; hence $L= \Ker f$.
The result follows from the First Isomorphism Theorem.
\end{proof}

Next, we describe the irreducible modules of $V_Q^\si$. We first calculate the $\si$-invariants in $Q^*/Q$ to be
\begin{align}
(Q^*/Q)^\si&=\bigl\{(\la_0+Q_0,\ldots, \la_0+Q_0)\,\big|\,\la_0\in Q_0^*\bigr\}\cong Q_0^*/Q_0.
\end{align}
We also calculate $\pi_0(Q^*)$ and $\pi_0(Q)$:
\begin{align*}
\pi_0(Q^*)&=\Bigl\{\frac1p(\la_0,\ldots,\la_0)\;\Big|\;\la_0\in Q_0^*\Bigr\}\cong\frac{1}{\sqrt{p}}Q_0^*,\\
\pi_0(Q)&=\Bigl\{\frac1p(\al_0,\ldots,\al_0)\;\Big|\;\al_0\in Q_0\Bigr\}\cong\frac{1}{\sqrt{p}}Q_0,
\end{align*}
where the isomorphisms are as lattices. We obtain 
$$\pi_0(Q^*)/\pi_0(Q)\cong Q_0^*/Q_0, \quad \sqrt{p}\pi_0(Q)\cong Q_0, \quad \sqrt{p}\pi_0(Q^*)\cong Q_0^*$$
 as lattices.
 
Hence, there are $|Q_0^*/Q_0|$ many irreducible $\si$-twisted $V_Q$-modules. 
Since the automorphism $\si$ acts on these modules, each of them will decompose into $p$ eigenspaces for $\si$. The same is true for each $\si^s$, $s=1,\ldots,p-1$, since the order of $\si$ is prime. This yields 
\begin{equation}\label{type3num}
p(p-1)|Q_0^*/Q_0|
\end{equation} 
many irreducible orbifold modules of twisted type. We denote the eigenspaces using $j=0,\ldots,p-1$ and label the irreducible Type 3 orbifold modules by 
\begin{equation}\label{Mla0sis}
M(\la_0;\si^s)^j := M(\mu,\ze_\mu;\si^s)^j \,,
\end{equation} 
for $\la_0\in Q_0^*$ and $s=1,\ldots,p-1$, where
\begin{equation}\label{mula0}
\mu=\pi_0(\la_0,0,\ldots,0)=\frac1p(\la_0,\ldots,\la_0)
\end{equation} 
and $\zeta=\ze_\mu$ is uniquely determined by $\mu$ from \eqref{con}, due to \leref{lpnondeg} and \reref{remark1}.
Notice that the coset $\la_0+Q_0$ is in one-to-one correspondence with the parameter $\mu\in\pi_0(Q^*)$ given by \eqref{mula0}; cf.\ Theorem \ref{bij}.
In particular, $|\la_0|^2=p|\mu|^2$.

Since the lattice $Q$ is a direct sum of copies of $Q_0$, the vertex algebra $V_Q$ can be written as a corresponding tensor product of vertex algebras $V_{Q_0}$ (cf.\ \cite{FHL}). 
The same is true for the irreducible $V_Q$-modules:
\[
V_{(\la_1,\ldots,\la_p)+Q}\cong V_{\la_1+Q_0}\otimes\cdots\otimes V_{\la_p+Q_0} \qquad (\la_i\in Q_0^*).
\]
Now $\si$ acts on the $V_Q$-module $\left(V_{\la_0+Q_0}\right)^{\oplus p}$ for each $\la_0\in Q_0^*$, and each of these will decompose into $p$ eigenspaces for $\si$, yielding $p$ irreducible orbifold modules for each $\la_0\in Q_0^*$. 
Hence, there are
\begin{equation}\label{type1num}
p|Q_0^*/Q_0|
\end{equation}
many irreducible orbifold modules of Type 1. 

We label the eigenspaces by $V^i_{\la+Q}$, where $i=0, 1,\ldots,p-1$ and $\la\in Q^*$ such that $\la=(\la_0,\ldots,\la_0)$ for $\la_0\in Q_0^*$. The action of $\si$ on the other $V_Q$-modules is given by
\[
\si\colon 
V_{\la_1+Q_0}\otimes\cdots\otimes V_{\la_p+Q_0}\rightarrow V_{\la_2+Q_0}\otimes\cdots\otimes V_{\la_p+Q_0}\otimes V_{\la_1+Q_0},
\]
and these distinct $V_Q$-modules become isomorphic as $V_Q^\si$-modules. So the irreducible modules where $\si$ does not act are in orbits of size $p$, and each module in the same orbit corresponds to the same orbifold module. 
We obtain
\begin{equation}\label{type2num}
\frac1p\left(|Q_0^*/Q_0|^p-|Q_0^*/Q_0|\right)
\end{equation}
many irreducible orbifold modules of Type 2. To label these modules, we use notation to describe the orbits of $\si$ in $Q^*/Q$ of order $p$. Let $\mathcal{O}$ be the set of orbits of order $p$, and denote by $[\ga+Q]\in\mathcal{O}$ the $\si$-orbit of the coset $\ga+Q$ with $(1-\si)\ga\notin Q$. Then the order of $\mathcal{O}$ is given by \eqref{type2num}, and we label the irreducible orbifold modules of Type 2 by $V_{\ga+Q}$ for each $[\ga+Q]\in \mathcal{O}$. 

All together, there are
\begin{equation}\label{totalnum}
\frac1p|Q_0^*/Q_0|^p+\frac{p^3-1}{p}|Q_0^*/Q_0|
\end{equation}
many irreducible $V_Q^\si$-modules.
We summarize this discussion into a theorem, which describes the irreducible orbifold modules and their characters explicitly.

\begin{theorem}\label{PermMod}
Consider a lattice\/ $Q=Q_0^{\oplus p}$, where $p$ is prime and\/ $Q_0$ is a positive-definite even lattice. Let\/ $\si$ be the automorphism of\/ $Q$ that permutes the summands cyclically. 
Denote by superscript\/ $j$ the eigenspace of\/ $\si$ with eigenvalue $\om^{-j}$, where $\om=e^{2\pi\ii /p}$. 
Let\/ $\mathcal{O}$ be the set of orbits of\/ $\si$ in $Q^*/Q$ of order $p$, and denote the orbit of\/ $\ga+Q\in Q^*/Q$ as\/ $[\ga+Q]$. Also choose a set\/ $\C\subset \pi_0(Q_0^*)$ of representatives of the cosets\/ $\pi_0(Q_0^*) / \pi_0(Q_0)$.

Then the following is a complete list of non-isomorphic irreducible modules over the orbifold algebra\/ $V_Q^\si${\upshape{:}}
\begin{enumerate}
\item[{\upshape{(Type 1)}}] 
$V_{\la+Q}^j$\, with \,$\la=(\la_0,\ldots,\la_0)$, \,$\la_0\in Q_0^*,$ \, $j=0,\ldots,p-1;$

\medskip
\item[{\upshape{(Type 2)}}] 
$V_{\ga+Q}$ \, with \, $[\ga+ Q]\in\mathcal{O},$ \, $\ga\in Q^*;$ 

\medskip
\item[{\upshape{(Type 3)}}] 
$M(\la_0;\si^s)^j$ \, with \, $\la_0+Q_0\in\C,$ \, $\la_0\in Q_0^*,$ \, $j=0,\ldots,p-1,$ \, $s=1,\ldots,p-1$ $($cf.\ \eqref{Mla0sis}, \eqref{mula0}$)$.
\end{enumerate}
The characters of these modules are given by$:$
\begin{enumerate}
\item[{\upshape{(Type 1)}}] 
$\displaystyle\chi^j_{V_{\la+Q}}(\tau)=\frac1p\frac{\th_{\la+Q}(\tau)}{P_{1,1}(\tau)}+\frac1p\sum_{k=1}^{p-1}\om^{jk}\frac{\th_{\la_0+Q_0}(p\tau)}{P_{1,\si^k}(\tau)}\,,$

\medskip
\item[{\upshape{(Type 2)}}] 
$\displaystyle\chi_{V_{\ga+Q}}(\tau)=\frac{\th_{\ga+Q}(\tau)}{P_{1,1}(\tau)}\,,$

\medskip
\item[{\upshape{(Type 3)}}] 
$\displaystyle\chi^j_{M(\la_0;\si^s)}(\tau)=\frac1p\frac{\th_{\la_0+Q_0}(\frac{\tau}{p})}{P_{\si^s,1}(\tau)}+\frac1p\sum_{k=1}^{p-1}\om^{jk}e^{- \frac{\pi\ii}{p}k|\la_0|^2}\frac{\th_{\la_0+Q_0}(\frac{\tau+k}{p})}{P_{\si^s,\si^{sk}}(\tau)}\,.$
\end{enumerate}
\end{theorem}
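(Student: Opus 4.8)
The classification is a direct specialization of \thref{class}, so the plan is first to match its three types of irreducible $V_Q^\si$-modules to the data of $Q_0$, and then to evaluate its character formulas using the explicit trace formulas established earlier. Since $\bar Q=Q$ here, \thref{class} applies verbatim, and it holds for every prime $p$ (including $p=2$), as does \thref{chiM2}; only the modular transformations required $p$ odd. The $\si$-fixed cosets $(Q^*/Q)^\si\cong Q_0^*/Q_0$ are exactly those of the form $\la=(\la_0,\dots,\la_0)$, giving the Type~1 modules $V_{\la+Q}^j$; the orbits of order $p$ give the Type~2 modules $V_{\ga+Q}$. Because \leref{lpnondeg} gives $(1-\si)Q=Q\cap\lieh_\perp$, \reref{remark1} shows each $\mu\in\pi_0(Q^*)$ determines a unique central character $\zeta=\ze_\mu$, while \leref{defectlemma} gives $d(\si^s)=d(\si)=1$; hence the Type~3 modules are labelled by $\mu\in\C_\M$ (in bijection with $Q_0^*/Q_0$, equivalently by $\la_0+Q_0$ via \eqref{mula0}), by $j$, and by $1\le s\le p-1$. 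Completeness and non-isomorphism are \thref{class}, and the counts \eqref{type1num}, \eqref{type2num}, \eqref{type3num}, \eqref{totalnum} are already recorded.

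For the characters of Types~1 and~2 I would substitute the untwisted trace formulas \eqref{chiVM}, \eqref{chiVQ} into the corresponding formulas of \thref{class} (with $h=0$). Type~2 is immediate. For Type~1, the $k=0$ term is $\th_{\la+Q}(\tau)/P_{1,1}(\tau)$, while for $1\le k\le p-1$ the numerator is $\th_{\la+M}(\tau)$ with $M=Q\cap\lieh_0$. Under the isometry $M\cong\sqrt p\,Q_0$ one has $\la+M\cong\sqrt p\,(\la_0+Q_0)$, and the rescaling identity \eqref{thc} with $c=\sqrt p$ turns $\th_{\sqrt p\la_0+\sqrt p Q_0}(\tau)$ into $\th_{\la_0+Q_0}(p\tau)$, yielding the stated Type~1 formula.

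The Type~3 characters require the correctly reindexed projection. Rather than the raw form in \thref{class}, I would use the discrete Fourier inverse of \eqref{chiMinvert}, which expresses $\chi^j_{M(\mu,\zeta;\si^s)}$ as $\tfrac1p\sum_{k=0}^{p-1}\om^{jk}\chi^{\si^s,\si^{sk}}_{M(\mu,\zeta;\si^s)}$, so that the automorphism appearing is the $k$-th power of the twisting $\si^s$. Applying \thref{chiM2} with $\si^s$ (of order $p$) as base isometry and $\ph=(\si^s)^k=\si^{sk}$ gives $\chi^{\si^s,\si^{sk}}=d(\si^s)\,e^{-\pi\ii k|\mu|^2}\,\th_{\sqrt p\mu+\sqrt p\pi_0(Q)}(\tfrac{\tau+k}{p})/P_{\si^s,\si^{sk}}(\tau)$. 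Using $d(\si^s)=1$, the isometry $\sqrt p\,\pi_0(Q)\cong Q_0$ sending $\sqrt p\,\mu\mapsto\la_0$, and $|\la_0|^2=p|\mu|^2$ (so $e^{-\pi\ii k|\mu|^2}=e^{-\pi\ii k|\la_0|^2/p}$), and splitting off the $k=0$ term, I obtain the asserted formula.

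The routine part is bookkeeping; the one genuinely delicate point is keeping the two theta-rescalings and the reindexing straight. In the untwisted (Type~1) case the relevant lattice $M$ is scaled \emph{up} to $\sqrt p\,Q_0$, producing the argument $p\tau$, whereas in the twisted (Type~3) case $\sqrt p\,\pi_0(Q)$ is already isometric to $Q_0$, so the argument stays $\tfrac{\tau+k}{p}$ and is accompanied by the phase $e^{-\pi\ii k|\la_0|^2/p}$. Equally, one must use the power $\si^{sk}$ (and not $\si^k$) of the twisting automorphism in the Type~3 projection, as dictated by \eqref{chiMinvert}; confusing these two indexings is the easiest way to produce an incorrect phase or denominator.
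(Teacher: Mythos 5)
Your proposal is correct and follows essentially the same route as the paper: the classification is read off from Theorem \ref{class} (using Lemma \ref{lpnondeg}, Remark \ref{remark1}, and $d(\si)=1$), and the characters come from substituting \eqref{chiVM}, \eqref{chiVQ}, and Theorem \ref{chiM2} into \eqref{chiV1}--\eqref{chiMj}, with the Type~1 argument $p\tau$ obtained exactly as you describe via $M\cong\sqrt p\,Q_0$ and the rescaling \eqref{thc}. You correctly identify the $\si^{sk}$ indexing from \eqref{chiMinvert} as the delicate point, which is precisely what the paper's citation of \eqref{Msilsikl} and Remark \ref{rem4.12} is handling.
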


\begin{proof}
The classification of irreducible $V_Q^\si$-modules is a special case of \thref{class}.
The formulas for the characters follow immediately from Theorem \ref{chiM2} and equations \eqref{chiVM}, \eqref{chiVQ},  \eqref{chiV1}--\eqref{chiMj}. 
For the Type 1 characters, we used that $M=\sqrt{p}Q_0$ and employed identity \eqref{thc}.
\end{proof}

In order to derive the modular transformations of the irreducible orbifold characters,
an important step in the process is to invert the equations representing these characters, 
so that each quotient on the right side is written as a linear combination of the orbifold characters. We can do this using 
\eqref{V1invert}--\eqref{chiMinvert}, as follows.

\begin{lemma}\label{perminvert} 
Let\/ $\la_0\in Q_0^*$, $\la=(\la_0,\ldots,\la_0)\in Q^*$, and\/ $1\leq s, k<p$. Then the formulas for the irreducible orbifold characters of Types 1 and 3 given in Theorem \ref{PermMod} can be inverted as follows$:$
\begin{align*}
\frac{\th_{\la+Q}(\tau)}{P_{1,1}(\tau)} &=\sum_{j=0}^{p-1}\chi_{V_{\la+Q}}^j(\tau), \\
e^{- \frac{\pi\ii}{p}k|\la_0|^2}\frac{\th_{\la_0+Q_0}(\frac{\tau+k}{p})}{P_{\si^s,\si^{sk}}(\tau)} &=\sum_{j=0}^{p-1}\om^{-jk}\chi_{M(\la_0;\si^s)}^j(\tau), \\
\frac{\th_{\la_0+Q_0}(\frac{\tau}{p})}{P_{\si^s,1}(\tau)} &=\sum_{j=0}^{p-1}\chi_{M(\la_0;\si^s)}^j(\tau), \\
\frac{\th_{\la_0+Q_0}(p\tau)}{P_{1,\si^k}(\tau)} &=\sum_{j=0}^{p-1}\om^{-jk}\chi_{V_{\la+Q}}^j(\tau).
\end{align*}
\end{lemma}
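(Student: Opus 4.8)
The plan is to observe that Lemma \ref{perminvert} is a purely formal inversion of the character formulas from Theorem \ref{PermMod}, exactly parallel to the inversion formulas \eqref{V1invert}--\eqref{chiMinvert} established earlier. The four identities to be proved are each a finite linear system indexed by $j=0,\ldots,p-1$, and the key structural fact is that $\sum_{j=0}^{p-1}\om^{jk}$ acts as a discrete Fourier transform on $\ZZ/p\ZZ$, with $\om=e^{2\pi\ii/p}$ a primitive $p$-th root of unity. First I would recall the two Type 1 and Type 3 character formulas from Theorem \ref{PermMod}, writing them as
\[
\chi^j_{V_{\la+Q}}(\tau)=\frac1p\,A_0+\frac1p\sum_{k=1}^{p-1}\om^{jk}A_k,\qquad
\chi^j_{M(\la_0;\si^s)}(\tau)=\frac1p\,B_0+\frac1p\sum_{k=1}^{p-1}\om^{jk}B_k,
\]
where I set $A_0=\th_{\la+Q}(\tau)/P_{1,1}(\tau)$, $A_k=\th_{\la_0+Q_0}(p\tau)/P_{1,\si^k}(\tau)$ for $1\le k<p$, and analogously $B_0=\th_{\la_0+Q_0}(\tfrac\tau p)/P_{\si^s,1}(\tau)$ and $B_k=e^{-\frac{\pi\ii}{p}k|\la_0|^2}\th_{\la_0+Q_0}(\tfrac{\tau+k}p)/P_{\si^s,\si^{sk}}(\tau)$ for $1\le k<p$.

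The heart of the argument is the orthogonality relation $\sum_{j=0}^{p-1}\om^{-jk}\om^{jk'}=p\,\de_{k,k'}$, valid for $k,k'\in\{0,\ldots,p-1\}$ since $p$ is prime. Summing the Type 1 formula over $j$ with weight $\om^{-jk}$ selects exactly the $A_k$ term: for $k=0$ the plain sum $\sum_j \chi^j_{V_{\la+Q}}$ telescopes to $A_0$ because $\sum_{j}\om^{jk}=0$ for $1\le k<p$, giving the first identity; for fixed $1\le k<p$ the weighted sum $\sum_j\om^{-jk}\chi^j_{V_{\la+Q}}$ isolates $A_k$, giving the fourth identity. The same computation applied to the Type 3 formula yields the second and third identities, with the phase factor $e^{-\frac{\pi\ii}{p}k|\la_0|^2}$ carried along untouched as part of $B_k$. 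I would note that these are precisely the specializations of the general inversion formulas \eqref{V2invert} and \eqref{chiMinvert} to the permutation-orbifold setting, where $d(\si)=1$ and each $\mu\in\pi_0(Q^*)$ corresponds to a unique pair $(\mu,\ze_\mu)$ by Lemma \ref{lpnondeg} and Remark \ref{remark1}, so no averaging over $\Z_\mu$ is needed.

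There is essentially no analytic obstacle here; the only point requiring a small amount of care is the indexing convention in the exponent $\om^{jk}$ versus $\om^{-jk}$, and making sure the case $k=0$ (which produces the unweighted sums in the first and third identities) is handled consistently with the cases $1\le k<p$. The mildest subtlety is that in the Type 3 character the summand for $k=0$ is $B_0$ with no phase, while the summands for $k\ge1$ carry the phase $e^{-\frac{\pi\ii}{p}k|\la_0|^2}$; I would absorb this phase into the definition of $B_k$ from the start so that the Fourier inversion is uniform across all $k$. With that bookkeeping fixed, each identity follows in one line from the orthogonality of characters of the cyclic group $\ZZ/p\ZZ$. I therefore expect the proof to be short, and the main (very minor) obstacle is simply presenting the four cases cleanly without re-deriving the discrete Fourier inversion four separate times.
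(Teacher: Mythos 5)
Your proof is correct and takes essentially the same route as the paper: the paper's own proof simply invokes the identity $1+\om+\dots+\om^{p-1}=0$ and the weighted sums $\sum_j\om^{-jk}\chi^j$, which is exactly the discrete Fourier inversion on $\ZZ/p\ZZ$ that you spell out. Your bookkeeping of the phase $e^{-\frac{\pi\ii}{p}k|\la_0|^2}$ inside $B_k$ and the case split between $k=0$ and $1\le k<p$ are both consistent with what the paper does.
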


\begin{proof}
The results follow easily from the identity $1+\om+\dots+\om^{p-1}=0$.
To obtain the two formulas involving theta functions with argument different from $\tau$, we multiply the corresponding characters by $\om^{-jk}$ and then sum over $j$ to yield the $k$-th term in the sum.
\end{proof}


Due to Lemma \ref{perminvert}, to obtain the transformations of irreducible orbifold characters, it is sufficient to transform the quotients involving theta functions. 
We first describe intuitively how these quotients should transform among themselves. Orbifold characters of Type 1 are written in terms of two types of theta functions, with arguments $\tau$ and $p\tau$, respectively, while characters of Type 2 are just one such quotient. The characters of twisted type are also in terms of two types of theta functions, with arguments $\frac{\tau}{p}$ and $\frac{\tau+k}{p}$, respectively, where $1\leq k<p$. As we will show, the theta functions with argument $\tau$ will transform among themselves. Those with argument $p\tau$ will transform to theta functions with argument $\frac{\tau}{p}$ and vice-versa, because $-\frac{p}{\tau}=-\frac{1}{\frac{\tau}{p}}$.
It follows that the theta functions with argument $\frac{\tau+k}{p}$ must transform among themselves. We now prove these transformations for the separate quotients, and as a result obtain the transformations of the irreducible permutation orbifold characters.

\begin{theorem}\label{thm8.5}
With the notation of Theorem \ref{PermMod},
the modular transformations of the irreducible permutation orbifold characters are as follows$:$
\begin{align}
\chi^{j}_{V_{\la+Q}}(\tau+1,h)&=e^{\pi\ii(|\la|^2-\frac{r}{12})}\chi^{j}_{V_{\la+Q}}(\tau,h), \\ 
\chi_{V_{\la+Q}}(\tau+1,h)&=e^{\pi\ii(|\la|^2-\frac{r}{12})}\chi_{V_{\la+Q}}(\tau,h), \\ 
\chi_{M(\la_0;\si^s)}^{j}(\tau+1, h)&=\om^{-j}e^{2\pi\ii(\Delta_{\si}-\frac{r}{24})}e^{\frac{\pi\ii}{p}|\la_0|^2}\chi_{M(\la_0;\si^s)}^j(\tau, h),\label{O5perm}
\end{align}
\begin{align}\label{chiorbtrans1}
\begin{split}
\displaystyle\chi^j_{V_{\la+Q}}&\left(-\frac1\tau\right)=\frac{1}{p\sqrt{|Q^*/Q|}}\sum_{\substack{\de+Q\in Q^*/Q\\(1-\si)\de\in Q}}\sum_{l=0}^{p-1}e^{-2\pi\ii(\la|\de)}\chi_{V_{\de+Q}}^l(\tau)\\
&+\frac{1}{\sqrt{|Q^*/Q|}}\sum_{[\ga+Q]\in \mathcal{O}}e^{-2\pi\ii(\la|\ga)}\chi_{V_{\ga+Q}}(\tau)\\
&+\frac{1}{p\sqrt{|Q_0^*/Q_0|}}\sum_{k=1}^{p-1}\sum_{\de_0+Q_0\in Q_0^*/Q_0}\sum_{l=0}^{p-1}\om^{jk}e^{-2\pi\ii(\la_0|\de_0)}\chi_{M(\de_0;\si^k)}^l(\tau),
\end{split}
\end{align}
\begin{align}\label{chiorbtrans2}
\begin{split}
\displaystyle\chi_{V_{\la+Q}}\left(-\frac{1}{\tau}\right)&=\frac{1}{\sqrt{|Q^*/Q|}}\sum_{\substack{\de+Q\in Q^*/Q\\(1-\si)\de\in Q}}\sum_{l=0}^{p-1}e^{-2\pi\ii(\la|\de)}\chi_{V_{\de+Q}}^l(\tau)\\
&\quad+\frac{1}{\sqrt{|Q^*/Q|}}\sum_{[\ga+Q]\in \mathcal{O}}E_{\la,\ga}\chi_{V_{\ga+Q}}(\tau),
\end{split}
\end{align}
\begin{align}\label{chiorbtrans3}
\begin{split}
\displaystyle\chi_{M(\la_0;\si^s)}^j&\left(-\frac1\tau\right)=\frac{1}{p\sqrt{|Q_0^*/Q_0|}}\sum_{\substack{\de+Q\in Q^*/Q\\\de=(\de_0,\ldots,\de_0)}}\sum_{l=0}^{p-1}\om^{ls}e^{-2\pi\ii(\la_0|\de_0)}\chi_{V_{\de+Q}}^l(\tau)\\
&+\frac{\ii^{r_0/2}}{p}\sum_{k=1}^{p-1}\sum_{\substack{\ga_0+Q_0\in Q_0^*/Q_0\\\ga_0\in k\la_0+pQ_0^*}}\sum_{l=0}^{p-1}v_k\om^{jk+lk^{p-2}}e^{-\frac{2\pi\ii}{p}(\la_0|\ga_0)}\chi_{M(\ga_0;\si^{ks})}^l(\tau),
\end{split}
\end{align}
where\/ $\Delta_\si=\frac{p^2-1}{24p}|\mathcal{O}|,$ 
$r=\rank Q$, $E_{\la,\ga}$ is given by \eqref{Ela}, and\/ $v_k$ is a complex number.
\end{theorem}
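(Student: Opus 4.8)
The plan is to derive Theorem \ref{thm8.5} by specializing the general transformation laws already established in Section \ref{transfVQ}, namely Theorems \ref{chiorbtran}, \ref{chiorbtran2}, and \ref{chiorbtran3}, to the permutation-orbifold setting of Theorem \ref{PermMod}. Since the permutation lattice $Q=Q_0^{\oplus p}$ satisfies $\bar Q=Q$, has defect $d(\si)=1$ (Lemma \ref{defectlemma}), and has $(1-\si)Q=Q\cap\lieh_\perp$ (Lemma \ref{lpnondeg}), the set $\Z_\mu$ is a singleton for each $\mu$ (Remark \ref{remark1}), so each $(\mu,\ze)$ pair is labeled simply by a coset $\la_0+Q_0$ via \eqref{mula0}. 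The first task is therefore purely bookkeeping: translate all the combinatorial data in the general theorems ($|Q^*/Q|$, $M^*/M$, $\pi_0(Q)/M$, $d(\si)$, $\C_\M$, $\C_\L(k,\mu)$, $\Delta_\si$) into the explicit $Q_0$-language computed in Section \ref{PermGen}.

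First I would handle the $\tau\mapsto\tau+1$ transformations, which are immediate. For Types 1 and 2 these are exactly \eqref{O1} and \eqref{O3}, and for Type 3 I specialize \eqref{O5}: using $|\mu|^2=|\la_0|^2/p$ (established below \eqref{mula0}) turns $e^{\pi\ii|\mu|^2}$ into $e^{\frac{\pi\ii}p|\la_0|^2}$, giving \eqref{O5perm}. I would also record here that $\Delta_\si=\frac{p+1}{24p}\dim\lieh_\perp$ by Lemma \ref{Deltalemma}, and since $\dim\lieh_\perp=r_0(p-1)$ by \eqref{permhperp} while $|\mathcal{O}|$ counts orbits, I must verify the stated identity $\Delta_\si=\frac{p^2-1}{24p}|\mathcal{O}|$; this requires relating $r_0(p-1)$ to $(p^2-1)|\mathcal{O}|/(p-1)$, i.e.\ checking $r_0=(p+1)|\mathcal{O}|$, which is a counting fact about the number of length-$p$ orbits among rank considerations and should be confirmed as part of the proof (or interpreted correctly).

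Next I would treat the $\tau\mapsto-1/\tau$ laws by substituting $h=0$ into \eqref{O2}, \eqref{O4}, and \eqref{O6} and rewriting every lattice-theoretic quantity. For \eqref{chiorbtrans1} and \eqref{chiorbtrans2}, the prefactor $D_\la^\si$ from \eqref{Dla} must be shown to equal $|Q_0^*/Q_0|^{-1/2}$ up to the explicit constants: using $d(\si)=1$, $|M^*/M|=|Q_0^*/Q_0|$ (since $M\cong\sqrt p Q_0$, $M^*\cong\frac1{\sqrt p}Q_0^*$), and $p^{\dim\lieh_\perp/2(p-1)}=p^{r_0/2}$, I would simplify $D_\la^\si$ and combine it with the $\frac1p$ to match the third sum's coefficient; the inner pairings $e^{-2\pi\ii(\la|\mu)}$ become $e^{-2\pi\ii(\la_0|\de_0)}$ via the correspondence \eqref{mula0}. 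For \eqref{chiorbtrans3} I specialize \eqref{O6}: I replace $v_0$ (from \eqref{v0}) using $d(\si)=1$, $|\pi_0(Q)/M|=p^{r_0}$ or the appropriate power, and convert the lattice $\mathcal{L}=\sqrt p\pi_0(Q)\cong Q_0$ so that the index set $\C_\L(k,\mu)$ becomes $\{\ga_0+Q_0:\ga_0\in k\la_0+pQ_0^*\}$, and the character $\chi_{M(\nu,\zeta;\si^{lk})}$ becomes $\chi_{M(\ga_0;\si^{ks})}$. The complex constants $v_k$ carry over unchanged from Theorem \ref{KP}.

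The main obstacle I anticipate is the careful normalization of the constant factors and index sets in the twisted-type transformation \eqref{chiorbtrans3}, where several identifications must align simultaneously: the rescaling $\mathcal{L}=\sqrt p\pi_0(Q)$, the dual $\mathcal{L}^*=\frac1{\sqrt p}Q^*\cap\lieh_0$, the shift $\nu\mapsto\nu-k\mu$ inside the theta transformation, and the phase $e^{-\frac{2\pi\ii}p(\la_0|\ga_0)}$ obtained from $e^{-2\pi\ii(\mu|\nu)}$ after clearing the $\sqrt p$ factors. Getting the exponent of $p$ in the overall prefactor to collapse to exactly $|Q_0^*/Q_0|^{-1/2}$ and confirming that the summation index $k^{p-2}\equiv k^{-1}\bmod p$ correctly produces the label $\si^{ks}$ (via $l\mapsto lk$ in Remark \ref{rem4.12} and the inversion formula \eqref{chiMinvert}) will demand attention, but these are all consequences of the already-proven Proposition \ref{chiMtran2} and the explicit lattice identifications of Section \ref{PermGen}, so no genuinely new estimate is needed.
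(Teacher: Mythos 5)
Your proposal is essentially correct but follows a genuinely different route from the paper. You derive Theorem \ref{thm8.5} by specializing the already-proven general transformation laws \eqref{O2}, \eqref{O4}, \eqref{O6} of Theorems \ref{chiorbtran}--\ref{chiorbtran3} to the permutation lattice, reducing the whole proof to lattice bookkeeping ($d(\si)=1$, $|\Z_\mu|=1$, $M\cong\sqrt{p}Q_0$, $|M^*/M|=p^{r_0}|Q_0^*/Q_0|$, $|\pi_0(Q)/M|=p^{r_0}$, $(\la|\mu)=(\la_0|\de_0)$, etc.); your simplifications $D_\la^\si=|Q_0^*/Q_0|^{-1/2}$ and $v_0=|Q_0^*/Q_0|^{-1/2}$ check out and reproduce all the stated coefficients. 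The paper instead gives a self-contained direct computation: it transforms each theta-quotient in Theorem \ref{PermMod} term by term using Theorem \ref{thtlaws} and Corollary \ref{chiFtran2}, re-expands via the inversion Lemma \ref{perminvert}, and for the twisted type applies Theorem \ref{KP} directly to the one copy $Q_0$ (via the substitution $\tau'=(\tau+k')/p$ and the matrix \eqref{tauprime4}), rather than to $\sqrt{p}\pi_0(Q)$. What your approach buys is brevity and the reuse of Proposition \ref{chiMtran2}; what the paper's approach buys is formulas expressed intrinsically in terms of $Q_0$ without passing through the rescaled lattice $\mathcal{L}$, and independence from the hypotheses of Theorem \ref{chiorbtran3}.

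Two caveats. First, Theorem \ref{chiorbtran3} is stated only for odd primes, so your derivation of \eqref{chiorbtrans3} does not literally cover $p=2$; for $p=2$ one must either invoke the Section \ref{ex2} analysis or check that $\be_0$ can be taken to be $0$ in Theorem \ref{KP} for the lattice $Q_0$ (the paper's own proof silently makes the same assumption when it applies Theorem \ref{KP} with $c=p$ and no $\be_0$ term). Second, your verification of $\Delta_\si=\frac{p^2-1}{24p}|\mathcal{O}|$ contains an algebra slip: since $\Delta_\si=\frac{p+1}{24p}\dim\lieh_\perp$ and $\dim\lieh_\perp=r_0(p-1)$, the identity requires $|\mathcal{O}|=r_0$, not $r_0=(p+1)|\mathcal{O}|$; you are right that this needs interpretation, since with $\mathcal{O}$ as defined in Theorem \ref{PermMod} (order-$p$ orbits in $Q^*/Q$, of which there are $\frac1p(|Q_0^*/Q_0|^p-|Q_0^*/Q_0|)$) the identity fails in general, and the symbol must be read as the number of $p$-cycles of $\si$ on a basis of $Q$, which is $r_0$.
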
 

\begin{proof}
The transformation laws for $\tau\mapsto\tau+1$ are essentially the same as in Theorem \ref{chiorbtran}. The expression for $\Delta_\si$ follows from writing $\dim\lieh_\perp=(p-1)|\mathcal{O}|$ in Lemma \ref{Deltalemma}. The rest of the proof will describe the transformation $\tau\mapsto-{1}/{\tau}$.


First, we transform characters of Type 1. In terms of theta functions, this transformation is
\begin{align}\label{thm1}
\displaystyle\chi^j_{V_{\la+Q}}\left(-\frac1\tau\right)=\frac1p\frac{\th_{\la+Q}(-\frac1\tau)}{P_{1,1}(-\frac1\tau)}+\frac1p\sum_{k=1}^{p-1}\om^{jk}\frac{\th_{\la_0+Q_0}(-\frac p\tau)}{P_{1,\si^k}(-\frac1\tau)} \,.
\end{align}
By Theorem \ref{thtlaws} and Corollary \ref{chiFtran2}, we obtain 
\allowdisplaybreaks
\begin{align*}
\frac{\th_{\la+Q}(-\frac1\tau)}{P_{1,1}(-\frac1\tau)}&=\frac{(-\ii\tau)^{-r_0/2}(-\ii\tau)^{r_0/2}}{\sqrt{|Q^*/Q|}}\sum_{\de+Q\in Q^*/Q}e^{-2\pi\ii(\la|\de)}\frac{\th_{\de+Q}(\tau)}{P_{1,1}(\tau)}\\
&=\frac{1}{\sqrt{|Q^*/Q|}}\sum_{\substack{\de+Q\in Q^*/Q\\(1-\si)\de\in Q}}e^{-2\pi\ii(\la|\de)}\frac{\th_{\de+Q}(\tau)}{P_{1,1}(\tau)}\\
&\hspace*{0.5in}+\frac{1}{\sqrt{|Q^*/Q|}}\sum_{\substack{\ga+Q\in Q^*/Q\\(1-\si)\ga\notin Q}}e^{-2\pi\ii(\la|\ga)}\frac{\th_{\ga+Q}(\tau)}{P_{1,1}(\tau)} \,.
\end{align*}
Using Lemma \ref{perminvert}, we write the transformation as a linear combination of irreducible orbifold characters:

\allowdisplaybreaks
\begin{align*}
\frac{\th_{\la+Q}(-\frac1\tau)}{P_{1,1}(-\frac1\tau)}&=\frac{1}{\sqrt{|Q^*/Q|}}\sum_{\substack{\de+Q\in Q^*/Q\\(1-\si)\de\in Q}}e^{-2\pi\ii(\la|\de)}\sum_{l=0}^{p-1}\chi_{V_{\de+Q}}^l(\tau)\\
&\hspace*{0.5in}+\frac{1}{\sqrt{|Q^*/Q|}}\sum_{\substack{\ga+Q\in Q^*/Q\\(1-\si)\ga\notin Q}}e^{-2\pi\ii(\la|\ga)}\chi_{V_{\ga+Q}}\\
&=\frac{1}{\sqrt{|Q^*/Q|}}\sum_{\substack{\de+Q\in Q^*/Q\\(1-\si)\de\in Q}}e^{-2\pi\ii(\la|\de)}\sum_{l=0}^{p-1}\chi_{V_{\de+Q}}^l(\tau)\\
&\hspace*{0.5in}+\frac{p}{\sqrt{|Q^*/Q|}}\sum_{\substack{[\ga+Q]\in \mathcal{O}}}e^{-2\pi\ii(\la|\ga)}\chi_{V_{\ga+Q}},
\end{align*}
and
\allowdisplaybreaks
\begin{align*}
\frac{\th_{\la_0+Q_0}(-\frac{1}{\frac{\tau}{p}})}{P_{1,\si^k}(-\frac1\tau)}
&=\frac{p^{r_0/2}(-\ii\tau)^{-r_0/2}(-\ii\frac{\tau}{p})^{r_0/2}}{\sqrt{|Q_0^*/Q_0|}} \!\!\! \sum_{\de_0+Q_0\in Q_0^*/Q_0} \!\!\! e^{-2\pi\ii(\la_0|\de_0)}\frac{\th_{\de_0+Q_0}(\frac{\tau}{p})}{P_{\si^k,1}(\tau)}\\
&=\frac{1}{\sqrt{|Q_0^*/Q_0|}}\sum_{\de_0+Q_0\in Q_0^*/Q_0}e^{-2\pi\ii(\la_0|\de_0)}\sum_{l=0}^{p-1}\chi_{M(\de_0;\si^k)}^l(\tau),
\end{align*}
using that $\dim\lieh_\perp=r_0(p-1)$. The transformation \eqref{chiorbtrans1} now follows from \eqref{thm1} and the above calculations.

Next we prove \eqref{chiorbtrans2}. Again from Theorem \ref{thtlaws} and Corollary \ref{chiFtran2}, we obtain
\allowdisplaybreaks
\begin{align*}
\chi_{V_{\la+Q}}\left(-\frac{1}{\tau}\right)&=\frac{\th_{\ga+Q}(-\frac1\tau)}{P_{1,1}(-\frac1\tau)}\\
&=\frac{(-\ii\tau)^{-r_0/2}(-\ii\tau)^{r_0/2}}{\sqrt{|Q^*/Q|}}\sum_{\de+Q\in Q^*/Q}e^{-2\pi\ii(\la|\de)}\frac{\th_{\de+Q}(\tau)}{P_{1,1}(\tau)}\\
&=\frac{1}{\sqrt{|Q^*/Q|}}\sum_{\substack{\de+Q\in Q^*/Q\\(1-\si)\de\in Q}}e^{-2\pi\ii(\la|\de)}\frac{\th_{\de+Q}(\tau)}{P_{1,1}(\tau)}\\
&\hspace*{0.5in}+\frac{1}{\sqrt{|Q^*/Q|}}\sum_{\substack{\ga+Q\in Q^*/Q\\(1-\si)\ga\notin Q}}e^{-2\pi\ii(\la|\ga)}\frac{\th_{\ga+Q}(\tau)}{P_{1,1}(\tau)}\\
&=\frac{1}{\sqrt{|Q^*/Q|}}\sum_{\substack{\de+Q\in Q^*/Q\\(1-\si)\de\in Q}}e^{-2\pi\ii(\la|\de)}\sum_{l=0}^{p-1}\chi_{V_{\de+Q}}^l(\tau)\\
&\hspace*{0.5in}+\frac{1}{\sqrt{|Q^*/Q|}}\sum_{\substack{\ga+Q\in Q^*/Q\\(1-\si)\ga\notin Q}}e^{-2\pi\ii(\la|\ga)}\chi_{V_{\ga+Q}}\\
&=\frac{1}{\sqrt{|Q^*/Q|}}\sum_{\substack{\de+Q\in Q^*/Q\\(1-\si)\de\in Q}}e^{-2\pi\ii(\la|\de)}\sum_{l=0}^{p-1}\chi_{V_{\de+Q}}^l(\tau)\\
&\hspace*{0.5in}+\frac{1}{\sqrt{|Q^*/Q|}}\sum_{\substack{[\ga+Q]\in \mathcal{O}}}E_{\la,\ga}\chi_{V_{\ga+Q}} \,.
\end{align*}

Now we prove \eqref{chiorbtrans3}. Similarly to transforming theta functions with argument $p\tau$, the transformation of the first term of the irreducible orbifold character of twisted type in Theorem \ref{PermMod}(3) is
\allowdisplaybreaks
\begin{align}\label{PermMtran1}
\begin{split}
&\frac{\th_{\la_0+Q_0}(-\frac{1}{p\tau})}{P_{\si^s,1}(-\frac1\tau)}=\frac{(-\ii\tau)^{-r_0/2}(-\ii p\tau)^{r_0/2}}{p^{r_0/2}\sqrt{|Q_0^*/Q_0|}}\sum_{\de_0+Q_0\in Q_0^*/Q_0}e^{-2\pi\ii(\la_0|\de_0)}\frac{\th_{\de_0+Q_0}(p\tau)}{P_{1, \si^{-s}}(\tau)}\\
&\quad=\frac{1}{\sqrt{|Q_0^*/Q_0|}}\sum_{\substack{\de+Q\in Q^*/Q\\\de=(\de_0,\ldots,\de_0)}}e^{-2\pi\ii(\la_0|\de_0)}\sum_{l=0}^{p-1}\om^{ls}\chi_{V_{\de+Q}}^l(\tau).
\end{split}
\end{align}
We use the same method of transforming the term
\begin{align*}
e^{-\pi\ii k\frac{|\la_0|^2}{p}}\frac{\th_{\la_0+Q_0}(\frac{\tau+k}{p})}{P_{\si^s,\si^{sk}}(\tau)}
\end{align*}
as in the proof of \eqref{chiWtlaws6}.
As in \eqref{tauprime} and \eqref{tauprime2}, we set 
\begin{equation}\label{tauprime3}
\tau'=\frac{\tau+k'}{p},
\end{equation}
where $kk'+1=mp$,
and note that
\begin{align}\label{tauprime4}
\frac{-\frac{1}{\tau}+k}{p}=\frac{k\tau'-m}{p\tau'-k'}=A\cdot\tau',
\quad A=
\begin{pmatrix}
k&-m\\p&-k'
\end{pmatrix}
\in\SL_2(\ZZ).
\end{align}
We next use Theorem \ref{KP} with the matrix $A$ and $\tau$ replaced by $\tau'$:
\allowdisplaybreaks
\begin{align}\label{thtwp}
\begin{split}
&\th_{\la_0+Q_0}\left(\frac{-\frac{1}{\tau}+k}{p}\right)=\th_{\la_0+Q_0}\left(A\cdot\tau'\right)\\
&=(p\tau'-k')^{r_0/2}v_k \!\!\! \sum_{\substack{\ga_0+Q_0\in Q_0^*/Q_0\\\ga_0\in pQ_0^*}} \!\!\! e^{\pi\ii( -\frac{k'}{p}|\ga_0|^2-2m(\la_0|\ga_0)-km|\la_0|^2)}\th_{k\la_0+\ga_0+Q_0}(\tau')\\
&=\tau^{r_0/2}v_k \!\!\! \sum_{\substack{\ga_0+Q_0\in Q_0^*/Q_0\\\ga_0\in pQ_0^*}} \!\!\! e^{\pi\ii( -\frac{k'}{p}|\ga_0|^2-2m(\la_0|\ga_0)-km|\la_0|^2)}\th_{k\la_0+\ga_0+Q_0}\left(\frac{\tau+k'}{p}\right)\\
&=\tau^{r_0/2}v_k\sum_{\substack{\ga_0+Q_0\in Q_0^*/Q_0\\\ga_0\in k\la_0+pQ_0^*}}e^{\frac{\pi\ii}{p}( -k'|\ga_0|^2-2(\la_0|\ga_0)+k|\la_0|^2)}\th_{\ga_0+Q_0}\left(\frac{\tau+k'}{p}\right),
\end{split}
\end{align}
using in the last step that
\allowdisplaybreaks
\begin{align*}
-k'|\ga_0&-k\la_0|^2-2mp(\la_0|\ga_0-k\la_0)-kmp|\la_0|^2 \\
&=-k'|\ga_0|^2-2(\la_0|\ga_0)+k|\la_0|^2.
\end{align*}
It now follows that
\allowdisplaybreaks
\begin{align}\label{PermMtran2}
\begin{split}
&e^{- \frac{\pi\ii}{p}k|\la_0|^2}\frac{\th_{\la_0+Q_0}\left(\frac{-\frac{1}{\tau}+k}{p}\right)}{P_{\si^s,\si^{sk}}(-\frac{1}{\tau})}\\
&=\tau^{r_0/2}(-\ii\tau)^{-r_0/2}v_k \!\!\! \sum_{\substack{\ga_0+Q_0\in Q_0^*/Q_0\\\ga_0\in k\la_0+pQ_0^*}} \!\!\! e^{\frac{\pi\ii}{p}( -k'|\ga_0|^2-2(\la_0|\ga_0))}\frac{\th_{\ga_0+Q_0}\left(\frac{\tau+k'}{p}\right)}{P_{\si^{ks},\si^{-s}}(\tau)}\\
&=\ii^{r_0/2}v_k\sum_{\substack{\ga_0+Q_0\in Q_0^*/Q_0\\\ga_0\in k\la_0+pQ_0^*}}e^{-\frac{2\pi\ii}{p}(\la_0|\ga_0)}e^{-\frac{\pi\ii}{p}k'|\ga_0|^2}\frac{\th_{\ga_0+Q_0}\left(\frac{\tau+k'}{p}\right)}{P_{\si^{ks},\si^{-s}}(\tau)}.\\
\end{split}
\end{align}
Using \eqref{PermMtran1}, \eqref{PermMtran2}, and Lemma \ref{perminvert}, we finish the calculation:
\begin{align*}
\begin{split}
&\frac1p\sum_{k=1}^{p-1}\om^{jk}e^{- \frac{\pi\ii}{p}k|\la_0|^2}\frac{\th_{\la_0+Q_0}\left(\frac{-\frac{1}{\tau}+k}{p}\right)}{P_{\si^s,\si^{sk}}\left(-\frac{1}{\tau}\right)}\\
&=\frac{\ii^{r_0/2}}{p}\sum_{k=1}^{p-1}\om^{jk}v_k\sum_{\substack{\ga_0+Q_0\in Q_0^*/Q_0\\\ga_0\in k\la_0+pQ_0^*}}e^{-\frac{2\pi\ii}{p}(\la_0|\ga_0)}e^{-\frac{\pi\ii}{p}k'|\ga_0|^2}\frac{\th_{\ga_0+Q_0}\left(\frac{\tau+k'}{p}\right)}{P_{\si^{ks},(\si^{ks})^{k'}}(\tau)}\\
&=\frac{\ii^{r_0/2}}{p}\sum_{k=1}^{p-1}\om^{jk}v_k\sum_{\substack{\ga_0+Q_0\in Q_0^*/Q_0\\\ga_0\in k\la_0+pQ_0^*}}e^{-\frac{2\pi\ii}{p}(\la_0|\ga_0)}\sum_{l=0}^{p-1}\om^{-lk'}\chi_{M(\ga_0;\si^{ks})}^l(\tau).
\end{split}
\end{align*}
We now obtain \eqref{chiorbtrans3} from \eqref{PermMtran1}, 
using that 
$$k'\equiv -k^{p-2}\mod p.$$ 
This completes the proof of \thref{thm8.5}.
\end{proof}

In the following table, we present the asymptotic and quantum dimensions of permutation orbifold modules, which are a special case of \coref{corqdim}.

\begin{center}
\begin{table}[H]\label{tab5}
\begin{tabular}{|c|c|c|c|}
\hline
$M$&$V_{(\la_0,\ldots,\la_0)+Q}^j$&$V_{\la+Q}$&$M(\la_0, \si^s)^j$\\
\hline
$\asdim M$&$p^{-1} |Q_0^*/Q_0|^{-p/2}$&$|Q_0^*/Q_0|^{-p/2}$&$p^{-1} |Q_0^*/Q_0|^{-1/2}$\\
\hline
$\qdim M$&$1$&$p$&$|Q_0^*/Q_0|^{(p-1)/2}$\\
\hline
\end{tabular}
\caption{Asymptotic and quantum dimensions}
\end{table}
\end{center}

\subsection{The case when $p=3$} 
In this subsection, we let $p=3$ and use the alternative strategy outlined in Sections \ref{Irrgen2} and \ref{Irrgen3} to calculate the part of the transformation $\chi_{M(\mu,\zeta;\si)}^j\left(-\frac{1}{\tau}, \frac{h}{\tau}\right)$ that contributes to orbifold characters of twisted type without reference to Theorem \ref{KP}.


In this case, \eqref{cL3} becomes
\begin{align}\label{cQ03}
c_{Q_0}=\sum_{\nu+Q_0\in Q_0^*/Q_0}e^{3\pi\ii|\nu|^2},
\end{align}
and the complex numbers $v_1$ and $v_{-1}$ in \eqref{thtwp} can be determined explicitly using \eqref{v1} and \eqref{v2}:
\begin{align}\label{v1v2perm3}
v_1=\displaystyle\frac{\bar{c}_{Q_0}}{|Q_0^*/Q_0|}\,,\qquad v_{-1}=\displaystyle\frac{(-1)^{r_0/2}c_{Q_0}}{|Q_0^*/Q_0|}\,.
\end{align}

\noindent
The transformations of the irreducible permutation orbifold characters of twisted type now follow from \eqref{chiorbtrans3}, \eqref{cQ03} and \eqref{v1v2perm3}:
\begin{align*}
\displaystyle\chi_{M(\la_0;\si^s)}^j &\left(-\frac1\tau\right)
=\frac{1}{3\sqrt{|Q_0^*/Q_0|}}\sum_{\substack{\de+Q\in Q^*/Q\\\de=(\de_0,\de_0,\de_0)}}\sum_{l=1}^{2}\om^{ls}e^{-2\pi\ii(\la_0|\de_0)}\chi_{V_{\de+Q}}^l(\tau)\\
&+\frac{\ii^{r_0/2}\bar{c}_{Q_0}}{3|Q_0^*/Q_0|}\sum_{\substack{\ga+Q_0\in Q_0^*/Q_0\\\ga\in\la_0+3Q_0^*}}\sum_{l=1}^{2}\om^{-(\la_0|\ga)+j+l}\chi_{M(\ga;\si^s)}^l(\tau)\\
&+\frac{\ii^{-r_0/2}c_{Q_0}}{3|Q_0^*/Q_0|}\sum_{\substack{\ga+Q_0\in Q_0^*/Q_0\\\ga\in-\la_0+3Q_0^*}}\sum_{l=1}^{2}\om^{-(\la_0|\ga)-j-l}\chi_{M(\ga;\si^{-s})}^l(\tau),
\end{align*}
where $\om=e^{2\pi\ii/3}$.


For an even more explicit demonstration, we now consider the case $\rank Q_0=1$,
which has been investigated previously in \cite{DXY3}.
Then we can write 
$$Q=Q_0^{\oplus3} = \ZZ\al_1\oplus\ZZ\al_2\oplus\ZZ\al_3$$
with bilinear form given by 
$$(\al_i|\al_j)=2t\de_{ij}$$ 
for some positive integer $t$. 
From the general results of Section \ref{PermGen}, we have that $d(\si)=1$
and all $\si^k$-twisted $V_Q$-modules 
can be described using the cosets of $Q_0^*/Q_0$.

For convenience, we set $\al=\al_1+\al_2+\al_3$. Then $|\al|^2=6t$ and 
\[
M=Q\cap\lieh_0=\ZZ\al, \qquad Q\cap\lieh_\perp=\ZZ(\al_1-\al_2)+\ZZ(\al_2-\al_3). 
\]
We can also write \eqref{L} as
$$L=\{x_1\al_1+x_2\al_2+x_3\al_3\;|\;x_1+x_2+x_3\equiv0\mod 3\},$$
and we find that $Q/L\cong\ZZ_3$ is generated by the coset $\al_1+L$. The dual lattice of $Q$ and its fundamental group are
\begin{align*}
Q^*&=\ZZ\frac{\al_1}{2t}\oplus\ZZ\frac{\al_2}{2t}\oplus\ZZ\frac{\al_3}{2t} \,,\qquad
Q^*/Q\cong\ZZ_{2t}\oplus\ZZ_{2t}\oplus\ZZ_{2t}.
\end{align*}
It follows that $(Q^*/Q)^\si\cong\ZZ_{2t}$, and it consists of the elements 
\begin{align*}
\frac{m}{2t}\al+Q \quad\text{for}\quad 0\leq m\leq 2t-1. 
\end{align*}
We also find that 
$$Q^*\cap\lieh_0=\ZZ\frac{\al}{2t}\,.$$ 
For each $m$, the automorphism $\si$ acts on the irreducible $V_Q$-module $V_{\frac{m}{2t}\al+Q}$, and so it decomposes into $3$ irreducible orbifold modules given by the eigenspaces of $\si$.

From \eqref{type3num}--\eqref{type2num}, there are $12t$ irreducible $V_Q^\si$-modules of twisted type, $6t$ irreducible $V_Q^\si$-modules of Type 1, and $\frac23(4t^3-t)$ irreducible $V_Q^\si$-modules of Type 2 (also see the discussion in Section \ref{PermGen}). Therefore, we have a total of
$$18t+\frac23(4t^3-t)$$
irreducible $V_Q^\si$-modules. In particular, we obtain $20$ irreducible permutation orbifold modules for $t=1$, which agrees with \cite{DXY3}.
The orbits of $\si$ in $Q^*/Q$ that are not singletons can be described in four class types, which we describe in Table 6 below. We use 
\[
\ga_i=d_i\frac{\al_1}{2t}+e_i\frac{\al_2}{2t}+f_i\frac{\al_3}{2t} \qquad (0\leq d_i,e_i,f_i\leq2t-1, \;\; i=1, 2, 3, 4)
\]
to represent a generic element in $Q^*$ from which to describe the class types. Notice that the sum of all class sizes is indeed $\frac23(4t^3-t)$.

\begin{center}
\begin{table}[H]\label{tab6}
\begin{tabular}{|c|c|c|}
\hline
Representative&Relation among coefficients&Class size\\
\hline
$\ga_1$&$d_1=e_1<f_1$&$2t^2-t$\\
\hline
$\ga_2$&$d_2<e_2=f_2$&$2t^2-t$\\
\hline
$\ga_3$&$d_3<e_3<f_3$&$\frac{1}{6}(2t-2)(2t-1)(2t)$\\
\hline
$\ga_4$&$d_4<f_4<e_4$&$\frac{1}{6}(2t-2)(2t-1)(2t)$\\
\hline
\end{tabular}
\caption{Cosets $\ga+Q$ with $\ga\in Q^*$ and $(1-\si)\ga\notin Q$}
\end{table}
\end{center}


%

Finally, we compute the constant $c_{Q_0}$, given by \eqref{cQ03}, for $t=1,2,3$. 
Recall from Section \ref{PermGen} that $\sqrt{3}\pi_0(Q)\cong Q_0$ and $\sqrt{3}\pi_0(Q^*)\cong Q_0^*$. Then \eqref{cL3} becomes
\begin{align}\label{c0t}
c_0=c_{Q_0}=\sum_{n=0}^{2t-1}e^{3\pi\ii n^2/2t},
\end{align}
and we calculate this explicitly for some values of $t$ in Table 7.

\begin{table}[H]\label{tab7}
\begin{tabular}{|c|c|}
\hline
$t=1$&$c_0=1-\ii$\\
\hline
$t=2$&$c_0=2e^{\frac{3\pi\ii}{4}}$\\
\hline
$t=3$&$c_0=3+3\ii$\\
\hline
\end{tabular}
\caption{Values of $c_0$}
\end{table}

\section*{Acknowledgments}

B.B. was supported in part by a Simons Foundation grant 584741. 
B.B. is grateful to the Institute for Nuclear Research and Nuclear Energy of the Bulgarian Academy of Sciences for the hospitality during June 2022 when this work was being completed.
V.K. was supported in part by the Simons Collaborative grant and the Berenson mathematical exploration fund.

\bibliographystyle{amsalpha}

\begin{thebibliography}{DVVV}


%

\bibitem[ABD]{ABD}
T.~Abe, G.~Buhl, and C.~Dong,
\textit{Rationality, regularity, and $C_2$-cofiniteness}. 
Trans. Amer. Math. Soc. \textbf{356} (2004), 3391--3402.

%

\bibitem[Ba]{Ba}
B.~Bakalov,
\textit{Twisted logarithmic modules of vertex algebras}.
Comm. Math. Phys. \textbf{345} (2016), 355--383.

\bibitem[BE]{BE}
B.~Bakalov and  J.~Elsinger,
\textit{Orbifolds of lattice vertex algebras under an isometry of order two}.
J. Algebra \textbf{441} (2015), 57--83.

\bibitem[BK]{BK}
B.~Bakalov and  V.G.~Kac,
\textit{Twisted modules over lattice vertex algebras}.
In: ``Lie theory and its applications in physics V,'' 
3--26,
World Sci. Publishing, River Edge, NJ, 2004;
math.QA/0402315.

\bibitem[BKi]{BKr}
B.~Bakalov and  A.~Kirillov,
{\it Lectures on tensor categories and modular functors}.
University Lecture Series, Am. Math. Soc., Vol. 21, 2001.

\bibitem[BM]{BM}
B.~Bakalov and  T.~Milanov,
\textit{$\W$-constraints for the total descendant potential of a simple singularity}.
Compositio Math. \textbf{149} (2013), 840--888.

\bibitem[BS]{BS}
B.~Bakalov and  M.~Sullivan, 
\textit{Twisted logarithmic modules of lattice vertex algebras}. 
Trans. Amer. Math. Soc. \textbf{371} (2019), 7995--8027.



\bibitem[B]{B}
R.E.~Borcherds,
\textit{Vertex algebras, Kac--Moody algebras, and the Monster}.
Proc. Nat. Acad. Sci. USA \textbf{83} (1986), 3068--3071.
%

\bibitem[CM]{MC}
S.~Carnahan and M.~Miyamoto, 
\textit{Rationality of fixed-point vertex operator algebras}. 
Preprint arXiv:1603.05645.


\bibitem[DVVV]{DVVV}
R.~Dijkgraaf, C.~Vafa, E.~Verlinde, and H.~Verlinde, 
\textit{The operator algebra of orbifold models}. 
Comm. Math. Phys. \textbf{123} (1989), 485--526.

\bibitem[D1]{D1}
C.~Dong,
\textit{Vertex algebras associated with even lattices}.
J. Algebra {\bf 161} (1993), 245--265.

\bibitem[D2]{D2}
C.~Dong,
\textit{Twisted modules for vertex algebras associated with even lattices}.
J. Algebra {\bf 165} (1994), 91--112.


\bibitem[DJX]{DJX}
C.~Dong, X.~Jiao, F.~Xu,
\textit{Quantum dimensions and quantum Galois theory}.
Trans. Amer. Math. Soc. \textbf{365} (2013), 6441--6469.

\bibitem[DRX]{DRX}
C.~Dong, L.~Ren, F.~Xu,
\textit{On orbifold theory}.
Adv. Math. \textbf{321} (2017), 1--30.

\bibitem[DL]{DL1}
C.~Dong and J.~Lepowsky,
\textit{Generalized vertex algebras and relative vertex operators}.
Progress in Math., 112, Birkh\"auser Boston, 1993.


\bibitem[DLM1]{DLM}
C.~Dong, H.~Li, and G.~Mason,
\textit{Twisted representations of vertex operator algebras}.
Math. Ann. {\bf 310} (1998), 571--600.


\bibitem[DLM]{DLM2}
C.~Dong, H.~Li, and G.~Mason,
\textit{Regularity of rational vertex operator algebra}.
Adv. Math. {\bf 312} (1997), 148--166. 


\bibitem[DN]{DN}
C.~Dong and K.~Nagatomo,
\textit{Representations of vertex operator algebra $V_L^+$ for rank $1$ lattice $L$}.
Comm. Math. Phys. {\bf 202} (1999), 169--195.

\bibitem[DXY1]{DXY1}
C.~Dong, F.~Xu, and N.~Yu,
\textit{2-cyclic permutations of lattice vertex operator algebras}.
Proc. Amer. Math. Soc. \textbf{144} (2016), 3207--3220.

\bibitem[DXY2]{DXY2}
C.~Dong, F.~Xu, and N.~Yu,
\textit{2-permutations of lattice vertex operator algebras: higher rank}. 
J. Algebra \textbf{476} (2017), 1--25.

\bibitem[DXY3]{DXY3}
C.~Dong, F.~Xu, and N.~Yu,
\textit{The 3-permutation orbifold of a lattice vertex operator algebra}.
 J. Pure Appl. Algebra {\bf 222} (2018), 1316--1336.

\bibitem[DXY4]{DXY4}
C.~Dong, F.~Xu, and N.~Yu,
\textit{$S$-matrix in permutation orbifolds}. 
J. Algebra \textbf{606} (2022), 851--876.

\bibitem[E1]{E}
J.~Elsinger,
\textit{Classification of orbifold modules under an automorphism of order two}.
Ph.D. dissertation, 2014, NC State University;
http:/\!/www.lib.ncsu.edu/resolver/1840.16/9824

\bibitem[E2]{E2}
J.~Elsinger,
\textit{Quantum dimensions and fusion products for irreducible $V_Q^\si$-modules with $\si^2=1$}.
Comm. Alg. {\bf 45} (2017), 3091--3109.

\bibitem[ENO]{ENO}
P. ~Etingof, D. ~Nikshich, V. ~Ostrik
\textit{On fusion categories}.
Ann. Math. {\bf 162} (2005), 581--642.

\bibitem[FFR]{FFR}
A.J.~Feingold, I.B.~Frenkel, and J.F.X.~Ries, 
\textit{Spinor construction of vertex operator algebras, triality, and\/ $E_8^{(1)}$}. 
Contemporary Math., 121, Amer. Math. Soc., Providence, RI, 1991.

\bibitem[FB]{FB}
E.~Frenkel and D.~Ben-Zvi,
\textit{Vertex algebras and algebraic curves}.
Math. Surveys and Monographs, 88,
Amer. Math. Soc., Providence, RI, 2001; 2nd ed., 2004.

\bibitem[FHL]{FHL}
I.B.~Frenkel, Y.-Z.~Huang, and J.~Lepowsky, 
\textit{On axiomatic approaches to vertex operator algebras and modules}. 
Mem. Amer. Math. Soc. \textbf{104} (1993), no. 494. 

\bibitem[FK]{FK}
I.B.~Frenkel and V.G.~Kac,
\textit{Basic representations of affine Lie algebras 
and dual resonance models}.
Invent. Math. \textbf{62} (1980), 23--66.
%
%
\bibitem[FLM]{FLM}
I.~B.~Frenkel, J.~Lepowsky, and A.~Meurman,
``Vertex operator algebras and the Monster,''
Pure and Appl. Math., 134,  Academic Press,  Boston, 1988.
%
%

\bibitem[H1]{H}
Y.-Z. ~Huang,
\textit{Vertex operator algebras and the Verlinde conjecture}. 
Comm. Contemp. Math. {\bf 10} (2008), 103--154. 

\bibitem[H2]{H2}
Y.-Z. ~Huang, 
\textit{Rigidity and modularity of vertex tensor categories}.
Comm. Contemp. Math. {\bf 10} (2008), 871--911.

\bibitem[K1]{K1}
V.G.~Kac,
\textit{Infinite-dimensional Lie algebras}. 3rd ed.,
Cambridge Univ. Press, Cambridge, 1990. 

\bibitem[K2]{K2}
V.G.~Kac,
\textit{Vertex algebras for beginners}. 
University Lecture Series, 10, 
Amer. Math. Soc., Providence, RI, 1996; 2nd ed., 1998.


\bibitem[KP1]{KP}
V.G.~Kac and D.H.~Peterson,
\textit{$112$ constructions of the basic representation of the loop group 
of\/ $E_8$}.
In: ``Symposium on anomalies, geometry, topology,'' 
276--298, World Sci. Publ., Singapore, 1985.

\bibitem[KP2]{KP2}
V.G.~Kac and D.H.~Peterson,
\textit{Infinite-Dimensional Lie Algebras, Theta Functions and Modular Forms}.
Adv. Math. \textbf{53} (1984), 125--264.

\bibitem[KRR]{KRR}
V.G.~Kac, A.K.~Raina, and N.~Rozhkovskaya,  
\textit{Bombay lectures on highest weight representations of infinite dimensional Lie algebras}. 
2nd ed., Advanced Ser. in Math. Phys., 29. 
World Sci. Pub. Co. Pte. Ltd., Hackensack, NJ, 2013.

\bibitem[KT]{KT}
V.G.~Kac and I.T.~Todorov,
\textit{Affine orbifolds and rational conformal field theory extensions of\/ 
$W_{1+\infty}$}.
Comm. Math. Phys. \textbf{190} (1997), 57--111.

\bibitem[Le]{Le}
J.~Lepowsky,
Calculus of twisted vertex operators, 
\textit{Proc. Nat. Acad. Sci. USA} \textbf{82}  (1985), 8295--8299.
%
\bibitem[LL]{LL}
J.~Lepowsky and H.~Li, 
\textit{Introduction to vertex operator algebras and their 
representations}.
Progress in Math., 227, Birkh\"auser Boston, Boston, MA, 2004.
%
%
\bibitem[Li]{Li}
H.~Li, 
\textit{Local systems of twisted vertex operators, vertex operator superalgebras and twisted modules}. 
In: ``Moonshine, the Monster, and related topics," 
203--236, Contemp. Math., 193, Amer. Math. Soc., Providence, RI, 1996.
%
%
%


\bibitem[V]{V}
E. ~Verlinde, 
\textit{Fusion rules and modular transformation in 2D conformal field theory}. 
Nucl. Phys. {\bf B300} (1988), 360--376.

\bibitem[Z]{Z}
Y.~Zhu, 
\textit{Modular invariance of characters of vertex operator algebras}. 
Amer. Math. Soc. {\bf 9}, no. 1, 1996.


\end{thebibliography}

\end{document}